\providecommand{\U}[1]{\protect\rule{.1in}{.1in}}
\numberwithin{equation}{section}
\newtheorem{theorem}{Theorem}[section]
\newtheorem{lemma}[theorem]{Lemma}
\newtheorem{corollary}[theorem]{Corollary}
\newtheorem{proposition}[theorem]{Proposition}
\newtheorem{remark}[theorem]{Remark}
\newtheorem{definition}[theorem]{Definition}
\newtheorem{assumption}[theorem]{Assumption}
\newtheorem{conjecture}[theorem]{Conjecture}
\newtheorem{system}{System}
\def\cA{\mathcal{A}}
\def\cC{\mathcal{C}}
\def\cE{\mathcal{E}}
\def\cF{\mathcal{F}}
\def\cH{\mathcal{H}}
\def\cI{\mathcal{I}}
\def\L{\mathcal{L}}
\def\cP{\mathcal{P}}
\def\cR{\mathcal{R}}
\def\cQ{\mathcal{Q}}
\def\cS{\mathcal{S}}
\def\E{\mathbb{E}}
\def\F{\mathbb{F}}
\def\N{\mathbb{N}}
\def\P{\mathbb{P}}
\def\Q{\mathbb{Q}}
\def\R{\mathbb{R}}
\def\T{\mathbb{T}}
\def\Z{\mathbb{Z}}
\def\<{\langle}
\def\>{\rangle}
\def\d{{\rm d}}
\def\div{{\rm div \,}}
\def\curl{{\rm curl}}
\def\Tr{{\rm Tr \,}}
\def\eps{\varepsilon}
\def\loc{{\rm loc}}
\newcommand{\red}[1]{{\color{red} #1}}
\begin{document}

\title{Weak well-posedness by transport noise for a class of\\ 2D fluid dynamics equations}
\author{Lucio Galeati\thanks{Email: lucio.galeati@epfl.ch. Research Group Math-AMCV, EPFL, 1015 Lausanne, Switzerland.}
\quad Dejun Luo\thanks{Email: luodj@amss.ac.cn. Key Laboratory of RCSDS,
Academy of Mathematics and Systems Science, Chinese Academy of Sciences,
Beijing 100190, China, and School of Mathematical Sciences, University of
Chinese Academy of Sciences, Beijing 100049, China. } }
\maketitle

\vspace{-15pt}

\begin{abstract}
A fundamental open problem in fluid dynamics is whether solutions to $2$D Euler equations with $(L^1_x\cap L^p_x)$-valued vorticity are unique, for some $p\in [1,\infty)$.
A related question, more probabilistic in flavour, is whether one can find a physically relevant noise regularizing the PDE.
We present some substantial advances towards a resolution of the latter, by establishing well-posedness in law for solutions with $(L^1_x\cap L^2_x)$-valued vorticity and finite kinetic energy, for a general class of stochastic 2D fluid dynamical equations;
the noise is spatially rough and of Kraichnan type and we allow the presence of a deterministic forcing $f$.
This class includes as primary examples logarithmically regularized 2D Euler and hypodissipative 2D Navier--Stokes equations. In the first case, our result solves the open problem posed by Flandoli in \cite{Fla}. In the latter case, for well-chosen forcing $f$, the corresponding deterministic PDE without noise has recently been shown in \cite{AlbCol} to be ill-posed; consequently, the addition of noise truly improves the solution theory for such PDE.
\end{abstract}

\textbf{Keywords:} Uniqueness in law, 2D Euler equation, 2D Navier--Stokes equations, Kraichnan noise, regularization by noise.

\textbf{MSC (2020):} 60H15, 60H50, 35Q35.

\section{Introduction}\label{sec-intro}

Consider the incompressible Euler equations on $\R^2$ in vorticity form:
\begin{equation}\label{eq:intro-euler}
\begin{cases}
	\partial_t \omega + u\cdot\nabla\omega = f, \\
	\nabla \cdot u = 0, \quad \curl\, u = \omega.
\end{cases}\end{equation}
Here $u$ and $\omega$ are the unknowns, representing respectively the velocity and vorticity of a perfect fluid; instead the time-dependent forcing term $f$ and an initial configuration $u\vert_{t=0}=u_0$, $\omega\vert_{t=0}=\omega_0$ are prescribed.
It is well-known that, under the above conditions, $u$ can be recovered from $\omega$ by the Biot--Savart law, namely $u=\curl^{-1}\omega=K\ast \omega$, so that \eqref{eq:intro-euler} can be regarded as an \emph{active scalar} equation with unknown $\omega$ solely (we refer to Section \ref{subsec:notation} for an explanation of relevant notations, standard facts and function spaces adopted in this paper).

Let us quickly recall some known results for the PDE \eqref{eq:intro-euler}, in the absence of forcing:\footnote{All of them can be easily generalized in the presence of suitable $f$, see e.g. \cite[Theorem 1.0.3]{ABCDGJK} for Yudovich's result with forcing $f\in L^1_t (L^1_x\cap L^\infty_x)$.}

\begin{itemize}
\item If $\omega_0\in L^1_x\cap L^\infty_x\cap \dot H^{-1}_x$ (so that $u_0 = \curl^{-1}\omega_0\in L^2_x$), in the celebrated work \cite{Yudovich63}, Yudovich established existence and uniqueness of weak solutions $\omega\in L^1_t(L^1_x\cap L^\infty_x \cap \dot H^{-1}_x)$.
Moreover, regularity of initial data is propagated, namely if $\omega_0\in C^k_x$, the same will be true for any $t\geq 0$; see the monographs \cite{MajBer02,MarPul94} for an overview.
\item If $\omega_0\in L^1_x\cap L^p_x$ for some $p<\infty$, then it is possible to construct solutions $\omega\in C^0_t (L^1_x\cap L^p_x)$ by several different approximations schemes, as observed by DiPerna and Majda in \cite{DiPMaj}; the result have then been refined by Schochet \cite{Schochet} and Delort \cite{Delort} to allow $\omega_0$ to be a signed measure ``with a sign preference''.
Uniqueness here is a longstanding open problem.
\item For $\omega_0$ below $L^1_x$, examples of non-uniqueness are now known.
Grotto and Pappalettera \cite{GroPap} revisit a classical example from Marchioro and Pulvirenti \cite{MarPul94} to show non-uniqueness of point vortex solutions (namely measure-valued objects of the form $\omega_t=\sum_{j=1}^n a_j \delta_{x^j_t}$) due to finite time collapse of three vortices into one; instead Brué and Colombo \cite{BruCol} use convex integration techniques to construct infinitely many solutions of finite energy, with vorticity in the Lorentz space $L^{1,\infty}_x$.
\end{itemize}

The problem of uniqueness for $L^p_x$-valued vorticity has received considerable attention in the last years, with several different attempts at disproving it.
On one hand, Bressan et al. \cite{BreMur, BreShe} have set forth a (numerically assisted) strategy towards non-uniqueness of the unforced 2D Euler, with vorticity in $L^p_{\rm loc}$, based on a symmetry breaking argument; this program however is not completely rigorous yet.
On the other, Vishik successfully showed in \cite{Vishik1, Vishik2} that, for any $p\in (1,\infty)$, one can find a carefully chosen forcing $f\in L^1_t (L^1_x\cap L^p_x)$ such that there are infinitely many weak solutions $\omega\in C^0_t(L^1_x\cap L^p_x)$ with zero initial condition, establishing sharpness of Yudovich's result in the forced case; see also the revisitation of this result from \cite{ABCDGJK}.

Understanding better $2$D Euler solutions with $L^p_x$-valued vorticity and the mechanisms under which non-uniqueness may arise is of fundamental importance in order to get a better insight on their \emph{turbulent behaviour}, which might not be displayed in Yudovich solutions which are quite regular; for instance the review \cite{Boffetta} predicts $2$D turbulence to display a $1/3$-H\"older velocity field $u$.
One possible way to forcefully exhibit turbulence in the equation is by the introduction of an exhogenous \emph{noise component}, which at the same time could be thought of as arising endogenously by the action of the turbulent small scales of the fluid.
There is by now a general consensus in the literature that the correct noise to consider for this task should be of \emph{Stratonovich gradient} type; the corresponding stochastic PDE (written directly only for $\omega$) is thus given by
\begin{equation}\label{eq:intro-stochastic-euler}
	\d \omega + (\curl^{-1} \omega)\cdot\nabla\omega \, \d t+ \circ \d W\cdot \nabla\omega
	= f\, \d t.
\end{equation}
See e.g. \cite{BrCaFl,Holm2015, Memin, FP21} for several alternative derivations of \eqref{eq:intro-stochastic-euler}, as well as the more detailed discussions in Section \ref{subsec:literature}.
Here $W(t,x)$ is a space-time Gaussian noise, Brownian in time and coloured in space\footnote{It is well known that in order to give meaning to the SPDE \eqref{eq:intro-stochastic-euler} $W$ cannot be taken too rough, in particular it cannot be white in space; rather, it should be roughly speaking a \emph{trace class noise}. See Section \ref{sec:preliminaries} for a more detailed explanation.}, and the symbol $\circ\d $ denotes Stratonovich integration in time (see Section \ref{sec:preliminaries} for more details), which is the most physically reasonable one (instead of It\^o) due to the Wong-Zakai principle.
It is also natural (as well as mathematically convenient) to take $W$ to be spatially divergence-free, so that the SPDE \eqref{eq:intro-stochastic-euler} shares many properties of its deterministic counterpart \eqref{eq:intro-euler}: a priori estimates, Casimir invariants, Lagrangian representations, etc. Indeed this allows to derive several analogues of the aforementioned results for \eqref{eq:intro-euler}, see e.g. \cite{BrFlMa} for a stochastic Yudovich theorem, \cite{BrzMau} for Delort's result and \cite{CrFlHo} for the Beale-Kato-Majda blow-up criterion.

In view of the above discussion for \eqref{eq:intro-euler}, a closely related more probabilistic question is then: can the introduction of the noise $W$ as in \eqref{eq:intro-stochastic-euler} restores uniqueness of solutions with $L^p_x$-valued vorticity, possibly also in the presence of an external forcing $f$?

This problem now falls in the growing field of \emph{regularization by noise} phenomena, see \cite{FlaBook, Gess} for overviews.
The hope that a sufficiently active, yet physically meaningful noise $W$ should improve the solution theory for an ODE or PDE, besides being by now understood in numerous examples, is grounded in the idea that often non-uniqueness (as well as formation of singularity) should be a very rare event, only happening at some special configurations.
In other words, non-uniqueness should be the exception, rather than the standard;
if one accepts this philosophy, then the noise should play the role of immediately pushing the system out of such configurations, restoring well-posedness.
A much more sophisticated question, outlined in the general regularization by noise program by Flandoli in \cite{FlaBook}, is then whether one can devise \emph{selection criteria} for physically meaningful solutions by performing a \emph{vanishing noise limit}. This is a much harder problem, which so far has not been properly understood even in the ODE case.
At the same time, in recent years convex integration techniques have been successfully applied to fluid dynamics SPDEs as well, making researchers doubt whether regularization by noise is even possible here; see \cite{BecHof} for a deeper discussion adopting this, equivalently plausible, perspective.

Unfortunately, $2$D Euler equations are still too hard to be tackled with our current techniques, and in this paper we are not able to solve the problem as formulated above.
We can however get ``very close to it'' by establishing such result for two other classes of PDEs, related to $2$D Euler; the price we have to pay is that these are not fundamental equations of fluid dynamics anymore.
The first SPDE we will consider is a stochastic analogue of the so called \emph{logEuler equations}; it differs from \eqref{eq:intro-euler} by an additional logarithmic regularization on $\curl^{-1}$.
Its interest as a test candidate for regularization by noise results for inviscid fluids comes from the open problem proposed by Flandoli in \cite{Fla}; therein the operator $T_\gamma$ is actually replaced by the nicer $(-\Delta)^{-\eps}$ with $\eps>0$, which we can treat by the same means, cf. Remark \ref{rem:flandoli}.
%

\begin{system}[$2$D logEuler]\label{system:logEuler}
Let $\gamma>1/2$ and denote by $T_\gamma$ the Fourier multiplier with symbol $\log^{-\gamma} (e+|\nabla|)$; then the $2$D logarithmically regularized Euler equation is given by
\begin{equation}\label{eq:intro-logeuler}
	\d \omega + (T_\gamma \curl^{-1} \omega)\cdot\nabla\omega \, \d t+ \circ \d W\cdot\nabla\omega = f\, \d t.
\end{equation}
\end{system}

The (deterministic) logEuler equations have already been studied by several researchers in the literature, as they fall in the general class of inviscid models introduced in \cite{ChCoWu}.
Compared to classical Euler, they present a slightly richer behaviour for initial vorticities in the borderline critical spaces $H^1_x\cap \dot H^{-1}_x$, see \cite{ChaeWu, DongLi, Kwon}; however for solutions with $L^p_x$-valued vorticity, their uniqueness is as open as in Euler.

The second SPDE we will deal with are the stochastic 2D hypodissipative Navier-Stokes equations, where the Biot--Savart kernel is not regularized, rather we add a fractionally dissipative term on the r.h.s. of \eqref{eq:intro-stochastic-euler}.

\begin{system}[$2$D hypodissipative Navier--Stokes]\label{system:hypoNS}
Let $\beta\in (0,2)$ and denote by $\Lambda^\beta$ the fractional Laplacian of order $\beta$, namely the Fourier multiplier with symbol $|\nabla|^\beta$; then the stochastic $2$D hypodissipative Navier--Stokes equation in vorticity form is given by
\begin{equation}\label{eq:intro-hypoNS}
	\d \omega + (\curl^{-1}\omega)\cdot\nabla\omega \, \d t+ \circ \d W \cdot\nabla\omega
	= (f- \Lambda^\beta \omega)\, \d t.
\end{equation}
\end{system}

Clearly, we are mostly interested in the regime where $\beta$ is very small, so that the regularization coming from $\Lambda^\beta$ is not that strong, and the evolution of \eqref{eq:intro-hypoNS} should resemble the one of \eqref{eq:intro-stochastic-euler}.
Our interest in System \ref{system:hypoNS} comes from a recent result by Albritton and Colombo \cite{AlbCol}, who established non-uniqueness of solutions to the forced, deterministic PDE for any $\beta\in (0,2)$, see Proposition \ref{prop:pde-nonuniquess} below for a precise statement; the proof is based on a readaption of the aforementioned unstable background method by Vishik \cite{Vishik1, Vishik2}. In particular, this equation is now a natural test field to understand if the introduction of noise as in \eqref{eq:intro-hypoNS} may affect this nonuniqueness scenario; we answer positively, by showing that a suitably chosen $W$ indeed restores uniqueness in law, even in the presence of the forcing $f$ considered in \cite{AlbCol}.

The next statement loosely summarizes the main findings of this paper; we refer to Definition \ref{defn:solution-nonlinear-SPDE} below for the meaning of probabilistically weak solutions to \eqref{eq:intro-logeuler} and \eqref{eq:intro-hypoNS}, and $\mathcal{L}(\omega)$ in the following result stands for the law of solution process.

\begin{theorem}\label{thm:main-theorem}
Let $f\in L^1_t \big(L^1_x\cap L^2_x\cap \dot H^{-1}_x \big)$ be a deterministic forcing term.
The following assertions hold.
\begin{itemize}
	\item ($2$D logEuler)
	There exists a noise $W$, with paths in $C^0_t L^2_\loc$, such that for any $\omega_0\in L^1_x\cap L^2_x\cap \dot{H}^{-1}_x$, global weak existence and uniqueness in law hold for System \ref{system:logEuler}, in the class of solutions $\omega$ satisfying
	\begin{align*}
	\P \mbox{-a.s.}\quad \omega\in L^2_t \big(L^1_x\cap L^2_x\cap \dot{H}^{-1}_x \big).
	\end{align*}
	Furthermore, the solution map $\omega_0\mapsto \mathcal{L}(\omega)$ is continuous with respect to weak convergence of probability measures.
	Finally, if $\gamma>3/4$, then the noise has trajectories in $C^0_t C^0_\loc$.
	
	\item ($2$D Hypodissipative Navier--Stokes)
	There exists a noise $W$ such that for any $\omega_0\in L^1_x\cap L^2_x\cap \dot{H}^{-1}_x$, global weak existence and uniqueness in law hold for System \ref{system:hypoNS}, in the class of solutions $\omega$ satisfying
	\begin{align*}
	\P \mbox{-a.s.}\quad \omega\in L^2_t \big(L^1_x\cap H^{\beta/2}_x\cap \dot{H}^{-1}_x \big).
	\end{align*}
	Furthermore, the solution map $\omega_0\mapsto \mathcal{L}(\omega)$ is continuous with respect to weak convergence of probability measures.
	Finally, the noise has paths in $C^0_t C^{\beta/2-\eps}_\loc$ for any $\eps>0$.
\end{itemize}
\end{theorem}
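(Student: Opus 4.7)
The plan follows the general strategy for regularization by Kraichnan-type transport noise: construct $W$ so that its It\^o-Stratonovich correction produces a fractional Laplacian of arbitrarily large coefficient, prove existence by compactness on the It\^o reformulation, and then exploit the induced parabolic regularization to deduce a stability estimate yielding uniqueness in law. Concretely, I would take $W$ to be a divergence-free spatially homogeneous Gaussian field on $\R^2$ whose covariance has Fourier symbol decaying like $|k|^{-(2+2\alpha)}$ at high frequencies (with smooth low-frequency truncation); a standard computation then converts $\circ\, dW\cdot\nabla\omega$ into $dW\cdot \nabla \omega - \kappa (-\Delta)^\alpha \omega\, dt$, with $\alpha \in (0,1)$ fixed by the spectral decay and $\kappa > 0$ a scale parameter rendered arbitrarily large by rescaling the noise amplitude. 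The claimed path regularities of $W$ follow from this spectral choice via Kolmogorov's continuity criterion; for System \ref{system:hypoNS} one picks $\alpha = \beta/2$ so that the noise-induced smoothing has exactly the order of the built-in hypodissipation.

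\textbf{Existence.} I would approximate $\omega_0$, $f$ and $W$ by smooth objects, solve the regularized SPDE classically, and pass to the limit. It\^o's formula applied to $\|\omega\|_{L^2_x}^2$ yields, after the transport-noise martingale contribution cancels by incompressibility, a uniform bound in $L^\infty_t L^2_x \cap L^2_t \dot H^\alpha_x$ (resp.\ $L^2_t H^{\beta/2}_x$ in the hypodissipative case). Conservation of $L^1_x$ under divergence-free transport, and an $\dot H^{-1}_x$ kinetic-energy estimate using $f \in L^1_t \dot H^{-1}_x$, complete the a priori bounds in the class claimed by the theorem; Prokhorov-Skorokhod together with a routine identification of the nonlinear limit (straightforward since $u$ is strictly smoother than $\omega$) then yield a probabilistically weak solution.

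\textbf{Uniqueness in law (main obstacle).} I would derive a stability estimate in $\dot H^{-1}_x$ for the difference $\eta := \omega^1 - \omega^2$ of two solutions on a common probability space; combined with a Yamada-Watanabe-type argument this yields uniqueness in law and continuity of $\omega_0 \mapsto \mathcal{L}(\omega)$. Applying It\^o's formula to $\|\eta\|_{\dot H^{-1}_x}^2$, the transport-noise contribution cancels by divergence-freeness and the It\^o-Stratonovich correction provides a coercive term $-2\kappa \|\eta\|_{\dot H^{\alpha-1}_x}^2$. The obstacle is the scaling-critical nonlinear cross-term
\begin{align*}
\langle \eta,\, (u^1 - u^2) \cdot \nabla \omega^2 \rangle_{\dot H^{-1}_x}, \qquad u^i := T_\gamma K \ast \omega^i \quad (\text{resp.\ } u^i := K\ast \omega^i).
\end{align*}
Using the logarithmic smoothing of $T_\gamma$ (resp.\ the extra regularity on $\omega^2$ provided by $\Lambda^\beta$), together with Sobolev embeddings and interpolation between the $L^\infty_t L^2_x$ and $L^2_t \dot H^\alpha_x$ controls on $\omega^2$, this term can be bounded by $C(\omega^2)\|\eta\|_{\dot H^{\alpha-1}_x}^2$ with $C(\omega^2)\in L^1_t$. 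Taking $\kappa$ large enough absorbs this into the coercive term and Gr\"onwall closes the estimate. The delicate point is that the estimate barely closes: the Kraichnan exponent $\alpha$, the regularization parameter $\gamma$ (or $\beta$), and the choice of $\dot H^{-1}_x$ as test norm are all tightly interlocked, and this is precisely where the hypotheses $\gamma > 1/2$ and $\beta \in (0,2)$ become essential.
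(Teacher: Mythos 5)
Your uniqueness argument rests on a premise that fails, in two ways.

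First, the It\^o--Stratonovich corrector of $\circ\,\d W\cdot\nabla\omega$ for a spatially homogeneous noise with covariance $Q$ is $\tfrac12\nabla\cdot(Q(0)\nabla\omega)=\kappa\Delta\omega$, i.e.\ a \emph{full} Laplacian whose coefficient is fixed by $Q(0)\sim\|g\|_{L^1}$. Once you truncate the Kraichnan spectrum at low frequencies so that $g\in L^1$, the corrector does not become a fractional Laplacian $(-\Delta)^{\alpha}$; the high-frequency decay exponent only governs the spatial H\"older regularity of the paths of $W$, not the order of the corrector. So there is no coercive term $-2\kappa\|\eta\|_{\dot H^{\alpha-1}_x}^2$ of fractional order to play against the nonlinearity.

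Second, and more fundamentally, even the full-Laplacian corrector cannot be used as a coercive term in an It\^o computation for $\|\eta\|_{\dot H^{-1}_x}^2$ (or any $L^2$-based norm): the quadratic variation of the transport martingale $\int \langle(-\Delta)^{-1}\eta,\nabla\eta\cdot\d W\rangle$ contributes a term $\sum_k\|(-\Delta)^{-1/2}\nabla\cdot(\sigma_k\eta)\|_{L^2_x}^2$, which is of the same size $\mathrm{Tr}\,Q(0)\,\|\eta\|_{L^2_x}^2$ as the dissipation $-2\kappa\|\eta\|_{L^2_x}^2$ produced by $\kappa\Delta$. This is exactly the mechanism by which Stratonovich transport noise formally preserves $L^p$ norms: the two contributions cancel (and the naive upper bound on the quadratic variation even exceeds the dissipation). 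Extracting any net gain requires exploiting the cancellation structure $Q(0)-Q(x-y)$, which in turn requires $Q\in W^{2,\infty}_x$, i.e.\ a smooth noise --- precisely what one must give up in order for the noise to regularize. This obstruction is discussed in Remark~\ref{rem:apriori-estimates-nonlinear}. Consequently ``take $\kappa$ large and absorb the cross-term by Gr\"onwall'' cannot close, and no Yamada--Watanabe argument can be launched from it.

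The paper takes an entirely different route: the nonlinear drift $\cR\,\curl^{-1}\omega$ is absorbed into the noise by a Girsanov change of measure (justified via the Liptser--Shiryaev/Ferrario criterion rather than Novikov), reducing the problem to uniqueness in law for the \emph{linear} Kraichnan SPDE; the hypotheses $\gamma>1/2$, respectively the choice $g(\xi)=(1+|\xi|^2)^{-1-\beta/2}$, enter solely to make $\cQ^{-1/2}\cR\,\curl^{-1}$ bounded from the a priori regularity class of solutions into $L^2_x$, so that the Girsanov density is well defined. Pathwise uniqueness for the linear equation is then proved not by an energy estimate but by a Fourier-space argument on the mean energy spectrum $A(\xi)=\E\int|\hat\rho_t(\xi)|^2\,\d t$, which satisfies a nonlocal integral inequality admitting a maximum-principle conclusion (Proposition~\ref{prop:pathwise-uniqueness-linear}). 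If you want to salvage a stability-estimate strategy, you would have to find a functional whose It\^o expansion does not suffer the corrector/quadratic-variation cancellation; within $L^2$-based norms this is not available for rough $Q$.
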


Theorem \ref{thm:main-theorem} will be a consequence of Theorems \ref{thm:main-logeuler}-\ref{thm:main-hypoNS}; the proofs will be presented in Section \ref{subsec:uniqueness-nonlinear}.
The noise $W$ in the statement is constructed in a rather explicit manner, in the sense that an exact formula for its covariance function $Q$ (as defined in Section \ref{subsec:structure-noise}) will be given (cf. equations \eqref{eq:covariance-logeuler}-\eqref{eq:covariance-hypoNS} therein); $W$ is fairly rough in space (e.g. non differentiable) and of \textit{Kraichnan type}, see Section \ref{subsec:structure-noise} for its precise description.
Our main interest is in the study of Systems \ref{system:logEuler}-\ref{system:hypoNS} when they are close to the original SPDE \eqref{eq:intro-stochastic-euler} (in particular, $\beta\ll 1$); let us remark however that Theorem \ref{thm:main-theorem} nicely displays how, when the regularizing terms $T_\gamma$ and $\Lambda^\beta$ become stronger, the roughness of the noise can be accordingly relaxed.

Theorem \ref{thm:main-theorem} illustrates the regularizing effects of Stratonovich transport noise of Kraichnan type on $2$D fluid dynamics PDEs.
In order to compare it to the nonuniqueness result from \cite{AlbCol}, we need to verify that all solutions in consideration belong to the same regularity class; for this reason, keeping track of the rather explicit construction from \cite{AlbCol}, we will prove in Section \ref{sec:nonuniqueness-deterministic} the following.

\begin{proposition}\label{prop:pde-nonuniquess}
Let $\beta\in (0,1)$. There exists a forcing $f\in L^1_t \big(L^1_x\cap L^2_x\cap \dot H^{-1}_x \big)$ such that the deterministic 2D hypodissipative Navier--Stokes equations
\begin{equation}\label{eq:hypodissipative-NS-deterministic}
\partial_t \omega + (\curl^{-1} \omega)\cdot\nabla \omega = -\Lambda^\beta\omega + f, \quad u=\curl^{-1}\omega
\end{equation}
admit two distinct weak solutions $\bar\omega$, $\omega$, starting with the same initial datum $\omega_0\equiv 0$, both belonging to $L^2_t \big( L^1_x\cap \dot{H}^{-1}_x \cap H^{\beta/2}_x \big)$.
\end{proposition}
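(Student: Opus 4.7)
The plan is to specialize the non-uniqueness construction of Albritton--Colombo \cite{AlbCol}, which adapts Vishik's unstable vortex method \cite{Vishik1,Vishik2} to the hypodissipative setting, and to verify that for $\beta\in(0,1)$ it produces two solutions and a forcing in the claimed regularity classes. Their construction yields a self-similar background solution
\begin{align*}
\bar\omega(t,x)=\frac{1}{t}\,\bar\Omega\!\Big(\frac{x}{t^{1/\beta}}\Big),
\end{align*}
with $\bar\Omega$ a Schwartz radial profile built as a perturbation of a Vishik-type unstable vortex, so that the linearization of \eqref{eq:hypodissipative-NS-deterministic} in self-similar coordinates $\xi=x/t^{1/\beta}$, $\tau=\log t$ admits an unstable eigenfunction $e^{\lambda\tau}\Psi(\xi)$ with $\Re\lambda>0$. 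The forcing $f:=\partial_t\bar\omega+(\curl^{-1}\bar\omega)\cdot\nabla\bar\omega+\Lambda^\beta\bar\omega$ is precisely what is needed for $\bar\omega$ to solve \eqref{eq:hypodissipative-NS-deterministic}, while a Banach fixed-point argument in self-similar variables produces a distinct second solution $\omega$ whose perturbation $\omega-\bar\omega$ has the same self-similar structure as $\bar\omega$, decorated by an extra factor $t^{\Re\lambda}$ that vanishes at $t=0$.

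The technical content I would add is a careful bookkeeping of norms under this scaling. Using $\|g(\cdot/\lambda)\|_{L^p_x}=\lambda^{2/p}\|g\|_{L^p_x}$ and $\|g(\cdot/\lambda)\|_{\dot H^s_x}=\lambda^{1-s}\|g\|_{\dot H^s_x}$ with $\lambda=t^{1/\beta}$, one finds
\begin{align*}
\|\bar\omega(t)\|_{L^1_x}\lesssim t^{\frac{2}{\beta}-1},\qquad \|\bar\omega(t)\|_{\dot H^{-1}_x}\lesssim t^{\frac{2}{\beta}-1},\qquad \|\bar\omega(t)\|_{\dot H^{\beta/2}_x}\lesssim t^{\frac{1}{\beta}-\frac{3}{2}}.
\end{align*}
Squaring and integrating on $[0,T]$, the borderline term is the $\dot H^{\beta/2}_x$-component, which is $L^2_t$ precisely when $2(1/\beta-3/2)>-1$, i.e.\ $\beta<1$; this identifies the range in the statement. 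Analogously, each of the three terms defining $f$ carries one extra power of $t^{-1}$ compared with $\bar\omega$, so that
\begin{align*}
\|f(t)\|_{L^1_x}+\|f(t)\|_{\dot H^{-1}_x}\lesssim t^{\frac{2}{\beta}-2},\qquad \|f(t)\|_{L^2_x}\lesssim t^{\frac{1}{\beta}-2},
\end{align*}
and the $L^1_t$-integrability of all three on $[0,T]$ again holds exactly under $\beta<1$, the binding constraint being the $L^2_x$-component; the bounded factors $\|\bar\Omega\|_{L^p_x}$, $\|\bar\Omega\|_{\dot H^s_x}$ appearing in these estimates are all finite since $\bar\Omega$ is Schwartz.

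For the second solution $\omega$, the extra factor $t^{\Re\lambda}$ with $\Re\lambda>0$ only improves the above time integrability, so $\omega-\bar\omega\in L^2_t(L^1_x\cap H^{\beta/2}_x\cap\dot H^{-1}_x)$ as well; hence $\omega$ itself lies in this class, differs from $\bar\omega$ (the unstable mode $\Psi$ being nonzero), and shares the initial datum $\omega_0\equiv 0$ since all the above norms tend to $0$ as $t\to 0^+$. The main obstacle I anticipate is purely organizational: verifying that the contraction scheme of \cite{AlbCol} indeed provides control of the perturbation in the $\dot H^{\beta/2}_x$-norm, and not merely in the weaker $L^2_x$-energy norm naturally obtained from the hypodissipative estimate $\frac{d}{dt}\|\omega\|_{L^2_x}^2+2\|\Lambda^{\beta/2}\omega\|_{L^2_x}^2=2\langle f,\omega\rangle$. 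Since $\Lambda^\beta$ commutes with the self-similar rescaling, this should amount to revisiting their fixed-point estimates in the Sobolev scale compatible with that energy identity, without requiring any further restriction on $\beta$ beyond $\beta<1$.
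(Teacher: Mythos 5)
Your overall strategy is the same as the paper's: take the Albritton--Colombo construction as a black box for the existence of the two solutions, and then verify by scaling which $L^2_t$- and $L^1_t$-norms are finite. However, there is a genuine error in how you set up the scaling. The similarity variables in \cite{AlbCol} are \emph{not} $\xi=x/t^{1/\beta}$: the construction uses $\xi=x/t^{1/\alpha}$ with a free exponent $\alpha$ satisfying $\beta<\alpha$ strictly. This is not cosmetic. With $\alpha>\beta$ the dissipation becomes $e^{\gamma\tau}\Lambda^\beta$ with $\gamma=1-\beta/\alpha>0$ in the $(\tau,\xi)$ variables, i.e.\ a perturbation vanishing as $\tau\to-\infty$ of the 2D Euler equation in similarity variables; the background $\bar\Omega$ is an unstable steady vortex of that \emph{Euler} system, and the whole linear-instability/unstable-manifold machinery lives there. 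Setting $\alpha=\beta$ gives $\gamma=0$, the dissipation no longer vanishes, and the construction you are invoking does not exist in that form. The correct bookkeeping (done in the paper with $\lambda=t^{-1/\alpha}$) yields: $\bar\omega\in L^2_t(L^1_x\cap L^2_x)$ for $\alpha<2$; $\bar\omega\in L^2_t\dot H^{\beta/2}_x$ for $\alpha+\beta<2$; $\partial_t\bar\omega\in L^1_tL^p_x$ for $\alpha<2/p$; and $\Lambda^\beta\bar\omega\in L^1_tL^p_x$ for $\beta<2/p$. The binding constraints are thus $\alpha<1$ and $\beta<1$ (from $f\in L^1_tL^2_x$), and one then picks any $\alpha\in(\beta,1)$; your exponents are the degenerate limit $\alpha\downarrow\beta$ of these, which is why your numerology accidentally reproduces the threshold $\beta<1$, but the attribution of that threshold to the $\dot H^{\beta/2}_x$-bound on $\bar\omega$ is not the right mechanism.

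The second issue is the point you flag as the ``main obstacle'' and leave open: control of the perturbation in $\dot H^{\beta/2}_x$. This does need to be addressed, and the paper resolves it by observing that the norm $\|\cdot\|_X$ in which \cite{AlbCol} run their fixed-point argument already controls the $H^1_\xi$-norm (their estimate (4.2) gives $\|\Omega^{\rm per}(\tau,\cdot)\|_{H^1}\le e^{(a+\delta_0)\tau}$, and the eigenfunction $\eta$ lies in $H^8$), so one in fact gets $\omega-\bar\omega\in L^2_t\dot H^1_x$, which is stronger than needed; no revisiting of their contraction estimates in a $\beta$-dependent Sobolev scale is required. Finally, two smaller inaccuracies: $\bar\Omega$ is smooth and compactly supported rather than Schwartz (harmless), and the forcing is $f=\partial_t\bar\omega+\Lambda^\beta\bar\omega$ because the self-transport term of the radial vortex vanishes, which is consistent with your formula but worth stating, since the radiality is also what makes the $f$-scaling computation clean.
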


Comparing Theorem \ref{thm:main-theorem} and Proposition \ref{prop:pde-nonuniquess}, we see that the addition of rough transport noise to \eqref{eq:hypodissipative-NS-deterministic} truly improves the solution theory of 2D hypodissipative Navier--Stokes equations.

After the completion of this work, the outstanding paper \cite{CogMau} appeared. Therein, the authors are able to establish strong existence and pathwise uniqueness for the true $2$D Euler equations, for instance for vorticities in $L^2_x$ and rough Kraichnan noise with paths in $C^0_t C^{1/2-\eps}_{\loc}$. The proof is based on a completely different approach from ours, not making use of of Girsanov transform; see Remark \ref{rem:apriori-estimates-nonlinear} for a deeper discussion.

\subsection{Strategy of proof} \label{subsec:strategy-proof}

Let us briefly describe here the main ideas behind Theorem \ref{thm:main-theorem}.
In order to treat at once both Systems \ref{system:logEuler} and \ref{system:hypoNS}, while also trying to extract useful information on the original problem \eqref{eq:intro-stochastic-euler}, it will be convenient to consider the following more general abstract stochastic fluid model:
\begin{equation}\label{eq:intro-abstract-spde}
	\d \omega + (\cR \curl^{-1} \omega)\cdot\nabla\omega \, \d t+ \circ \d W\cdot\nabla\omega
	= (f- \nu\Lambda^\beta \omega)\, \d t.
\end{equation}
Here we denote by $\cR$ a general Fourier multiplier associated to some bounded map $r:\R^2\to \R$ and $\nu$ is a nonnegative parameter.
System \ref{system:logEuler} is recovered by taking $\nu=0$ and $\cR=T_\gamma$, System \ref{system:hypoNS} by $\nu=1$ and $\cR=I$ and \eqref{eq:intro-stochastic-euler} by $\nu=0$ and $\cR=I$.

The idea behind the proof is based on the exposition presented by Flandoli in \cite{Fla}; let us also mention that, according to \cite{Fla,CrFlMa}, this approach is originally due to P. Malliavin.

Let $\cQ$ be the covariance operator associated to the noise $W$ (cf. Section \ref{subsec:structure-noise}), so that one can write $W=\cQ^{1/2} \Xi$ where $\Xi$ is a cylindrical noise on $L^2_x$.
Then, at least formally, one can collect terms so to write the SPDE \eqref{eq:intro-abstract-spde} as
\begin{align*}
	\d \omega + \circ \d \cQ^{1/2} \bigg( \Xi_\cdot + \int_0^\cdot \cQ^{-1/2}\, \cR\, \curl^{-1} \omega_s\, \d s\bigg) \cdot\nabla \omega = (f- \nu\Lambda^\beta \omega)\, \d t.
\end{align*}
In particular, if we define a new process $\tilde\Xi$ by
\begin{align*}
	\tilde \Xi_t:= \Xi_t + \int_0^t \cQ^{-1/2}\, \cR\, \curl^{-1} \omega_s\, \d s
\end{align*}
and correspondigly set $\tilde W:=\cQ^{1/2} \tilde \Xi$, then the above SPDE is nothing but
\begin{equation}\label{eq:intro-linear-spde-0}
	\d \omega + \circ \d\tilde W \cdot\nabla \omega = (f- \nu\Lambda^\beta \omega)\, \d t.
\end{equation}
If the noise $W$ is ``sufficiently active'', we may hope to reabsorb the nonlinearity into $W$; in other words, $\tilde W$ should display the same behaviour as the original $W$, the nonlinearity just being accounted as a not too relevant perturbation.
Pushing this heuristic a step further, we may regard \eqref{eq:intro-linear-spde-0} as a \emph{linear} equation driven by $\tilde{W}$, whose well-posedness theory should be much easier to establish compared to the original nonlinear \eqref{eq:intro-abstract-spde}.

The passage from \eqref{eq:intro-abstract-spde} to \eqref{eq:intro-linear-spde-0} can be made  mathematically rigorous by means of \emph{Girsanov's theorem}.
In particular, under suitable assumptions on the solution $\omega$ to \eqref{eq:intro-abstract-spde} in consideration, one can define a change of measure $\d \Q= M_T\, \d \P$, for $M$ given by the exponential martingale
\begin{align*}
	M_t= \exp\bigg(-\int_0^t \big\<\cQ^{-1/2}\, \cR\, \curl^{-1} \omega_s, \d\Xi_s\big\> - \frac12 \int_0^t \big\| \cQ^{-1/2}\, \cR\, \curl^{-1} \omega_s \big\|_{L^2_x}^2\,\d s \bigg),\quad t\le T
\end{align*}
and correspondingly show that, under the new measure $\Q$, the process $\tilde{W}$ has the same law on $[0,T]$ as the original $W$.
Said otherwise, if the hypothesis of Girsanov's theorem are verified, then the solution $\omega$ to the nonlinear SPDE \eqref{eq:intro-abstract-spde} under $\P$ is also a solution to the linear one under $\Q$.
The next step is then to show that, under suitable conditions on the noise $W$, the linear SPDE \eqref{eq:intro-linear-spde-0} enjoys pathwise uniqueness of solutions;
combining these two ingredients overall allows us to deduce the uniqueness in law of weak solutions to the original nonlinear equation \eqref{eq:intro-abstract-spde}.

Although apparently simple, the above program is actually very challenging in practice, as the two main requirements are in competition with one another. Specifically:
\begin{itemize}
\item Pathwise uniqueness for the linear, transport-type SPDE \eqref{eq:intro-linear-spde-0} is usually based on either solving the underlying SDE, or commutator estimates, see \cite{Fla} for a detailed discussion; both arguments require $W=\cQ^{1/2} \Xi$ to be fairly regular, which can be equivalently formulated as a condition on the smoothing properties of operator $\cQ$.
Other concepts of uniqueness (called \emph{Wiener uniqueness} in \cite{Fla}), coming from the works by Le Jan and Raimond \cite{LejRai1, LejRai2} are available under milder assumptions on $\cQ$, but do not coordinate well with Girsanov transform techniques.

\item A rigorous justification of the passage from \eqref{eq:intro-abstract-spde} to \eqref{eq:intro-linear-spde-0} based on Girsanov usually requires to verify Novikov's condition, which amounts to
\begin{equation}\label{Novikov-criterion}
	\E\bigg[ \exp\bigg(\frac12 \int_0^T \big\| \cQ^{-1/2}\, \cR\, \curl^{-1} \omega_s \big\|_{L^2_x}^2\,\d s\bigg)\bigg]<+\infty.
\end{equation}
Even neglecting the exponential integrability requirement, condition \eqref{Novikov-criterion} requires at least to know that $\cQ^{-1/2}\, \cR\, \curl^{-1} \omega \in L^2_t L^2_x$.
In particular, either $\cQ^{-1/2}$ or the operator $\cR\, \curl^{-1}$ coming from the nonlinearity, should have nice behaviour.
\end{itemize}
Let us also stress that, even just to make sense of the SPDE \eqref{eq:intro-abstract-spde}, one needs the operator $\cQ$ to have certain compactness properties (see Section \ref{subsec:structure-noise}), so that $\cQ^{1/2}$ and $\cQ^{-1/2}$ cannot even be both bounded on $L^2_x$.
In general, there is a delicate balance and nontrivial trade-off between the regularity properties of the main players (the noise, the nonlinearity and the solution), which have to be verified in each case in order to carry out the strategy.
This is the reason why we are currently able to close the arguments for Systems \ref{system:logEuler} and \ref{system:hypoNS}, but not for the genuine $2$D Euler one; we leave it as an open problem, see Conjecture \ref{conj:euler} in Section \ref{sec:open}.

In order to solve the issues with the verification of Novikov's criterion \eqref{Novikov-criterion}, we will invoke a nice (yet quite unnoticed) result by Ferrario \cite{Ferrario}, which in turn is originally based on Liptser and Shiryaev \cite[Chapter 7]{LipShi}; this provides a much easier-to-check condition, not only guaranteeing the validity of Girsanov's transform, but also the equivalence (on finite intervals $[0,T]$) of the laws $\Q$ and $\P$; see Section \ref{subsec:uniqueness-nonlinear} for more details.
In particular, this implies that the laws of the unique solution $\omega$ to the nonlinear SPDE \eqref{eq:intro-abstract-spde} and of $\tilde\omega$ solution to \eqref{eq:intro-linear-spde-0} are equivalent; moreover $\omega$ and $\tilde\omega$ share the same pathwise properties, which is particularly interesting since $\tilde\omega$ actually solves a Kraichnan type SPDE, which is a basic toy model of turbulence.

The other key ingredient needed in the proof of Theorem \ref{thm:main-theorem} is the pathwise uniqueness of the linear equation \eqref{eq:intro-linear-spde-0}, under regularity conditions on $\cQ$ as mild as possible.
This is an SPDE of interest on its own, as in the case $f=0=\nu$, it is a particular (incompressible) case of the aforementioned Kraichnan model of turbulence \cite{Kraichnan} (see also Section \ref{subsec:literature} for more references).
It then makes sense to study it in higher dimension and to introduce the following auxiliary linear system on $\R^d$, $d\ge 2$ ($d=1$ being trivial for divergence-free objects):
\begin{equation}\label{eq:intro-linear-spde}
	\d \rho + \circ \d W\cdot\nabla \rho = [f - \nu\Lambda^\beta \rho]\, \d t.
\end{equation}
Here $W$ is a divergence free Gaussian noise on $\R^d$, homogeneous and isotropic, with paths in $C^0_t L^2_\loc$; see Assumption \ref{ass:covariance-basic} in Section \ref{subsec:structure-noise} below for the precise conditions.

\begin{theorem}\label{thm:main-linear-intro}
Let $p\in [1,\infty]$, $\nu \geq 0$, $\beta\in (0,2)$, $f\in L^1_t (L^1_x\cap L^p_x)$ and $W$ as described above.
Then for any deterministic $\rho_0\in L^1_x\cap L^p_x$, there exists a global probabilistically strong, analytically weak solution to \eqref{eq:intro-linear-spde}, satisfying $\P$-a.s.
\begin{equation}\label{eq:intro-linear-apriori}
	\sup_{s\in [0,t]} \| \rho_s \|_{L^1_x\cap L^p_x}
	\leq \| \rho_0\|_{L^1_x\cap L^p_x} + \int_0^t \| f_s\|_{L^1_x\cap L^p_x}\, \d s
	\quad \forall\, t\geq 0.
\end{equation}
Moreover pathwise uniqueness and uniqueness in law hold among all solutions to \eqref{eq:intro-linear-spde} satisfying $\rho\in L^1_t L^1_x$ $\P$-a.s.
Finally, given two solutions $\rho^i$, $i=1,\,2$ defined on the same probability space and driven by the same $W$, but associated to different data $(\rho_0^i, f^i)$, $\P$-a.s. it holds
\begin{equation}\label{eq:intro-linear-stability}
	\sup_{s\in [0,t]} \| \rho^1_s -\rho^2_s \|_{L^1_x\cap L^p_x}
	\leq \| \rho^1_0-\rho^2_0\|_{L^1_x\cap L^p_x} + \int_0^t \| f^1_s-f^2_s\|_{L^1_x\cap L^p_x}\, \d s
	\quad \forall\, t>0.
\end{equation}
\end{theorem}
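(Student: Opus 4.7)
I would approximate the noise by spatially smooth divergence-free fields $W^\epsilon$ (truncating the covariance $Q$ at high frequencies) and the data by smooth $\rho_0^\epsilon$, $f^\epsilon$. For each $\epsilon > 0$ the regularized SPDE admits a classical solution $\rho^\epsilon$, constructible via the stochastic method of characteristics combined with Duhamel for the fractional dissipative/forcing part. Since $W^\epsilon$ is divergence-free, the Stratonovich transport $\circ \d W^\epsilon \cdot \nabla \rho^\epsilon$ preserves $L^1$ and $L^p$ norms pathwise and $-\nu\Lambda^\beta \rho^\epsilon$ contributes with the correct sign; an It\^o--Stratonovich computation on $\|\rho^\epsilon_t\|_{L^1_x\cap L^p_x}$ yields \eqref{eq:intro-linear-apriori} for $\rho^\epsilon$ uniformly in $\epsilon$ and pathwise. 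Standard tightness estimates together with a Skorokhod/martingale-problem argument allow to pass $\epsilon \to 0$ and extract a limit $\rho$ solving \eqref{eq:intro-linear-spde} in the analytically weak sense and inheriting \eqref{eq:intro-linear-apriori}. Once pathwise uniqueness is proven below, Gy\"ongy--Krylov upgrades the solution to probabilistically strong.

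\textbf{Pathwise uniqueness.} By linearity it suffices to show that any solution $\tilde\rho \in L^1_t L^1_x$ of the homogeneous equation with zero initial datum is identically zero. Since $L^1$ regularity is too weak for direct energy estimates, I would use a mollification-renormalization argument. The It\^o form of the SPDE features the It\^o--Stratonovich corrector $\mathcal{L}_Q := \tfrac12 Q^{ij}(0)\,\partial_i\partial_j$ (well-defined because the $C^0_t L^2_\loc$ regularity of $W$ forces $\mathrm{tr}\,Q(0) < \infty$), and the convolution $\tilde\rho^\epsilon := \tilde\rho * \eta_\epsilon$ satisfies
\begin{equation*}
\d \tilde\rho^\epsilon + \d W \cdot \nabla \tilde\rho^\epsilon = \mathcal{L}_Q \tilde\rho^\epsilon\, \d t - \nu \Lambda^\beta \tilde\rho^\epsilon\, \d t + r^\epsilon\, \d t,
\end{equation*}
with commutator $r^\epsilon := (W\cdot\nabla\tilde\rho)*\eta_\epsilon - W\cdot\nabla\tilde\rho^\epsilon$. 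Applying It\^o to $\int \chi(\tilde\rho^\epsilon)\phi_R\, \d x$ for a smooth convex $\chi \geq 0$ with $\chi(0)=0$ and a spatial cutoff $\phi_R$: by the chain rule, the contribution of $\chi'(\tilde\rho^\epsilon)\,\mathcal{L}_Q \tilde\rho^\epsilon$ together with the It\^o quadratic variation of $\d W \cdot \nabla \tilde\rho^\epsilon$ combines into $\mathcal{L}_Q\chi(\tilde\rho^\epsilon)$, which integrated against $\phi_R$ is $O(R^{-2})$ by the divergence-free structure; the transport martingale has vanishing quadratic variation as $R\to\infty$; and $\nu\Lambda^\beta$ contributes nonpositively. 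What remains is the commutator integral $\int_0^t\!\int \chi'(\tilde\rho^\epsilon)\,r^\epsilon\,\phi_R\, \d x\, \d s$.

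\textbf{Commutator estimate and stability.} The crux of the proof is showing that this commutator vanishes as $\epsilon\to 0$. The classical DiPerna--Lions commutator lemma would require $W\in L^1_t W^{1,1}_{x,\loc}$, which fails for our $L^2_\loc$ noise; instead one must exploit the parabolic gain supplied by $\mathcal{L}_Q$ (and $\nu\Lambda^\beta$ when $\nu>0$), together with the explicit homogeneous/isotropic Kraichnan structure of $Q$ from Assumption \ref{ass:covariance-basic}, to control $r^\epsilon$ in the relevant dual norm. This is the main technical obstacle and the step where the precise form of the noise is essential. Once $r^\epsilon \to 0$ is established, sending $\epsilon\to 0$ and then $R\to\infty$ yields $\int \chi(\tilde\rho_t)\, \d x = 0$ a.s.\ for every admissible $\chi$, hence $\tilde\rho_t\equiv 0$. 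Pathwise uniqueness implies uniqueness in law, and the stability \eqref{eq:intro-linear-stability} is immediate: $\tilde\rho := \rho^1-\rho^2$ is the unique solution of \eqref{eq:intro-linear-spde} driven by $W$ with data $(\rho^1_0-\rho^2_0,\, f^1-f^2)$, and applying \eqref{eq:intro-linear-apriori} to it gives \eqref{eq:intro-linear-stability} pathwise.
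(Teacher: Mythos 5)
Your existence argument and your derivation of \eqref{eq:intro-linear-stability} from \eqref{eq:intro-linear-apriori} via linearity match the paper (Propositions \ref{prop:apriori-estimates-linear}--\ref{prop:strong-existence-linear} and Theorem \ref{thm:main-linear-extended}). The problem is pathwise uniqueness: your proposal identifies the commutator estimate as ``the main technical obstacle'' and then leaves it unproven, but that estimate \emph{is} the theorem. For a noise that is merely $C^0_t L^2_\loc$ in space there is no DiPerna--Lions-type commutator lemma available, and the mechanism you invoke to save the day --- a ``parabolic gain supplied by $\mathcal{L}_Q$'' --- does not exist: the It\^o corrector $\kappa\Delta$ is \emph{exactly} cancelled, at the level of second moments, by the quadratic variation of the transport martingale. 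This is precisely the content of the paper's identity \eqref{eq:pde-energy-density} (see also Remark \ref{rem:michele}): the energy $\E[|\hat\rho_t(\xi)|^2]$ is redistributed among frequencies with no net dissipation, so there is no smoothing of the solution to lend to the commutator. A secondary structural issue: the commutator between mollification and the transport term is itself a martingale differential, not a drift $r^\eps\,\d t$, so it also perturbs the quadratic variation and breaks the exact identity $\chi'(\tilde\rho^\eps)\,\mathcal{L}_Q\tilde\rho^\eps + \tfrac12\chi''(\tilde\rho^\eps)\,\d[\tilde\rho^\eps]/\d t = \mathcal{L}_Q\chi(\tilde\rho^\eps)$ on which your renormalization step relies.

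The paper's actual uniqueness proof (Proposition \ref{prop:pathwise-uniqueness-linear}) avoids mollification in physical space entirely. It sets $A^N(\xi)=\E\big[\int_0^{\tau_N}|\hat\rho_s(\xi)|^2\,\d s\big]$ for the difference of two solutions, uses the exact cancellation above to derive the pointwise integral inequality \eqref{eq:spectral-density-inequality}
\begin{equation*}
0 \leq \int_{\R^d} g(\eta)\,|P_\eta\xi|^2\,\big[A^N(\xi+\eta)-A^N(\xi)\big]\,\d\eta ,
\end{equation*}
and then concludes by a maximum-principle argument: $A^N$ is continuous, nonnegative and vanishes at infinity (Riemann--Lebesgue plus the $L^1_x$ hypothesis, which is exactly why the class $\rho\in L^1_t L^1_x$ appears in the statement), so it attains a maximum which the inequality propagates along $\bar\xi+m\bar\eta\to\infty$, forcing $A^N\equiv 0$. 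If you want to complete your write-up you should either supply the missing commutator estimate (which we do not believe is achievable at this noise regularity) or switch to this Fourier-space energy-spectrum argument.
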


Theorem \ref{thm:main-linear-intro} will be a subcase of Theorem \ref{thm:main-linear-extended}, which will be proved in Section \ref{subsec:uniqueness-linear}.
Assumption \ref{ass:covariance-basic} is roughly the same condition under which we are able to give meaning to the SPDE \eqref{eq:intro-linear-spde} to begin with, thus we do not expect it to be further improvable; we have not however studied the case of non incompressible $W$, which we leave for future investigations, for which such regularity may be no longer sufficient.
The proof relies on a non-standard argument, based on looking at the evolution of the mean energy spectrum $a_t(\xi):=\E[|\hat\rho(\xi)|^2]$, which can be shown to solve an auxiliary system of infinitely many coupled ODEs; notable precursors for the idea of the proof are the works \cite{BBF,Gal}, see Section \ref{subsec:literature} for more references.
In the setting of the linear SPDE \eqref{eq:intro-linear-spde}, we can even allow the forcing term $f$ to be stochastic, see Theorem \ref{thm:main-linear-extended} for the precise statement; moreover, in the proof of pathwise uniqueness, we only need the forcing $f$ to be in the class $L^1_t L^1_x$.

In this paper we always work on the whole space $\R^d$, which makes the proofs of Theorems \ref{thm:main-theorem}-\ref{thm:main-linear-intro} considerably more difficult than on the torus $\T^d$; indeed we have to handle with several technical difficulties, ranging from the singular properties of the Biot--Savart kernel, to the lack of compact embeddings between function spaces and even of Mercer type theorems for the covariance function $Q$ of the noise $W$ (which we essentially need to reprove, see Section \ref{subsec:structure-noise}).
At the same time, all the results carry over seamlessly to the torus $\T^d$; in particular, in the setting of regularized 2D Euler equation on $\T^2$, our results settle the open problem formulated by Flandoli in \cite{Fla}, see Remarks \ref{rem:readaptation-torus}-\ref{rem:flandoli} for more details on both statements.

Let us finish this section by shortly describing the structure of the rest of the paper.
We conclude the introduction by recalling some related literature in Section \ref{subsec:literature} and the relevant notations in Section \ref{subsec:notation}.
In Section \ref{sec:preliminaries}, we present some preliminary results needed in the rest of the paper; in particular, Section \ref{subsec:structure-noise} comprises a detailed description of the noise $W$ we consider, while Section \ref{subsec:solution-concepts} discusses the relevant notions of solutions for our SPDEs.
Section \ref{sec:wellposedness-linear} is devoted to the proof of Theorem \ref{thm:main-linear-intro} and consists of two parts, dealing respectively with strong existence and pathwise uniqueness of solutions.
Theorem \ref{thm:main-theorem} will be proved in Section \ref{sec:wellposedness-nonlinear}, similarly splitting the problem into treating weak existence and uniqueness in law separately.
In Section 5 we discuss the non-uniqueness result from Proposition \ref{prop:pde-nonuniquess}, and we conclude the paper with a discussion of potential future research in Section \ref{sec:open}.
Finally, we collect some auxiliary results in the Appendices \ref{app:useful}-\ref{app:pathwise-regularity}.

\subsection{Relations with the existing literature}\label{subsec:literature}

We discuss here more in details several results in the literature connected to ours; we separate them in distinct thematic blocks.

\emph{Weak solutions to $2$D Euler equations.}
DiPerna and Majda first constructed Young-type solutions to $2$D Euler equations in \cite{DiPMaj}, building on the theory introduced by the same authors in \cite{DiPMaj2}; such solutions have vorticity in $\mathcal{M}\cap \dot H^{-1}_x$, $\mathcal{M}$ denoting the space of Radon measures.
They also observed that, for $\omega_0\in L^p_x$ with $p\in (1,\infty)$, these are in fact classical weak solutions.
They can be obtained by three different relevant approximation schemes: approximation by exact smooth solutions, vanishing viscosity from $2$D Navier--Stokes, and vortex-blob approximations.
However, as their uniqueness is still open, one cannot infer in general that these schemes give rise to the same solutions.
Nonetheless, weak solutions to $2$D Euler equations keep receiving considerable attention in the literature and many physical properties have been established for them, most notably their \emph{renormalizability} in the sense of DiPerna-Lions \cite{DiPLio} and preservation of kinetic energy and enstrophy: see the work by Lopes Filho et al. \cite{LoMaNu} and then the one by Crippa et al. \cite{CriSpi,CiCrSp2}.
Finally, Ciampa, Crippa and Spirito \cite{CiCrSp1} recently established quantitative rates of convergence in the strong $L^p_x$-norms for solutions obtained by the vanishing viscosity limit of Navier--Stokes.

\emph{Stratonovich transport noise in fluids.}
An idealized description of the effects of small, possibly turbulent, fluid scales by means of a Brownian-in-time, coloured-in-space noise $W$ was first proposed by Kraichnan in the context of passive scalar turbulence, see the works \cite{Kraichnan, Kra2}, as well as the reviews \cite{CGHKP, FaGaVe}.
The noise $W$ in consideration here belongs to the class of measure preserving, isotropic Brownian flows as defined in \cite{Kunita}, which have been studied in detail by Le Jan and Raimond \cite{LejRai1, LejRai2}; the authors therein identify three relevant regimes for the existence and properties of the corresponding flow, depending on the compressibility of the noise (as encoded by some parameters in the covariance function), see \cite[p. 1312]{LejRai2} and a similar classification given by \cite[p. 639]{EVan}.
In particular, in this paper we work exclusively with divergence free noise, which is \emph{turbulent without hitting} according to the classification from \cite{LejRai2}.
Early mathematical attempts to incorporate transport noise in nonlinear fluid dynamics equations go back to Inoue and Funaki \cite{InoFun}; their idea, based on a variational principle, has been later substantially revisited by Holm \cite{Holm2015} and expanded to several classes of equations, see \cite{DrHo, CHLN} and the references therein.
A different, more Lagrangian-flavoured approach was proposed in \cite{BrCaFl}, where Brz\'ezniak et al. derived stochastic Navier--Stokes equations with transport noise by computing the stochastic material derivative along the trajectories of fluid particles.
Yet another model for turbulent fluid dynamical equations, incorporating transport noise, has been proposed by M\'emin in \cite{Memin}.
In all of the above cases, the adoption of Stratonovich noise is physically justified  by the Wong--Zakai principle, which is known to hold also for SPDEs, see e.g. \cite{BreFla}.
A heuristic argument for the introduction of transport noise, based on a separation of scales, can be found in \cite{FlaLuo}, which has been made rigorous by Flandoli and Pappalettera \cite{FP21, FP22} using methods of stochastic model reduction.

\emph{Non-uniqueness issues in fluid dynamics.}
In recent years, the convex integration tehcniques have successfully constructed non-unique weak solutions to various fluid equations, most notably 3D Euler and Navier-Stokes equations; see \cite{DeLSz13, BuckDeLeIS, BuckVic} and the references therein.
The same techniques apply to $2$D Euler in velocity form, giving rise to infinitely many wild solutions $u$ with suitable H\"older regularity, see \cite{BuckShkVic, Novack, MenSze, Mengual} for several examples. However, so far convex integration has not been able to show non-uniqueness of solutions with vorticity in $L^p_x$, the closest attempt being the aforementioned \cite{BruCol} reaching the Lorentz class $L^{1,\infty}_x$.
In another direction, these techniques have been readapted in the stochastic setting to show the similar pathological behaviour and multiplicity of solutions in the presence of additive noise \cite{HZZa,HZZb}, to the point where even non-unique ergodicity holds \cite{HZZc}, and for sufficiently regular Stratonovich transport noise \cite{HoLaPa, Pappalettera}.
It should be mentioned that convex integration techniques typically yield very weak solutions, for instance outside the Leray-Hopf class, which are thus usually considered non-physical.
In the context of forced $2$D Euler equations in vorticity form, non-uniqueness of solutions with $L^1_x\cap L^p_x$ vorticity has been established by Vishik \cite{Vishik1, Vishik2}, by applying ideas from spectral analysis and linear instability; this construction has been later revisited by De Lellis and his group in \cite{ABCDGJK} and has recently led to astonishing developments in \cite{AlBrCo}, where non-uniqueness of Leray-Hopf solutions to forced $3$D Navier-Stokes equations is shown.
An alternative approach to show nonuniqueness for the unforced $2$D Euler equation is the one by Bressan et al. \cite{BreMur, BreShe}, also borrowing ideas from the constructions of self-similar and spiral solutions by Elling \cite{Elling1, Elling2}. See also \cite[Section 1.3]{ABCDGJK} for further references on this very rich topic.

\emph{Logarithmically regularized Euler equations.}
The paper \cite{ChCoWu} introduced a general class of active inviscid transport equations, comprising both $2$D Euler and SQG equations, by allowing a general Fourier multiplier in front of the velocity term $u$ in \eqref{eq:intro-euler}.
Among them, the logEuler equations have then attracted considerable attention, due to their slightly different behaviour in the critical spaces $\dot H^1_x \cap \dot H^{-1}_x$, compared to standard Euler.
Indeed, although the space $\dot H^1_x$ has the same scaling behaviour as $L^\infty_x$, it does not embed therein and so Yudovich theory does not apply.
Chae and Wu \cite{ChaeWu} first studied the logEuler equations with $\gamma>1/2$, obtaining local well-posedness in $H^1_x$; later \cite{DongLi} established global well-posedness in the critical space $(\dot H^1\cap \dot H^{-1})(\R^2)$ for $\gamma \ge 3/2$.
On the other hand, for the $2$D Euler equations (formally corresponding to $\gamma=0$) Bourgain and Li \cite{BouLi} established strong ill-posedness in $H^1_x$, in the sense that the $H^1_x$-norm of Yudovich solutions can become instantaneously infinite at positive times.
The result was later revisited by Elgindi and Jeong \cite{ElgJeo}, who proved a similar result on $\T^2$, making the construction local.
The gap between the available results for $\gamma=0$ and $\gamma>1/2$ was recently filled by Kwon \cite{Kwon}, showing that the logEuler equation is ill-posed in $\dot H^1_x\cap \dot H^{-1}_x$ (in the sense of instantaneous infinite norm inflation as above) for $\gamma\in (0,1/2]$.

\emph{Regularization by transport noise in PDEs.}
The regularizing effect of Stratonovich transport noise in PDEs was first observed in the seminal work \cite{FlGuPr2010}, showing that simple transport noise (in the sense that $W(t,x)=W(t)$) drastically improves the solution theory of linear transport equations with H\"older continuous drifts;
several other results for linear PDEs have then been established, like prevention of blow-up by singularity formation \cite{FedFla} or infinite stretching \cite{FlMaNe}.
However, it was already noticed in \cite[Section 6.2]{FlGuPr2010} that simple noise is not as useful in regularizing nonlinear PDEs, where noises with richer space-dependence should be adopted.
This is now also confirmed by the negative results of \cite{HoLaPa, Pappalettera}, in which smooth transport noise can be reabsorbed by a flow transformation; the resulting random PDE is then amenable to the application of convex integration schemes, yielding the same pathological behaviour as in the deterministic setting.
Concerning positive results, \cite{FlGuPr2011} proved that a sufficiently nondegenerate space-time noise prevents the collapse of point vortex system of 2D Euler equations; see \cite{DeFlVi14} for related results on system of Vlasov-Poisson point charges and \cite{LuoSaa} for the point vortex system of modified SQG equations.
In another direction, we have shown in the recent works \cite{FlaLuo, FlGaLu21} that high mode transport noise can suppress blow-up of solutions with high probability; they cover several PDEs of interest, most notably $3$D Navier-Stokes, Keller-Segel and $2$D Kuramoto-Sivashinsky equations on the torus.
The results are based on a scaling limit technique introduced by the first author in \cite{Gal} and have been recently applied also to Tao's averaged version of Navier-Stokes equations \cite{Lange} and reaction-diffusion systems \cite{Agresti}.
The same scaling limit argument applies to stochastic $2$D inviscid fluids, see \cite{FlGaLu21a,Luo21}, although therein it cannot be exploited to infer any regularization by noise property.
Another class of positive results, similar in spirit to the ones in this paper, is based on the use of Girsanov transform to reduce the nonlinear SPDE to a simpler one.
The closest to our setting, especially in the case of the logEuler equations, is the work \cite{BBF} by Barbato et al., where uniqueness in law for a $3$D Leray-$\alpha$ model is shown;
the proof of pathwise uniqueness for the linear SPDE presented therein, based on the analysis of the energy spectrum, has then been applied in the case of general scalar transport equations in \cite[Section 4.2]{Gal}.
Other applications in the literature, outside the realm of SPDEs, mainly concern stochastic dyadic model of turbulence, see \cite{BFM} and \cite{Bianchi} for the more complicated tree model;
remarkably, in this case even anomalous dissipation of energy can be established, see \cite{BFM11}.
Finally, inspired by \cite{Gal, FlGaLu21a, Luo21}, the second author and Wang \cite{LuoWang} considered a stochastic inviscid dyadic model with more complicated random perturbation than that in \cite{BFM}; the new noise ensures also uniqueness in law of weak solutions and, under a suitable scaling limit, convergence to the associated viscous model.

\subsection{Notations and conventions}\label{subsec:notation}

We write $x\cdot y$ for the scalar product of $x,y\in \R^d$, $|x|$ for the Euclidean norm.
The notation $a\lesssim b$ means that there exists some constant $c>0$ such that $a\leq c b$;
$a\sim b$ if $a \lesssim b$ and $b\lesssim a$.
Given some family of parameters $\lambda$, we write $a\lesssim_\lambda b$ to stress the dependence of the hidden constant $c=c(\lambda)$.

We denote by $C^\infty_b(\R^d;\R^m)$ the space of bounded, infinitely differentiable functions $f:\R^d\to\R^m$ with bounded derivatives of all orders; whenever $d$, $m$ are clear from the context, we will just write $C^\infty_b$ for short.
Similarly, $C^\infty_c(\R^d;\R^m)=C^\infty_c$ is the space of smooth, compactly supported functions and $\cS(\R^d;\R^m)=\cS$ that of Schwartz functions; $\cS'$ denotes its dual, the space of tempered distributions.
For $p\in [1,\infty]$, $L^p(\R^d;\R^m)=L^p_x$ denote the usual Lebesgue spaces; given $p$, we denote by $p'=p/(p-1)$ its conjugate exponent, with the usual conventions for $p\in \{1,\infty\}$.
For $s\in \R$, $\cC^s_x=\cC^s_x(\R^d;\R^m)=B^s_{\infty,\infty}(\R^d;\R^m)$ stand for the classical inhomogeneous Besov--H\"older spaces, while for $n\in\N$ and $p\in [1,\infty]$, $W^{n,p}_x=W^{n,p}(\R^d;\R^m)$ denote classical Sobolev spaces.

We adopt the convention that the Fourier transform of $f$ is defined as
\begin{equation*}
	(\cF f)(\xi) = \hat{f}(\xi)
	:= (2\pi)^{-d/2} \int f(x) e^{-ix\cdot \xi}\, \d x
\end{equation*}
In this way, for real functions $f$, the antitransform is given by
$(\cF^{-1} f)(x) = \check f(x) = \overline{\hat{f}(x)}$
and standard properties like Parseval identity read as
\begin{equation}\label{eq:properties-fourier}
	\widehat{f\ast g} = (2\pi)^{d/2} \hat{f}\, \hat{g},\quad
	\| \hat{f} \|_{L^2} = \| f\|_{L^2},
\end{equation}
similarly for the antitransform. Among other things, this implies
$\| f\ast g\|_{L^2} = (2\pi)^{d/2} \| \hat{f} \hat g\|_{L^2}$.

Given a sufficiently regular function $r:\R^d\to\R$, the associated Fourier symbol, or Fourier multiplier, is the operator $\mathcal{R}$ given by
$\mathcal{R} \varphi = \cF^{-1}( r \cF\varphi)$.
A protoypical example are the fractional operators $(1-\Delta)^{s/2}$, corresponding to $r(\xi)=(1+|\xi|^2)^{s/2}$, for $s\in\R$.
The associated inhomogeneous fractional Sobolev spaces are given by $H^s_x := (1-\Delta)^{-s/2}(L^2_x)$, consisting of functions $f:\R^d \to \R^m$ such that
\begin{equation}\label{eq:sobolev-norm}
	\|f \|_{H^s_x} := \bigg(\int_{\R^d} (1+|\xi|^2)^{s} |\hat f(\xi)|^2\,\d\xi \bigg)^{1/2} < \infty.
\end{equation}
We use $\dot H^s_x= (-\Delta)^{-s/2} (L^2_x)$ to denote their homogeneous counterparts, with $(1-\Delta)^{-s/2}$ replaced by $(-\Delta)^{-s/2}$; although the latter are not necessarily Banach spaces, we will keep using the norm notation $\| \cdot\|_{\dot H^s}$, which is defined similarly to \eqref{eq:sobolev-norm}.

With a slight abuse, whenever clear, we use $\langle \cdot,\cdot\rangle$ both to indicate the dual pairing between any of the spaces introduced above (e.g. between $\cS$ and $\cS'$, or $L^p_x$ and $L^{p'}_x$, or $H^s_x$ and $H^{-s}_x$), keeping as a reference the duality of $L^2_x$ with itself.
Whenever $E$ and $F$ are function spaces in the list above, their intersection $E\cap F$ will be endowed with $\| \cdot\|_{E\cap F}:= \| \cdot\|_E+\| \cdot\|_F$, which makes it a Banach space if $E$ and $F$ are so (but we will also allow $F=\dot{H}^s$).

It is worth giving several details on two classes of Fourier multipliers which will be used frequently in the paper.

The first one is the fractional Laplacian $\Lambda^\beta=(-\Delta)^{\beta/2}$, for $\beta\in (0,2)$.
Besides being the Fourier multiplier associated to $\xi\mapsto |\xi|^\beta$, it has an integral representation in the real space variable: there exists a constant $C_{d,\beta}$ such that
\begin{equation}\label{eq:fractional-laplacian1}
	\Lambda^\beta f (x)
	= C_{d,\beta}\, {\rm P.v.} \int_{\R^d} \frac{f(x)-f(y)}{|x-y|^{d+\beta}}\, \d y
\end{equation}
whenever $f$ is smooth enough; as a consequence, for any $f$, $g$ regular enough, it holds
\begin{equation}\label{eq:fractional-laplacian2}
	\langle \Lambda^\beta f, g\rangle
	= C_{d,\beta} \int_{\R^d\times \R^d} \frac{(f(x)-f(y)) (g(x)-g(y))}{|x-y|^{d+\beta}}\, \d x \d y.
\end{equation}
$\Lambda^\beta$ are fairly well-behaved linear operators, e.g. they map $C^\infty_b$ into itself, $\cC^{s+\beta}_x$ (resp. $H^{s+\beta}_x$) into $\cC^s_x$ (resp. $H^s_x$) for any $s\in\R$.

The second one is the Biot--Savart operator $\curl^{-1}$, which corresponds on $\R^2$ to the Fourier symbol $-\nabla^\perp(-\Delta)^{-1}$;
it admits similarly a real space representation as $\curl^{-1} f = K\ast f$, where $K(x)=C_2\, x^\perp/|x|^2$, for a suitable constant $C_2$.
Here $x^\perp:=(-x_2,x_1)^T$, namely the counterclockwise 90 degrees rotation of $x$, similarly for $\nabla^\perp$.
We warn the reader that on the full space $\R^2$, compared to bounded domains such as $\T^2$, this operator presents some nontrivial technical issues, due to the spectrum being continuous and $\hat K$ (resp. $\xi^\perp/|\xi|^2$) being singular in $0$.
Specifically, $\curl^{-1}$ maps $\dot{H}^{-1}_x$ in $L^2_x$ and $L^1_x\cap L^{2+\eps}_x$ in $L^\infty_x$ for all $\eps>0$, but in general is not well-defined from $L^2_x$ to itself; see Remarks 1.0.2-1.0.5 from \cite{ABCDGJK} for a deeper discussion.

We might sometimes need to work with local function spaces, without knowledge on the behaviour of the function at infinity.
For instance, we denote by $L^p_\loc=L^p_\loc(\R^d;\R^m)$ the space of functions $f:\R^d\to\R^m$ such that $f\,\varphi\in L^p_x$ for all $\varphi\in C^\infty_c$;
similarly for $C^\infty_\loc$, $H^s_\loc$ and so on.
Such spaces can be endowed with a countable family of seminorms, yielding a metric and a Fr\'echet topology, see Appendix \ref{app:useful} for such a construction for $H^s_\loc$.

Whenever dealing with functions defined w.r.t. a time parameter $t\in\R_{\geq 0}=[0,+\infty)$, for simplicity the associated norms will always be computed only on finite intervals $[0,T]$, although $T$ can be chosen arbitrarily large.
For example, given a Banach space $E$ and $q\in [1,\infty]$ we denote by $L^q_t E = L^q(0,T;E)$ the space of Bochner-measurable function $f:[0,T]\to E$ such that
\begin{equation*}
	\| f\|_{L^q_t E}:= \Big( \int_0^T \| f_t\|_E^q\, \d t\Big)^{1/q}<\infty
\end{equation*}
with the usual convention on the essential supremum norm for $q=\infty$.
With a slight abuse, we extend the definition to other ``norm-like'' quantities such as $\| \cdot\|_{\dot{H}_x}$.
For more general metric vector spaces $(E,d_E)$ we denote by $C^0_t E=C([0,T];E)$ the space of continuous, $E$-valued functions, with supremum distance $\| f\|_{C^0_t E}:=\sup_{t\in [0,T]} d_E( f_t, 0)$;
for $\gamma\in (0,1)$, $C^\gamma_t E=C^\gamma([0,T];E)$ instead is the class of $\gamma$-H\"older continuous functions, with
\begin{equation*}
	\llbracket f \rrbracket_{C^\gamma_t E}
	:= \sup_{0\leq s<t\leq T} \frac{d_E(f_t,f_s)}{|t-s|^\gamma}, \quad
	\| f\|_{C^\gamma_t E}:= \| f\|_{C^0_t E} + \llbracket f \rrbracket_{C^\gamma_t E}.
\end{equation*}

Whenever dealing with a filtered probability space $(\Theta,\mathbb F, \mathbb F_t, \P)$, we will assume that $\F$ and $\F_t$ satisfy the usual assumptions.
Given  metric vector space $E$ and $p\in [1,\infty]$, an $E$-valued random variable $Y$ belongs to $L^p_\Theta E= L^p(\Theta,\F,\P;E)$ if $d_E(Y,0)$ belongs to the classical $L^p$-space; $L^p_\Theta E$ is a complete metric space with distance
\begin{equation*}
	d_{L^p_\Theta E} (Y,Z)=\E[d_E(Y,Z)^p]^{1/p}
\end{equation*}
where $\E$ denotes expectation w.r.t. $\P$; it is Banach if $E$ is Banach.
An $E$-valued stochastic process $X$ is $\F_t$-adapted if $X_t$ is $\F_t$-measurable for any $t\geq 0$;
it is progressively measurable if for any $t>0$, the restriction $X:\Theta\times [0,t]\to E$ is measurable w.r.t. $(\mathbb F_t\otimes \mathcal B([0,t]); \mathcal B(E))$.
Whenever $E$ embeds into $\cS'$, we will say that $X$ is $\P$-a.s. weakly continuous if for any $\varphi \in C_c^\infty$, the real-valued process $t\to \<X_t, \varphi\>$ has $\P$-a.s. continuous paths; we warn the reader to keep in mind that, for Banach space $E$, this does not imply that $X$ is $\P$-a.s. continuous w.r.t. the weak topology induced by the topological dual $E'$.

All the previous definitions of metric and Banach spaces can be concatenated, with the convention of reading the expression from left to right.
For instance one can define $L^q_t(L^1_x\cap L^p_x)=L^q([0,T]; L^1\cap L^p(\R^d;\R^m))$, similarly $L^k_\Theta C^\gamma_t H^s_\loc$, $C^0_t L^k_\Theta \dot H^{-1}_x$ and so on.

Finally, whenever referring to a smooth deterministic function $f$, say e.g. $f:\R_{\geq 0} \times \R^d\to \R^m$, we will mean that $f$ has all the desired boundedness, differentiability or local support properties needed, in any of its variables.
Similarly, whenever talking about a smooth process $X$, say $X:\Theta\times\R_+\times\R^d\to\R^m$, we will impose that it has similar properties in the variables $(t,x)$, while possibly being uniformly bounded in $\Theta$ in all relevant norms.

\medskip

\noindent \textbf{Acknowledgements.}
LG is supported by the SNSF Grant 182565 and by the Swiss State Secretariat for Education, Research and Innovation (SERI) under contract number MB22.00034. DL is grateful to the financial supports of the National Key R\&D Program of China (No. 2020YFA0712700), the National Natural Science Foundation of China (Nos. 11931004, 12090010, 12090014), and the Youth Innovation Promotion Association, CAS (Y2021002).

LG will be constantly indebted to David Barbato for making him interested in this line of research; part of this project dates back to LG's master thesis.
Both authors are thankful to Franco Flandoli for his generosity, constant supervision throughout the years and many stimulating discussions.
They also warmly thank Maria Colombo for very useful discussions and clarifications on the results from \cite{AlbCol, ABCDGJK}, Michele Dolce for the content of Remark \ref{rem:michele} and Francesco de Vecchi for some helpful suggestions concerning Appendix \ref{app:pathwise-regularity}.

\section{Preliminaries}\label{sec:preliminaries}

This section consists of two parts. We first describe in detail the structure of the noise $W$, starting from the Fourier transform of its covariance function $Q$; we shall find series expansions of $Q$ and $W$, and discuss the relation between their regularity properties, which will play important role in the sequel. We stress that many of these results are classical on compact domains, but we have not found proper references on the full space. In the second part, as our noise is rough in space variable, we explain the connection between Stratonovich and It\^o formulations, and consequently introduce the relevant solution concepts for the SPDEs studied in this paper.

\subsection{Structure of the noise}\label{subsec:structure-noise}

Since we will only consider divergence free $W$, we set ourselves on $\R^d$ with $d\geq 2$.

Let $(\Theta,\mathbb F, \P)$ be a filtered probability space, on which we are given a centered Gaussian vector field $W=W(t,x)$, possibly taking values in spaces of distributions;
we refer to the monographs \cite{Bog,DaPZab} for standard facts on these objects.
We will assume $W$ to be Brownian in time, coloured and divergence-free in space;
we additionally impose it to be \emph{homogeneous} in space, so that its law is entirely determined by the associated covariance function $Q:\R^d \to \R^{d\times d}$, formally given by
\begin{equation*}
	\E[W(t,x)\otimes W(s,y)]= (t\wedge s) Q(x-y)
	\quad \forall\, t,s\in \R_{\geq 0},\, x,y\in \R^d.
\end{equation*}
Throughout the paper, we will always impose the following structural condition on $Q$; it encompasses a general class of noises, including the incompressible Kraichnan noise of turbulence (corresponding to $g(\xi)=(1+|\xi|^2)^{-(d+\gamma)/2}$ for some $\gamma\in (0,2)$).

\begin{assumption}\label{ass:covariance-basic}
The covariance $Q$ has Fourier transform $\hat{Q}$ given by
\begin{equation}\label{eq:covariance-basic}
	\hat Q (\xi)
	:=  g(\xi) P_\xi
	= g(\xi) \bigg(I_d- \frac{\xi\otimes \xi}{|\xi|^2} \bigg)
\end{equation}
for a continuous, non-negative radial function $g(\xi)=G(|\xi|)$ satisfying $g\in (L^1\cap L^\infty) (\R^d)$.
\end{assumption}

The matrix $P_\xi$ appearing on the r.h.s. of \eqref{eq:covariance-basic} is the projection on the orthogonal of $\xi$ (with the convention that $P_0=0$), which enforces the fact that the noise $W$ is divergence free.
Instead, the radiality of $g$ enforces the \emph{isotropy} of the noise, which can be equivalently expressed by the property that, for any rotation matrix $R\in \R^{d\times d}$, it holds
\begin{equation}\label{eq:isotropy-covariance}
	R\, Q(x) R^T = Q(R x), \quad R\, \hat{Q}(\xi) R^T = \hat Q(R \xi).
\end{equation}
Also observe that, since $g$ is real-valued, $Q$ is symmetric, namely $Q(x)=Q(-x)$.

It follows from Assumption \ref{ass:covariance-basic}, together with well-known properties of the Fourier antitransform of $L^1_x \cap L^2_x$ functions, that $Q$ is bounded, uniformly continuous and in $L^2_x$.
The covariance of $W$ can be alternatively thought of as a symmetric operator $\cQ$, acting on smooth $\R^d$-valued functions $f$ by
\begin{equation*}
	(\cQ f)(x) := (Q\ast f)(x)= \int_{\R^d} Q(x-y) f(y)\,\d y.
\end{equation*}
The operator $\cQ$ can be equivalently defined as the Fourier multiplier $\cQ v= (2\pi)^{d/2} \mathcal{F}^{-1}(\hat Q \hat v)$ (the additional factor $(2\pi)^{d/2}$ being due to our conventions for $\cF$, cf. the properties \eqref{eq:properties-fourier}); in this case, it can be thought of as the composition of two multipliers, one corresponding to the scalar $(2\pi)^{d/2} g(\xi)$, the other to the matrix $P_\xi$, the latter being the classical Leray-Helmholtz projector $\Pi$. In other words
\begin{equation}\label{eq:covariance-projector}
	\cQ = \cF^{-1}\Big( (2\pi)^{d/2} g\ \hat\cdot\Big) \circ \Pi.
\end{equation}
In the following we will often say that $W$ is a $\cQ$-Brownian motion (resp. $Q$-Brownian motion), to mean that it has covariance operator $\cQ$ (resp. covariance kernel $Q$) satisfying Assumption \ref{ass:covariance-basic}.
There is no ambiguity in this, since $Q$ and $\cQ$ are in a 1-1 correspondence.

In view of Assumption \ref{ass:covariance-basic}, the square root operator $\cQ^{1/2}$ is also well defined, with associated Fourier multiplier $(2\pi)^{d/4} g^{1/2}(\xi) P_\xi$, which now belongs to $L^2_x\cap L^\infty_x$.
Antitransforming again, this implies that $\cQ^{1/2}$ is also a convolutional operator, associated to an $L^2_x$-integrable function $\mathfrak{g}= (2\pi)^{-d/2} \mathcal{F}^{-1} ( (2\pi)^{d/4} g^{1/2} P_\xi) = (2\pi)^{-d/4} \mathcal{F}^{-1} ( g^{1/2} P_\xi)$.

The next lemma collects some basic properties of the operators $\cQ^{1/2}$.
\begin{lemma}\label{lem:basic-properties-covariance}
The following assertions hold:
\begin{itemize}
\item[i)] $\cQ^{1/2}$ maps $L^2_x$ fields into uniformly continuous, bounded, divergence free fields and
\begin{align*}
	\| \cQ^{1/2} f \|_{L^\infty} \lesssim \| f\|_{L^2} \| g\|_{L^1}^{1/2};
\end{align*}
\item[ii)] $\cQ^{1/2}$ maps $\R^d$-valued finite measures into divergence free fields in $L^2_x$ and
\begin{align*}
	\| \cQ^{1/2} \mu \|_{L^2} \lesssim \| \mu\|_{TV} \| g\|_{L^1}^{1/2},
\end{align*}
where $\| \mu\|_{TV}$ denotes the total variation norm of $\mu$.
\end{itemize}
\end{lemma}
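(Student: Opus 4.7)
The plan is to leverage the explicit description of $\cQ^{1/2}$ as convolution against the $L^2_x$ kernel $\mathfrak{g}=(2\pi)^{-d/4}\mathcal{F}^{-1}(g^{1/2} P_\cdot)$, as already noted in the paragraph above the statement. Once the $L^2_x$ norm of $\mathfrak{g}$ is controlled by $\| g\|_{L^1}^{1/2}$, both estimates reduce to a suitable version of Young's convolution inequality, and the divergence free property follows from the Fourier symbol.

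First I would compute $\|\mathfrak{g}\|_{L^2_x}$ via Parseval. Since $\hat{\mathfrak{g}}(\xi)=(2\pi)^{-d/4} g^{1/2}(\xi) P_\xi$ and $P_\xi$ is an orthogonal projector of rank $d-1$ for $\xi\neq 0$, so that $|P_\xi|_{HS}^2=\Tr(P_\xi)=d-1$, one gets
\begin{equation*}
	\|\mathfrak{g}\|_{L^2_x}^2 = \|\hat{\mathfrak{g}}\|_{L^2_x}^2 = (2\pi)^{-d/2}(d-1) \int_{\R^d} g(\xi)\,\d\xi \lesssim \| g\|_{L^1}.
\end{equation*}
In particular $\mathfrak{g}\in L^2_x$ as claimed in the preceding discussion.

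For i), given $f\in L^2_x$, Cauchy--Schwarz applied pointwise to $(\cQ^{1/2} f)(x)=(\mathfrak{g}\ast f)(x)=\int \mathfrak{g}(x-y) f(y)\,\d y$ immediately yields $\| \cQ^{1/2} f\|_{L^\infty_x}\leq \|\mathfrak{g}\|_{L^2_x}\| f\|_{L^2_x}\lesssim \| g\|_{L^1}^{1/2}\| f\|_{L^2_x}$. Uniform continuity comes from the continuity of translations in $L^2_x$: applying Cauchy--Schwarz to $(\cQ^{1/2}f)(x+h)-(\cQ^{1/2}f)(x)$ gives the uniform bound $\|\tau_h \mathfrak{g}-\mathfrak{g}\|_{L^2_x}\| f\|_{L^2_x}\to 0$ as $h\to 0$. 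Divergence-freeness follows at the Fourier level: $\widehat{\div(\cQ^{1/2}f)}(\xi)=i\xi\cdot \widehat{\cQ^{1/2}f}(\xi)$ and, since $\xi\cdot P_\xi=0$ by construction, this vanishes.

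For ii), for any $\R^d$-valued finite measure $\mu$, the convolution $\mathfrak{g}\ast\mu\in L^2_x$ is well-defined and, by Minkowski's integral inequality (Young's inequality for the convolution of an $L^2$ function with a finite measure),
\begin{equation*}
	\| \cQ^{1/2}\mu\|_{L^2_x}=\|\mathfrak{g}\ast\mu\|_{L^2_x}\leq \|\mathfrak{g}\|_{L^2_x}\|\mu\|_{TV}\lesssim \| g\|_{L^1}^{1/2}\|\mu\|_{TV}.
\end{equation*}
Divergence-freeness in the sense of distributions follows by testing against $\nabla\varphi$, $\varphi\in C^\infty_c$, and moving the gradient inside the convolution: $\<\cQ^{1/2}\mu,\nabla\varphi\>=\< \mu,(\mathfrak{g}\ast\nabla\varphi)\>=-\<\mu,\div(\mathfrak{g})\ast\varphi\>=0$, since $\div(\mathfrak{g})=0$ by the argument in part i) applied componentwise. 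The only mild care needed is in the constants and in the use of convolution of $L^2_x$ functions with vector-valued measures, which is routine; no step presents a substantial obstacle.
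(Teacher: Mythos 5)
Your proposal is correct and follows the same route as the paper's (very brief) proof: both rest on the convolution representation $\cQ^{1/2}f=\mathfrak{g}\ast f$ with $\|\mathfrak{g}\|_{L^2_x}=\|\hat{\mathfrak{g}}\|_{L^2_x}\sim\|g\|_{L^1_x}^{1/2}$ and Young-type convolution inequalities, you merely supply the details (Parseval with $\Tr P_\xi=d-1$, Cauchy--Schwarz, continuity of translations, and the vanishing of the symbol $\xi^T P_\xi$) that the paper leaves implicit.
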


\begin{proof}
Both statements follow from the representation of $\cQ^{1/2}$ via the convolution with $\mathfrak{g}$, which is an $L^2_x$ matrix-valued function with $\| \mathfrak{g} \|_{L^2_x} = \|\hat{\mathfrak{g}} \|_{L^2_x} \sim \| g\|_{L^1_x}^{1/2}$, and standard properties of convolution via Young inequalities.
\end{proof}

As standard when dealing with Gaussian variables in infinite dimensional spaces, we define the associated Cameron-Martin space as $\cH:=\cQ^{1/2}(L^2(\R^d;\R^d))$; it is a Hilbert space endowed with the scalar product
\begin{align*}
	\langle \varphi,\psi\rangle_\cH := \langle \cQ^{-1/2} \varphi, \cQ^{-1/2} \psi \rangle_{L^2_x}.
\end{align*}
Observe that, by Lemma \ref{lem:basic-properties-covariance}, $\cH$ is a space of continuous, bounded vector fields and that convergence in $\cH$ implies convergence in the uniform topology. Furthermore, combining items i) and ii) of Lemma \ref{lem:basic-properties-covariance}, we can deduce that $\cQ \mu \in \cH$ for any finite vector-valued measure $\mu$. In particular, for any $y\in \R^d$, the map
\begin{align*}
	Q_y (x) :=Q(x-y) = (\cQ \delta_y)(x)
\end{align*}
is an element of $\cH$,\footnote{Technically, since $\delta_y$ in only scalar valued, $Q_y$ is $\R^{d\times d}$-valued, while elements of $\cH$ are $\R^d$-valued. More precisely $Q_y$ can be identified with $d$ distinct elements of $\cH$, given by $Q_y^i= \cQ (\delta_y e_i)$, where $(e_1,\ldots, e_d)$ is the standard basis of $\R^d$. With a slight abuse, we will write $Q_y\in \cH$ whenever the interpretation is clear.}
with the property that for any $\varphi\in \cH$ it holds
\begin{equation}\label{eq:property-Q_y}
	\langle Q_y, \varphi\rangle_{\cH}
	= \langle \cQ^{1/2}(\delta_y), \cQ^{-1/2}\varphi\rangle_{L^2}
	= \langle \delta_y, \varphi\rangle = \varphi(y)
\end{equation}
where we used the facts that $\cQ^{1/2}$ is symmetric and $\cH\hookrightarrow C_b$, so that all pairings are rigorous.
With these preparations, we can prove rigorously a tensorized representation of $Q$, which in the setting of integral operators on compact domains is standardly associated to Mercer's theorem \cite{Mercer}.

\begin{lemma}\label{lem:covariance-series-representation}
Let $\{\sigma_k\}_{k\in\N}$ be any complete orthonormal system (CONS) of $\cH$ made of smooth, divergence free functions. Then it holds
\begin{equation}\label{eq:covariance-series-representation}
	Q(x-y)=\sum_{k\in\N} \sigma_k(x) \otimes \sigma_k(y)
\end{equation}
where the series converges absolutely, uniformly on compact sets.
\end{lemma}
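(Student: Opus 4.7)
My plan is to treat $Q$ as the reproducing kernel of $\cH$ and expand it in the given CONS. First I would introduce, for the standard basis $(e_1,\ldots,e_d)$ of $\R^d$, the elements $Q_y^i := \cQ(\delta_y e_i)$, which belong to $\cH$ since $\cQ^{1/2}(\delta_y e_i) \in L^2_x$ by Lemma \ref{lem:basic-properties-covariance}(ii). The reproducing property \eqref{eq:property-Q_y} then yields $\langle Q_y^i,\varphi\rangle_{\cH} = \varphi_i(y)$ for every $\varphi \in \cH$; applied with $\varphi = Q_x^j$ and combined with the evenness of $Q$ and the symmetry of $\hat Q(\xi)$ as a matrix, this gives $\langle Q_y^i, Q_x^j\rangle_{\cH} = Q(x-y)_{ji}$.

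Next, since $\{\sigma_k\}$ is a CONS of $\cH$, Parseval's identity yields the $\cH$-expansion $Q_y^i = \sum_k \langle Q_y^i, \sigma_k\rangle_{\cH}\,\sigma_k = \sum_k (\sigma_k)_i(y)\,\sigma_k$. Taking the $\cH$-inner product with $Q_x^j$ would then give
$$ Q(x-y)_{ji} \;=\; \langle Q_y^i, Q_x^j\rangle_{\cH} \;=\; \sum_k (\sigma_k)_i(y)(\sigma_k)_j(x), $$
which is precisely \eqref{eq:covariance-series-representation} read entry by entry, since $(\sigma_k(x)\otimes\sigma_k(y))_{ji}=(\sigma_k)_j(x)(\sigma_k)_i(y)$.

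Finally, to obtain absolute convergence uniform on compacts, I would apply Parseval once more to get $\sum_k |(\sigma_k)_i(x)|^2 = \|Q_x^i\|_\cH^2 = (Q_x^i)_i(x) = Q(0)_{ii}$, which is \emph{constant in $x$}; summing over $i$, $\sum_k |\sigma_k(x)|^2 = \Tr Q(0)$. Since each partial sum is continuous (the $\sigma_k$'s being smooth) and increases pointwise to a constant, Dini's theorem gives uniform convergence on every compact $K \subset \R^d$. A Cauchy--Schwarz estimate would then dominate the tails of $\sum_k (\sigma_k)_j(x)(\sigma_k)_i(y)$ by products of these uniformly small diagonal tails, yielding the claimed absolute, uniform convergence on $K\times K$. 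The main subtlety---resolved precisely by the spatial homogeneity of the noise---is that the diagonal sum $\sum_k|\sigma_k(x)|^2$ is constant rather than merely continuous, and it is this fact that lets Dini's argument succeed on the non-compact space $\R^d$ so that the classical Mercer-type proof carries over.
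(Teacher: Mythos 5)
Your proposal is correct and follows essentially the same route as the paper: both rest on the reproducing property \eqref{eq:property-Q_y} of $Q_y\in\cH$, the expansion of $Q_y$ in the CONS, the observation that the diagonal sum $\sum_k|\sigma_k(x)|^2$ equals the constant $\Tr Q(0)$, and then Dini's theorem plus Cauchy--Schwarz for the absolute, locally uniform convergence. The only cosmetic difference is that you obtain the pointwise identity by pairing two reproducing elements in $\cH$ (a pure Parseval computation), whereas the paper evaluates the $\cH$-convergent expansion of $Q_y$ pointwise via the embedding $\cH\hookrightarrow C_b$; both steps are equivalent in substance.
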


\begin{proof}
Since $\{\sigma_k\}_k$ is a CONS of $\cH$ and $Q_y\in\cH$, for any fixed $y\in\R^d$ it holds
\begin{align*}
	Q_y(\cdot)
	= \sum_{k\in\N} \langle Q_y, \sigma_k \rangle_{\cH}\, \sigma_k (\cdot)
	= \sum_{k\in\N} \sigma_k(y) \otimes \sigma_k(\cdot)
\end{align*}
where the series converges in $\cH$ and thus in the uniform topology; we can then evaluate in $x$, which together with the definition of $Q_y$ yields the pointwise relation \eqref{eq:covariance-series-representation}.
To upgrade the convergence to be also locally uniform in the variable $x$, first observe that by taking $x=y$ in \eqref{eq:covariance-series-representation} we find
\begin{align*}
	\Tr (Q(0))
	= \sum_k \Tr (\sigma_k (x) \otimes \sigma_k(x))
	= \sum_k |\sigma_k(x)|^2 \quad \forall\, x\in \R^d.
\end{align*}
This implies that the increasing sequence $S_n(x) = \sum_{k\leq n} |\sigma_k(x)|^2$ is converging pointwise to the constant $S_\infty = \Tr (Q(0) )$, which by Dini's theorem implies that the sequence is also converging uniformly on compact sets.
For any $|x|,\,|y|\leq R$ it then holds
\begin{align*}
	\sum_{k\geq n} |\sigma_k(x)\otimes \sigma_k(y)|
	\leq \bigg(\sum_{k\geq n} |\sigma_k(x)|^2\bigg)^{1/2}\, \bigg( \sum_{k\geq n} |\sigma_k(y)|^2 \bigg)^{1/2}
	\leq \sup_{|z|\leq R}  |S_\infty - S_n(z)|
\end{align*}
which finally implies absolute convergence for \eqref{eq:covariance-series-representation}, uniformly on compact sets.
\end{proof}
Thanks to the isotropy property \eqref{eq:isotropy-covariance}, we can compute $Q(0)$ explicitly.

\begin{lemma}\label{lem:Q_0}
Under Assumption \ref{ass:covariance-basic}, it holds
\begin{equation}\label{eq:Q_0}
	Q(0)=2\kappa I_d, \quad \kappa:= (2\pi)^{-\frac{d}{2}}\, \frac{d-1}{2d}\, \| g\|_{L^1};
\end{equation}
moreover $Q$ satisfies $Q(x) \leq Q(0)$, in the sense of symmetric matrices, for all $x\in\R^d$.
\end{lemma}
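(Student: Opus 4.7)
The plan is to prove both assertions by direct Fourier inversion, leveraging the radiality of $g$ and the simple structure of $\hat Q$.

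For the computation of $Q(0)$, I would start from the inversion formula $Q(0) = (2\pi)^{-d/2} \int_{\R^d} \hat Q(\xi)\, \d\xi = (2\pi)^{-d/2} \int_{\R^d} g(\xi) P_\xi\, \d\xi$, which is justified since $g \in L^1_x$ forces $\hat Q \in L^1_x$. I would then compute the integral entrywise. The off-diagonal entries of $P_\xi$ are $-\xi_i\xi_j/|\xi|^2$ with $i\neq j$; since $g$ is radial, these are odd in each of the variables $\xi_i$, $\xi_j$ and the corresponding integrals vanish. For the diagonal entries I would exploit the radiality of $g$ together with the identity $\sum_{i=1}^d \xi_i^2/|\xi|^2 = 1$ to conclude $\int g(\xi)\, \xi_i^2/|\xi|^2\, \d\xi = \frac{1}{d}\| g\|_{L^1}$ for every $i$. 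Therefore
\begin{equation*}
\int_{\R^d} g(\xi) P_\xi\, \d\xi = \bigg(1 - \frac{1}{d}\bigg)\| g\|_{L^1}\, I_d = \frac{d-1}{d} \| g\|_{L^1}\, I_d,
\end{equation*}
which, after multiplying by $(2\pi)^{-d/2}$, gives precisely \eqref{eq:Q_0}.

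For the matrix inequality $Q(x) \leq Q(0)$, I would again use Fourier inversion to write $Q(x) = (2\pi)^{-d/2}\int g(\xi) P_\xi\, e^{ix\cdot\xi}\, \d\xi$. Since $Q$ is real-valued and the integrand $g(\xi)P_\xi$ is even in $\xi$ (both $g$ is radial and $P_{-\xi} = P_\xi$), the imaginary part vanishes and
\begin{equation*}
Q(x) = (2\pi)^{-d/2}\int_{\R^d} g(\xi) P_\xi \cos(x\cdot\xi)\, \d\xi.
\end{equation*}
Subtracting, for any $v\in\R^d$ one obtains
\begin{equation*}
v^T(Q(0) - Q(x))v = (2\pi)^{-d/2}\int_{\R^d} g(\xi)\, (v^T P_\xi v)\, \bigl(1 - \cos(x\cdot\xi)\bigr)\, \d\xi \geq 0,
\end{equation*}
since $g\geq 0$, $P_\xi$ is an orthogonal projector hence positive semidefinite (so $v^T P_\xi v \geq 0$), and $1-\cos(\cdot) \geq 0$. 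This establishes $Q(0) - Q(x) \geq 0$ as symmetric matrices.

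I do not anticipate any real obstacle here: the only points requiring care are the justification of interchanging integration and scalar-product evaluation (which is immediate from $\hat Q \in L^1_x$ combined with dominated convergence), and the vanishing of the odd contributions, which follows cleanly from the radiality hypothesis on $g$. Both steps are essentially direct computations once the Fourier representation is written out.
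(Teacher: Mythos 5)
Your proof is correct, but it takes a genuinely different route from the paper's in both halves. For the computation of $Q(0)$, the paper does not evaluate the matrix entrywise: it first applies the isotropy identity \eqref{eq:isotropy-covariance} at $x=0$ to get $R\,Q(0)\,R^T=Q(0)$ for every rotation $R$, which forces $Q(0)$ to be a multiple of the identity, and then recovers the constant by taking the trace and using $\Tr(P_\xi)=d-1$. Your direct computation --- killing the off-diagonal entries by oddness and evaluating the diagonal ones via $\sum_i \xi_i^2/|\xi|^2=1$ together with radiality --- is more hands-on but lands on the same constant; the paper's version is slightly slicker and recycles a property it has already recorded. For the inequality $Q(x)\le Q(0)$, the paper works in real space, combining the Mercer-type expansion \eqref{eq:covariance-series-representation} with the elementary bound $2\,\sigma(x)\otimes\sigma(y)\le \sigma(x)\otimes\sigma(x)+\sigma(y)\otimes\sigma(y)$, whereas you work on the Fourier side, writing $v^T(Q(0)-Q(x))v=(2\pi)^{-d/2}\int_{\R^d} g(\xi)\,(v^TP_\xi v)\,(1-\cos(x\cdot\xi))\,\d\xi\ge 0$. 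Your Bochner-style argument has the advantage of being self-contained: it does not rely on Lemma \ref{lem:covariance-series-representation} nor on the Cameron--Martin machinery behind it, only on $g\ge 0$ and $\hat Q\in L^1_x$. The paper's argument, in exchange, is a one-line consequence of a representation it needs elsewhere anyway. Both proofs are complete and the integrability justifications you flag (dominated convergence, absolute convergence from $g\in L^1_x$ and $|P_\xi|\le 1$) are indeed all that is required.
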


\begin{proof}
Applying \eqref{eq:isotropy-covariance} for $x=0$, it holds $R Q(0) R^T = Q(0)$, so that $Q(0) = 2\kappa I_d$ for some $\kappa\in \R$.
The exact value can be computed by taking the trace on both sides, which gives
\begin{align*}
	\kappa = \frac{1}{2d} \Tr (Q(0)))
	=  (2\pi)^{-\frac{d}{2}}\, \frac{1}{2d}\, \Tr \bigg( \int_{\R^d} g(\xi) P_\xi\, \d \xi\bigg)
	= (2\pi)^{-\frac{d}{2}}\, \frac{d-1}{2d} \int_{\R^d} g(\xi)\, \d \xi
\end{align*}
where we used the fact that $\Tr (P_\xi)=d-1$ for all $\xi\in\R^d \setminus \{0\}$; \eqref{eq:Q_0} thus follows.
The second claim comes from \eqref{eq:covariance-series-representation} and the inequality
$2 \sigma(x)\otimes \sigma(y) \leq \sigma(x)\otimes \sigma(x) + \sigma(y)\otimes \sigma(y)$.
\end{proof}
The importance of formula \eqref{eq:covariance-series-representation} comes from the fact that correspondingly, for any such CONS $\{\sigma_k\}_k$ of $\cH$, the noise $W$ admits a series representation as
\begin{equation}\label{eq:noise-series-representation}
	W(t,x) = \sum_{k\in\N} \sigma_k(x) B^k_t,
\end{equation}
where $\{B^k \}_k$ is a family of independent standard Brownian motions defined by
\begin{equation}\label{eq:chaos-expansion-finite-noises}
	B^k_t := \frac{\langle W_t, \cQ^{-1/2} \sigma_k\rangle}{\|\sigma_k \|_{L^2_x}}.
\end{equation}

The next proposition collects some important information on the spatial regularity of $W$, in terms of sufficient conditions on its covariance $Q$, and justifies the representation \eqref{eq:noise-series-representation};
since the proofs are a bit technical and fairly standard, we postpone them to Appendix \ref{app:pathwise-regularity}.
Let us stress that the regime $\alpha\in (0,2)$ in Point c) below is exactly the one associated to the Kraichnan noise.

\begin{proposition}\label{prop:pathwise-regularity-noise}
Let $Q$ be satisfying Assumption \ref{ass:covariance-basic}, $\{\sigma_k\}_k$ be an associated CONS of $\cH$ and $W$ be a $\cQ$-Brownian motion.
The following hold:
\begin{itemize}
\item[a)] $\P$-a.s. $W\in C^0_t L^2_\loc$ and the series \eqref{eq:noise-series-representation} converges $\P$-a.s. therein;
\item[b)] if additionally there exists $\gamma>3/4$ such that
\begin{equation}\label{eq:noise-logarithmic-regularity}
	g(\xi) \lesssim (1+|\xi|)^{-d} \log^{-2\gamma}(e+ |\xi|)
	\quad \forall\, \xi\in\R^d,
\end{equation}
then $\P$-a.s. $W\in C^0_t C^0_\loc$ and the series \eqref{eq:noise-series-representation} converges $\P$-a.s. therein;
\item[c)] finally, if there exists $\alpha\in (0,1)$ such that
\begin{equation}\label{eq:noise-holder-regularity}
	g(\xi) \lesssim (1+|\xi|)^{-d-2\alpha}
	\quad \forall\, \xi\in\R^d ,
\end{equation}
then $\P$-a.s. $W\in C^0_t C^{\alpha-\eps}_\loc$ for all $\eps>0$ and the series \eqref{eq:noise-series-representation} converges $\P$-a.s. therein.
\end{itemize}
\end{proposition}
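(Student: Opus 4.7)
My plan is to treat the three parts in parallel: first compute the second moment of the spatial increments of $W$ via the Fourier representation of its covariance, then upgrade to arbitrary $L^p(\Theta)$-moments by Gaussianity, and finally invoke a continuity criterion matched to each modulus. Series convergence in each target topology will be obtained as a by-product by applying the same estimates to the tails $W - W^N$, where $W^N := \sum_{k\le N}\sigma_k B^k$, whose covariance is simply $Q$ minus the partial sum of its tensor decomposition from Lemma \ref{lem:covariance-series-representation}.

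For (a), since $g\in L^1$, Lemma \ref{lem:Q_0} gives $\E|W_t(x)|^2 = t\,\Tr Q(0) < \infty$, so Fubini yields $W_t\in L^2_\loc$ $\P$-a.s. Continuity in time in $L^2(K)$ follows from the Brownian bound $\E\|W_t - W_s\|_{L^2(K)}^2 \lesssim_K |t-s|$, Fernique's theorem to reach higher moments, and Kolmogorov in the Banach space $L^2(K)$. For the series, Doob's maximal inequality in the Hilbert space $L^2(K)$ reduces the claim to showing
\begin{align*}
\int_K \sum_{M < k \le N} |\sigma_k(x)|^2\, \d x \to 0 \quad \text{as } M, N \to \infty,
\end{align*}
which follows from Lemma \ref{lem:covariance-series-representation} combined with Dini's theorem: since the pointwise monotone limit $\sum_{k\le N}|\sigma_k(x)|^2 \to \Tr Q(0)$ is towards a constant, the convergence is uniform on compact sets.

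For (b) and (c), the Fourier representation together with $\Tr P_\xi = d-1$ gives
\begin{align*}
\Tr\bigl(Q(0) - Q(z)\bigr) = (2\pi)^{-d/2}(d-1)\int g(\xi)(1-\cos(z\cdot\xi))\, \d\xi,
\end{align*}
and splitting at $|\xi|\sim 1/|z|$ (with $1-\cos\le|z\cdot\xi|^2/2$ on low frequencies and $|1-\cos|\le 2$ on high ones) yields $|Q(0) - Q(z)| \lesssim |z|^{2\alpha}$ under the decay of (c), and $|Q(0) - Q(z)| \lesssim \log^{1-2\gamma}(1/|z|)$ under that of (b). In case (c), Gaussianity provides
\begin{align*}
\E|W_t(x) - W_s(y)|^{2p} \lesssim_p |t-s|^p + |x-y|^{2\alpha p} \quad \forall\, p\ge 1,
\end{align*}
so Kolmogorov's theorem on $[0,T]\times K$ delivers $\P$-a.s. joint regularity $W\in C^0_t C^{\alpha-\eps}_\loc$ for every $\eps>0$; the series convergence in the same topology then follows from the identical estimate applied to $W - W^N$ combined with Borel--Cantelli.

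The main obstacle will be case (b), where the intrinsic modulus $\sigma(h) \lesssim \log^{1/2-\gamma}(1/h)$ is only logarithmic and Kolmogorov fails. The plan there is to apply a Gaussian chaining/entropy argument (of Dudley--Fernique or Garsia--Rodemich--Rumsey type with an exponential Young function) to the centered Gaussian field $W(t,\cdot)$ on compact sets $K\subset\R^d$; the threshold $\gamma > 3/4$ should arise from a careful balance of Gaussian tail bounds against the $\R^d$-metric entropy of $K$. Once almost sure spatial continuity is secured, joint continuity in $(t,x)$ is inherited by combining with Brownian regularity in time, and the series convergence follows once more by applying the same chaining to $W - W^N$, whose truncated variance and hence intrinsic modulus vanish uniformly on $K$ as $N\to\infty$ by the argument of (a).
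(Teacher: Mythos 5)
Your core estimates coincide with the paper's: the proof in Appendix \ref{app:pathwise-regularity} also reduces everything to the spatial field at a fixed time, computes $\Tr(Q(0)-Q(z))$ by splitting the Fourier integral at $|\xi|\sim 1/|z|$ to get exactly the moduli $|z|^{2\alpha}$ and $|\log|z||^{1-2\gamma}$, and then feeds these into a Kolmogorov-type criterion for (c) and a Garsia--Rodemich--Rumsey criterion for (b). Where you genuinely diverge is in how the almost sure convergence of the series is obtained. The paper invokes the abstract white-noise expansion theorem (\cite[Theorem 2.12]{DaPZab}, reproduced as Theorem \ref{thm:white-noise-expansion}): once $W_1$ is shown to live in a fixed separable Banach space $E$, convergence of $\sum_k\sigma_k B^k_1$ in $E$ is automatic. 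This forces the paper to build weighted global spaces ($L^2(\R^d,(1+|x|)^{-d-2}\d x)$ in (a), continuous functions of polynomial growth in (b), weighted H\"older spaces in (c)) so that $E$ embeds into the local space. You instead estimate the tails $W-W^N$ directly on compacts, using Doob's inequality plus the Dini argument of Lemma \ref{lem:covariance-series-representation} in (a), and the same chaining bounds applied to the tail covariance $Q_N^\perp$ in (b)--(c). This is workable and arguably cleaner (it avoids the weighted spaces entirely), but note two small points: the domination $\E|(W-W^N)(x)-(W-W^N)(y)|^2\le\E|W(x)-W(y)|^2$ you implicitly need follows from independence of head and tail, and to pass from convergence in probability in $C(K)$ to $\P$-a.s. convergence of the full series (not just a subsequence) you should invoke It\^o--Nisio for sums of independent symmetric Banach-valued summands rather than Borel--Cantelli alone.

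The one real soft spot is the step you defer in case (b). The claim that ``the threshold $\gamma>3/4$ should arise from a careful balance'' is not automatic and depends on \emph{which} chaining criterion you use. A plain Dudley entropy integral in the canonical metric $\sigma(h)=\sqrt{p(h)}\sim|\log h|^{(1-2\gamma)/2}$ gives covering numbers $N(\eps)\sim\exp(d\,\eps^{-2/(2\gamma-1)})$ and hence requires $\int_0\eps^{-1/(2\gamma-1)}\,\d\eps<\infty$, i.e.\ $\gamma>1$ --- short of the stated threshold. The paper reaches $\gamma>3/4$ by using the GRR criterion in the form of Corollary \ref{cor:GRR}, whose integrability condition is phrased in terms of $p'=(\sigma^2)'$ rather than $\sigma'$: with $p(s)\sim|\log s|^{1-2\gamma}$ one gets $\int_0 s^{-1}|\log s|^{1/2-2\gamma}\,\d s<\infty$ iff $\gamma>3/4$. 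So to complete your argument you must carry out the chaining with this specific quadratic-variance form of the criterion; the generic ``Dudley--Fernique'' route you name would only deliver the result for $\gamma>1$.
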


The representations \eqref{eq:covariance-series-representation} and \eqref{eq:noise-series-representation}, sometimes referred to as the \emph{chaos expansion} associated to $W$, are extremely useful for practical computations and we will invoke them quite often throughout the paper.

All the noises appearing in our main results will be spatially rough, satisfying some of the conditions from Proposition \ref{prop:pathwise-regularity-noise} at best.
However, in order to rigorously set up a priori estimates and construct solutions, we will often first work with smooth noise $W$.
To this end, it is desirable to have at hand the following result, relating the spatial regularity of $Q$ to summability of (derivatives of) $\{\sigma_k\}_k$ and spatial regularity of $W$.

\begin{proposition}\label{prop:regularity-Q-sigma}
Let $Q$ satisfy Assumption \ref{ass:covariance-basic} and $\{\sigma_k\}_k$ be a CONS of $\cH$ made of smooth functions.
Then for any $n\in\N$ it holds
\begin{align*}
	Q\in W^{2n,\infty}_x \quad \Longleftrightarrow \quad \sup_{x\in\R^d} \sum_{k\in\N} |D^n_x \sigma_k(x)|^2<\infty;
\end{align*}
moreover under the above condition, $\P$-a.s. $W\in C^0_t C^{n-\eps}_\loc$ for all $\eps>0$.
\end{proposition}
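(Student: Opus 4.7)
The plan is to reduce both sides of the stated equivalence to a single Fourier-side integrability condition,
\begin{equation*}
\int_{\R^d} g(\xi)\, |\xi|^{2n}\, \d\xi < \infty \quad (\star),
\end{equation*}
and to derive the pathwise regularity of $W$ from $(\star)$.

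For the CONS sum, any CONS $\{\sigma_k\}$ of $\cH$ has the form $\sigma_k = \cQ^{1/2} e_k$ for an orthonormal basis $\{e_k\}$ of the appropriate $L^2$ subspace. Viewing $f \mapsto \partial^\alpha(\cQ^{1/2} f)(x)$ as a linear functional on $L^2$, Bessel's equality together with a direct Fourier computation (using that $\cQ^{1/2}$ has multiplier symbol $(2\pi)^{d/4} g^{1/2}(\xi)\, P_\xi$) give
\begin{equation*}
\sum_k |\partial^\alpha \sigma_k(x)|^2 = c_d \int g(\xi)\, \xi^{2\alpha}\, \Tr(P_\xi)\, \d\xi,
\end{equation*}
independent of $x$; summing over $|\alpha|=n$ with the multinomial weights collapsing $\sum_{|\alpha|=n}\binom{n}{\alpha}\xi^{2\alpha}$ to $|\xi|^{2n}$ yields $\sum_k |D^n\sigma_k(x)|^2 = c_{d,n}\int g(\xi)\,|\xi|^{2n}\,\d\xi$. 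Hence the first condition is equivalent to $(\star)$.

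For the second equivalence, the forward direction $(\star) \Rightarrow Q\in W^{2n,\infty}_x$ is immediate via Fourier inversion and Riemann--Lebesgue, since under $(\star)$ and $g\in L^1$, log-convexity gives $(i\xi)^\alpha g(\xi) P_\xi \in L^1$ for all $|\alpha|\leq 2n$. For the converse, the key is to exploit $g \geq 0$ and evenness of $Q$: for $n=1$, Taylor gives $|Q(0) - Q(x)| \leq \|D^2 Q\|_\infty |x|^2/2$, and combining with the identity $Q(0) - Q(x) = \int \hat Q(\xi)(1-\cos(x\cdot\xi))\, \d\xi$, setting $x = t e$ for a unit vector $e$, dividing by $t^2$ and applying Fatou on the nonneg integrand $(1-\cos(t\xi\cdot e))/t^2 \to (\xi\cdot e)^2/2$ yields $\int g(\xi)(\xi\cdot e)^2\,\d\xi < \infty$; integrating over the unit sphere recovers $(\star)$. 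For general $n$ one iterates with symmetric finite differences of $Q$ of order $2n$ whose Fourier symbol is nonneg and comparable to $|\xi|^{2n}$ near $0$.

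Finally, for the pathwise regularity: under $(\star)$, $D^{2n} Q$ exists and is bounded continuous, and by evenness of $Q$ all odd-order derivatives at the origin vanish, so Taylor gives $|D^{2(n-1)}Q(x) - D^{2(n-1)}Q(0)| \lesssim |x|^2$. Using the covariance identity $\E[D^{n-1} W_t(x) \otimes D^{n-1} W_s(y)] = (-1)^{n-1}(t\wedge s)\, D^{2(n-1)} Q(x-y)$, the spatial increments satisfy $\E|D^{n-1}W_t(x) - D^{n-1}W_t(y)|^2 \lesssim t\,|x-y|^2$, and Gaussianity combined with Kolmogorov's continuity theorem in $x$ give $D^{n-1} W_t \in C^{1-\eps}_\loc$ $\P$-a.s.\ with all Gaussian moments. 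Since $W_t - W_s \stackrel{d}{=} \sqrt{t-s}\,\tilde W$ for $\tilde W$ a $Q$-Gaussian field, this also yields $\E\|W_t - W_s\|_{C^{n-\eps}(K)}^2 \lesssim_K |t-s|$ for every compact $K$, and Kolmogorov's continuity theorem in time for the $C^{n-\eps}_\loc$-valued process $t \mapsto W_t$ gives $W \in C^0_t C^{n-\eps}_\loc$. The main obstacle I anticipate is the converse half of $W^{2n,\infty}_x \iff (\star)$ for general $n$: while the $n=1$ Fatou scheme is transparent, higher $n$ requires a careful selection of $2n$-th order symmetric finite differences of $Q$ whose Fourier symbol is everywhere nonneg and $\sim |\xi|^{2n}$ near $0$, so that Fatou can be applied without sign issues.
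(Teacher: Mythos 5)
Your argument is correct in substance but routes everything through the Fourier-side condition $\int_{\R^d} g(\xi)|\xi|^{2n}\,\d\xi<\infty$, whereas the paper never leaves physical space: it works directly with the Mercer expansion $Q(x-y)=\sum_k\sigma_k(x)\otimes\sigma_k(y)$ of Lemma \ref{lem:covariance-series-representation}. Concretely, for the direction $Q\in W^{2,\infty}_x\Rightarrow$ summability the paper computes $\sum_k|\sigma_k(x)-\sigma_k(y)|^2=2\big(\Tr Q(0)-\Tr Q(x-y)\big)\lesssim|x-y|^2$ (using evenness of $Q$ so that $D_xQ(0)=0$) and passes to difference quotients; for the converse it differentiates the Mercer series term by term and applies Cauchy--Schwarz; and for general $n$ it iterates via the observation that $-\partial_i^2Q$ is the covariance of $\partial_iW$ with CONS $\{\partial_i\sigma_k\}$, which avoids entirely the order-$2n$ finite-difference/Fatou step you correctly flag as the delicate point of your scheme (that step does work: the $2n$-th symmetric difference has symbol $4^n\sin^{2n}(h\cdot\xi/2)\geq 0$, comparable to $(h\cdot\xi)^{2n}$ near the origin, so Fatou applies — but the paper's iteration sidesteps it). Your approach buys a stronger conclusion as a byproduct, namely that $\sum_k|D^n\sigma_k(x)|^2$ is \emph{constant} in $x$ by homogeneity, and it isolates the single quantitative condition $(\star)$ governing everything. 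Two points deserve care if you write it up: (i) the Bessel/Parseval identity $\sum_k|\partial^\alpha\sigma_k(x)|^2=\|h^\alpha_x\|_{L^2}^2$ presupposes that the representer $h^\alpha_x=\partial^\alpha\mathfrak{g}(x-\cdot)$ lies in $L^2_x$, which is exactly $(\star)$; in the direction ``sum finite $\Rightarrow(\star)$'' you should instead argue via truncated multipliers or, as the paper does, first deduce $\partial^{2\alpha}Q\in L^\infty_x$ by Cauchy--Schwarz on the partial sums and only then run Fatou. (ii) The time-regularity step needs Fernique/Gaussian hypercontractivity to upgrade $\E\|W_t-W_s\|^2\lesssim|t-s|$ to $p$-th moments with $p>2$ before Kolmogorov applies, which is how the paper handles it in Appendix \ref{app:pathwise-regularity}. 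Neither point is a genuine obstruction.
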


\begin{proof}
Let us first prove the statement for $n=1$; to this end, let us employ the shortcut notation $a^{\otimes 2}=a\otimes a$ for $a\in\R^d$.

First assume $Q\in W^{2,\infty}_x$, then for any $x,\, y\in\R^d$ it holds
\begin{align*}
	\sum_k |\sigma_k(x)-\sigma_k(y)|^2
	& = \Tr  \bigg( \sum_k (\sigma_k(x)-\sigma_k(y))^{\otimes 2}\bigg)\\
	& = \Tr \bigg(\sum_k \big[ \sigma_k(x)^{\otimes 2} + \sigma_k(y)^{\otimes 2} - 2 \sigma_k(x)\otimes \sigma_k(y)\big] \bigg)\\
	& = 2 \big( \Tr Q(0) - \Tr Q(x-y) \big)
\end{align*}
where the last passage comes from \eqref{eq:covariance-series-representation}.
Since $Q$ is even, $D_x Q(0)=0$;
therefore under the assumption that $Q\in W^{2,\infty}_x$, by Taylor expansion it holds $|Q(0)-Q(z)|\lesssim |z|^2$ for all $z\in \R^d$.
Overall we find
\begin{align*}
	\sup_{x\neq y} \sum_k \frac{|\sigma_k(x)-\sigma_k(y)|^2}{|x-y|^2} \lesssim 1
\end{align*}
and the conclusion now follows by taking $x= y + \eps e_i$, where $\{e_i\}_{i=1}^d$ are the canonical elements of $\R^d$, and taking $\eps\to 0$.

Conversely, if the family $\{D_x \sigma_k\}_k$ satisfies the aforementioned summability, then for any $i,j\in \{1, \ldots, d\}$ it holds
\begin{align*}
	\partial_{ij}^2 Q(x-y)
	= - \partial_{x_i} \partial_{y_j} Q(x-y)
	= - \sum_{k\in\N} \partial_i\sigma_k(x)\otimes \partial_j\sigma_k(y)
\end{align*}
so that by Cauchy's inequality
\begin{align*}
	\sup_{x,y} |\partial^2_{ij} Q(x-y)|
	& \leq \sup_{x,y} \sum_{k\in\N} |\partial_i \sigma_k(x)|\, |\partial_j \sigma_k(y)|\\
	& \leq \sup_x \bigg( \sum_{k\in\N} |\partial_i \sigma_k(x)|^2\bigg)^{1/2}\,
	 \sup_y \bigg( \sum_{k\in\N} |\partial_j \sigma_k(y)|^2\bigg)^{1/2}
\end{align*}
which implies uniform boundedness of $D^2_x Q$.

The claim on the regularity of $W$ then follows from arguing similarly to the proof of Point c) of Proposition \ref{prop:pathwise-regularity-noise}, since it holds
\begin{equation*}
	\E	\big[ |W_t(x)-W_t(y)|^2 \big] = 2t \big[ \Tr Q(0)-\Tr Q(x-y)\big]\lesssim t |x-y|^2.
\end{equation*}
The case of general $n\in\N$ follows from an iteration argument. Indeed, observe that if $Q$ is the covariance function associated to $W$, then $\tilde Q = -\partial_i^2 Q$ is the covariance function associated to $\tilde W = \partial_i W$ and moreover
\begin{align*}
	\tilde Q(x-y) = -\partial_{x_i}\partial_{y_i} Q(x-y)
	= \sum_k \partial_i\sigma_k(x)\otimes \partial_i \sigma_k(y)
	=: \sum_k \tilde\sigma_k(x)\otimes \tilde\sigma_k(y);
\end{align*}
therefore applying the statement with $(Q,\{\sigma_k\},W)$ replaced by $(\tilde Q, \{\tilde \sigma_k\}, \tilde W)$ we can inductively pass from $n$ to $n+1$ and conclude the proof.
\end{proof}

\begin{remark}\label{rem:smooth-noise}
The proof above is inspired by \cite[Remark 4]{CogFla}, which already covered one implication for $n=1$.
Based on Proposition \ref{prop:regularity-Q-sigma}, we will say that $W$ is a smooth noise if the associated covariance $Q\in C^\infty_b$, so that the statement applies for all $n\in\N$.

A sufficient condition, expressed in terms of the associated $g$ given by \eqref{eq:covariance-basic}, is that
\begin{equation*}
	\int_{\R^d} g(\xi) (1+|\xi|^m)\, \d \xi<\infty \quad \forall\, m\in\N
\end{equation*}
as can be readily checked using properties of the Fourier transform.
\end{remark}

Suppose now we are given a filtered probability space $(\Theta,\F,\F_t,\P)$ and $W$ defined on it;
we will say that $W$ is an $\F_t$-Brownian motion with covariance $Q$ if, in addition to the above requirements, it is $\F_t$-adapted and $W_t-W_s$ is independent of $\F_s$ for all $s<t$.
We conclude this section with some basic facts concerning stochastic integration w.r.t. $W$.

\begin{lemma}\label{lem:stoch-integr-basic}
For any $\mathbb F_t$-predictable, smooth process $f:\Omega\times \R_{\geq 0}\times \R^d\to \R^d$, the stochastic integral $M_t := \int_0^t \langle f_s, \d W_s\rangle$ is a well-defined continuous $\R$-valued $\mathbb F_t$-martingale satisfying
\begin{equation*}
	[M]_t = \int_0^t \| \cQ^{1/2} f_s\|_{L^2_x}^2\, \d s, \quad
	\E\bigg[ \sup_{t\in [0,T]} |M_t|^p \bigg]
	\lesssim_p   \E\bigg[ \Big( \int_0^T \|  \cQ^{1/2} f_s\|_{L^2_x}^2\, \d s \Big)^{\frac{p}{2}} \bigg]
\end{equation*}
for any $p\in [1,\infty)$. Similarly, $N_t :=\int_0^t f_s\cdot \d W_s$ is a well-defined continuous $L^2_x$-valued $\mathbb F_t$-martingale satisfying
\begin{equation*}
	[N]_t = 2\kappa \int_0^t \| f_s\|_{L^2_x}^2\, \d s, \quad
	\E\bigg[ \sup_{t\in [0,T]} \|N_t \|_{L^2_x}^p \bigg]
	\lesssim_p  \kappa^{\frac{p}{2}}\, \E\bigg[ \Big( \int_0^T \| f_s\|_{L^2_x}^2\, \d s \Big)^{\frac{p}{2}} \bigg].
\end{equation*}
By standard density and localization arguments, the definition of $M$ (resp. $N$) as a continuous $\R$-valued (resp. $L^2$-valued) $\mathbb F_t$-local martingale extends to all $\mathbb F_t$-predictable processes $f$ satisfying $\P$-a.s.
$\int_0^T \| \cQ^{1/2} f_s \|_{L^2_x}^2 \,\d s <\infty$
(resp. $\int_0^T \| f_s \|_{L^2_x}^2\, \d s <\infty$) for all $T<\infty$.
\end{lemma}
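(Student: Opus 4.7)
The plan is to reduce both identities to standard scalar It\^o calculus by exploiting the chaos expansion \eqref{eq:noise-series-representation}, $W_t = \sum_k \sigma_k B^k_t$, with $\{\sigma_k\}_k$ any CONS of $\cH$ and $\{B^k\}_k$ the associated family of independent real Brownian motions given by \eqref{eq:chaos-expansion-finite-noises}. Setting $e_k := \cQ^{-1/2}\sigma_k$, the very definition of the inner product on $\cH$ ensures that $\{e_k\}_k$ is a CONS of $(\ker \cQ^{1/2})^\perp = \overline{\mathrm{Range}(\cQ^{1/2})} \subset L^2_x$; note also that $\cQ^{1/2}$ is bounded on $L^2_x$ since $g \in L^\infty$, so $\sigma_k = \cQ^{1/2} e_k$ belongs to $L^2_x$. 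For smooth predictable $f$ with adequate integrability I would first introduce the truncations
\begin{equation*}
M^{(N)}_t := \sum_{k \leq N} \int_0^t \langle f_s, \sigma_k\rangle_{L^2_x}\, \d B^k_s, \qquad N^{(N)}_t(x) := \sum_{k \leq N} \int_0^t (f_s(x) \cdot \sigma_k(x))\, \d B^k_s,
\end{equation*}
each summand being a classical real-valued It\^o integral against the $B^k$.

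For $M^{(N)}$, independence of the $B^k$'s and the It\^o isometry give $[M^{(N)}]_t = \sum_{k \leq N} \int_0^t \langle f_s, \sigma_k\rangle_{L^2_x}^2\, \d s$. The key identity needed to pass to the limit is
\begin{equation*}
\sum_k \langle f, \sigma_k\rangle_{L^2_x}^2 = \|\cQ^{1/2} f\|_{L^2_x}^2,
\end{equation*}
which follows from writing $\langle f, \sigma_k\rangle_{L^2_x} = \langle f, \cQ^{1/2} e_k\rangle_{L^2_x} = \langle \cQ^{1/2} f, e_k\rangle_{L^2_x}$ by symmetry of $\cQ^{1/2}$, observing that $\cQ^{1/2} f \in \overline{\mathrm{Range}(\cQ^{1/2})}$, and applying Parseval to the CONS $\{e_k\}_k$ of this subspace. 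For $N^{(N)}$, regarded as an $L^2_x$-valued continuous martingale, the analogous computation is pointwise in $x$: using \eqref{eq:covariance-series-representation} at $x=y$ together with Lemma \ref{lem:Q_0},
\begin{equation*}
\sum_k (f(x) \cdot \sigma_k(x))^2 = f(x)^T \bigg(\sum_k \sigma_k(x) \otimes \sigma_k(x)\bigg) f(x) = f(x)^T Q(0) f(x) = 2\kappa |f(x)|^2,
\end{equation*}
and integrating in $x$ yields $[N]_t = 2\kappa \int_0^t \|f_s\|_{L^2_x}^2\, \d s$.

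The $L^p$-bounds then follow from the scalar Burkholder--Davis--Gundy inequality for $M$ and from its Hilbert-space-valued version for $N$, which is applicable since $L^2_x$ is a Hilbert space. The passage to $N \to \infty$ is justified by Doob's maximal inequality combined with the quadratic-variation identities just established, proving that $M$ and $N$ are genuine continuous $\F_t$-martingales with the stated covariation. Finally, the extension to general predictable $f$ verifying only the $\P$-a.s.\ integrability conditions is obtained by the standard density-and-localization procedure, approximating $f$ by smooth processes and stopping at $\tau_M := \inf\{t : \int_0^t \|\cQ^{1/2} f_s\|_{L^2_x}^2\, \d s \geq M\}$ (respectively $\int_0^t \|f_s\|_{L^2_x}^2\, \d s \geq M$). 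The most delicate point is the Parseval identity for $M$: the family $\{e_k\}_k$ spans only $\overline{\mathrm{Range}(\cQ^{1/2})}$, so any non-divergence-free component of $f$ (lying in $\ker \cQ^{1/2}$, annihilated by the projector $\Pi$ appearing in \eqref{eq:covariance-projector}) contributes nothing to either side of the identity; once this subtlety is recognized, the rest of the argument amounts to routine bookkeeping.
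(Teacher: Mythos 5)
Your proposal is correct and follows essentially the same route as the paper: the quadratic variation of $N$ is computed exactly as in the paper's proof, via the chaos expansion \eqref{eq:noise-series-representation}, the identity $\sum_k \sigma_k(x)\otimes\sigma_k(x)=Q(0)$ from \eqref{eq:covariance-series-representation}, and Lemma \ref{lem:Q_0}, with BDG and the standard density/localization procedure handling the rest. The only difference is that for $M$ the paper simply cites \cite[Theorem 4.12]{DaPZab}, whereas you spell out the underlying Parseval argument with the CONS $\{e_k\}=\{\cQ^{-1/2}\sigma_k\}$ of $\overline{\mathrm{Range}(\cQ^{1/2})}$ — a correct and welcome elaboration of the same classical proof.
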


\begin{proof}
The estimates for $M$ are classical and can be found for instance in \cite[Theorem 4.12]{DaPZab}; we only focus on the identity for $[N]_t$, as the moment bounds then readily follow from an application of Burkholder-Davis-Gundy (BDG) inequality.
By the representation \eqref{eq:noise-series-representation} and classical rules of stochastic calculus on Hilbert spaces, it holds
\begin{align*}
	[N]_t = \bigg[\sum_k \int_0^\cdot \sigma_k\cdot f_s\, \d B^k_s \bigg]_t
	= \sum_k \bigg[\int_0^\cdot \sigma_k\cdot f_s\, \d B^k_s \bigg]_t
	= \int_0^t \sum_k \| \sigma_k\cdot f_s\|_{L^2_x}^2\, \d s;
\end{align*}
for any fixed $s$, by \eqref{eq:covariance-series-representation} and translation invariance, we have
\begin{align*}
	\sum_k \| \sigma_k\cdot f_s\|_{L^2_x}^2
	= \sum_k \int_{\R^d} f_s(x)^T \sigma_k(x)\otimes \sigma_k(x) f_s(x)\, \d x
	= \int_{\R^d} f_s(x)^T Q(0)  f_s(x)\, \d x.
\end{align*}
Combined with \eqref{eq:Q_0}, this yields the claim.
Finally, the density and localization arguments are classical, see \cite[Section 4.2]{DaPZab} for more details.
\end{proof}

\begin{remark}\label{rem:stoch-integr-basic}
In view of Lemma \ref{lem:stoch-integr-basic} and Lemma \ref{lem:basic-properties-covariance}-ii), $\int_0^\cdot \langle f_s, \d W_s\rangle$ is a well defined local martingale for any measure-valued process $f$ such that $\P$-a.s. $\int_0^T \| f_s\|_{TV}^2\, \d s <\infty$ for all $T>0$.
\end{remark}

\subsection{Solution concepts and equations in It\^o form}\label{subsec:solution-concepts}

As already mentioned in the introduction, in order to study our SPDEs, it is convenient to rewrite them in the corresponding It\^o form.
A general class of equations, which includes those of our interest but also several useful approximations of them, is given by
\begin{equation}\label{eq:abstract-transport-stratonovich}
	\d \rho + b \cdot\nabla \rho\, \d t + \circ \d W\cdot\nabla \rho = (f- A \rho)\, \d t
\end{equation}
where $A$ is a deterministic self-adjoint positive operator on its domain $D(A)$, such that $A$ maps $\cS(\R^d)$ into itself (think of $A= \nu \Lambda^\beta - \eps \Delta$), while $f$ and $b$ are predictable stochastic processes, taking values in suitable Lebesgue spaces, $b$ divergence free; $W$ is a $Q$-Brownian motion, $Q$ satisfying Assumption \ref{ass:covariance-basic}.
In particular, we can always find a collection $\{\sigma_k\}_k$ of smooth, divergence free vector fields such that \eqref{eq:noise-series-representation} holds, so that the Stratonovich term can be (formally) interpreted in integral form as
\begin{equation*}
	\int_0^t \circ \d W_s\cdot\nabla \rho_s
	= \sum_{k\in\N} \int_0^t \sigma_k\cdot\nabla \rho_s \circ \d B^k_s.
\end{equation*}
If we assumed $\rho$ and $W$ to be smooth for the moment, then standard calculus rules relating It\^o and Stratonovich integrals would suggest the It\^o-Stratonovich corrector to be
\begin{align*}
	\frac{1}{2}\sum_k \d [ \sigma_k\cdot\nabla \rho, B^k]_t
	& = -\frac{1}{2} \sum_{k,j} \sigma_k\cdot\nabla (\sigma_j\cdot \nabla \rho_t)\, \d [B^k,B^j]_t\\
	& = -\frac{1}{2} \sum_k \nabla \cdot (\sigma_k\otimes \sigma_k \nabla \rho_t)\, \d t
	= -\frac{1}{2} \nabla\cdot (Q(0)\nabla \rho_t)\, \d t
	= -\kappa\Delta \rho_t\, \d t,
\end{align*}
where in the last passage we applied \eqref{eq:Q_0};
thus the Stratonovich SPDE \eqref{eq:abstract-transport-stratonovich} should have equivalent It\^o form given by
\begin{equation}\label{eq:abstract-transport-ito}
	\d \rho + b \cdot\nabla \rho\, \d t + \d W\cdot\nabla \rho = (f- A \rho + \kappa \Delta \rho)\, \d t.
\end{equation}
However, since in general both $W$ and $\rho$ will not be smooth and the sum in \eqref{eq:noise-series-representation} can be genuinely infinite dimensional, it is far from obvious that the equations \eqref{eq:abstract-transport-stratonovich} and \eqref{eq:abstract-transport-ito} are equivalent.
Remarkably, we have not found many rigorous results concerning this equivalence; this is because many authors either directly work with fairly regular noise (e.g. given by a \textit{finite} sum of $\sigma_k$), or justify the passage from Stratonovich to It\^o more heuristically and then work directly with the latter.
The only reference we are aware of, providing general criteria for equivalence of It\^o and Stratonovich SPDEs with infinite dimensional transport noise, is \cite[Section 2.3]{Goodair}, which however does not fit our setting.

To provide rigorous statements, we need to introduce some conventions and definitions.

Recall that a real-valued process $(X_t)_{t\in [0,T]}$ is a continuous semimartingale if it admits the canonical decomposition $X=X_0+V+M$, where $V$ is a continuous process with paths of bounded variation and $M$ is a local martingale on $[0,T]$.
Let $\tau$ be a $\P$-a.s. strictly positive stopping time; we will say that a sequence of semimartingales $\{X^n\}_n$ converge to $X$ on $[0,\tau)$ if there exists a common increasing sequence of stopping times $\{\tau^m\}_m$, $\tau^m\uparrow \tau$, such that
\begin{equation*}
	\lim_{n\to\infty} \E\bigg[|X^n_0-X_0|^2 + \Big( \int_0^{\tau_m} | \d V^n_s-\d V_s| \Big)^2 + [ M^n-M]_{\tau^m}\bigg]=0
	\quad \forall\, m\in \N.
\end{equation*}
In other words, $X^n\to X$ on $[0,\tau)$ if $X^n_0\to X_0$ in $L^2_\Theta$ and, for any $m\in \N$, the sequence of stopped semimartingales $\{X^n_{\cdot\wedge \tau_m}-X^n_0\}_n$ is such that $X^n_{\cdot\wedge \tau_m}-X^n_0\to X_{\cdot\wedge \tau_m}-X_0$ in the $\mathcal{H}^2$-norm, as defined in \cite[Chapter V.2]{Protter}.
Let us point out that in particular, this implies that $X^n$ converge to $X$ in probability, uniformly on compact sets on $[0,\tau)$.

We can now rigorously define weak solutions to the Stratonovich SPDE \eqref{eq:abstract-transport-stratonovich}.

\begin{definition}\label{defn:solution-abstract-stratonovich}
Let $(\Theta,\mathbb F, \mathbb F_t, \P)$ be a filtered probability space, $W$ be an $\mathbb F_t$-Brownian noise with covariance $Q$ satisfying Assumption \ref{eq:covariance-basic}.
Let $p\in [1,\infty]$, $p'$ denotes its conjugate, $f$ and $b$ be progressively measurable processes such that $\P$-a.s. $f\in L^1_tL^1_\loc$, $b\in L^1_t L^{p'}_\loc$ and $b$ is divergence free.
Let $\rho_0\in L^1_\loc$ be $\mathbb{F}_0$-measurable.
A progressively measurable $L^p_x$-valued process $\rho$, defined up to some stopping time $\tau$, is a \emph{Stratonovich weak solution} to \eqref{eq:abstract-transport-stratonovich} if the following hold:
\begin{itemize}
\item[a)] $\rho$ is weakly continuous and $\P$-a.s. it holds $\rho\in L^\infty([0,\tau); L^p_x)$;
\item[b)] for any $\varphi\in C_c^\infty$, $t\mapsto \langle \varphi, \rho_t\rangle$ is a continuous semimartingale on $[0,\tau)$;
\item[c)] for any $\varphi\in C^\infty_c$, it holds
\begin{equation}\label{eq:solution-abstract-stratonovich}\begin{split}
	\< \rho_\cdot,\varphi\> - \< \rho_0,\varphi\>
	& - \int_0^\cdot \big( \<\rho_s, b_s\cdot \nabla\varphi -A \varphi\>  + \< f_s, \varphi\> \big)\, \d s\\
	& = \lim_{n\to\infty} \sum_{k\leq n} \int_0^\cdot \<\sigma_k\cdot\nabla\varphi, \rho_s\> \circ \d B^k_s
\end{split}\end{equation}
where convergence holds in the sense of semimartingales on $[0,\tau)$.
\end{itemize}
\end{definition}

\begin{remark}\label{rem:solution-abstract-stratonovich}
The passage from \eqref{eq:abstract-transport-stratonovich} to \eqref{eq:solution-abstract-stratonovich} is formally justified by integration by parts, using the fact that both $b$ and $\sigma_k$ are divergence free vector fields.
Note that Definition \ref{defn:solution-abstract-stratonovich} is meaningful. Indeed, under condition a), $\P$-a.s. $b\,\rho\in L^1 ([0,\tau);L^1_\loc)$ and $A\varphi\in \cS$, so that the integral on the l.h.s. of \eqref{eq:solution-abstract-stratonovich} is meaningful in the Lebesgue sense and defines a bounded variation process on $[0,\tau)$.
On the other hand, by construction $\sigma_k\cdot\nabla\varphi\in C^\infty_c$ for all $k\in\N$, so that by condition b) the process $t\mapsto \< \sigma_k\cdot\nabla\varphi, \rho_t \>$ is a continuous semimartingale; the same thus holds for the Stratonovich integral $\int_0^\cdot \langle \sigma_k\cdot\nabla\varphi,\rho_s\rangle\circ \d B^k_s$, for any fixed $k$ (thus also for any finite sum of them as on the r.h.s. of \eqref{eq:solution-abstract-stratonovich}).
\end{remark}

\begin{definition}\label{defn:solution-abstract-ito}
Let $(\Theta,\mathbb F, \mathbb F_t, \P)$, $W$, $p$, $p'$, $f$, $b$ and $\rho_0$ be as in Definition \ref{defn:solution-abstract-stratonovich}.
A progressively measurable $L^p_x$-valued process $\rho$, defined up to some stopping time $\tau$, is an \emph{It\^o weak solution} to \eqref{eq:abstract-transport-stratonovich} if the following hold:
\begin{itemize}
\item[a)] $\rho$ is weakly continuous and $\P$-a.s. it holds $\rho\in L^\infty([0,\tau); L^p_x)$;
\item[b)] for any $\varphi\in C^\infty_c$, $\P$-a.s. it holds
\begin{equation}\label{eq:solution-abstract-ito} \begin{split}
	\< \rho_{\cdot\wedge \tau},\varphi\> = \< \rho_0,\varphi\>
	& + \int_0^{\cdot\wedge\tau} \big( \<\rho_s, b_s\cdot \nabla\varphi -A \varphi + \kappa\Delta\varphi\>  + \< f_s, \varphi\> \big)\, \d s \\
	& + \int_0^{\cdot\wedge\tau} \<\rho_s \nabla\varphi, \d W_s\>.
\end{split}\end{equation}
\end{itemize}
\end{definition}

\begin{remark}\label{rem:solution-abstract-ito}
Definition \ref{defn:solution-abstract-ito} is meaningful; indeed, as in Remark \ref{rem:solution-abstract-stratonovich}, the first integral in \eqref{eq:solution-abstract-ito} is well defined in the Lebesgue sense and defines a continuous process of bounded variation on $[0,\tau)$; instead for the stochastic integral, we observe that by our assumption $\rho\nabla\varphi\in L^\infty([0,\tau);L^1_x)$ and thus conclude that $\int_0^{\cdot\wedge \tau} \< \rho_s\nabla\varphi, \d W_s \>$ is a local martingale by Remark \ref{rem:stoch-integr-basic}.
\end{remark}

We can finally relate the two concepts of solutions introduced above.

\begin{proposition}\label{prop:equivalence-ito-stratonovich}
Let $(\Theta,\mathbb F, \mathbb F_t, \P)$, $W$, $p$, $p'$, $f$, $b$ and $\rho_0$ be as in Definition \ref{defn:solution-abstract-stratonovich}, $\rho$ be a progressively measurable $L^p_x$-valued process, defined up to some stopping time $\tau$.
Suppose that at least one of the following conditions holds:
\begin{itemize}
\item[i)] $\P$-a.s. $\rho\in L^1 \big([0,\tau); W^{1,1}_\loc \big)$;
\item[ii)] $Q\in W^{2,\infty}_x$.
\end{itemize}
Then $\rho$ satisfies Definition \ref{defn:solution-abstract-stratonovich} if and only if it satisfies Definition \ref{defn:solution-abstract-ito}.
\end{proposition}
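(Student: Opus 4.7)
The plan is to fix $\varphi \in C^\infty_c$ and exploit that, since each $\sigma_k$ is smooth and divergence-free, the function $\sigma_k \cdot \nabla \varphi$ is again an admissible test function in either solution concept. Setting $X^k_t := \langle \sigma_k \cdot \nabla \varphi, \rho_t\rangle$, hypothesis (b) of Definition \ref{defn:solution-abstract-stratonovich} (or directly \eqref{eq:solution-abstract-ito}) ensures that $X^k$ is a continuous real semimartingale on $[0,\tau)$, and the classical Stratonovich--It\^o conversion reads
\begin{equation*}
    \int_0^t X^k_s \circ \d B^k_s = \int_0^t X^k_s\, \d B^k_s + \tfrac{1}{2}\,[X^k, B^k]_t.
\end{equation*}
The central point is that, in both directions of the equivalence, the cross-variation $[X^k, B^k]$ can be computed from the \emph{same} equation applied with test function $\sigma_k \cdot \nabla \varphi$: combined with the series expansion \eqref{eq:noise-series-representation} and orthogonality of the $\{B^j\}_j$, this identifies the martingale part of $X^k$ as $\sum_j \int_0^\cdot \langle \rho_s, \sigma_j \cdot \nabla(\sigma_k \cdot \nabla \varphi)\rangle\, \d B^j_s$, from which
\begin{equation*}
    [X^k, B^k]_t = \int_0^{t\wedge \tau} \langle \rho_s, \sigma_k \cdot \nabla(\sigma_k \cdot \nabla \varphi)\rangle\, \d s.
\end{equation*}

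To sum these correctors over $k$, I would exploit two pointwise identities. Expanding $\sigma_k \cdot \nabla(\sigma_k \cdot \nabla \varphi) = (\sigma_k \cdot \nabla \sigma_k^j)\,\partial_j \varphi + \sigma_k^i \sigma_k^j\,\partial_i \partial_j \varphi$, the first term vanishes after summation in $k$: differentiating $\sum_k \sigma_k^i(x) \sigma_k^j(x) = Q(0)^{ij} = 2\kappa \delta^{ij}$ from Lemma \ref{lem:Q_0} and using $\nabla \cdot \sigma_k = 0$ gives $\sum_k \sigma_k \cdot \nabla \sigma_k^j = 0$, while the second term evaluates to $2\kappa \Delta \varphi(x)$ by the same Lemma \ref{lem:Q_0}. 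Thus
\begin{equation*}
    \sum_k \sigma_k \cdot \nabla(\sigma_k \cdot \nabla \varphi)(x) = 2\kappa\, \Delta \varphi(x) \qquad \forall\, x \in \R^d,
\end{equation*}
so after the $1/2$ factor the total Stratonovich correction is exactly $\kappa \int_0^{t\wedge \tau} \langle \rho_s, \Delta \varphi\rangle\, \d s$, matching the additional $\kappa \Delta \rho$ drift distinguishing \eqref{eq:solution-abstract-ito} from \eqref{eq:solution-abstract-stratonovich}.

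The main obstacle is to rigorously justify both the interchange of summation with the pairing against $\rho_s$, and the convergence of the partial sums in the $\mathcal{H}^2$ sense underlying semimartingale convergence in \eqref{eq:solution-abstract-stratonovich}; this is where the alternatives (i) and (ii) enter. Under (ii), Proposition \ref{prop:regularity-Q-sigma} supplies $\sup_x \sum_k |D_x \sigma_k(x)|^2 < \infty$ in addition to the uniform bound on $\sum_k |\sigma_k(x)|^2$, so by Cauchy--Schwarz the series $\sum_k \sigma_k \cdot \nabla(\sigma_k \cdot \nabla \varphi)$ converges absolutely in $L^\infty$ on $\mathrm{supp}(\varphi)$ and may be passed inside $\langle \rho_s, \cdot\rangle$ with $\rho_s \in L^p_x$. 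Under (i), one first integrates by parts using $\nabla \cdot \sigma_k = 0$ to move a derivative onto $\rho_s \in W^{1,1}_\loc$, after which the required summation reduces to $\sum_k \sigma_k^i(x) \sigma_k^j(x) = 2\kappa \delta^{ij}$, which converges locally uniformly by Lemma \ref{lem:covariance-series-representation}. In either case, the tails of the It\^o martingales $\sum_{k > n} \int_0^\cdot \langle \sigma_k \cdot \nabla \varphi, \rho_s\rangle\, \d B^k_s$ are controlled in $\mathcal{H}^2$ on $[0,\tau_m]$ via the moment bound in Lemma \ref{lem:stoch-integr-basic} along an exhausting sequence of stopping times $\tau_m \uparrow \tau$ on which $\rho$ is bounded in $L^p_x$, completing the verification in both directions.
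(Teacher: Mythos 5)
Your proposal is correct and follows essentially the same route as the paper's proof in Appendix B: identify the cross-variation $[\langle\sigma_k\cdot\nabla\varphi,\rho\rangle,B^k]$ by testing the equation against $\sigma_k\cdot\nabla\varphi$, sum the correctors using $\sum_k\sigma_k\otimes\sigma_k=Q(0)=2\kappa I_d$ together with $\sum_k\sigma_k\cdot\nabla\sigma_k=0$ (the paper's splitting $V^n=V^{n,1}+V^{n,2}$ under (ii) is exactly your expansion of $\sigma_k\cdot\nabla(\sigma_k\cdot\nabla\varphi)$), and invoke (i) to integrate by parts onto $\rho\in W^{1,1}_{\rm loc}$ or (ii) via Proposition \ref{prop:regularity-Q-sigma} to control the differentiated series, with the martingale tails handled in $\mathcal H^2$ on stopped intervals as in the paper.
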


We postpone the proof to Appendix \ref{app:equiv-ito-strat}, in order not to interrupt the flow of the presentation.

\begin{remark}
We expect that one can interpolate between the two sufficient conditions from Proposition \ref{prop:equivalence-ito-stratonovich}; namely, that in order for Definitions \ref{defn:solution-abstract-stratonovich} and \ref{defn:solution-abstract-ito} to be equivalent, it suffices to exhibit some $\theta\in [0,1]$ such that $\P$-a.s. $\rho\in L^1 \big([0,\tau);W^{\theta,1}_\loc \big)$ and $Q\in W^{2(1-\theta),\infty}_x$. We leave this problem for future investigations.
\end{remark}

For the SPDEs of our interest in general neither condition \textit{i}) nor \textit{ii}) from Proposition \ref{prop:equivalence-ito-stratonovich} will be met.
Observe however that, in order to be meaningful, Definition \ref{defn:solution-abstract-ito} does not impose any of them; in other words, Definition \ref{defn:solution-abstract-ito} is often weaker than Definition \ref{defn:solution-abstract-stratonovich}.

For this reason, whenever referring to weak solutions to either System \ref{system:logEuler}, System \ref{system:hypoNS} or SPDEs \eqref{eq:intro-abstract-spde} and \eqref{eq:intro-linear-spde}, from now on we will adopt the convention that, unless specified otherwise, these will be solutions in the \emph{It\^o sense}; namely, $\circ \d W\cdot\nabla \rho$ will be systematically interpreted as $\d W\cdot\nabla \rho - \kappa \Delta \rho\, \d t $.

We now specialize the general solution concept from Definition \ref{defn:solution-abstract-ito} to the specific cases of interest \eqref{eq:intro-abstract-spde} and \eqref{eq:intro-linear-spde}, where it is natural to impose some slightly different, more physical, constraints on the solutions.
Below we always assume the existence of an underlying filtered probability space $(\Theta,\F,\F_t,\P)$, on which there exists an $\F_t$-Brownian noise with covariance function $Q$ satisfying Assumption \ref{eq:covariance-basic}.

\begin{definition}\label{defn:linear-weak-solution}
Let $p\in [1,\infty]$, $f$ and $\rho_0$ be as in Definition \ref{defn:solution-abstract-stratonovich}, $\tau$ be a stopping time.
An $\mathbb F_t$-progressively measurable $L^p_x$-valued process $\rho$, defined on $[0,\tau)$, is a solution to SPDE \eqref{eq:intro-linear-spde} if it is weakly continuous, $\P$-a.s. it holds $\rho\in L^2([0,\tau);L^p_x)$ and for any $\varphi\in C^\infty_c$, $\P$-a.s. on $[0,\tau)$ it holds
\begin{equation}\label{eq:defn-linear-weak-solution}
	\< \rho_\cdot,\varphi\>
	= \< \rho_0,\varphi\> + \int_0^\cdot \<\rho_s \nabla\varphi,\d W_s\> + \int_0^\cdot \big[ \big\langle \rho_s, (\kappa \Delta -\nu \Lambda^\beta) \varphi \big\rangle + \< f_s,\varphi \>\big]\, \d s.
\end{equation}
\end{definition}

\begin{remark}\label{rem:defn-linear-weak-solution}
Similarly to Remark \ref{rem:solution-abstract-ito}, we see that the integrals in \eqref{eq:defn-linear-weak-solution} are meaningful, respectively in the It\^o and in the Lebesgue sense.
Depending on the precise value of $p$ and additional information on the solution $\rho$, the set of test functions $\varphi$ such that identity \eqref{eq:defn-linear-weak-solution} holds can be expanded by classical density arguments.
For instance for $p=1$, one can extend this to any $\varphi\in C^\infty_b$;
indeed, it still holds $\Lambda^\beta \varphi, \Delta\varphi\in C^\infty_b$ and so all pairings appearing in \eqref{eq:defn-linear-weak-solution} are meaningful.
\end{remark}

Next we restrict ourselves to $d=2$; recall the nonlinear SPDE we are interested in, generalizing both System \ref{system:logEuler} and \ref{system:hypoNS}, given by
\begin{equation}\label{eq:SPDE-nonlinear-strat}
	\d \omega + (\cR\, \curl^{-1}\omega)\cdot\nabla\omega \, \d t+ \circ \d W\cdot\nabla\omega
	= (f- \nu\Lambda^\beta \omega)\, \d t.
\end{equation}
We will mostly work with deterministic data $\omega_0$ and $f$, but let us give a slightly more general definition.

\begin{definition}\label{defn:solution-nonlinear-SPDE}
Let $f$ be a progressively measurable process with paths in $L^1_t L^1_\loc$, $\cR$ be a Fourier multiplier associated to $r\in L^\infty_x$; let $\omega_0$ be $\F_0$-measurable with $\P$-a.s. values in $L^1_x\cap L^2_x \cap \dot H^{-1}_x$, $\tau$ be a stopping time.
A progressively measurable process $\omega$, with values in $L^1_x\cap L^2_x\cap \dot H^{-1}_x$, is a weak solution to \eqref{eq:SPDE-nonlinear-strat} on $[0,\tau)$ if it is weakly continuous, $\P$-a.s. $\omega\in L^2 \big([0,\tau); L^1_x	\cap L^2_x \cap \dot{H}^{-1}_x \big)$, and for any $\varphi\in C_c^\infty$, $\P$-a.s. on $[0,\tau)$ it holds
\begin{equation}\label{eq:solution-nonlinear-SPDE}\begin{split}
	\< \omega_{t\wedge \tau},\varphi\>
	& = \< \omega_0,\varphi\> + \int_0^{t\wedge \tau} \<\omega_s \nabla\varphi,\d W_s\> + \int_0^{t\wedge \tau} \<\omega_s, (\cR\, \curl^{-1}\omega_s)\cdot\nabla\varphi\>\,\d s \\
  	& \quad + \int_0^{t\wedge \tau} \big[ \big\langle\omega_s, (\kappa \Delta -\nu \Lambda^\beta) \varphi \big\rangle + \< f_s,\varphi \>\big]\, \d s.
\end{split}\end{equation}
\end{definition}

\begin{remark}
Roughly speaking, Definition \ref{defn:solution-nonlinear-SPDE} coincides with Definition \ref{defn:solution-abstract-ito} for $b=\cR\,\curl^{-1}\omega$; we are however specifying that we work exclusively with solutions with \emph{finite energy and enstrophy}, so that $\curl^{-1} \omega$ is a well defined element of $L^2_x$ at all times.
Properties of Fourier multipliers $\cR$ then readily imply that $\cR\, \curl^{-1}\omega \in L^2([0,\tau);L^2_x)$, thus arguing as before one can verify that all integrals appearing in \eqref{eq:solution-nonlinear-SPDE} are meaningful.
\end{remark}

Finally, all classical concepts such as \emph{strong solutions}, \emph{pathwise uniqueness} and \emph{uniqueness in law} can be readapted in the context of Definitions \ref{defn:linear-weak-solution} and \ref{defn:solution-nonlinear-SPDE}.
In particular, we will say that a solution is \emph{strong} if it is adapted to the filtration generated by all the data of the problem, which in this case can be e.g. $(\rho_0, W, f, b)$.

\section{Proof of Theorem \ref{thm:main-linear-intro}}\label{sec:wellposedness-linear}

This section is devoted to the well-posedness of the linear SPDE \eqref{eq:intro-linear-spde} on $\R^d$, $d\geq 2$.
We start by showing strong existence in Section \ref{subsec:existence-linear}, by standard compactness arguments, and then pass to prove pathwise uniqueness in Section \ref{subsec:uniqueness-linear}, by studying the evolution of energy spectrum in frequency space.

\subsection{A priori estimates and strong existence}\label{subsec:existence-linear}

Throughout this section, we fix a filtered probability space $(\Theta,\mathbb F,\mathbb F_t, \P)$ and assume that the noise $W$ is $\mathbb F_t$-adapted.

In the linear setting, the proof of strong existence is based on standard approximations and compactness arguments.
To this end, we start by considering a smooth noise $W$, in the sense of Remark \ref{rem:smooth-noise}, so that well-posedness of the SPDE \eqref{eq:intro-linear-spde} with smooth coefficients is classical, cf. \cite[Chapter 6]{Kunita}.
Our first aim is to derive a priori estimates for solutions, which will then allow us to relax the regularity assumptions on the noise and the coefficients. In the following result we allow the presence of a smooth drift part $b$.

\begin{proposition}\label{prop:apriori-estimates-linear}
Let $d\geq 2$, $\rho_0:\R^d\to\R$ be a deterministic smooth function, $f:\Theta\times \R_{\geq 0}\times \R^d\to \R$, $b:\Theta\times \R_{\geq 0} \times \R^d\to \R^d$ be smooth predictable processes;
additionally assume that $b$ is spatially divergence free and that $W$ is smooth.
For any $\nu\geq 0$, $\eps>0$ and $\beta\in (0,2)$, consider the unique strong smooth solution $\rho$ to
\begin{equation}\label{eq:linear-spde-apriori}
	\d \rho + [ b\, \d t + \circ \d W]\cdot\nabla \rho
	= \big[ f + \nu \eps \Delta \rho- \nu \Lambda^\beta \rho\big] \d t  .
\end{equation}
Then for any $p\in [1,\infty]$ we have the $\P$-a.s. pathwise estimate
\begin{equation}\label{eq:apriori-linear}
	\| \rho_t\|_{L^p_x}
	\leq \| \rho_0\|_{L^p_x} + \int_0^t \| f_s\|_{L^p_x} \,\d s
	\quad \forall\, t\geq 0.
\end{equation}
\end{proposition}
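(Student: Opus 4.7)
The strategy is to differentiate $\|\rho_t\|_{L^p_x}^p$ via It\^o's formula applied pointwise in $x$, exploiting the divergence-free structure of both $b$ and each mode $\sigma_k$ of $W$ to eliminate all transport contributions, while the dissipative terms enter with the correct sign and H\"older handles the forcing. Under the stated smoothness hypotheses the SPDE \eqref{eq:linear-spde-apriori} admits a unique smooth strong solution with adequate spatial decay, see \cite[Ch.~6]{Kunita}. Since $Q\in C^\infty_b\subset W^{2,\infty}_x$, Proposition \ref{prop:equivalence-ito-stratonovich} permits me to rewrite \eqref{eq:linear-spde-apriori} in the equivalent It\^o form
\begin{equation*}
\d\rho + b\cdot\nabla\rho\,\d t + \d W\cdot\nabla\rho = \bigl[f + (\nu\eps + \kappa)\Delta\rho - \nu\Lambda^\beta\rho\bigr]\,\d t,
\end{equation*}
with $\kappa$ the It\^o--Stratonovich corrector given by \eqref{eq:Q_0}.

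Working first with $p\in[2,\infty)$, I would apply It\^o's formula to $|\rho_t(x)|^p$ pointwise in $x$, use $Q(0)=2\kappa I_d$ from Lemma \ref{lem:Q_0} to compute the quadratic-variation correction as $p(p-1)\kappa|\nabla\rho|^2\,\d t$, and then integrate in $x$ using stochastic Fubini mode-by-mode. Four cancellations and sign properties close the estimate: the drift transport $-p\int|\rho|^{p-2}\rho\,b\cdot\nabla\rho\,\d x$ and the martingale contribution $-p\int|\rho|^{p-2}\rho\,(\d W\cdot\nabla\rho)\,\d x$ vanish by $\nabla\cdot b = 0 = \nabla\cdot\sigma_k$ and integration by parts (each term rewrites as $\int (\cdot)\cdot\nabla(|\rho|^p)\,\d x = 0$); the It\^o correction cancels exactly with the $(\nu\eps+\kappa)\int|\rho|^{p-2}\rho\,\Delta\rho\,\d x$ term, leaving the non-positive residue $-p(p-1)\nu\eps\int|\rho|^{p-2}|\nabla\rho|^2\,\d x$; formula \eqref{eq:fractional-laplacian2} with $g=|\rho|^{p-2}\rho$ together with monotonicity of $s\mapsto|s|^{p-2}s$ forces the correct sign for the fractional piece; and H\"older bounds the forcing contribution by $p\|\rho_t\|_{L^p_x}^{p-1}\|f_t\|_{L^p_x}$. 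Dividing by $p\|\rho_t\|_{L^p_x}^{p-1}$ (after a standard $(\cdot + \delta)^{1/p}$ regularization to avoid degeneracy) and integrating in time then yields \eqref{eq:apriori-linear}.

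The edge cases $p\in[1,2)$ and $p=\infty$ require only mild modifications. For $p\in[1,2)$, $|s|^p$ fails to be $C^2$ at the origin, so I would regularize by $\phi_\eps(s)=(s^2+\eps)^{p/2}$ and pass to the limit $\eps\to 0$ by dominated convergence; crucially $\phi_\eps'$ is odd and $\phi_\eps'/s$ is nondecreasing in $|s|$, so both the divergence-free cancellations and the positivity of the fractional-Laplacian contribution persist. For $p=\infty$, the smoothness and spatial decay of $\rho_t$ and $f_t$ give $\|\rho_t\|_{L^p_x}\to\|\rho_t\|_{L^\infty_x}$ and analogously for $f_t$ as $p\to\infty$, letting the bound pass to the limit. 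The one genuinely delicate step is the stochastic-Fubini interchange with the infinite series representation of $W$; I would handle it by truncating to the first $N$ modes (where every manipulation is elementary It\^o calculus) and then passing to $N\to\infty$ using the uniform summability $\sup_x\sum_k(|\sigma_k(x)|^2+|D\sigma_k(x)|^2)<\infty$ supplied by Proposition \ref{prop:regularity-Q-sigma}, together with the spatial regularity and decay of $\rho$.
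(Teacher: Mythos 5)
Your proof is correct, but it takes a different computational route from the paper's. The paper never passes to the It\^o form: for $\nu=0$ it invokes the stochastic-flow representation $\rho_t(X^x_t)=\rho_0(x)+\int_0^t f_s(X^x_s)\,\d s$ from \cite[Section 6.1]{Kunita} and concludes by measure preservation of the flow, and for $\nu>0$ it computes $\d\int h(\rho_t)\,\d x$ for smooth convex $h$ directly via the \emph{Stratonovich} chain rule, under which the transport terms $\langle h'(\rho),\sigma_k\cdot\nabla\rho\rangle=\langle\sigma_k,\nabla h(\rho)\rangle=0$ vanish with no corrector ever appearing. Your It\^o-form computation reproduces the same outcome: the cancellation you verify by hand between the quadratic-variation term $p(p-1)\kappa\int|\rho|^{p-2}|\nabla\rho|^2$ and the contribution of the It\^o--Stratonovich corrector $\kappa\Delta\rho$ is exactly what the Stratonovich calculus delivers for free. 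What your approach buys is uniformity ($\nu=0$ and $\nu>0$ are treated identically, with the residue $-p(p-1)\nu\eps\int|\rho|^{p-2}|\nabla\rho|^2$ simply vanishing when $\nu=0$) and an explicit handling of the mode-truncation/stochastic-Fubini step that the paper glosses over; what it costs is the extra bookkeeping of the corrector and the pointwise It\^o formula. The treatment of the fractional Laplacian via \eqref{eq:fractional-laplacian2} and monotonicity, the H\"older bound on the forcing, the convex regularization of $|s|^p$, and the limit $p\to\infty$ via Lemma \ref{lem:Lp-norms-appendix} all coincide with the paper's argument.

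One small inaccuracy in your $p\in[1,2)$ discussion: for $\phi_\eps(s)=(s^2+\eps)^{p/2}$ the ratio $\phi_\eps'(s)/s=p(s^2+\eps)^{p/2-1}$ is \emph{decreasing} in $|s|$ when $p<2$, so the property you cite is false as stated. Fortunately it is also not what you need: the sign of the fractional-Laplacian term only requires $(\phi_\eps'(v)-\phi_\eps'(z))(v-z)\geq 0$, i.e.\ convexity of $\phi_\eps$, which does hold since $\phi_\eps''(s)=p(s^2+\eps)^{p/2-2}[(p-1)s^2+\eps]\geq 0$ for $p\geq 1$; and the divergence-free cancellations only use the chain rule $\phi_\eps'(\rho)\nabla\rho=\nabla(\phi_\eps(\rho))$. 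So the argument closes, but the justification should be corrected.
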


\begin{proof}

Let us first consider the case $\nu=0$; here well-posedness of \eqref{eq:linear-spde-apriori} comes from the method of stochastic characteristics, as illustrated in \cite[Section 6.1]{Kunita}.
In particular, by applying Theorem 6.1.9 therein for  $(F^1,\ldots, F^d)=\int_0^t b_s\, \d s + W_t$, $F^{d+1}=0$ and $F^{d+2}= \int_0^t f_s\, \d s$, we can deduce that
\begin{equation}\label{eq:linear-estimates-representation}
	\rho_t(X_t^x) = \rho_0(x) + \int_0^t f_s(X_s^x)\,\d s,
\end{equation}
where $X_t^x$ is the stochastic flow associated to the Stratonovich SDE
\begin{equation}\label{eq:SDE}
	\d X_t^x = b_t(X_t^x)\,\d t + \sum_k \sigma_k(X_t^x)\circ \d B^k_t, \quad X^x_0=x.
\end{equation}
Estimate \eqref{eq:apriori-linear} then follows by taking the $L^p_x$-norm on both sides of \eqref{eq:linear-estimates-representation} and using the fact that $\P$-a.s. the flow $x\mapsto X^x_t$ preserves the Lebesgue measure for all $t\geq 0$, which comes from the divergence free assumption on $b$ and $\sigma_k$. More generally, in this setting the Jacobian $J_t^x = \det (D_x X^x_t)$ is given by the formula
\begin{equation*}
	J_t^x = \exp\bigg(\int_0^t \div b_r(X_r^x)\, \d r + \sum_{k\in \N} \int_0^t \sigma_k(X_r^x) \circ \d B^k_r \bigg);
\end{equation*}
this can be established by applying \cite[Lemma 4.3.1]{Kunita} for $\pi\equiv 1$, or alternatively going through the same argument as in \cite[Lemma 3.1]{Zhang2010}, based on time mollification and the Wong--Zakai theorem.

We now pass to the case $\nu>0$; here, although well-posedness of \eqref{eq:linear-spde-apriori} is standard, we are not aware of a precise reference in the literature.
Thus let us quickly mention three alternative ways to establish it:
i) use monotonicity arguments, like those first developed in \cite{Pardoux}, see \cite{LiuRoc} for a general exposition and \cite{FlGaLu21,RoShZh} in the case of transport noise;
ii) perform a flow transformation to reduce the SPDE to a PDE with random coefficients and show uniqueness of the latter, in the style of \cite[Lemma 6.2.3]{Kunita};
iii) identify Stratonovich noise with a geometric rough path, and treat the SPDE pathwise by unbounded rough drivers techniques as in \cite{DGHT}.

Smoothness of solutions can then be established by classical arguments, either by exploiting the associated mild formulation and the regularizing properties of the heat kernel, or iteratively looking at the equations satisfied by the partial derivatives $\partial^\alpha \rho$ for $\alpha\in \N^d$;
similarly, one can establish their decay at infinity, e.g. by looking at the equation satisfied by $x\mapsto (1+|x|^2)^m \rho_t(x)$ and using monotonicity arguments to get $L^2_x$-bounds.
Overall, we can and will assume that our solution $\rho$ take values $H^s_x$ for all $s$, so that it belongs to $L^\infty_x$, and decays polynomially fast at infinity, so that it belongs to $L^1_x$.

Let us now compute $\d \int h(\rho)\, \d x$ for a smooth convex function $h:\R\to\R$ such that $h(0)=0$; by the previous observations, $h(\rho_t)\in L^1_x$ as well.
By the divergence-free assumption on $b$, it holds
\begin{equation*}
	\langle b\cdot\nabla \rho, h'(\rho)\rangle
	= \langle b, \nabla ( h(\rho))\rangle
	= -\langle \div b, h(\rho)\rangle = 0,
\end{equation*}
similarly for $b$ replaced by $\sigma_k$; together with the Stratonovich chain rule, this implies
\begin{equation*}
	\d \int_{\R^d} h(\rho_t(x))\, \d x
	= \langle h'(\rho_t), \circ \d \rho_t\rangle
	= \langle h'(\rho_t), f_t - \nu \Lambda^\beta \rho_t + \nu\eps \Delta \rho_t\rangle\, \d t.
\end{equation*}
By convexity of $h$, it holds $(h'(v)-h'(z))(v-z)\geq 0$, so that by \eqref{eq:fractional-laplacian2} we find
\begin{equation*}
	-\nu \langle \Lambda^\beta \rho_t, h'(\rho_t) \rangle
	= -\nu\, C_{d,\beta} \int_{\R^{2d}} \frac{[\rho_t(x)-\rho_t(y)] [h'(\rho_t(x))-h'(\rho_t(y)) ]}{|x-y|^{d+\beta}}\, \d x \d y \leq 0;
\end{equation*}
moreover using $h''\geq 0$, by integration by parts
\begin{equation*}
	\langle h'(\rho_t), \Delta \rho_t \rangle = -\int_{\R^d} h''(\rho_t(x)) | \nabla \rho_t(x)|^2 \d x \leq 0,
\end{equation*}
so that overall we obtain
\begin{equation*}
	\frac{\d}{\d t} \int_{\R^d} h(\rho_t(x))\, \d x \leq \langle f_t, h'(\rho_t)\rangle.
\end{equation*}
By approximating the convex functions $x\mapsto |x|^p$, $p\in [1,\infty)$, by smooth convex functions $h$, we then obtain the same relation in this case, in particular
\begin{equation*}
	\frac{\d}{\d t} \| \rho_t\|_{L^p_x}^p
	\leq p \int_{\R^d} |f_t(x)|\, |\rho_t(x)|^{p-1}\,\d x
	\leq p \| f_t\|_{L^p_x} \|\rho_t\|_{L^p_x}^{1-\frac{1}{p}}.
\end{equation*}
This implies
\begin{equation*}
	\frac{\d}{\d t} \| \rho_t\|_{L^p_x}
	= \frac{1}{p} \| \rho_t\|_{L^p_x}^{\frac{1}{p}-1} \frac{\d }{\d t}\| \rho_t\|_{L^p_x}^p \leq \| f_t\|_{L^p_x}
\end{equation*}
which upon integrating on $[0,t]$ readily yields \eqref{eq:apriori-linear} whenever $p\in [1,\infty)$.

The case $p=\infty$ can be obtained by passing to the limits on both sides of \eqref{eq:apriori-linear} as $p\to\infty$, using dominated convergence and Lemma \ref{lem:Lp-norms-appendix} in Appendix \ref{app:useful}, which guarantees that
$\| \rho_t\|_{L^\infty_x}=\lim_{p\to\infty} \| \rho\|_{L^p_x}$, similarly for $f$.
\end{proof}

\begin{remark}\label{rem:energy-estim-linear-fractional-laplacian}
In the above, we never used the regularizing properties of the fractional Laplacian $\Lambda^\beta$.
It is clear however that, in the special case $p=2$ and $\nu>0$, going through the same computations, one can arrive at the $\P$-a.s. estimate
\begin{equation}\label{eq:energy-estim-linear-fractional-laplacian}
	\| \rho_t\|_{L^2_x}^2  + 2\nu \int_0^t \| \rho_s\|_{\dot H^{\beta/2}_x}^2\, \d s
	\leq \bigg( \| \rho_0\|_{L^2_x} + \int_0^t \|f_s\|_{L^2_x}\, \d s \bigg)^2 \quad \forall\, t\geq 0.
\end{equation}
\end{remark}

With Proposition \ref{prop:apriori-estimates-linear} at hand, we are now ready to establish probabilistically strong existence of analytically weak solutions for the SPDE \eqref{eq:intro-linear-spde}.

\begin{proposition}\label{prop:strong-existence-linear}
Let $p\in[1,\infty]$, $\nu\geq 0$ and $\beta\in (0,2)$; let $W$ be an $\F_t$-Brownian motion with covariance $Q$ satisfying Assumption \ref{ass:covariance-basic} and let $f:\Theta\times \R_{\geq 0}\times \R^d \to\R$ be an $\mathbb F_t$-predictable $L^p_x$-valued process such that
\begin{equation}\label{eq:strong-existence-linear-assumption}
	\E\bigg[ \Big| \int_0^T \| f_s\|_{L^p_x}\, \d s \Big|^2 \bigg] <\infty \quad \forall\, T>0.
\end{equation}
Set $Q(0)=2\kappa I_d$. Then for any deterministic $\rho_0\in L^p_x$ there exists a global probabilistically strong, analytically weak $L^p_x$-valued solution $\rho:\Theta\times\R_{\geq 0}\times\R^d\to\R$ to
\begin{equation}\label{eq:linear-spde-no-drift}
	\d \rho + \d W\cdot \nabla \rho = [f - \nu \Lambda^\beta \rho + \kappa \Delta \rho]\, \d t
\end{equation}
which moreover satisfies the $\P$-a.s. pathwise bound \eqref{eq:apriori-linear}.
\end{proposition}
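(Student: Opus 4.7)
The approach is to construct $\rho$ via approximation from Proposition~\ref{prop:apriori-estimates-linear}. First, truncate the Karhunen--Lo\`eve expansion \eqref{eq:noise-series-representation} to obtain smooth $\F_t$-Brownian noises $W^n := \sum_{k\leq n}\sigma_k B^k$ with smooth covariances $Q^n$ (by Proposition~\ref{prop:regularity-Q-sigma}), satisfying $Q^n(0)=2\kappa^n I_d$ with $\kappa^n\uparrow\kappa$. Mollify the data in space to get smooth $\rho_0^n\to \rho_0$ in $L^p_x$ and predictable smooth $f^n \to f$ in the norm appearing in \eqref{eq:strong-existence-linear-assumption}. Apply Proposition~\ref{prop:apriori-estimates-linear} (with $b=0$, $\eps=0$) to obtain smooth strong Stratonovich solutions $\rho^n$; since $Q^n\in W^{2,\infty}_x$, Proposition~\ref{prop:equivalence-ito-stratonovich}(ii) shows they also solve the It\^o equation with corrector $\kappa^n\Delta$. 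The estimate \eqref{eq:apriori-linear} yields the uniform bound $\E[\|\rho^n\|^2_{L^\infty_t L^p_x}]\leq C$, independent of $n$.

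Next, extract a weakly-$*$ convergent subsequence $\rho^n\rightharpoonup^*\rho$ in $L^2_\Theta L^\infty_t L^p_x$ for $p>1$; for $p=1$ one first works with measure-valued weak limits and recovers the $L^1_x$-property \emph{a posteriori} via lower semicontinuity of the bound. Identify the limit by passing to the limit in the weak formulation \eqref{eq:defn-linear-weak-solution}. The deterministic terms are straightforward to handle, since $(\kappa^n\Delta -\nu\Lambda^\beta)\varphi \to (\kappa\Delta-\nu\Lambda^\beta)\varphi$ in $\cS$ and $\rho^n$, $f^n$ converge in the required weak topologies.

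The main obstacle is the stochastic integral $\int_0^t\langle\rho^n_s\nabla\varphi,\d W^n_s\rangle = \sum_{k\leq n}\int_0^t\langle\rho^n_s\nabla\varphi,\sigma_k\rangle\,\d B^k_s$. Decomposing against the target integral $\int_0^t\langle\rho_s\nabla\varphi,\d W_s\rangle$, the tail contribution $\sum_{k>n}\int_0^t\langle\rho_s\nabla\varphi,\sigma_k\rangle\,\d B^k_s$ vanishes in $L^2_\Theta$ by Lemma~\ref{lem:stoch-integr-basic} and a dominated convergence argument, using the compact support of $\nabla\varphi$ together with the pointwise identity
\[
	\sum_{k\in\N}|\langle\rho_s\nabla\varphi,\sigma_k\rangle|^2 = \int_{\R^{2d}} \rho_s(x)\rho_s(y)\nabla\varphi(x)\cdot Q(x-y)\nabla\varphi(y)\,\d x\d y \leq 2\kappa\,\|\rho_s\nabla\varphi\|_{L^2_x}^2.
\]
The inner contribution $\sum_{k\leq n}\int_0^t \langle(\rho^n_s-\rho_s)\nabla\varphi,\sigma_k\rangle\,\d B^k_s$ is the delicate piece, since only weak convergence of $\rho^n$ is available. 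To handle it, I would first establish tightness of $\{\rho^n\}$ in a path space such as $C^0_t H^{-s}_\loc$ for $s$ large, by exploiting the SPDE itself to control increments in negative Sobolev norm via a BDG estimate on the martingale part and Aubin--Lions type compact embeddings. Then by a Gy\"ongy--Krylov argument, invoking the pathwise uniqueness proved in Section~\ref{subsec:uniqueness-linear}, the tightness on pairs is upgraded to convergence in probability on the original probability space, which suffices to identify the limit of the stochastic integral.

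Finally, since every approximating object lives on the original filtered space and is $\F_t$-adapted, so is the limit $\rho$, yielding a probabilistically strong solution. The pathwise bound \eqref{eq:apriori-linear} survives by lower semicontinuity of the $L^p_x$ norm under weak convergence together with Fatou's lemma applied to the time integral on the right-hand side.
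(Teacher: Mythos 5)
Your skeleton (regularize, invoke Proposition \ref{prop:apriori-estimates-linear}, extract weak limits, pass to the limit in the weak formulation) matches the paper's, but three of your details contain genuine problems. First, truncating the expansion \eqref{eq:noise-series-representation} does not produce a noise satisfying Assumption \ref{ass:covariance-basic}: the kernel $\sum_{k\leq n}\sigma_k(x)\otimes\sigma_k(y)$ is not of convolution type, $\sum_{k\le n}\sigma_k(x)\otimes\sigma_k(x)$ is not constant in $x$, so the claim $Q^n(0)=2\kappa^n I_d$ is false and the It\^o--Stratonovich corrector is a variable-coefficient second-order operator rather than $\kappa^n\Delta$. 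The paper instead mollifies $W$ by the heat kernel, $W^N=p_{N^{-1}}\ast W$, which keeps the covariance homogeneous and isotropic so that Proposition \ref{prop:apriori-estimates-linear} and the constant corrector apply verbatim. Second, your treatment of the $p=1$ case does not work as stated: a bounded sequence in $L^1_x$ can converge weakly-$\ast$ to a measure with a nontrivial singular part, and lower semicontinuity of the mass bound does not exclude this; one needs an equi-integrability argument (the paper uses the double-approximation scheme of DiPerna--Lions) to stay inside $L^1_\loc$.

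The most serious issue is your handling of the stochastic integral. You have misidentified the difficulty: the term $\sum_{k\leq n}\int_0^t\langle(\rho^n_s-\rho_s)\nabla\varphi,\sigma_k\rangle\,\d B^k_s$ is \emph{not} delicate, because for fixed $W$ the map $\rho\mapsto\int_0^\cdot\langle\rho_s\nabla\varphi,\d W_s\rangle$ is a bounded \emph{linear} operator on predictable integrands (Lemma \ref{lem:stoch-integr-basic}), hence weakly continuous; weak convergence of $\rho^n$ already identifies the limit of the stochastic integral, which is exactly the argument of \cite[Theorem 15]{FlGuPr2010} that the paper invokes. The tightness-plus-Gy\"ongy--Krylov machinery you propose instead is not only unnecessary but cannot be closed in the stated generality: Gy\"ongy--Krylov requires pathwise uniqueness for the limit equation, and Proposition \ref{prop:pathwise-uniqueness-linear} only gives this for $L^1_x$-valued solutions with $\int_0^\tau\|\rho_t\|_{L^1_x}^2\,\d t<\infty$, whereas Proposition \ref{prop:strong-existence-linear} is claimed for arbitrary $\rho_0\in L^p_x$, $p\in[1,\infty]$, which need not lie in $L^1_x$. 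For such data your argument has no uniqueness statement to appeal to, and the identification of the limit fails. Replacing the Gy\"ongy--Krylov step by the weak-continuity-of-linear-maps argument repairs this and simultaneously removes the need for tightness in path space.
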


\begin{proof}
The idea of proof, based on a priori estimates and convergence of subsequences in suitable weak topologies, is classical; see for instance the references \cite{Pardoux, FlGuPr2010, Maurelli, Gal} for other proofs in slightly different settings.

For notational simplicity, we will only show existence of a strong solution defined on a finite time interval $[0,T]$, although arbitrarily large; the extension to $[0,+\infty)$ is straightforward, based on the extraction of a further subsequence by a diagonalization procedure.
We divide the proof in three steps.

\textit{Step 1: Approximations.} For any $N\ge 1$, consider the solution $\rho^N$ to
\begin{equation}\label{eq:proof-defn-approximants}
	\d \rho^N + \circ \d W^N \cdot \nabla \rho^N
	= \big[f^N + \Lambda^\beta \rho^N + \eps^N\Delta \rho^N \big]\, \d t, \quad
	\rho^N\vert_{t=0} = \rho^N_0.
\end{equation}
Here $\eps^N\to 0^+$ and the noises $W^N$ are defined by $W^N = p_{N^{-1}} \ast W$, where $p_t$ denotes the standard heat kernel;
in particular, the associated covariances $Q^N$ are given by formula \eqref{eq:covariance-basic} with $g$ replaced by $g^N(\xi):= g(\xi) e^{-|\xi|^2/N}$, so that $W^N$ are smooth.
Moreover $f^N$, $\rho_0^N$ are smooth approximations of $f$, $\rho_0$ such that
\begin{equation}\label{eq:proof-properties-approx1}
	\| \rho^N_0-\rho_0\|_{L^1_x\cap L^{\tilde p}_x} \to 0, \quad
	\int_0^T \E \big[\| f^N_r-f_r\|_{L^1_x\cap L^{\tilde p}_x} \big]\, \d r\to 0
	\quad \forall\, \tilde p\in [1,p]\setminus\{\infty\}
\end{equation}
and satisfying the $\P$-a.s. bounds
\begin{equation}\label{eq:proof-properties-approx2}
	\| \rho^N_0\|_{L^p_x} \leq \| \rho_0\|_{L^p_x}, \quad
	\int_0^T \| f^N_s\|_{L^p_x}\, \d s \leq \int_0^T \| f_s\|_{L^p_x}\, \d s;
\end{equation}
one can verify that such approximations always exist.

By their definition and Remark \ref{rem:smooth-noise}, it is clear that $W^N$ are smooth noises;
since everything is regular, by Proposition \ref{prop:apriori-estimates-linear} the solutions $\rho^N$ to \eqref{eq:proof-defn-approximants} strongly exist.
Condition \eqref{eq:proof-properties-approx2}, combined with \eqref{eq:apriori-linear}, implies that $\rho^N$ satisfies the $\P$-a.s. bound
\begin{equation}\label{eq:proof-uniform-bound}
	\| \rho^N_t\|_{L^p_x} \leq \| \rho_0\|_{L^p_x} + \int_0^t \| f_s\|_{L^p_x} \,\d s\quad \forall\, t\in [0,T];
\end{equation}
together with assumption \eqref{eq:strong-existence-linear-assumption}, we find
\begin{equation}\label{eq:proof-uniform-bound-2}
	\sup_N \E\bigg[\sup_{t\in [0,T]} \| \rho^N_t\|_{L^p_x}^2 \bigg] <\infty.
\end{equation}
Moreover the solutions $\rho^N$ solve the equivalent It\^o SPDE; thus for any $\varphi\in C_c^\infty$, it holds $\P$-a.s. for all $t\in [0,T]$,
\begin{equation}\label{eq:approximating-linear}\begin{split}
	\<\rho^N_t,\varphi \>
	& = \<\rho^N_0,\varphi \> + \int_0^t \< \rho^N_s \nabla \varphi, \d W^N_s\> + \int_0^t \< f^N_s, \varphi \>\,\d s \\
	& \quad + \int_0^t \< \rho^N_s, \Lambda^\beta \varphi \>\,\d s + \int_0^t \big\<\rho^N_s, \div \big([\eps^N I_d + Q^N(0)]\nabla \varphi \big) \big\>\,\d s.
\end{split}\end{equation}

\textit{Step 2: Compactness.}
In view of the uniform bounds \eqref{eq:proof-uniform-bound}-\eqref{eq:proof-uniform-bound-2}, we can now extract weak limits.
Let us shortly distinguish three cases of interest:
\begin{itemize}
\item[i)] $p\in (1,\infty)$: here \eqref{eq:proof-uniform-bound-2} implies uniform boundedness of $\{\rho^N \}_{N\ge 1}$ in $L^2_\Theta L^p_t L^p_x$, which is reflexive; thus $\{\rho^N \}_{N\ge 1}$ is precompact in the weak topology and we can extract a (not relabelled for simplicity) subsequence converging weakly in $L^2_\Theta L^p_t L^p_x$ to some limit $\rho$.
\item[ii)] $p=\infty$: here \eqref{eq:proof-uniform-bound-2} gives a uniform bound in $L^2_\Theta L^\infty_t L^\infty_x$; while not reflexive, this space still has weak-$\ast$ compact balls, thus we can extract a subsequence weakly-$\ast$ converging to some $\rho\in L^2_\Theta L^\infty_t L^\infty_x$.
\item[iii)] $p=1$: this case is the trickiest since $L^1_x$ is not reflexive nor weak-$\ast$ compact; indeed we will actually need to pass to considering $L^1_{\rm loc}$.
Nevertheless, one can follow the same arguments as in \cite[Proposition II.1, bottom p. 515]{DiPLio} (conceptually simple, but notationally heavy, as they require the introduction of another auxiliary approximation and thus a double-indexed sequence $\{\rho^{N,\eps}\}_{N\geq 1, \eps>0}$) to establish weak compactness of $\{\rho^N\}_N$ in $L^2_\Theta L^2_t L^1_\loc$, from which we can again extract a subsequence.
\end{itemize}
In the above three cases, since $\rho$ is also a weak limit in the sense of distributions, the lower semicontinuity of all the norms involved implies the pathwise bound \eqref{eq:apriori-linear}, i.e. \eqref{eq:proof-uniform-bound} holds with $\rho^N$ replaced by $\rho$.
Moreover, the process $\rho$ constructed in this way is still $\F_t$-predictable, since this properly defines closed sets w.r.t. to both weak and strong convergence in $L^2_\Theta L^p_t L^p_\loc$ for any $p\in [1,\infty]$; see e.g. the argument from the proof of \cite[Theorem 15]{FlGuPr2010}.

\textit{Step 3: Passage to the limit.} In order to show that the limit constructed in this way is an analytically weak solution to \eqref{eq:linear-spde-no-drift}, we need to take weak limits on both sides of \eqref{eq:approximating-linear};
more precisely, we need to verify that for any $\varphi\in C^\infty_c$ it holds (we omit for simplicity the time variable $s$ and $\d s$ in the integrals)
\begin{align}
	& \int_0^\cdot \langle \rho^N \nabla \varphi, \d W^N\rangle \rightharpoonup \int_0^\cdot \langle \rho \nabla \varphi, \d W\rangle, \label{eq:proof-goal-1}\\
	& \int_0^\cdot \langle f^N, \varphi \rangle + \big\<\rho^N, \div \big([\eps^N I_d + Q^N(0)]\nabla \varphi \big) \big\>
\rightharpoonup
\int_0^\cdot \langle f, \varphi \rangle + \langle \rho, \div ( Q(0)\nabla \varphi)\rangle, \label{eq:proof-goal-2}\\
	& \int_0^\cdot \langle \Lambda^\beta \varphi,\rho^N \rangle \rightharpoonup \int_0^\cdot \langle \Lambda^\beta \varphi,\rho \rangle. \label{eq:proof-goal-3}
\end{align}
We present the computations only for the case $p=1$, which is the most challenging one.

Proof of \eqref{eq:proof-goal-1}:
we split this step in two parts, as one can show respectively that
\begin{align*}
	\int_0^\cdot \langle \rho^N \nabla \varphi, \d W\rangle \rightharpoonup \int_0^\cdot \langle \rho \nabla \varphi, \d W\rangle, \quad
	\int_0^\cdot \langle \rho^N \nabla \varphi, \d (W-W^N)\rangle\to 0.
\end{align*}
The first limit follows from weak convergence arguments, see e.g. the argument from \cite[Theorem 15]{FlGuPr2010};
for the second one, using the definition of $W^N$, Lemma \ref{lem:stoch-integr-basic} and Lemma \ref{lem:basic-properties-covariance}-ii), it holds
\begin{align*}
	\E\bigg[\sup_{t\in  [0,T]} \Big|\int_0^t \langle \rho^N_r \nabla \varphi,\d (W_r-W^N_r)\rangle\Big|^2 \bigg]
	& \lesssim \int_0^T \E\big[ \| (\cQ-\cQ^N)^{1/2}\rho^N_r \nabla \varphi\|_{L^2_x}^2 \big]\, \d r\\
	& \lesssim \int_0^T \| g-g^N\|_{L^1_x} \E \big[ \| \rho^N_r \nabla \varphi\|_{L^1_x}^2 \big]\, \d r\\
	& \lesssim \| g-g^N\|_{L^1_x} \| \nabla\varphi\|_{L^\infty_x}^2 \sup_N\, \int_0^T \E\big[ \|\rho^N_r\|_{L^1_x}^2\big]\, \d r\\
	& \lesssim \| g-g^N\|_{L^1_x},
\end{align*}
where the last quantity converges to $0$ by dominated convergence.

Proof of \eqref{eq:proof-goal-2}:
the limit $\int_0^\cdot \langle f^N, \varphi \rangle \rightharpoonup \int_0^\cdot \langle f, \varphi \rangle$ follows directly from properties of weak convergence, similarly for
\begin{align*}
	\int_0^\cdot \big\<\rho^N, \div (\eps^N I_d\, \nabla \varphi) \big\>
	= \eps^N \int_0^\cdot \<\rho^N, \Delta \varphi \> \rightharpoonup 0.
\end{align*}
Moreover, using the strong convergence of $Q^N(0)\to Q(0)$ and weak convergence of $\rho^N$ to $\rho$, we obtain
$\int_0^\cdot \langle \rho^N, \div (Q^N(0)\nabla \varphi)\rangle\rightharpoonup \int_0^\cdot \langle \rho, \div (Q(0)\nabla \varphi)\rangle$.

Proof of \eqref{eq:proof-goal-3}:
for any $R>0$ we can apply the decomposition $\Lambda^\beta=\Lambda^\beta_{R,1}+\Lambda^\beta_{R,2}$ as described in Lemma \ref{lem:decomposition-fractional-laplacian} from Appendix \ref{app:useful}; in particular
\begin{equation}\label{eq:proof-existence-splitting}
	\int_0^\cdot \big\langle \Lambda^\beta\varphi, \rho^N_r-\rho_r \big\rangle\, \d r
	= \int_0^\cdot \big\langle \Lambda^\beta_{R,1} \varphi, \rho^N_r-\rho_r \big\rangle\, \d r + \int_0^\cdot \big\langle \varphi, \Lambda^\beta_{R,2}(\rho^N_r-\rho_r) \big\rangle\, \d r.
\end{equation}
Since $\Lambda^\beta_{R,1} \varphi$ is bounded and compactly supported, while $\rho^N\rightharpoonup \rho$ in $L^2_{\omega,t} L^1_{x, {\rm loc}}$,  we deduce that the first term on the r.h.s. of \eqref{eq:proof-existence-splitting} converges weakly to $0$ in $L^2_\omega C^0_t$;
on the other hand, again by Lemma \ref{lem:decomposition-fractional-laplacian}, it holds
\begin{align*}
	& \E\bigg[ \sup_{t\in [0,T]} \Big| \int_0^t \big\langle \varphi, \Lambda^\beta_{R,2}(\rho^N_r-\rho_r) \big\rangle\, \d r\Big| \bigg]\\
	& \lesssim \|\varphi\|_{L^\infty_x} R^{-\beta} \sup_N \E\bigg[\int_0^T \big(\|\rho^N_r\|_{L^1_x} + \|\rho_r\|_{L^1_x} \big)\, \d r\bigg]
	\lesssim R^{-\beta}.
\end{align*}
Since $R$ can be chosen arbitrarily large, combining these facts allows us to deduce that \eqref{eq:proof-goal-3} indeed holds.

Summarizing the above arguments, we can take weak limits in \eqref{eq:approximating-linear} to get
\begin{align*}
	\<\rho_t,\varphi \>
	& = \<\rho_0,\varphi \> + \int_0^t \< \rho_s \nabla \varphi, \d W_s\> + \int_0^t \< f_s, \varphi \>\,\d s + \int_0^t \big\< \rho_s, (\Lambda^\beta +2\kappa \Delta )\varphi \big\>\,\d s .
\end{align*}
Since the right hand side defines a continuous stochastic process, we see that $t\mapsto \<\rho_t,\varphi \>$ has a continuous modification; as the argument holds for any $\varphi$, we conclude that the selected limit $\rho$ admits a weakly continuous modification.
\end{proof}

\subsection{Pathwise uniqueness and stability}\label{subsec:uniqueness-linear}

Next we verify pathwise uniqueness in the class of $L^1_x$-valued, local in time solutions.
The proof relies on a non-standard argument, involving integral inequalities for the energy spectrum of solutions, and is inspired by the ones from \cite[Proposition 3]{BBF} and \cite[Lemma 2]{Gal}.
Among notable differences, here we can cover $L^1_x$-valued functions on $\R^d$, possibly only defined up to a stopping time;
as a byproduct, we can deduce a number of interesting observations on the Kraichnan model, see Remarks \ref{rem:michele} and \ref{rem:invariant-measure-linear} below.

\begin{proposition}\label{prop:pathwise-uniqueness-linear}
Let $\beta\in (0,2)$, $\nu\geq 0$, $\rho_0\in L^1_x$, $f\in L^1_t L^1_x$ and $W$ be a noise with the associated covariance $Q$ satisfying Assumption \ref{ass:covariance-basic}.
Let $\rho^1$, $\rho^2$ be $L^1_x$-valued solutions to \eqref{eq:intro-linear-spde} for the same $(\rho_0, f, W)$, defined respectively up to stopping times $\tau^1,\, \tau^2$, in the sense of Definition \ref{defn:linear-weak-solution}; assume that $\P$-a.s.
\begin{equation*}
	\int_0^{\tau^i} \| \rho^i_t \|_{L^1_x}^2\,\d t <+\infty, \quad i=1,2.
\end{equation*}
Then it holds
\begin{equation*}
	\sup_{t\in [0,\tau^1\wedge \tau^2]} \| \rho^1_t-\rho^2_t\|_{L^1_x} =0 \quad \P\text{-a.s.}
\end{equation*}
\end{proposition}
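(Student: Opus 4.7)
The plan is to implement in Fourier space a version of the $L^2$ energy cancellation that is unavailable directly in $L^1$. By linearity, setting $\eta := \rho^1-\rho^2$, it suffices to show that the solution of $d\eta + dW \cdot \nabla \eta = (\kappa\Delta - \nu\Lambda^\beta)\eta\, dt$ with $\eta_0 \equiv 0$ is identically zero on $[0,\tau^1\wedge\tau^2]$. First I would localize via the stopping times
\[ \tau_N := \tau^1 \wedge \tau^2 \wedge \inf\Bigl\{t : \int_0^t \|\eta_s\|_{L^1_x}^2\, ds > N\Bigr\} \wedge N, \]
which by hypothesis are $\P$-a.s. strictly positive and exhaust $\tau^1\wedge\tau^2$ as $N\to \infty$. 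Using the elementary bound $\|\hat\eta_t\|_{L^\infty_\xi} \lesssim \|\eta_t\|_{L^1_x}$, the mean energy spectrum $a_t(\xi) := \E\bigl[|\hat\eta_{t\wedge\tau_N}(\xi)|^2\bigr]$ is then pointwise bounded in $\xi$ with $\int_0^T a_t(\xi)\, dt \lesssim_N 1$ uniformly.

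Next, I would test the It\^o weak formulation \eqref{eq:defn-linear-weak-solution} against smooth approximations of $\varphi_\xi(x) = (2\pi)^{-d/2}e^{-i\xi\cdot x}$ and use the chaos expansion $W = \sum_k \sigma_k B^k$ from Lemma \ref{lem:covariance-series-representation} to derive, for each $\xi\in\R^d$, the complex-valued SDE
\[ d\hat\eta_t(\xi) = -i\xi\cdot \sum_{k\in\N} \widehat{\eta_t\sigma_k}(\xi)\, dB_t^k \;-\; M(\xi)\,\hat\eta_t(\xi)\, dt, \qquad M(\xi) := \kappa|\xi|^2 + \nu|\xi|^\beta. \]
Applying It\^o's formula to the complex modulus squared, taking expectations, and using the identity $\sum_k \sigma_k^j(x)\sigma_k^l(y) = Q^{jl}(x-y)$ produces the closed linear integro-differential equation
\[ \partial_t a_t(\xi) = -2M(\xi)\,a_t(\xi) + \int_{\R^d} K(\xi,\zeta)\, a_t(\zeta)\, d\zeta, \qquad a_0 \equiv 0, \quad a_t \geq 0, \]
with non-negative kernel $K(\xi,\zeta) := (2\pi)^{-d/2}\, g(\xi+\zeta)\, |P_{\xi+\zeta}\xi|^2$. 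A substitution $\omega = \xi+\zeta$ combined with the isotropy of $g$ (as in Lemma \ref{lem:Q_0}) yields the pivotal identity
\[ \int_{\R^d} K(\xi,\zeta)\, d\xi \;=\; (2\pi)^{-d/2}\int_{\R^d} g(\omega)\,|P_\omega\zeta|^2\, d\omega \;=\; 2\kappa |\zeta|^2, \]
which is precisely the Fourier-side incarnation of the It\^o--Stratonovich compensation.

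Formally integrating the equation for $a_t$ in $\xi$ and using this identity to cancel the terms $-2\kappa\int |\xi|^2 a_t\, d\xi$ and $\int 2\kappa|\zeta|^2 a_t(\zeta)\, d\zeta$ gives $\frac{d}{dt} \int a_t\, d\xi = -2\nu \int |\xi|^\beta a_t\, d\xi \leq 0$. Combined with $a_0 \equiv 0$ and $a_t \geq 0$, this forces $\int a_t\, d\xi \equiv 0$, hence $a_t \equiv 0$; Fourier inversion and weak continuity of $\eta$ then give $\eta_{t\wedge\tau_N} \equiv 0$ in $L^1_x$ $\P$-a.s., and sending $N\to\infty$ completes uniqueness. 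The stability estimate \eqref{eq:intro-linear-stability} follows from the same scheme applied to the difference of two solutions with non-trivial data, combined with the pathwise $L^1$ contractivity from Proposition \ref{prop:apriori-estimates-linear}.

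The main obstacle is rigorously justifying the $\xi$-integration in the last step: since only $a_t \in L^\infty_\xi$ is available a priori and not $a_t \in L^1_\xi$, both individual terms $-2M(\xi)a_t(\xi)$ and $\int K(\xi,\zeta)a_t(\zeta)\,d\zeta$ may have divergent integrals and the cancellation has to be performed \emph{before} any truncation is removed. I would therefore work with $\int a_t(\xi)\phi_R(\xi)\, d\xi$ for $\phi_R \in C_c^\infty(\R^d)$ with $0\leq \phi_R \uparrow 1$, group the two integrand terms pointwise as $\int a_t(\zeta)\bigl[\int K(\xi,\zeta)\phi_R(\xi)\,d\xi - 2M(\zeta)\phi_R(\zeta)\bigr]\, d\zeta$, and exploit the uniform bound on $a_t$ together with the integrability and decay of $g$ to pass to the limit by dominated convergence; the residual terms from $\phi_R \neq 1$ vanish thanks to the non-negativity of $a_t$ and $K$ and the decay of $g$ at infinity, yielding the desired inequality for $\int a_t\, d\xi$.
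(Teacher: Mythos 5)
Your derivation of the spectral evolution is exactly the one the paper uses: the reduction to the homogeneous equation for $\eta=\rho^1-\rho^2$, the localization by $\tau_N$, the Fourier-mode SDE, the It\^o correction producing the kernel $K$, and the identity $\int K(\xi,\zeta)\,\d\xi=2\kappa|\zeta|^2$ all match Step 1 of the paper's proof (up to the harmless replacement of $g(\zeta-\xi)$ by $g(\xi+\zeta)$, which is legitimate since $a_t$, $g$ and $P_\eta$ are even). The gap is in your concluding step. Integrating the equation for $a_t$ over $\xi$ and invoking the cancellation between $-2\kappa\int|\xi|^2a_t\,\d\xi$ and $\iint K\,a_t$ requires not just $a_t\in L^1_\xi$ (i.e.\ $\eta_t\in L^2_x$, which is not assumed) but in effect $\int|\xi|^2a_t(\xi)\,\d\xi<\infty$, since the two quantities being cancelled are each of that order. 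Your proposed repair does not close this: writing $\phi_R(\cdot)=\phi(\cdot/R)$ and substituting $\eta=\zeta-\xi$, the residual after grouping is
\begin{equation*}
D_R(t)=\int\!\!\int a_t(\zeta)\,g(\eta)\,|P_\eta\zeta|^2\big[\phi_R(\zeta-\eta)-\phi_R(\zeta)\big]\,\d\eta\,\d\zeta ,
\end{equation*}
whose integrand is supported where $|\zeta|\lesssim R+|\eta|$ and carries the weight $|\zeta|^2$; with only $a_t\in L^\infty_\xi$ and $g\in L^1\cap L^\infty$ (no quantitative decay of $g$ is assumed), one gets at best $|D_R(t)|\lesssim \|a_t\|_{L^\infty_\xi}R^{d+1}\int g(\eta)|\eta|\,\d\eta$, which grows with $R$ rather than vanishing (and $\int g(\eta)|\eta|\,\d\eta$ need not even be finite). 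So the ``mass balance in Fourier'' inequality $\frac{\d}{\d t}\int a_t\,\d\xi\le 0$ cannot be extracted this way.

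The paper circumvents this precisely because integrating in $\xi$ is too expensive: it works instead with the time-integrated spectrum $A^N(\xi)=\E\big[\int_0^{\tau_N}|\hat\eta_s(\xi)|^2\d s\big]$, which is bounded and \emph{continuous} in $\xi$ and, by Riemann--Lebesgue plus dominated convergence, tends to $0$ as $|\xi|\to\infty$. Discarding the nonnegative boundary term $\E[|\hat\eta_{\tau_N}(\xi)|^2]$ yields the pointwise inequality $0\le\int g(\eta)|P_\eta\xi|^2[A^N(\xi+\eta)-A^N(\xi)]\,\d\eta$ for every $\xi$, and one concludes by a maximum-principle/propagation-of-maximum argument: $A^N$ attains its maximum at some $\bar\xi$, the integrand there must vanish identically, and either the maximum is at $\bar\xi=0$ (where $A^N(0)=0$ by conservation of $\int\eta$), or it propagates along $\bar\xi+m\bar\eta$ out to infinity where $A^N$ vanishes, or $g\equiv0$ and the equation is deterministic. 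This requires no integrability of the spectrum in $\xi$ and is the step you need to replace your $\xi$-integration with.
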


\begin{proof}
As the difference $\rho:=\rho^1-\rho^2$ is again a local $L^1_x$-valued solution of the homogeneous equation \eqref{eq:intro-linear-spde} with $\rho_0=0=f$, defined up to the random time $\tau:=\tau^1\wedge \tau^2$, in order to conclude it suffices to show that any such solution is $\P$-a.s. constantly $0$ on $[0,\tau]$.

Let us define an increasing sequence of stopping times $\tau_N$ by
\begin{equation*}
	\tau_N := \inf\Big\{t\in [0,\tau) : \int_0^{t} \| \rho_s\|_{L^1_x}^2\, \d s \geq N\Big\}	
	\quad \forall\, N\in\N,
\end{equation*}
with the convention that $\inf\emptyset=\tau$; then our task is reduced to show that for any fixed $N$, $\P$-a.s. $\rho\equiv 0$ on $[0,\tau^N]$.
We divide the rest of the proof in two main steps.

\textit{Step 1: Integral inequality for the spectral energy.}
For any $\xi\in \R^d$, define
\begin{equation*}
	A^N(\xi) = \E\bigg[\int_0^{\tau_N}|\hat\rho_s(\xi)|^2\, \d s\bigg]
\end{equation*}
where $\hat{\rho}_s$ denotes the Fourier transform of $\rho_s$.
It follows from the definition of $\tau_N$, the properties of Fourier transform of $L^1_x$-functions and dominated convergence that $A^N$ is a continuous, bounded function of $\xi$.
Our aim is to find an integral inequality for $A^N$, which will allow us to show that $A^N\equiv 0$.

We can compute $| \hat{\rho}(\xi)|^2$ using the structure of the SPDE: it holds
\begin{equation}\label{eq:uniqueness-linear-proof-1}
	\d | \hat{\rho}_t(\xi)|^2
	= 2 \Re \Big(\overline{\hat{\rho_t}(\xi)}\, \d \hat{\rho}_t(\xi) \Big) + \d [\hat{\rho}(\xi)]_t;
\end{equation}
by taking $\varphi(x) = (2\pi)^{-d/2} e^{-i\xi\cdot x}$ in \eqref{eq:defn-linear-weak-solution} (which is allowed by Remark \ref{rem:defn-linear-weak-solution}), we have
\begin{equation}\label{eq:uniqueness-linear-proof-2}
	\d \hat\rho_t(\xi)
	= -i(2\pi)^{-d/2} \langle \rho_t \xi e^{-i\xi\cdot x}, \d W_t\rangle - (\kappa |\xi|^2 + \nu |\xi|^\beta) \hat{\rho}_t(\xi)\, \d t.
\end{equation}
By Lemma \ref{lem:stoch-integr-basic} and our convention for the Fourier transform, it holds
\begin{equation}\label{eq:uniqueness-linear-proof-3}\begin{split}
	\frac{\d [\hat{\rho}(\xi)]_t}{\d t}
	& = (2\pi)^{-d}\, \big\| \cQ^{1/2} ( \rho_t \xi e^{-i\xi\cdot x}) \big\|_{L^2}^2
	= (2\pi)^{-d/2} \int_{\R^d} g(\eta) |P_\eta \xi|^2\, |\widehat{ \rho_t e^{-i\xi\cdot x}} (\eta)|^2\, \d \eta\\
	& = (2\pi)^{-d/2} \int_{\R^d} g(\eta) |P_\eta \xi|^2 |\hat\rho_t(\xi+\eta)|^2\, \d \eta.
\end{split}\end{equation}
Combining \eqref{eq:uniqueness-linear-proof-1} with \eqref{eq:uniqueness-linear-proof-2} and \eqref{eq:uniqueness-linear-proof-3}, we obtain
\begin{equation*}
	\d | \hat{\rho}_t(\xi)|^2
	= \d M_t(\xi) -2 (\kappa |\xi|^2 + \nu |\xi|^\beta)| \hat{\rho}_t(\xi)|^2\,\d t + (2\pi)^{-d/2} \int_{\R^d} g(\eta) |P_\eta \xi|^2 |\hat\rho_t(\xi+\eta)|^2\, \d \eta \,\d t,
\end{equation*}
where $M_t(\xi)$ is the martingale part.
Integrating over $(0,\tau_N)$ and taking expectation, which removes the martingale term, we arrive at
\begin{align*}
	0 & \leq \E\big[|\hat\rho_{\tau_N}(\xi)|^2\big]\\
	& = -2(\kappa|\xi|^2 +\nu |\xi|^\beta) \E\bigg[\int_0^{\tau_N}\! |\hat\rho_t(\xi)|^2\, \d t\bigg] + (2\pi)^{-d/2}\!\! \int_{\R^d} g(\eta) |P_{\eta}\xi|^2 \E\bigg[ \int_0^{\tau_N}\! |\hat\rho_t(\xi+\eta)|^2\, \d t\bigg] \d \eta\\
	& \leq -2\kappa |\xi|^2 A^N(\xi) + (2\pi)^{-d/2}\int_{\R^d} g(\eta) |P_{\eta}\xi|^2 A^N(\xi+\eta)\, \d \eta,
\end{align*}
where we used the fact that $\rho_0\equiv 0$.
Since $2\kappa I_d = Q(0)$, it holds
\begin{equation*}
	2\kappa |\xi|^2
	= \xi\cdot Q(0)\xi
	= (2\pi)^{-d/2} \int_{\R^d} g(\eta) |P_\eta \xi|^2\, \d \eta
\end{equation*}
and so finally we arrive at
\begin{equation}\label{eq:spectral-density-inequality}
	0 \leq (2\pi)^{-d/2} \int_{\R^d} g(\eta) |P_\eta\xi|^2 \, [A^N(\xi+\eta)-A^N(\xi)]\, \d \eta
	\quad \forall\, \xi\in\R^d.
\end{equation}
\textit{Step 2: Conclusion.}
Recall that for any fixed $t$ such that $\rho_t\in L^1_x$, by the Riemann-Lebesgue lemma it holds $|\hat{\rho}_t(\xi)|\to 0$ as $|\xi|\to\infty$; combined with the definition of $\tau^N$ and dominated convergence, this implies
\begin{equation}\label{eq:uniqueness-linear-proof-4}
	\lim_{|\xi|\to\infty} A^N(\xi)
	= \lim_{|\xi|\to\infty} \E\bigg[ \int_0^{\tau_N} |\hat\rho_t(\xi)|^2\, \d t \bigg]
	= 0.
\end{equation}
Together with the continuity of $A^N$, \eqref{eq:uniqueness-linear-proof-4} implies that $A^N$ admits a maximum, realized at some point $\bar{\xi}$. But then evaluating \eqref{eq:spectral-density-inequality} at $\bar{\xi}$ we find
\begin{equation*}
	0 \leq \int_{\R^d} g(\eta) |P_\eta \bar{\xi}|^2 [A^N(\bar\xi+\eta)-A^N(\bar\xi)]\, \d \eta
	\leq 0,
\end{equation*}
which implies that the integrand must be Lebesgue a.e. $0$; since $g$, $P_\eta$ and $A^N$ are all continuous, it must be identically zero everywhere. There are now three possible cases.

\textit{Case 1.}
It holds $\bar\xi=0$.
Recall that $\hat{\rho}_t(0)=(2\pi)^{-d/2} \int \rho_t(x)\, \d x$; since $\rho$ is an $L^1_x$-valued solution to \eqref{eq:linear-spde-no-drift} with $\rho_0= 0=f$, by applying Remark \ref{rem:defn-linear-weak-solution} with test function $\varphi\equiv 1$, we can deduce that $\P$-a.s. $\hat{\rho}_t(0)= \hat\rho_0(0)=0$ for all $t\in [0,\tau]$.
As a consequence, $A^N(0)=0$; recalling that by definition $A^N(\xi)\geq 0$ and we are assuming it achieves maximum at $\bar{\xi}=0$, it follows that $\P$-a.s. $\int_0^{\tau_N} |\hat{\rho}_t(\xi)|^2 \d t=0$ for all $\xi\in\R^d$.
By the continuity of $\xi\mapsto \hat\rho_t(\xi)$ at Lebesgue a.e. $t$ (coming from $\rho_t\in L^1_x$) and the weak continuity of $t\mapsto \rho_t$ (coming from Definition \ref{defn:linear-weak-solution}), we can conclude that $\P$-a.s. $\rho_t\equiv 0$ for all $t\in [0,\tau^N]$.

Hence, we may assume $\bar\xi\neq 0$ in the sequel.

\textit{Case 2.} There exists $\bar{\eta}\neq 0$ such that $g(\bar{\eta}) |P_{\bar{\eta}} \bar{\xi}|^2\neq 0$.
Then it must hold $A^N(\bar{\xi}+\bar{\eta})=A^N(\bar{\xi})$, namely $\bar{\xi}+\bar{\eta}$ also realizes the maximum of $A^N$; observing that
\begin{equation*}
	g(\bar{\eta})\, |P_{\bar{\eta}} \bar{\xi}|^2
	= g(\bar{\eta})\, |P_{\bar{\eta}} (\bar{\xi}+\bar{\eta})|^2
\end{equation*}
we can now repeat the argument to deduce
$A^N(\bar{\xi}+2\bar{\eta})=A^N(\bar{\xi}+\bar\eta)$.
Iterating, it holds $A^N(\bar{\xi}+m\bar{\eta})=A^N(\bar{\xi})$ for all $m\in\N$, which implies
\begin{equation*}
	A^N(\bar{\xi})
	= \sup_{\eta\in\R^d} A^N(\eta)
	= \lim_{m\to\infty} A^N(\bar{\xi}+m\bar\eta)=0
\end{equation*}
where again we used \eqref{eq:uniqueness-linear-proof-4}.
The conclusion now follows by arguing as in the previous case.

\textit{Case 3.} It holds $g(\eta) |P_{\eta} \bar{\xi}|^2 = 0$ for all $\eta$.
In this case, since $P_{\eta} \bar\xi \neq 0$ for Lebesgue a.e. $\eta\in\R^d$ and $g$ is continuous, we deduce that $g\equiv 0$; namely, the noise $W$ is the trivial $0$ noise.
But then $\rho$ is a weak $L^1_x$-valued solution on $[0,\tau^N]$ of the deterministic PDE
$\partial_t \rho = -\nu \Delta^\beta \rho $
starting at $\rho_0=0$, for which uniqueness is classical.
\end{proof}

\begin{remark}\label{rem:michele}
The above proof reveals an interesting structure concerning the evolution of the energy spectrum of solutions, similar to the ones arising in weak turbulence \cite{EscVel}.
For the sake of simplicity, we restrict to $\nu=0$.
Given a solution $\rho$, set $a_t(\xi)=\E[|\hat{\rho}_t(\xi)|^2]$; then going through the same computations as in Step 1 of the proof, one arrives at an integral system of ODEs for $\{a_t(\xi)\}_{t,\xi}$ of the form
\begin{equation}\label{eq:pde-energy-density}
	\frac{\d}{\d t} a_t(\xi)
	= (2\pi)^{-d/2} \int_{\R^d} g(\eta) |P_\eta\xi|^2 \, \big( a_t(\xi+\eta)-a_t(\xi)\big)\, \d \eta.
\end{equation}
By testing \eqref{eq:pde-energy-density} against a smooth function $\psi$ of $\xi$, after changing variable one finds
\begin{equation*}
	\frac{\d}{\d t} \langle a_t, \psi \rangle
	= (2\pi)^{-d/2} \int_{\R^{2d} } g(\eta-\xi) |P_{\eta-\xi} \xi|^2\, \big( a_t(\eta)-a_t(\xi)\big) \psi(\xi)\, \d \xi \d \eta.
\end{equation*}
Define the non-negative kernel $K(\eta,\xi):=(2\pi)^{-d/2} g(\eta-\xi) |P_{\eta-\xi} \xi|^2$; by the radial symmetry of $g$ and the definition of $P_\eta$ as an orthogonal projection, it follows that $K$ is symmetric, i.e. $K(\eta,\xi)=K(\xi,\eta)$.
By symmetry it then holds
\begin{equation}\label{eq:pde-density-symmetry}\begin{split}
	\frac{\d}{\d t} \langle a_t,\psi \rangle
	& = -\frac{1}{2} \int_{\R^{2d}} K(\eta,\xi) \big( a_t(\eta)-a_t(\xi)\big) \big( \psi(\eta)-\psi(\xi)\big)\, \d\xi\d\eta\\
	& = \int_{\R^{2d}} K(\eta,\xi)\, a_t(\xi) \big( \psi(\eta)-\psi(\xi)\big)\, \d\xi\d\eta.
\end{split}\end{equation}
The second equation in \eqref{eq:pde-density-symmetry} can be thought of as an alternative definition of weak solutions to the integral system \eqref{eq:pde-energy-density}, where now equality is not enforced pointwise in $\xi$ but only by testing against $\psi$.
Instead the first equation in \eqref{eq:pde-density-symmetry} shows that this system is \emph{monotone}; formally, by taking $\psi=a_t$, one would obtain
\begin{equation*}
	\frac{\d}{\d t} \| a_t\|_{L^2}^2
	= -\frac{1}{2} \int_{\R^{2d}} K(\eta,\xi)\, |a_t(\eta)-a_t(\xi)|^2\, \d \xi\d\eta \leq 0
\end{equation*}
which also provides an alternative (formal) route to establishing uniqueness of solutions.
\end{remark}

We are now ready to provide a complete existence and uniqueness statement, which includes as a particular subcase the main Theorem \ref{thm:main-linear-intro} from the introduction.

\begin{theorem}\label{thm:main-linear-extended}
Let $p\in[1,\infty]$, $\nu\geq 0$ and $\beta\in (0,2)$; let $W$ satisfy Assumption \ref{ass:covariance-basic} and let $f:\Omega\times \R_{\geq 0}\to L^1_x\cap L^p_x$ be a predictable process such that
\begin{equation*}
	\E\bigg[ \Big| \int_0^T \| f_s\|_{L^1_x\cap L^p_x}\, \d s \Big|^2 \bigg] <\infty
	\quad \forall\, T>0.
\end{equation*}
Then for any deterministic $\rho_0\in L^1_x\cap L^p_x$, there exists a global probabilistically strong, analytically weak $L^1_x\cap L^p_x$-valued solution to \eqref{eq:linear-spde-no-drift}, which satisfies the $\P$-a.s. pathwise bound
\begin{equation}\label{eq:apriori-linear-thm}
	\sup_{s\in [0,t]} \| \rho_t\|_{L^1_x\cap L^p_x}
	\leq \| \rho_0\|_{L^1_x\cap L^p_x} + \int_0^t \| f_s\|_{L^1_x\cap L^p_x}\, \d s
	\quad \forall\, t\geq 0.
\end{equation}
Moreover pathwise uniqueness and uniqueness in law hold among all solutions to \eqref{eq:linear-spde-no-drift} satisfying $\rho\in L^1_t L^1_x$ $\P$-a.s.
Finally, given two solutions $\rho^i$, $i=1,\,2$, defined on the same probability space and associated to different data $(\rho_0^i, f^i)$, we have the $\P$-a.s. stability estimate
\begin{equation}\label{eq:stability-linear-thm}
	\sup_{s\in [0,t]} \| \rho^1_s -\rho^2_s \|_{L^1_x\cap L^p_x}
	\leq \| \rho^1_0-\rho^2_0\|_{L^1_x\cap L^p_x} + \int_0^t \| f^1_s-f^2_s\|_{L^1_x\cap L^p_x}\, \d s
	\quad \forall\, t>0.
\end{equation}
\end{theorem}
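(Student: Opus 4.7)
The theorem is a consolidation of Propositions \ref{prop:strong-existence-linear} and \ref{prop:pathwise-uniqueness-linear} in a combined $L^1_x \cap L^p_x$ framework, augmented by a linearity argument for stability. My plan is to split the proof into three steps: (a) produce a strong solution satisfying the combined bound \eqref{eq:apriori-linear-thm}; (b) deduce pathwise uniqueness and then uniqueness in law within the declared class; (c) use linearity to obtain the stability estimate \eqref{eq:stability-linear-thm}. Only (a) requires revisiting the earlier constructions; (b) and (c) will follow almost immediately.

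For (a), I would rerun the compactness argument from the proof of Proposition \ref{prop:strong-existence-linear}, with smooth approximants $(\rho_0^N, f^N)$ now chosen so that $\rho_0^N \to \rho_0$ in $L^1_x\cap L^p_x$, $f^N\to f$ in $L^1_t(L^1_x\cap L^p_x)$, and the dominating bounds analogous to \eqref{eq:proof-properties-approx2} hold for both exponents $q=1$ and $q=p$ simultaneously. Such a sequence is produced by standard mollification and cutoff. The central observation is that Proposition \ref{prop:apriori-estimates-linear} is uniform in $q\in[1,\infty]$, so the smooth solutions $\rho^N$ of \eqref{eq:proof-defn-approximants} inherit \eqref{eq:apriori-linear} for both $q=1$ and $q=p$, giving uniform control in the combined norm. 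Repeating Steps 2 and 3 of that proof, a common (diagonally extracted) subsequence converges in both of the relevant weak topologies to the same distributional limit $\rho$; lower semicontinuity of both norms transports the two a priori estimates to $\rho$, which combined give \eqref{eq:apriori-linear-thm}. The passage to the limit in the equation itself is identical to the one already performed there.

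For (b), the solution $\rho$ from (a) lies in $L^\infty_t L^1_x$, hence in $L^2_t L^1_x$ on each bounded interval, so Proposition \ref{prop:pathwise-uniqueness-linear} applies with $\tau^1=\tau^2=T$ for arbitrary $T>0$ (after the localization by the stopping times $\tau_N$ introduced there), yielding pathwise uniqueness within the class $\rho\in L^1_t L^1_x$. Uniqueness in law is then a classical consequence of the combination \emph{strong existence plus pathwise uniqueness}, via the SPDE version of the Yamada--Watanabe principle.

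For (c), linearity of \eqref{eq:linear-spde-no-drift} implies that $\rho^1-\rho^2$ is itself a weak solution driven by the same $W$ with data $(\rho_0^1-\rho_0^2, f^1-f^2)$; by (b) it coincides $\P$-a.s.\ with the strong solution produced by (a) for these data, to which \eqref{eq:apriori-linear-thm} applies, yielding exactly \eqref{eq:stability-linear-thm}. The hard part is therefore confined to (a), and specifically to accommodating the non-reflexive endpoint cases $p=1$ (requiring the $L^1_\loc$ detour in the style of \cite{DiPLio}) and $p=\infty$ (handled via weak-$\ast$ compactness of $L^\infty$) within a single diagonal extraction. This is however no harder than the balancing already performed in Proposition \ref{prop:strong-existence-linear} and introduces no new conceptual difficulty.
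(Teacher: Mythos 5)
Your proposal is correct and follows essentially the same route as the paper, which likewise assembles the theorem from Proposition \ref{prop:strong-existence-linear} (existence with the pathwise bound), Proposition \ref{prop:pathwise-uniqueness-linear} (pathwise uniqueness, hence uniqueness in law via strong existence plus pathwise uniqueness), and linearity for the stability estimate. Your step (a) usefully spells out the simultaneous two-exponent bookkeeping needed to upgrade the single-$L^p_x$ statement of Proposition \ref{prop:strong-existence-linear} to the combined $L^1_x\cap L^p_x$ bound, a point the paper leaves implicit.
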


\begin{proof}
The existence of solutions satisfying the pathwise bound \eqref{eq:apriori-linear-thm} follows from Proposition \ref{prop:strong-existence-linear}, while pathwise uniqueness is proved in Proposition \ref{prop:pathwise-uniqueness-linear};
strong existence and pathwise uniqueness imply uniqueness in law.
Given two solutions $\rho^i$ as above, by linearity of the SPDE their difference $\rho^1-\rho^2$ solves the equation associated to $(\rho_0^1-\rho_0^2, f^1-f^2)$;
uniqueness of solutions then implies that $\rho^1-\rho^2$ must also satisfy the pathwise bound \eqref{eq:apriori-linear-thm}, yielding the stability estimate \eqref{eq:stability-linear-thm}.
\end{proof}

We conclude this section with an interesting byproduct of Proposition \ref{prop:pathwise-uniqueness-linear}.

\begin{remark}\label{rem:invariant-measure-linear}
Consider the SPDE \eqref{eq:intro-linear-spde} with $f\equiv 0$, $\nu=0$ and nontrivial noise, i.e. $g\neq 0$; define the Banach space $E=\big\{ \rho \in L^1_x \, : \, \int_{\R^d} \rho(x)\,\d x=0 \big\}$.
Since the noise $W$ has independent increments, Theorem \ref{thm:main-linear-extended} and standard arguments imply that the SPDE gives rise to a Feller semigroup on $E$, given by
\begin{equation*}
	P_t F(\rho_0):= \E[F(\rho_t)]
\end{equation*}
where $F\in C_b(E)$ and $\rho_t$ is the unique solution starting from $\rho_0$.
Therefore it makes sense to talk about invariant measures associated to $\{P_t\}_{t\geq 0}$.

Suppose now that there existed an invariant measure $\mu$ on $E$, with finite second moment $\int_E \| \rho\|_E^2\, \mu(\d \rho)$.
Then setting $a(\xi):=\int_{E} |\hat\rho(\xi)|^2 \mu(\d \rho)$ and applying \eqref{eq:pde-energy-density}, by invariance of the measure one would obtain
\begin{equation*}
	\int_{\R^d} g(\eta) |P_\eta \xi|^2\, \big( a(\xi+\eta)-a(\xi)\big)\, \d \eta =0
	\quad \forall\, \xi\in \R^d
\end{equation*}
which going through the same arguments as in the proof of Proposition \ref{prop:pathwise-uniqueness-linear} ultimately results in $a\equiv 0$ (observe that, since $g\neq 0$ and we are on $E$, Cases 1. and 3. do not apply).
In other terms, the only invariant measure with finite second moment for \eqref{eq:intro-linear-spde} supported on $E$ is expected to be the Dirac centered at the trivial stationary solution $v=0$.

The situation can be drastically different for measures supported on larger classes of distributions; a formal (Gibbs type) invariant measure for \eqref{eq:intro-linear-spde} with $f\equiv 0$ and $ \nu=0$ is the white noise measure, corresponding to $a(\xi)=c$ for some constant $c>0$.
In this direction, for similar (nonlinear) systems, let us mention the classical work \cite{AlbCru} and more recently in the presence of transport noise \cite{FlaLuo2019}.

We expect uniqueness of the invariant measure $\mu=\delta_0$ in the setting of $E$-valued solutions to be very important in understanding the long-time behaviour of the semigroup $P_t$;
we leave this topic for future investigations.
\end{remark}

\section{Proof of Theorem \ref{thm:main-theorem}}\label{sec:wellposedness-nonlinear}

In this section we work exclusively on $\R^2$.
We split the proof in two parts: we first show the existence of probabilistically weak solutions to \eqref{eq:intro-abstract-spde} by the classical compactness method, then we apply a Girsanov transform argument to infer their uniqueness in law.

\subsection{A priori estimates and weak existence}\label{subsec:apriori-nonlinear}

We start by establishing suitable a priori estimates for smoothened SPDEs. Recall that $\cS$ is the space of Schwartz functions.

\begin{proposition}\label{prop:apriori-estimates-nonlinear}
Let $\omega_0:\R^2\to\R$ and $f:\R_{\geq 0} \times \R^2\to\R$ be deterministic smooth functions, such that $\int_{\R^2} \omega_0(x)\, \d x =0$\footnote{Since $\omega_0$ is smooth, $\int_{\R^2} \omega_0(x)\, \d x =0$ is the same as requiring $\omega_0\in \dot H^{-1}_x$, see Lemma \ref{lem:homogeneous-H1} in Appendix \ref{app:useful}.};
let $W$ be a smooth noise, $\mathcal{R}$ be the Fourier multiplier associated to some $r\in\cS$.
For any $\nu\geq 0$, $\eps>0$ and $\beta\in (0,2)$, consider the SPDE
\begin{equation}\label{eq:nonlinear-regularized-spde}
	\d \omega+ (\mathcal{R}\, \curl^{-1}\omega)\cdot \nabla\omega\,\d t + \circ\d W \cdot \nabla\omega
	= [f - \nu\Lambda^\beta \omega +\eps\nu\Delta \omega]\, \d t,
	\quad \omega\vert_{t=0}=\omega_0.
\end{equation}
Then there exists a unique global regular strong solution to \eqref{eq:nonlinear-regularized-spde} and for any $p\in [1,\infty]$, $\P$-a.s. it holds
\begin{equation}\label{eq:apriori-vorticity-Lp}
	\| \omega_t\|_{L^1_x\cap L^p_x}
	\leq \| \omega_0\|_{L^1_x\cap L^p_x} + \int_0^t \| f_s\|_{L^1_x\cap L^p_x}\, \d s
	\quad \forall\, t\geq 0,
\end{equation}
as well as
\begin{equation}\label{eq:apriori-vorticity-L2}
	\| \omega_t\|_{L^2_x}^2 + 2\nu \int_0^t \| \omega_s\|_{\dot H^{\beta/2}_x}^2\, \d s
	\leq \bigg( \| \omega_0\|_{L^2_x} + \int_0^t \| f_s\|_{L^2_x}\, \d s\bigg)^2
	\quad \forall\, t\geq 0.
\end{equation}
Moreover for any finite $T>0$, there exists a constant $C$, depending on $T$, $\|f\|_{L^1_t (L^2_x\cap \dot{H}^{-1}_x)}$, $\|r\|_{L^\infty_x}$ and $\| g\|_{L^1_x\cap L^\infty_x}$, increasing in all its entries, such that
\begin{equation}\label{eq:apriori-energy}
	\E\bigg[ \sup_{t\in [0,T]} \| \omega_t\|_{\dot H^{-1}_x}^2 \bigg]
	\leq C \big(1+ \|\omega_0\|_{\dot H^{-1}_x}^2 \big).
\end{equation}
\end{proposition}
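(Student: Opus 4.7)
\textbf{Well-posedness and the pathwise bounds \eqref{eq:apriori-vorticity-Lp}--\eqref{eq:apriori-vorticity-L2}.}
I would first construct a global smooth solution by a Picard-type iteration: given $\omega^{(n)}$, set $b^{(n)}:=\cR\curl^{-1}\omega^{(n)}$, which is smooth (since $r\in\cS$) and divergence-free (since $\cR$ is a scalar Fourier multiplier and $\curl^{-1}\omega^{(n)}$ is divergence-free), and let $\omega^{(n+1)}$ solve the resulting linear SPDE with frozen drift $b^{(n)}$ via the stochastic-characteristics/monotonicity methods recalled in the proof of Proposition \ref{prop:apriori-estimates-linear}. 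Contraction in a suitable Sobolev class $H^s$ on a short time interval, combined with the a priori bounds below, yields the global smooth solution. Once this is in hand, the pathwise bounds \eqref{eq:apriori-vorticity-Lp} and \eqref{eq:apriori-vorticity-L2} follow immediately from Proposition \ref{prop:apriori-estimates-linear} and Remark \ref{rem:energy-estim-linear-fractional-laplacian} applied with the divergence-free drift $b=\cR\curl^{-1}\omega$.

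\textbf{Setup for the energy bound \eqref{eq:apriori-energy}.}
For \eqref{eq:apriori-energy} I would apply It\^o's formula to $\|\omega_t\|_{\dot H^{-1}}^2=\langle\psi_t,\omega_t\rangle$ with $\psi_t:=(-\Delta)^{-1}\omega_t$, using the It\^o form of the SPDE (so $\circ\,\d W\cdot\nabla\omega$ is rewritten as $\d W\cdot\nabla\omega-\kappa\Delta\omega\,\d t$). Integration by parts (using $\div b=\div\sigma_k=0$) then produces
\begin{equation*}
\|\omega_t\|_{\dot H^{-1}}^2 = \|\omega_0\|_{\dot H^{-1}}^2 + M_t + 2\!\int_0^t\!\langle\nabla\psi_s\cdot b_s,\omega_s\rangle\,\d s + 2\!\int_0^t\!\langle\psi_s,f_s\rangle\,\d s + \int_0^t I_s\,\d s,
\end{equation*}
where $M_t=2\int_0^t\langle\nabla\psi_s\,\omega_s,\d W_s\rangle$ and $I_s$ gathers the stochastic and dissipative drift contributions. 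The dissipative terms $-2\nu\|\omega_s\|_{\dot H^{(\beta-2)/2}}^2-2\eps\nu\|\omega_s\|_{L^2}^2$ are non-positive and can be dropped. The combination of the Stratonovich-to-It\^o correction $-2\kappa\|\omega_s\|_{L^2}^2$ (coming from $2\langle\psi_s,\kappa\Delta\omega_s\rangle$) with the It\^o quadratic-variation correction $\sum_k\|\sigma_k\cdot\nabla\omega_s\|_{\dot H^{-1}}^2$ is bounded, using $\|\nabla\!\cdot\!(\sigma_k\omega)\|_{\dot H^{-1}}\le\|\sigma_k\omega\|_{L^2}$ and the trace identity $\sum_k|\sigma_k(x)|^2=\Tr Q(0)=4\kappa$, by $2\kappa\|\omega_s\|_{L^2}^2$, which is controlled by \eqref{eq:apriori-vorticity-L2}.

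\textbf{Closing the nonlinear and martingale contributions.}
The delicate term is the nonlinear drift $\langle\nabla\psi\cdot\cR u,\omega\rangle$ with $u=\curl^{-1}\omega$. Using $\omega=-\Delta\psi$ and the incompressibility $\div(\cR u)=0$, one integrates by parts to obtain (up to sign)
\begin{equation*}
\langle\nabla\psi\cdot\cR u,\omega\rangle = \int_{\R^2}\partial_i\psi\,\partial_j\psi\,\partial_j(\cR u)_i\,\d x,
\end{equation*}
which H\"older bounds by $\|\nabla\psi\|_{L^4}^2\,\|\nabla(\cR u)\|_{L^2}$. The 2D Gagliardo-Nirenberg inequality gives $\|\nabla\psi\|_{L^4}^2\lesssim\|\nabla\psi\|_{L^2}\|\Delta\psi\|_{L^2}=\|\omega\|_{\dot H^{-1}}\|\omega\|_{L^2}$, while commuting $\cR$ with $\nabla$ and using the $2$D identity $\|\nabla u\|_{L^2}=\|\omega\|_{L^2}$ (valid since $\div u=0$) yields $\|\nabla(\cR u)\|_{L^2}\le\|r\|_{L^\infty}\|\omega\|_{L^2}$. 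Altogether $|\langle\nabla\psi\cdot\cR u,\omega\rangle|\lesssim\|r\|_{L^\infty}\|\omega\|_{\dot H^{-1}}\|\omega\|_{L^2}^2$. For the martingale, Lemma \ref{lem:basic-properties-covariance}\,(ii) applied to the $\R^2$-valued density $\nabla\psi_s\,\omega_s$ (viewed as a finite measure) gives $[M]_t\lesssim\|g\|_{L^1}\!\int_0^t\!\|\omega_s\|_{\dot H^{-1}}^2\|\omega_s\|_{L^2}^2\,\d s$, and BDG combined with Young's inequality produces a $\tfrac12\,\E\sup_{s\in[0,t]}\|\omega_s\|_{\dot H^{-1}}^2$ contribution that is absorbed on the left. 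The forcing is controlled by $|\langle\psi_s,f_s\rangle|\le\|\omega_s\|_{\dot H^{-1}}\|f_s\|_{\dot H^{-1}}$ and a further application of Young's inequality. Substituting \eqref{eq:apriori-vorticity-L2} to replace $\|\omega_s\|_{L^2}$ by the constant $M_1:=\|\omega_0\|_{L^2}+\|f\|_{L^1_t L^2_x}$, one arrives at an inequality of the form
\begin{equation*}
\E\sup_{t\in[0,T]}\|\omega_t\|_{\dot H^{-1}}^2 \le C_1\bigl(1+\|\omega_0\|_{\dot H^{-1}}^2\bigr) + C_2\!\int_0^T\!\E\sup_{r\in[0,s]}\|\omega_r\|_{\dot H^{-1}}^2\,\d s,
\end{equation*}
to which Gr\"onwall's lemma applies, yielding \eqref{eq:apriori-energy}. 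The main obstacle is precisely the cubic drift: closing it with only $(L^2\cap\dot H^{-1})$-regularity of $\omega$ and only the $L^\infty$-norm of the symbol $r$ is what forces the combined use of the 2D Gagliardo-Nirenberg inequality and the 2D identity $\|\nabla u\|_{L^2}=\|\omega\|_{L^2}$.
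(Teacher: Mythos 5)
Your proposal is correct and follows essentially the same route as the paper: pathwise $L^p$ and $L^2$ bounds inherited from the linear estimates, then It\^o's formula for $\|\omega_t\|_{\dot H^{-1}_x}^2=\|(-\Delta)^{-1/2}\omega_t\|_{L^2_x}^2$, with the cubic drift closed by a 2D Ladyzhenskaya/Gagliardo--Nirenberg interpolation yielding the same bound $\|r\|_{L^\infty}\|\omega\|_{\dot H^{-1}_x}\|\omega\|_{L^2_x}^2$, the martingale handled by BDG via the $TV$-norm of $\omega\nabla(-\Delta)^{-1}\omega$, and Gr\"onwall to conclude. The only cosmetic differences are that you perform one extra integration by parts in the nonlinear term (placing both $L^4$ factors on $\nabla\psi$ rather than splitting them between $\cR u$ and $\nabla(-\Delta)^{-1}\omega$ as the paper does) and that you sketch existence by Picard iteration where the paper invokes the flow-representation contraction argument of \cite{CogFla} (for $\nu=0$) and monotonicity methods (for $\nu>0$).
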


\begin{proof}
Let us first discuss strong existence and uniqueness of regular solutions for the system \eqref{eq:nonlinear-regularized-spde}.
Recalling that $\curl^{-1}\omega=K\ast\omega$ for the Biot--Savart kernel $K$, it holds $\cR\, \curl^{-1} \omega= \tilde K\ast \omega$ for $\tilde K = \cR K= r\ast K$, which is now a kernel in $C_b^\infty$ thanks to the assumption $r\in \cS$ and properties of Biot--Savart kernel.

In the case $\nu=0$, strong well-posedness then follows from the results of \cite[Theorem 13]{CogFla}, which also yield the representation\footnote{Technically speaking, in \cite{CogFla} the authors only consider the case $f\equiv 0$, but it is rather clear that the contraction mapping argument developed therein in Section 3.2, based on the representation \eqref{eq:proof-flow-representation} itself, extends easily in the presence of a smooth, deterministic forcing $f$.}
\begin{equation}\label{eq:proof-flow-representation}
	\omega_t (X^\omega_t(x))
	= \omega_0(x) + \int_0^t f_s (X^\omega_s (x))\, \d s,
\end{equation}
where $X^\omega$ is the flow associated to the SDE
\begin{equation*}
	\d X^\omega_t(x)
	= b^\omega( X^\omega_t(x))\, \d t + \sum_k \sigma_k(X^\omega_t(x))\circ \d W^k_t,
	\quad b^\omega:= \tilde K\ast \omega.
\end{equation*}
By the above observations $\tilde K\ast\omega\in C^\infty_b$, which by the results from \cite{Kunita} implies smoothness of the flow $X^\omega$; jointly with the regularity of $\omega_0$ and $f$, and the representation \eqref{eq:proof-flow-representation}, this implies the desired regularity of $\omega$.

In the case $\nu>0$, we can exploit the additional presence of $\eps\nu \Delta\omega$ to infer strong existence and uniqueness from the same references mentioned in the proof of Proposition \ref{prop:apriori-estimates-linear}, e.g. all the local monotonicity requirements from \cite{FlGaLu21} are satisfied; regularity of $\omega$ follows as before either from using parabolic bootstrap or the mild formulation.

Estimate \eqref{eq:apriori-vorticity-Lp} can then be proved as in Proposition \ref{prop:apriori-estimates-linear}, up to replacing the drift $b$ with $\cR\,\curl^{-1} \omega$, which is divergence free; similarly, estimate \eqref{eq:apriori-vorticity-L2} follows from Remark \ref{rem:energy-estim-linear-fractional-laplacian}.

Thus it only remains to show \eqref{eq:apriori-energy}; for simplicity, we will work on a fixed finite interval $[0,T]$ and use $\lesssim$ without expliciting the dependence on the parameters determining the constant $C$ from \eqref{eq:apriori-energy}.
Let $Q$ be the covariance function of $W$, $Q(0)=2\kappa I_d$; since everything is smooth, the solution $\omega$ to \eqref{eq:nonlinear-regularized-spde} satisfies the equivalent It\^o form
\begin{equation}\label{eq:nonlinear-regularized-spde-ito}
	\d \omega+ (\mathcal{R}\, \curl^{-1}\omega)\cdot \nabla\omega\,\d t + \d W \cdot \nabla\omega
	= [f - \nu\Lambda^\beta \omega +(\eps\nu+\kappa)\Delta \omega]\, \d t.
\end{equation}
Let us set $u=\curl^{-1} \omega$; by estimate \eqref{eq:apriori-vorticity-L2}, we have the $\P$-a.s. bound
\begin{equation}\label{eq:proof-nonlinear-pathwise-bound}
	\sup_{t\in [0,T]} \| \omega_t\|_{L^2_x}
	\lesssim 1+ \|\omega_0\|_{L^2_x}.
\end{equation}
By applying the same arguments as in the proof of  Lemma \ref{lem:identity-enstrophy} in the Appendix \ref{app:useful}, we have
\begin{equation}\label{eq:fourier-estimates}\begin{split}
	\| u_t \|_{L^2_x}
  	& = \| \nabla^\perp (-\Delta)^{-1} \omega_t\|_{L^2_x}
  	= \| \nabla (-\Delta)^{-1} \omega_t\|_{L^2_x}
  	= \| (-\Delta)^{-1/2} \omega_t\|_{L^2_x},\\
  	\| \omega_t\|_{L^2_x}
  	& = \| \nabla u_t\|_{L^2_x}
  	= \| \nabla^2 (-\Delta)^{-1} \omega_t\|_{L^2_x}.
\end{split}\end{equation}
By It\^o formula in Hilbert spaces, it holds
\begin{equation*}
	\d\| u_t \|_{L^2_x}^2
	= \d \| (-\Delta)^{-1/2} \omega_t \|_{L^2_x}^2
	= 2 \langle (-\Delta)^{-1} \omega_t , \d \omega_t \rangle + \d [(-\Delta)^{-1/2} \omega]_t;
\end{equation*}
let $W$ have chaos expansion \eqref{eq:noise-series-representation}, then by \eqref{eq:nonlinear-regularized-spde-ito} we have
\begin{equation*}
	\d [(-\Delta)^{-1/2} \omega]_t
	= \sum_{k\in\N} \|(-\Delta)^{-1/2}(\sigma_k \cdot \nabla\omega_t) \|_{L^2_x}^2\,\d t
	= \sum_{k\in\N} \|(-\Delta)^{-1/2}\nabla\cdot (\sigma_k\, \omega_t) \|_{L^2_x}^2\,\d t
\end{equation*}
where in the last passage we used $\nabla\cdot\sigma_k=0$.
Consequently,
\begin{align*}
  	\d\| u_t\|_{L^2_x}^2
  	& = 2\Big( -\<(-\Delta)^{-1} \omega_t, (\mathcal{R} u_t)\cdot \nabla\omega_t\> + \big\<(-\Delta)^{-1} \omega_t, f_t - \nu\Lambda^\beta \omega_t +(\eps\nu+\kappa) \Delta \omega_t \big\>\Big) \, \d t \\
  	& \quad + \sum_{k\in\N} \|(-\Delta)^{-1/2}\nabla\cdot (\sigma_k \,\omega_t) \|_{L^2_x}^2\,\d t - 2 \<(-\Delta)^{-1} \omega_t, \nabla\omega_t \cdot \d W_t\> \\
  	& =: 2 \big( I^1_t + I^2_t\big)\, \d t +I^3_t\, \d t +\d I^4_t.
\end{align*}
We estimate the terms $I^i$ separately.
For $I^1$, since $\nabla\cdot \cR u=0$, integrating by parts yields
\begin{align*}
	|I^1_t|
	& = |\<(\mathcal{R} u_t)\cdot \nabla (-\Delta)^{-1} \omega_t, \omega_t\>|
	\le \|\mathcal{R} u_t\|_{L^4_x} \|\nabla(-\Delta)^{-1} \omega_t\|_{L^4_x} \|\omega_t\|_{L^2_x};
\end{align*}
by Ladyzhenskaya's inequality and the properties of the multiplier $\mathcal{R}$, it then holds
\begin{align*}
	|I^1_t|
	& \lesssim \|\mathcal{R} u_t\|_{L^2_x}^{1/2}\, \|\nabla \mathcal{R} u_t\|_{L^2_x}^{1/2}\, \|\nabla(-\Delta)^{-1} \omega_t\|_{L^2_x}^{1/2}\, \|\nabla^2 (-\Delta)^{-1} \omega_t \|_{L^2_x}^{1/2}\, \|\omega_t\|_{L^2_x} \\
  	& \lesssim \| u_t\|_{L^2_x}^{1/2}\, \|\nabla u_t\|_{L^2_x}^{1/2}\, \|(-\Delta)^{-1/2} \omega_t\|_{L^2_x}^{1/2}\, \| \omega_t \|_{L^2_x}^{3/2} \\
  	& \lesssim \| u_t\|_{L^2_x} \|\omega_t \|_{L^2_x}^2,
\end{align*}
where we used \eqref{eq:fourier-estimates} several times.
Next, by fractional integration by parts it holds
\begin{align*}
	I^2_t
	& = \big\<(-\Delta)^{-1} \omega_t, f_t - \nu\Lambda^\beta \omega_t +(\eps\nu+\kappa) \Delta \omega_t \big\>\\
	& = \big\< (-\Delta)^{-1/2}\omega_t, (-\Delta)^{-1/2} f_t\big\> -\nu \|\Lambda^{(\beta-2)/2} \omega_t\|_{L^2_x}^2 - (\eps\nu+\kappa)\|\omega_t \|_{L^2_x}^2\\
	& \le |\<(-\Delta)^{-1/2} \omega_t, (-\Delta)^{-1/2} f_t\>|
	\le \|u_t\|_{L^2_x} \| f_t\|_{\dot H^{-1}_x}
\end{align*}
where the last step is due to Cauchy's inequality and \eqref{eq:fourier-estimates}.
Since $\varphi\mapsto (-\Delta)^{-1/2}\nabla\cdot\varphi$ is a bounded operator on $L^2_x$, it holds
\begin{equation*}
	I^3_t
	\lesssim \sum_k \|\sigma_k\, \omega_t \|_{L^2_x}^2
	\lesssim \|\omega_t \|_{L^2_x}^2,
\end{equation*}
where the last step follows by arguing as in the proof of Lemma \ref{lem:stoch-integr-basic}.
Combining the above estimates and using \eqref{eq:proof-nonlinear-pathwise-bound} overall give us
\begin{equation}\label{eq:proof-nonlinear-useful}\begin{split}
	\d\| u_t\|_{L^2_x}^2
	& \lesssim \big(\| u_t\|_{L^2_x} \|\omega_t\|_{L^2_x}^2 + \|u_t\|_{L^2_x} \|f_t\|_{\dot H^{-1}_x} + \|\omega_t \|_{L^2_x}^2\big)\,\d t - 2 \<(-\Delta)^{-1} \omega_t, \nabla\omega_t\cdot \d W_t\> \\
  	& \lesssim (1+\| u_t\|_{L^2_x})\big( 1+\|\omega_0\|_{L^2}^2 + \| f_t\|_{\dot H^{-1}_x} \big)\,\d t + 2 \<\omega_t\, \nabla (-\Delta)^{-1} \omega_t, \d W_t\>,
\end{split}\end{equation}
where in the last passage we integrated by parts in the stochastic integral, using the fact that $W$ is divergence free.
Integrating in $t$, taking supremum in time and then expectation we arrive at
\begin{align*}
	\E\bigg[\sup_{s\in [0,t]}\| u_s\|_{L^2_x}^2 \bigg]
	& \lesssim \| u_0\|_{L^2_x}^2+ \int_0^t \bigg(1+\E\bigg[\sup_{s\in [0,r]}\| u_s\|_{L^2_x}^2 \bigg] \bigg)\big( 1+\|\omega_0\|_{L^2_x}^2 + \| f_r \|_{\dot H^{-1}_x} \big)\,\d r\\
  	& \quad + \E\bigg[\sup_{s\in [0,t]} \Big| \int_0^s \<\omega_r \nabla (-\Delta)^{-1} \omega_r, \d W_r\> \Big| \bigg].
\end{align*}
For the stochastic integral, we can now apply Lemma \ref{lem:stoch-integr-basic} and Remark \ref{rem:stoch-integr-basic} to find
\begin{equation}\label{eq:proof-nonlinear-useful2}\begin{split}
	& \E\bigg[\sup_{s\in [0,t]} \Big| \int_0^s \<\omega_r \nabla (-\Delta)^{-1} \omega_r, \d W_r\> \Big| \bigg]
  	\lesssim \E\bigg[ \Big(\int_0^t \| \omega_r \nabla (-\Delta)^{-1} \omega_r\|_{L^1_x}^2 \d r \Big)^{1/2} \bigg]\\
  	& \lesssim \E\bigg[ \Big(\int_0^t \| \omega_r \|_{L^2_x}^2 \| u_r\|_{L^2_x}^2 \d r\Big)^{1/2} \bigg]
  	\lesssim (1+\| \omega_0\|_{L^2_x})\, \E\bigg[ \Big(\int_0^t \| u_r\|_{L^2_x}^2 \d r\Big)^{1/2} \bigg],
\end{split}\end{equation}
where as before we applied Cauchy's inequality, \eqref{eq:proof-nonlinear-pathwise-bound} and \eqref{eq:fourier-estimates}.
To sum up, we find
\begin{equation*}
	\E\bigg[\sup_{s\in [0,t]}\| u_s\|_{L^2_x}^2 \bigg]
	\lesssim \| u_0\|_{L^2_x}^2+ \int_0^t \bigg(1+\E\bigg[\sup_{s\in [0,r]}\| u_s\|_{L^2_x}^2 \bigg] \bigg)\big( 1+\|\omega_0\|_{L^2_x}^2 + \| f_r \|_{\dot H^{-1}_x} \big)\,\d r
\end{equation*}
and the desired estimate \eqref{eq:apriori-energy} follows from Gr\"onwall's inequality.
\end{proof}

We are now ready to show existence of weak solutions;
in the next proof, we will use crucially some topological properties of the Fr\'echet spaces $H^s_\loc$, see Lemma \ref{lem:Hs-loc}, Corollary \ref{cor:ascoli-Hs-loc} and Remark \ref{rem:Hs-loc-interpolation} in Appendix \ref{app:useful} for more details.

\begin{proposition}\label{prop:weak-existence-nonlinear}
Let $p\in [2,\infty]$, $\omega_0\in L^1_x\cap L^p_x\cap \dot H^{-1}_x$ and $f\in L^1_t (L^1_x\cap L^p_x\cap \dot H^{-1}_x)$ be deterministic functions;
let $Q$ be a covariance function satisfying Assumption \ref{ass:covariance-basic} and $\mathcal{R}$ be a Fourier multiplier asssociated to $r\in L^\infty_x$.
Then for any $\nu\geq 0$ and any $\beta\in (0,2)$, there exists a probabilistically weak solution $(\omega,W)$ to the SPDE
\begin{equation}\label{eq:nonlinear-general-spde}
	\d \omega+ (\mathcal{R}\, \curl^{-1} \omega)\cdot \nabla\omega\,\d t + \d W \cdot \nabla\omega
	= [f - \nu\Lambda^\beta \omega + \kappa\Delta \omega]\, \d t
\end{equation}
where the noise $W$ has covariance function $Q$; moreover $\omega$ satisfies the pathwise estimates \eqref{eq:apriori-vorticity-Lp}-\eqref{eq:apriori-vorticity-L2} and the moment estimate \eqref{eq:apriori-energy}, for the same constant $C$.
\end{proposition}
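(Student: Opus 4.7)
The strategy is the classical compactness/Skorokhod method: regularize all data, invoke Proposition \ref{prop:apriori-estimates-nonlinear} to obtain smooth strong solutions $\omega^N$, derive uniform bounds, extract a subsequence converging almost surely on a new probability space, and pass to the limit in the weak formulation of the SPDE.

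For the regularization, in analogy with the proof of Proposition \ref{prop:strong-existence-linear}, I would take $g^N(\xi) := g(\xi) e^{-|\xi|^2/N}$, yielding a smooth covariance $Q^N$ and a spatially smooth noise $W^N$; approximate $r \in L^\infty_x$ by $r^N \in \cS$ with $r^N \to r$ pointwise a.e. and $\|r^N\|_{L^\infty_x} \leq \|r\|_{L^\infty_x}$ (so that by dominated convergence $\cR^N \psi \to \cR \psi$ in $L^2_x$ for any $\psi \in L^2_x$); mollify $\omega_0$ and $f$ to smooth functions $\omega_0^N$, $f^N$ converging in the relevant intersection spaces, with norms dominated by the original ones; and add a vanishing artificial viscosity $\eps^N \Delta$ with $\eps^N \to 0^+$. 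Proposition \ref{prop:apriori-estimates-nonlinear} then yields smooth strong solutions $\omega^N$ satisfying \eqref{eq:apriori-vorticity-Lp}--\eqref{eq:apriori-energy} uniformly in $N$. In particular, $\omega^N$ is $\P$-a.s. uniformly bounded in $L^\infty_t (L^1_x \cap L^p_x)$, in $L^2_\Theta L^\infty_t \dot H^{-1}_x$, and (when $\nu>0$) in $L^2_t \dot H^{\beta/2}_x$.

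The second step is tightness. Testing the It\^o form of the SPDE against $\varphi \in C^\infty_c$, the uniform $L^\infty_t L^2_x$ bound on $\omega^N$ combined with the boundedness of $\cR^N$ on $L^2_x$ controls the deterministic drift (including the nonlinear one, after an integration by parts) by $C(\varphi)|t-s|$ in every $L^q_\Theta$, while Lemma \ref{lem:stoch-integr-basic} and BDG control the stochastic term by $C(\varphi) |t-s|^{1/2}$. Together with the uniform spatial bounds and the compact embedding $L^2_\loc \hookrightarrow H^{-\delta}_\loc$ (Corollary \ref{cor:ascoli-Hs-loc}), an Aubin--Lions/Ascoli--Arzel\`a argument on the Fr\'echet space $H^{-\delta}_\loc$ yields tightness of $\{\mathcal{L}(\omega^N)\}_N$ in $C^0_t H^{-\delta}_\loc$ for any $\delta>0$; interpolating with the uniform $L^\infty_t L^2_x$-bound gives tightness also in $L^2_t L^2_\loc$. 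Tightness of $\{\mathcal{L}(W^N)\}_N$ in $C^0_t L^2_\loc$ follows from Proposition \ref{prop:pathwise-regularity-noise} and the uniform control $\|g^N\|_{L^1_x} \leq \|g\|_{L^1_x}$. By Skorokhod's representation theorem, on a new probability space I obtain $\tilde\P$-a.s. convergent copies $(\tilde\omega^N, \tilde W^N) \to (\tilde\omega, \tilde W)$ in the above topologies.

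The final and hardest step is identification of the limit. The law of $\tilde W$ as a $\cQ$-Brownian motion follows from $g^N \to g$ and standard arguments; the linear terms (forcing, fractional Laplacian, It\^o corrector, vanishing viscosity) pass trivially using weak convergence and $\eps^N \to 0$. The \textbf{main obstacle} is the nonlinear term $\int_0^\cdot \langle \tilde\omega^N, (\cR^N \curl^{-1}\tilde\omega^N)\cdot \nabla\varphi \rangle\, \d s$. I would recast it as $\int_0^\cdot \langle \cR^N \tilde u^N, \tilde\omega^N \nabla\varphi\rangle\, \d s$ with $\tilde u^N := \curl^{-1}\tilde\omega^N$ and exploit two facts: first, the strong $L^2_t L^2_\loc$-convergence $\tilde\omega^N \to \tilde\omega$ together with the compact support of $\nabla\varphi$; second, the boundedness of $\curl^{-1}$ from $L^1_x \cap L^{2+\eps}_x \cap \dot H^{-1}_x$ into $L^2_x \cap L^\infty_\loc$, which combined with the uniform bounds on $\tilde\omega^N$ gives uniform equi-integrability of $\cR^N \tilde u^N$ on compact sets. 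The convergence $\cR^N \psi \to \cR\psi$ in $L^2_x$ then replaces $\cR^N$ with $\cR$ in the limit. For the stochastic integral, I would follow the classical martingale characterization approach: convergence of the quadratic variations together with the cross-variations of the approximating martingales with $\tilde W^N$ identifies $\int_0^\cdot \langle \tilde\omega_r \nabla\varphi, \d\tilde W_r\rangle$ as the limit of $\int_0^\cdot \langle \tilde\omega^N_r \nabla\varphi, \d\tilde W^N_r\rangle$. Finally, the pathwise bounds \eqref{eq:apriori-vorticity-Lp}--\eqref{eq:apriori-energy} for $\tilde\omega$ follow by lower semicontinuity of the involved norms under the modes of convergence at play.
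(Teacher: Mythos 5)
Your proposal is correct and follows essentially the same route as the paper: mollify the data, noise and multiplier, invoke the a priori estimates of Proposition \ref{prop:apriori-estimates-nonlinear}, prove tightness in $C^0_t H^{-s}_\loc$ via equicontinuity plus the uniform $L^2_x$ bound, apply Skorokhod, and pass to the limit, with the nonlinear term handled exactly as in the paper's Lemma \ref{lem:convergence-nonlinearity} (strong $L^2_\loc$ convergence of $\omega^N$ against suitable convergence of $\cR^N\curl^{-1}\omega^N$). One small caveat: at the endpoint $p=2$ you cannot invoke the mapping $L^1_x\cap L^{2+\eps}_x\to L^\infty_x$ of $\curl^{-1}$; instead the uniform $H^1_x$ bound on $u^N=\curl^{-1}\omega^N$ coming from $\|\nabla u^N\|_{L^2_x}=\|\omega^N\|_{L^2_x}$ (Lemma \ref{lem:identity-enstrophy}) provides the needed local compactness, which is how the paper argues.
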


\begin{proof}
In light of the a priori estimates from Proposition \ref{prop:apriori-estimates-nonlinear}, we follow a compactness argument in the style of Flandoli--Gatarek \cite{FlaGat}.
As before, we fix a finite interval $[0,T]$.

First, consider smooth approximations $\omega^N_0,\, f^N$ and $W^N$ similarly as in the proof of Proposition \ref{prop:strong-existence-linear}; also consider a sequence $r^N\in C^\infty_c$ such that $|r^N|\leq |r|$ for all $N$, $r^N\uparrow r$ Lebesgue almost everywhere, and let $\cR^N$ denote the associated Fourier multipliers.
Set $K^N := \cR^N K\in C^\infty_b$ and let $\omega^N$ be the solutions to the mollified It\^o SPDEs
\begin{equation}\label{eq:nonlinear-approx-spde}
	\d \omega^N+ (K^N\ast\omega^N)\cdot \nabla\omega^N\,\d t + \d W^N \cdot \nabla\omega^N
  	= \big[f^N - \nu\Lambda^\beta \omega^N +(\eps\nu+\kappa_N)\Delta \omega^N \big]\, \d t
\end{equation}
with smooth initial data $\omega_0^N$, where $\kappa_N$ is given by the relation $Q^N(0)=2\kappa_N I_d$.
By Proposition \ref{prop:apriori-estimates-nonlinear}, the solutions $\omega^N$ exist strongly, in particular we can take them all defined on the same probability space;
moreover they satisfy the estimates \eqref{eq:apriori-vorticity-Lp}, \eqref{eq:apriori-vorticity-L2} and \eqref{eq:apriori-energy}, where we can take the r.h.s. to be independent of $N$, thanks to our choice of the approximations $(\omega_0^N, f^N, W^N, r^N)$.
The rest of the proof is divided in two main parts.

\textit{Step 1: Tightness.}
Our first task is to derive equicontinuity estimates and moment bounds for $\{\omega^N\}_N$ in $C^0_t H^{-s}_x$, for sufficiently small $s$;
combined with Corollary \ref{cor:ascoli-Hs-loc}, Remark \ref{rem:Hs-loc-interpolation} and the uniform pathwise bound for $\| \omega^N_t\|_{L^2_x}$, this will imply that their laws are tight in $C^0_t H^{-s}_\loc$ for all $s>0$.

To this end, we need to estimate the time integrals of terms appearing in \eqref{eq:nonlinear-approx-spde} one by one.
We start with the nonlinear term; to this end, set $u^N=\curl^{-1}\omega^N$, so that $K^N\ast \omega^N= \cR^N u^N$.
By the Sobolev embedding $H^{1/2}_x\hookrightarrow L^4_x$, its dual $L^{4/3}_x\hookrightarrow H^{-1/2}_x$ and H\"older's inequality, we have
\begin{align*}
	\big\| (K^N\ast \omega^N_t)\cdot\nabla \omega^N_t \big\|_{H^{-3/2}_x}
	& = \big\| \nabla \cdot ( (\cR^N u^N_t)\, \omega^N_t) \big\|_{H^{-3/2}_x}
	\leq \big\| (\cR^N u^N_t)\, \omega^N_t \big\|_{H^{-1/2}_x}\\
	& \leq \| \cR^N u^N_t\|_{L^4_x} \| \omega^N_t\|_{L^2_x}
	\lesssim \| \cR^N u^N_t\|_{L^4_x} ,
\end{align*}
where in the last step we applied the uniform in $N$ pathwise bound \eqref{eq:apriori-vorticity-L2}.
By arguing as in the estimates for $I^1$ from Proposition \ref{prop:apriori-estimates-nonlinear}, one then finds
\begin{align*}
	\| \cR^N u^N_t\|_{L^4_x}
	\lesssim \| u^N_t\|_{L^2_x}^{1/2} \, \| \omega^N_t\|_{L^2_x}^{1/2}
	\lesssim \| \omega^N_t\|_{\dot H^{-1}_x}^{1/2}
\end{align*}
which together with Sobolev embeddings and the moment estimates \eqref{eq:apriori-energy} yields
\begin{equation}\label{eq:equicontinuity-1}
	\sup_N\, \E\bigg[\, \Big\| \int_0^\cdot \big[(K^N\ast \omega^N_t)\cdot\nabla \omega^N_t \big]\, \d t \Big\|_{C^{1/2}_t H^{-3/2}_x}^2 \bigg]
	\lesssim \sup_N\, \E\bigg[\int_0^T \| \omega^N_t\|_{\dot H^{-1}_x}\, \d t\bigg]
	< \infty.
\end{equation}
For the stochastic integrals, since $W^N$ are divergence free, by Lemma \ref{lem:stoch-integr-basic} it holds
\begin{align*}
	\E\bigg[ \Big\| \int_s^t \nabla\omega^N_r \cdot \d W^N_r \Big\|_{H^{-1}_x}^{2p} \bigg]
	& = \E \bigg[ \Big\| \nabla\cdot \int_s^t \omega^N_r\, \d W^N_r \Big\|_{H^{-1}_x}^{2p} \bigg] \\
	& \lesssim \E \bigg[ \Big\| \int_s^t \omega^N_r\, \d W^N_r \Big\|_{L^2_x}^{2p} \bigg]\\
	& \lesssim \kappa_N^p\, \E\bigg[\Big( \int_s^t \| \omega^N_r\|_{L^2_x}^2\, \d r \Big)^p \bigg]
	\lesssim |t-s|^p
\end{align*}
where in the last passage we applied the uniform pathwise estimates on $\|\omega^N\|_{L^2_x}$ and the fact that $\kappa_N\leq \kappa$ by construction.
Combining the above estimate with Kolmogorov's continuity theorem, one can conclude that for any $\gamma<1/2$ and any $p\in [1,\infty)$ it holds
\begin{equation}\label{eq:equicontinuity-2}
	\sup_N \, \E\bigg[ \Big\| \int_0^\cdot \nabla\omega^N_r\cdot \d W^N_r\Big\|_{C^\gamma_t H^{-1}_x} ^p\bigg] <\infty.
\end{equation}
Next, again by the uniform pathwise bounds on $\| \omega^N\|_{L^2_x}$, it holds
\begin{equation}\label{eq:equicontinuity-3}
	\sup_N\, \E\bigg[ \Big\| \int_0^\cdot [-\nu\Lambda^\beta + (\eps \nu +\kappa_N)\Delta ] \omega^N_r \d r \Big\|_{C^1_t H^{-2}_x} \bigg]
	\lesssim \sup_N\, \E \bigg[ \int_0^T \| \omega^N_r\|_{L^2_x}\, \d r \bigg]
	< \infty
\end{equation}
Finally, by construction the sequence $f^N\to f$ in $L^1_t L^2_x$, which implies that that $\int_0^\cdot f^N_r\, \d r \to \int_0^\cdot f_r\, \d r$ in $C([0,T];L^2_x)$ and is equicontinuous therein; moreover by assumption $f^N$ and $f$ are all deterministic.

Writing the SPDEs \eqref{eq:nonlinear-approx-spde} in integral form and combining the last observation with estimates \eqref{eq:equicontinuity-1}, \eqref{eq:equicontinuity-2} and \eqref{eq:equicontinuity-3}, we can conclude that the family $\{\omega^N\}_N$ is equicontinuous in $C^0_t H^{-2}_x$, with suitable moment bounds; together with the pathwise estimate on $\sup_N \| \omega^N\|_{L^\infty_t L^2_x}$, by Remark \ref{rem:Hs-loc-interpolation} this implies tightness of the sequence in $H^{-s}_\loc$, for any $s>0$.

\textit{Step 2: Passage to the limit in a new probability space.}
The next step is very classical, therefore we mostly sketch it; for a thorough presentation in a similar setting, see for instance \cite[Proof of Theorem 2.2]{FlGaLu21a}.

By construction $W^N\to W$ in $L^2_\Theta C^0_t L^2_\loc$; together with Step 1, this implies tightness of the laws of $\{(\omega^N,W^N)\}_N$ in $C^0_t H^{-s}_\loc\times C^0_t L^2_\loc$.
By Prohorov's theorem and Skorohod's representation theorem, we can extract a (not relabelled) subsequence and construct a new probability space $(\tilde\Theta, \tilde{\mathbb F}, \tilde\P)$, on which there exists a sequence of processes $\{(\tilde \omega^N,\tilde W^N)\}_N$ satisfying:
\begin{itemize}
\item[i)] for any $N$, $(\tilde \omega^N, \tilde W^N)$ has the same law as  $(\omega^N, W^N)$;
\item[ii)] $\tilde\P$-a.s., $(\tilde \omega^N, \tilde W^N)\to (\tilde \omega,\tilde W)$ in $C^0_t H^{-s}_\loc\times C^0_t L^2_\loc$.
\end{itemize}
Point i) above implies that $\tilde\omega^N$ is a solution to \eqref{eq:nonlinear-approx-spde}, adapted to the filtration generated by $W^N$, satisfying the same pathwise and moment bounds \eqref{eq:apriori-vorticity-Lp}, \eqref{eq:apriori-vorticity-L2} and \eqref{eq:apriori-energy}; moreover, $\tilde W^N$ are again $Q^N$-Brownian motions.
By Point ii) and Remark \ref{rem:Hs-loc-interpolation}, it then follows that, on a set of full probability $\tilde\P$, $\tilde\omega^N_t\rightharpoonup \tilde \omega_t$ in $L^2_x$ for all $t\in [0,T]$; by lowersemicontinuity of the relevant norms, it follows that $\tilde\omega$ satisfies the bounds \eqref{eq:apriori-vorticity-Lp}, \eqref{eq:apriori-vorticity-L2} and \eqref{eq:apriori-energy} as well.
Moreover, since $\tilde W^N\to \tilde W$, $\tilde W$ is a $Q$-Brownian motion.
With slightly more effort, using Points i) and ii) above, one can additionally show that $\tilde W$ is a $\tilde\F_t$-Brownian motion, where $\tilde \F_t=\sigma(\tilde W_r, \tilde \omega_r: r\leq t)$;
we claim that $(\tilde\Omega, \tilde\F, \tilde \F_t, \tilde \P; \tilde \omega, \tilde W)$ is a desired weak solution to \eqref{eq:nonlinear-general-spde}.

This can be accomplished by applying Points i) and ii) and passing to the limit as $N\to\infty$ in the weak form of the SPDEs \eqref{eq:nonlinear-approx-spde} satisfied by $\tilde\omega^N$.
For simplicity, let us drop the tildes in the notation of processes, so that for any $\varphi\in C_c^\infty$, $\tilde\P$-a.s. for all $t\in [0,T]$, it holds
\begin{equation}\label{eq:nonlinear-approx-spde-new}\begin{split}
	\<\omega^N_t,\varphi\>
	& = \<\omega^N_0,\varphi\> + \int_0^t \big\<\omega^N_s, (\cR^N u^N_s) \cdot \nabla \varphi \big\>\,\d s + \int_0^t \big\<\omega^N_s, \d W^N_s\cdot\nabla \varphi \big\> \\
	& \quad + \int_0^t \big[ \<f^N_s, \varphi\> - \nu \<\omega^N_s, \Lambda^\beta \varphi \> +(\kappa_N+ \eps\nu) \< \omega^N_s, \Delta \varphi \> \big]\,\d s.
\end{split}\end{equation}
The reasoning concerning the last integral in \eqref{eq:nonlinear-approx-spde-new} is almost identical to the one in the proof of Proposition \ref{prop:strong-existence-linear}, similarly for the stochastic integrals.

The nonlinear terms $\<\omega^N_s, (\cR^N u^N_s) \cdot \nabla \varphi \>$ are the only ones requiring less standard treatment; here however, thanks to Point ii) above and Remark \ref{rem:Hs-loc-interpolation}, we can apply at any fixed $t\in [0,T]$ Lemma \ref{lem:convergence-nonlinearity} from Appendix \ref{app:useful} to deduce that $\P$-a.s.
\begin{align*}
	\big\<\omega^N_s, (\cR^N \curl^{-1} \omega^N_s) \cdot \nabla \varphi \big\>
	\to \big\<\omega_s, (\cR \, \curl^{-1} \omega_s)\cdot\nabla\varphi \big\>.
\end{align*}
By dominated convergence, we can then establish the same for the integrals in time, which completes the proof.
\end{proof}

\begin{remark}\label{rem:apriori-estimates-nonlinear}
Our existence result from Proposition \ref{prop:weak-existence-nonlinear} crucially requires $\omega_0\in L^2_x$, as it is based on the a priori estimates for $\|u\|_{L^2_x}$ coming from Proposition \ref{prop:apriori-estimates-nonlinear}.
In light of the deterministic weak existence results by Delort \cite{Delort} and Schochet \cite{Schochet}, where it is enough to require $\omega_0\in L^1_x\cap \dot H^{-1}_x$, this assumption might seem superfluous; indeed, a stochastic counterpart of their result can be found in \cite{BrzMau} (on the torus $\T^2$), under the additional regularity condition
  \begin{equation}\label{eq:additional-regularity}
  \sum_k \| \sigma_k\|_{C^1_x}^2<\infty
  \end{equation}
on the noise, cf. \cite[Condition 2.1]{BrzMau}.
However, in order to prove uniqueness in law, we will need to work with a rougher noise, which prevents us from obtaining the a priori estimates from \cite[Lemma 4.5]{BrzMau}.

This issue is not related to the nonlinear structure of the PDE, instead, it is specific to the (ir)regularity of the noise. The work \cite{CogMau} contains a key intuition on how to exploit it effectively to obtain improved weak existence results.
To illustrate the idea in a somewhat heuristic fashion, consider a solution $\rho$ to the simpler linear SPDE $\d \rho + \circ\d W\cdot\nabla\rho=0$, which we may write in It\^o form as
\begin{equation*}
\d \rho_t + \sum_k \sigma_k\cdot\nabla \rho_t\, \d B^k_t = \kappa \Delta \rho_t\, \d t = \frac{1}{2} \nabla\cdot( Q(0)\nabla\rho_t)\, \d t;
\end{equation*}
assume it admits a weak non-negative solution $\rho$, uniformly bounded in $L^1_x$.
Let $G$ be a \emph{smooth} convolutional kernel; by an application of It\^o's formula in $L^2_x$, it holds
\begin{align*}
	\d \langle G\ast \rho_t,\rho_t\rangle
	= 2 \langle G\ast\rho_t,\d \rho_t \rangle + \sum_k \langle G\ast(\sigma_k\cdot\nabla \rho_t),\sigma_k\cdot\nabla \rho_t\rangle.
\end{align*}
Observe that
\begin{align*}
	2 \langle G\ast\rho_t,\d \rho_t \rangle
	& = \langle G\ast \rho_t, \nabla\cdot (Q(0)\nabla\rho_t) \rangle\, \d t + \d M_t\\
	& = \int_{\R^{2d}} D^2 G(x-y): Q(0) \rho_t(x) \rho_t(y)\, \d x \d y + \d M_t,
\end{align*}
where $M$ is a suitable martingale term and the last passage comes from integration by parts; we employed the notation $A:B=\Tr (A^T B)=\sum_{ij} A_{ij} B_{ij}$.
Similarly, since $\sigma_k$ are divergence free, integrating by parts we find
\begin{align*}
	\sum_k \langle G\ast(\sigma_k\cdot\nabla \rho_t),\sigma_k\cdot\nabla \rho_t\rangle
	& = \sum_k \int_{\R^{2d}} G(x-y) \sigma_k(y)\cdot\nabla\rho_t(y) \sigma_k(x) \cdot\nabla \rho_t(x)\, \d x \d y\\
	& = - \sum_k \int_{\R^{2d}} D^2G(x-y): \big( \sigma_k(y)\otimes \sigma_k(x)\big)\, \rho_t(x) \rho_t(y)\, \d x \d y\\
	& = - \int_{\R^{2d}} D^2G(x-y) : Q(x-y) \rho_t(x)\rho_t(y)\, \d x \d y.
\end{align*}
Combining the above computations, taking expectations which removes the martingale part $M_t$, one finds
\begin{equation}\label{eq:convolutional-balance}
	\frac{\d}{\d t} \E[\langle G\ast \rho_t,\rho_t\rangle]
	= \E\bigg[\int_{\R^{2d}} H(x-y) \rho_t(x)\rho_t(y)\, \d x \d y\bigg]
	= \E[\langle H\ast \rho_t,\rho_t\rangle]
\end{equation}
for the new kernel $H(z):=(Q(0)-Q(z)):D^2 G(z)$.
We derived \eqref{eq:convolutional-balance} for smooth $G$, but the right-hand side of \eqref{eq:convolutional-balance} can be meaningful even for singular kernels, depending on the regularity of $Q$.
Arguing by approximations, one thus expects relation \eqref{eq:convolutional-balance} to be true whenever $G$, $H$ and $\rho$ are such that all terms appearing are well-defined.

Consider the case of the Green function $G=\Delta^{-1}$, $G(z)\sim |z|^{2-d}$ ($-\log |z|$ for $d=2$); then the left-hand side of \eqref{eq:convolutional-balance} would correspond to $\E \big[\| \rho_t\|_{\dot H^{-1}_x}^2 \big]$, while
\begin{align*}
	|H(z)| \leq |G(z)|\, |Q(0)-Q(z)| \lesssim |z|^{-d} |Q(0)-Q(z)|.
\end{align*}
Under \eqref{eq:additional-regularity}, Proposition \ref{prop:regularity-Q-sigma} implies that $|Q(z)-Q(0)| \lesssim \|Q \|_{W^{2,\infty}_x} |z|^2$ and consequently $H(z)\lesssim |z|^{2-d}$.
Since $\rho$ is non-negative, we overall expect an a priori estimate of the form
\begin{align*}
	\frac{\d}{\d t}\E\big[\| \rho_t\|_{\dot H^{-1}_x}^2\big]
	& \lesssim \| Q\|_{W^{2,\infty}_x} \E \int_{\R^{2d}} |x-y|^{2-d} \rho_t(x)\rho_t(y)\, \d x \d y \\
	&\sim \| Q\|_{W^{2,\infty}_x}\, \E \langle G \ast \rho_t, \rho_t\rangle
	= \| Q\|_{W^{2,\infty}_x}\, \E\big[\| \rho_t\|_{\dot H^{-1}_x}^2\big];
\end{align*}
the last two passages work for $d\geq 3$, while for $d=2$ there is a more direct bound with $\| Q\|_{W^{2,\infty}_x} \| \rho_t\|_{TV}^2$.
Overall, for $Q\in W^{2,\infty}_x$, which in particular holds true under \eqref{eq:additional-regularity}, one can expect to obtain closed estimates for $\E\big[ \| \rho_t\|_{\dot{H}^{-1}_x}^2 \big]$.

The key intuition from \cite{CogMau} is that, for rough Kraichnan noise (namely satisfying Assumption \ref{ass:covariance-basic} with $g(\xi)=(1+|\xi|^2)^{-(d+\gamma)/2}$, $\gamma\in (0,2)$), one can carefully analyze the kernel $H$ to show that (for $d=2$)
\begin{align*}
	|H(z)|= -c (1+|z|^2)^{-\gamma/2} + O(|z|^{-2}) \quad \text{as } |z|\to 0
\end{align*}
for some positive constant $c>0$; this allows to get an improved estimate of the form
\begin{align*}
	\frac{\d}{\d t}\E\big[\| \rho_t\|_{\dot H^{-1}_x}^2\big] + c\, \E\big[\| \rho_t\|_{H^{-\gamma/2}_x}^2\big]
	\lesssim \E\big[\| \rho_t\|_{\dot H^{-1}_x}^2\big]
\end{align*}
which yields closed estimates in $\dot H^{-1}_x$ and shows a regularization effect (represented by the term $\|\rho_t\|_{H^{-\gamma/2}_x}^2$) taking place, due to the irregularity of the noise.
\end{remark}

We presented the computations in Remark \ref{rem:apriori-estimates-nonlinear} in detail, in order to highlight the subtle and nontrivial effect in the well-posedness theory for the nonlinear SPDE, coming from the roughness of our noise.
Classical strategies like Yudovich's proof of uniqueness for $\omega_0\in L^1_x\cap L^\infty_x$, which was readapted to the stochastic setting in \cite{BrFlMa} (on $\T^2$), might not be available anymore; on the other hand, the rough Kraichnan regime ($\gamma\in (0,2)$) allows for new arguments compared to the standard setting.

We conclude this section with a consequence of Proposition \ref{prop:weak-existence-nonlinear}, which will be useful later.

\begin{corollary}\label{cor:stability-linear-energy}
Consider a deterministic sequence $\{(\rho_0^n, f^n)\}_n$ such that $\rho_0^n\to \rho_0$ in $L^1_x\cap L^2_x\cap \dot H^{-1}_x$ and $f^n\to f$ in $L^1_t(L^1_x\cap L^2_x\cap \dot H^{-1}_x)$; let $\nu$, $\beta$ and $W$ be fixed parameters taken as in Proposition \ref{prop:weak-existence-nonlinear}. For each $n$, consider the unique strong solution to the linear SPDE
\begin{align*}
	\d \rho^n + \d W\cdot\nabla \rho^n = [f^n-\nu \Lambda^\beta\rho^n+\kappa\Delta\rho^n]\, \d t;
\end{align*}
similarly, denote by $\rho$ the solution associated to $(\rho_0,f)$. Then for any $T\in (0,+\infty)$ it holds
\begin{align*}
	\lim_{n\to\infty} \E\bigg[\sup_{t\in [0,T]} \| \rho^n_t-\rho_t\|_{L^2_x}^2 + \sup_{t\in [0,T]} \| \rho^n_t-\rho_t\|_{\dot H^{-1}_x}^2 + \nu \int_0^T \| \rho^n_t -\rho_t \|_{\dot H^{\beta/2}_x}^2\, \d t\bigg] = 0.
\end{align*}
\end{corollary}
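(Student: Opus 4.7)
The plan is to exploit linearity. Setting $v^n := \rho^n - \rho$, by linearity $v^n$ is itself the unique strong solution, driven by the same noise $W$, of the linear SPDE associated to the data $(\rho_0^n - \rho_0,\, f^n - f)$; thus it suffices to show that such a solution vanishes in all three topologies when its data do. I would argue pathwise at the level of smooth approximations and then pass to the limit via Corollary \ref{cor:stability-linear-energy}'s upstream stability estimates, the parabolic regularization from $\kappa \Delta$ (so that condition \textit{i}) of Proposition \ref{prop:equivalence-ito-stratonovich} applies and Stratonovich/Itô forms agree), and lower semi-continuity of the norms involved.

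First I would apply Itô's formula to $\|v^n_t\|_{L^2_x}^2$ and exploit a remarkable cancellation. The martingale contribution $-2\int_0^t\langle v^n_s, \nabla v^n_s \cdot dW_s\rangle$ vanishes pathwise by integration by parts (using $\nabla\cdot W = 0$ and $\int \sigma_k \cdot \nabla (v^n)^2 = 0$), while the Itô-Stratonovich correction $2\kappa\langle v^n, \Delta v^n\rangle = -2\kappa\|\nabla v^n\|_{L^2_x}^2$ is exactly offset by the quadratic variation $\sum_k \|\sigma_k\cdot \nabla v^n\|_{L^2_x}^2 = 2\kappa \|\nabla v^n\|_{L^2_x}^2$ (cf. the proof of Lemma \ref{lem:stoch-integr-basic} and \eqref{eq:Q_0}). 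What survives is the pathwise identity
$$\|v^n_t\|_{L^2_x}^2 + 2\nu \int_0^t \|v^n_s\|_{\dot H^{\beta/2}_x}^2\,ds = \|v^n_0\|_{L^2_x}^2 + 2\int_0^t \langle v^n_s, f^n_s - f_s\rangle\,ds,$$
from which a Young-type manipulation yields the purely deterministic bound
$$\sup_{t\in[0,T]} \|v^n_t\|_{L^2_x}^2 + 2\nu\int_0^T\|v^n_s\|_{\dot H^{\beta/2}_x}^2\,ds \leq \big(\|\rho_0^n-\rho_0\|_{L^2_x} + 2\|f^n - f\|_{L^1_t L^2_x}\big)^2 \quad \P\text{-a.s.},$$
whose right-hand side tends to zero by hypothesis, disposing of the first two terms of the stated limit.

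For the remaining $\dot H^{-1}_x$ contribution no such cancellation is available, and this is the main obstacle. I would mimic the Itô computation of $d\|u_t\|_{L^2_x}^2$ from the proof of Proposition \ref{prop:apriori-estimates-nonlinear}, with the nonlinear term $I^1$ absent: applying Itô's formula to $\|v^n_t\|_{\dot H^{-1}_x}^2 = \|(-\Delta)^{-1/2} v^n_t\|_{L^2_x}^2$ and bounding the quadratic variation by $\sum_k \|(-\Delta)^{-1/2}\nabla\cdot(\sigma_k v^n)\|_{L^2_x}^2 \lesssim \sum_k \|\sigma_k v^n\|_{L^2_x}^2 \lesssim \|v^n\|_{L^2_x}^2$ (since $(-\Delta)^{-1/2}\nabla\cdot$ is $L^2_x$-bounded and $\sum_k |\sigma_k|^2 = \Tr Q(0)$), one obtains
$$d\|v^n_t\|_{\dot H^{-1}_x}^2 \leq \Big[2\|v^n_t\|_{\dot H^{-1}_x}\|f^n_t - f_t\|_{\dot H^{-1}_x} + C \|v^n_t\|_{L^2_x}^2\Big]\,dt + 2\langle v^n_t \nabla(-\Delta)^{-1} v^n_t, dW_t\rangle.$$
Taking supremum in time and expectation, treating the martingale via BDG and Remark \ref{rem:stoch-integr-basic} together with $\|v^n \nabla(-\Delta)^{-1}v^n\|_{L^1_x} \leq \|v^n\|_{L^2_x}\|v^n\|_{\dot H^{-1}_x}$ exactly as in \eqref{eq:proof-nonlinear-useful2}, a Young absorption of the resulting $\varepsilon\,\E[\sup_t\|v^n_t\|_{\dot H^{-1}_x}^2]$ term produces
$$\E\Big[\sup_{t\in[0,T]}\|v^n_t\|_{\dot H^{-1}_x}^2\Big] \lesssim \|\rho_0^n - \rho_0\|_{\dot H^{-1}_x}^2 + \|f^n - f\|_{L^1_t \dot H^{-1}_x}^2 + \E\bigg[\int_0^T \|v^n_s\|_{L^2_x}^2\,ds\bigg].$$
The first two summands vanish by assumption, and the third by the pathwise bound of the previous paragraph; the delicate point is that the Young absorption on the $\dot H^{-1}_x$-norm is viable precisely because the quadratic-variation and BDG estimates only reintroduce the $L^2_x$-norm of $v^n$, which has already been controlled uniformly in $\omega$ by a deterministic quantity going to zero.
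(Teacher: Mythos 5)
Your argument is correct and follows essentially the same route as the paper: reduce by linearity to data tending to zero, use the pathwise $L^2_x$/$\dot H^{\beta/2}_x$ energy identity (the content of Remark \ref{rem:energy-estim-linear-fractional-laplacian}) for the first two terms, and then apply It\^o's formula to $\|(-\Delta)^{-1/2}v^n\|_{L^2_x}^2$ with the quadratic variation and BDG terms controlled through $\|v^n\|_{L^2_x}$ exactly as in \eqref{eq:proof-nonlinear-useful}--\eqref{eq:proof-nonlinear-useful2}. The only cosmetic difference is in closing the $\dot H^{-1}_x$ estimate: you absorb $\varepsilon\,\E[\sup_t\|v^n_t\|_{\dot H^{-1}_x}^2]$ by Young's inequality (which requires its a priori finiteness, supplied by \eqref{eq:apriori-energy} or by working at the level of smooth approximations as you indicate), whereas the paper keeps that factor and bounds it via the uniform moment estimate \eqref{eq:proof-cor-unif-bound} while letting the deterministic $L^2_x$ factor carry the convergence to zero.
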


\begin{proof}
Strong existence and uniqueness of $\rho^n$, $\rho$ comes from Theorem \ref{thm:main-linear-extended}; by linearity of the SPDE, $\rho^n-\rho$ is the solution associated to $(\rho^n_0-\rho_0, f^n-f)$, therefore it suffices to show the statement for $\rho^n_0\to 0$ and $f^n\to 0$.
Remark \ref{rem:energy-estim-linear-fractional-laplacian} yields the stronger $\P$-a.s. pathwise estimate
\begin{equation}\label{eq:proof-cor-pathwise}
	\lim_{n\to\infty} \bigg[ \sup_{t\in [0,T]} \| \rho^n_t\|_{L^2_x}^2  + \nu \int_0^T \| \rho^n_t \|_{\dot H^{\beta/2}_x}^2\, \d t \bigg]
	\lesssim \lim_{n\to\infty} \bigg( \| \rho^n_0\|_{L^2_x} + \int_0^T \|f^n_t \|_{L^2_x}\, \d t \bigg)^2 = 0 ,
\end{equation}
so we only need to show convergence of the $\dot H^{-1}_x$-norm.
Next observe that, as the family $\| f^n\|_{L^1_t (L^2_x\cap \dot H^{-1}_x)}$ is uniformly bounded, estimate \eqref{eq:apriori-energy} yields a uniform bound
\begin{equation}\label{eq:proof-cor-unif-bound}
	\sup_{n\in\N}\, \E\bigg[\sup_{t\in [0,T]} \| \rho^n_t\|_{\dot H^{-1}_x}^2 \bigg] \leq C;
\end{equation}
going through the same computations as in the proof of Proposition \ref{prop:apriori-estimates-nonlinear}, this time for $u^n=\curl^{-1} \rho^n$, one then finds (cf. the first equation in \eqref{eq:proof-nonlinear-useful}, keeping in mind that the terms coming from the nonlinearity are absent here)
\begin{align*}
	\d\| \rho^n_t\|_{\dot H^{-1}_x}^2
	\lesssim \big(\|\rho^n_t\|_{\dot H^{-1}_x} \|f^n_t\|_{\dot H^{-1}_x} + \|\rho^n_t \|_{L^2_x}^2\big)\,\d t -2 \<(-\Delta)^{-1} \rho^n_t, \nabla\rho^n_t\cdot \d W_t\>.
\end{align*}
Integrating from 0 to $t$ and taking the supremum over $t\in [0,T]$ on both sides, estimating the stochastic integral as in \eqref{eq:proof-nonlinear-useful2}, we then arrive at
\begin{align*}
	\E\bigg[\sup_{t\in [0,T]} \| \rho^n_t\|_{\dot H^{-1}_x}^2\bigg]
	& \lesssim \| \rho^n_0 \|_{\dot H^{-1}_x}^2+ \E\bigg[\int_0^T \big( \|\rho^n_t\|_{\dot H^{-1}_x} \|f^n_t\|_{\dot H^{-1}_x} + \|\rho^n_t \|_{L^2_x}^2\big)\,\d t\bigg]\\
	& \quad + \Big(\| \rho^n_0\|_{L^2_x} + \int_0^T \|f^n_t\|_{L^2_x}\, \d t\Big) \E\bigg[ \Big(\int_0^T \| \rho^n_t\|_{\dot H^{-1}_x}^2\, \d t\Big)^{1/2} \bigg]\\
	& \lesssim_T \| \rho^n_0 \|_{\dot H^{-1}_x}^2+ \| \rho^n_0\|_{L^2_x} + \!\int_0^T\! \|f^n_t\|_{L^2_x \cap\dot H^{-1}_x} \,\d t + \!\bigg( \| \rho^n_0\|_{L^2_x} + \!\int_0^T\! \|f^n_t\|_{L^2_x}\, \d t \bigg)^2 ,
\end{align*}
where in the last passage we applied the pathwise bound \eqref{eq:proof-cor-pathwise} on $\| \rho^n_t\|_{L^2_x}$, the uniform moment bound \eqref{eq:proof-cor-unif-bound} on $\| \rho^n_t\|_{\dot H^{-1}_x}$ and the fact that $\rho^n_0$, $f^n$ are deterministic.
All terms on the r.h.s. of the last inequality above by assumption converge to $0$ as $n\to\infty$, yielding the conclusion.
\end{proof}

\subsection{Uniqueness in law and weak stability}\label{subsec:uniqueness-nonlinear}

We now turn to the proof of uniqueness in law; as mentioned in the introduction, it is based on the use of Girsanov transform, see \cite[Theorem 10.14]{DaPZab} for its classical statement on Hilbert spaces.
We shall start by some abstract considerations, presented in Propositions \ref{prop:weak-uniqueness-nonlinear} and \ref{prop:entropy-bounds} below, working for a general class of multipliers $\cR$; we then specialize to our equations of interest in Theorems \ref{thm:main-logeuler}-\ref{thm:main-hypoNS}, finally yielding the proof of Theorem \ref{thm:main-theorem}.

We need to introduce some conventions. Recall that, for a noise $W$ with covariance $Q$, the associated Cameron-Martin space is given by $\cH = \cQ^{1/2}(L^2_x)$, with norm $\|\varphi \|_\cH= \|\cQ^{-1/2} \varphi\|_{L^2_x}$.
Given a stochastic process $h$ $\P$-a.s. belonging to $L^2_t \cH$, we define $\cI(h)_\cdot:=\int_0^\cdot h_s\, \d s$; if moreover $h$ is $\F_t$-progressively measurable and $W$ is an $\F_t$-Brownian motion, we denote by $\cE_T(h)$ the associated exponential martingale evaluated at time $T$, namely
\begin{equation}\label{eq:exponential-martingale}
	\cE_T(h):= \exp\bigg(\int_0^T \langle \cQ^{-1} h_s, \d W_s\rangle_{L^2_x} - \frac{1}{2} \int_0^T \| h_s \|_{\cH}^2\, \d s \bigg).
\end{equation}
From now on, we will always assume $\omega_0$ and $f$ to be deterministic and satisfying the assumptions of Proposition \ref{prop:weak-existence-nonlinear} for some $p\in [2,\infty]$.
Correspondingly, we work with a weak solution $(\Theta,\F,\F_t,\P;\omega,W)$ to the nonlinear SPDE \eqref{eq:nonlinear-general-spde}, defined on the whole finite interval $[0,T]$, whose existence is thus granted; on the same space, thanks to Theorem \ref{thm:main-linear-extended}, we can construct a strong solution $\rho$ to the linear SPDE \eqref{eq:linear-spde-no-drift}, which would correspond to the choice $\cR\equiv 0$.

We denote by $C^w_t L^2_x=C^w([0,T];L^2_x)$ the Banach space of measurable bounded maps $\varphi:[0,T]\to L^2_x$, which are continuous in the weak topology of $L^2_x$; it is a separable Banach space endowed with supremum norm $\| f\|_{C^w_t L^2_x}:=\sup_{t\in [0,T]} \| f_t\|_{L^2_x}$.
Whenever talking about the law of either $\omega$ or $\rho$, this will be meant in the sense of being a $C^w_t L^2_x$-valued random variable.

\begin{proposition}\label{prop:weak-uniqueness-nonlinear}
Let $\omega_0$, $f$, $\nu$, $\beta$, $Q$, $\cR$ satisfy the assumptions of Proposition \ref{prop:weak-existence-nonlinear} for some $p\in [2,\infty]$; let $\omega$ be a weak solution to \eqref{eq:nonlinear-general-spde}, defined on the interval $[0,T]$.
If
\begin{equation}\label{eq:weak-uniqueness-assumption1}
\int_0^T \| \mathcal{R}\, \curl^{-1}\omega_s\|_{\cH}^2\, \d s <\infty \quad \P\text{-a.s.},
\end{equation}
then the law of $\omega$ is uniquely determined. If additionally $\rho$ satisfies
\begin{equation}\label{eq:weak-uniqueness-assumption2}
\int_0^T \| \mathcal{R}\, \curl^{-1}\rho_s\|_{\cH}^2\, \d s <\infty \quad \P\text{-a.s.},
\end{equation}
then $\mu^{NL}:=\L(\omega)$ and $\mu^L:=\L(\rho)$ are equivalent and  $\P$-a.s. it holds
\begin{equation}\label{eq:formula-density-girsanov}
	\frac{\d \mu^{NL}}{\d \mu^L}(\rho) = \E\big[ \cE_T \big(\cR\, \curl^{-1} \rho\big) \big\vert \F^\rho_T \big], \quad
	\frac{\d \mu^L}{\d \mu^{NL}}(\omega) = \E\big[ \cE_T \big(-\cR\, \curl^{-1} \omega\big) \big\vert \F^\omega_T \big]
\end{equation}
where $\F^\rho$ and $\F^\omega$ denote respectively the natural filtrations of $\rho$ and $\omega$.
\end{proposition}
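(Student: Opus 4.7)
The plan is to perform a Girsanov change of measure that removes the nonlinear drift from the It\^o form of \eqref{eq:nonlinear-general-spde}, reducing it to the linear SPDE \eqref{eq:linear-spde-no-drift} for which pathwise uniqueness is available by Theorem \ref{thm:main-linear-extended}. Set $h_s := \cR\,\curl^{-1}\omega_s$; by assumption \eqref{eq:weak-uniqueness-assumption1}, $h$ is an $\F_t$-progressively measurable process lying $\P$-a.s.\ in $L^2_t\cH$, so $M_T := \cE_T(-h)$ is a nonnegative $\P$-local martingale. The first task is to upgrade $M$ to a true martingale with $\E[M_T]=1$, and in fact to secure the equivalence $\Q\sim\P$ on $\F_T$ for $\d\Q := M_T\,\d\P$. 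Since Novikov's condition is not available under merely a.s.\ finiteness of $\int_0^T\|h_s\|_\cH^2\,\d s$, I would invoke the criterion of Ferrario \cite{Ferrario} (stemming from \cite[Ch.~7]{LipShi}), which is tailored precisely to hypotheses such as \eqref{eq:weak-uniqueness-assumption1} and yields both $\E[M_T]=1$ and $\Q\sim\P$.

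Once $\Q$ is defined, Girsanov's theorem on Hilbert spaces (cf.\ \cite[Thm.~10.14]{DaPZab}) implies that
\[
	\tilde W_t := W_t + \int_0^t h_s\,\d s
\]
is a $\cQ$-Brownian motion under $\Q$, still adapted to $\F_t$. Substituting $\d W = \d\tilde W - h\,\d s$ into the It\^o weak formulation of \eqref{eq:nonlinear-general-spde} and using $h = \cR\,\curl^{-1}\omega$, the nonlinear drift is exactly cancelled, so on $(\Theta,\F,\F_t,\Q)$ the process $\omega$ is a weak solution of the linear SPDE \eqref{eq:linear-spde-no-drift} driven by $\tilde W$. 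Because $\Q\sim\P$, the integrability $\omega\in L^2_tL^1_x$ required by Theorem \ref{thm:main-linear-extended} is preserved, so pathwise uniqueness for the linear equation forces the $\Q$-law of $\omega$ to coincide with $\mu^L$. This alone proves uniqueness in law for the nonlinear SPDE under \eqref{eq:weak-uniqueness-assumption1}.

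To derive the explicit densities in \eqref{eq:formula-density-girsanov}, I would start from $\mu^{NL}(A) = \E_\Q[M_T^{-1}\mathbf{1}_A(\omega)]$ and rewrite $M_T^{-1}$ in terms of $\tilde W$: using $\d W = \d\tilde W - h\,\d s$ one checks that
\[
	M_T^{-1} = \exp\!\bigg(\int_0^T\langle\cQ^{-1}h_s,\d\tilde W_s\rangle - \tfrac12\int_0^T\|h_s\|_{\cH}^2\,\d s\bigg),
\]
which is the exponential martingale with integrand $h=\cR\,\curl^{-1}\omega$ built from the $\Q$-Brownian motion $\tilde W$. Since $\omega$ has $\Q$-law $\mu^L$ (as a weak solution of \eqref{eq:linear-spde-no-drift} driven by $\tilde W$), conditioning on $\F^\omega_T$ and identifying the pair $(\omega,\tilde W)$ under $\Q$ with $(\rho,W)$ under $\P$ yields the first identity in \eqref{eq:formula-density-girsanov}. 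If moreover \eqref{eq:weak-uniqueness-assumption2} holds, the same recipe applied in reverse, starting from the linear solution $\rho$ and the drift $\tilde h := \cR\,\curl^{-1}\rho$, produces an equivalent change of measure under which $\rho$ solves the nonlinear SPDE; combining with the uniqueness in law already established gives the second identity in \eqref{eq:formula-density-girsanov} and the global equivalence $\mu^{NL}\sim\mu^L$.

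The main obstacle I expect is the first step, namely verifying Ferrario's criterion in our setting: this is the substitute for Novikov and is what actually drives the choice of noise $Q$ and of the operator $\cR$ in Systems \ref{system:logEuler}--\ref{system:hypoNS}, and it is the source of the full equivalence $\Q\sim\P$, which is what transfers pathwise bounds between the two pictures. Two subsidiary technical points are checking that the solution class used in Theorem \ref{thm:main-linear-extended} is genuinely preserved under the change of measure (automatic from $\Q\sim\P$ once available), and carrying out the conditioning carefully to turn a change-of-measure identity on $\Theta$ into a bona fide Radon--Nikodym formula on the path space $C^w_tL^2_x$.
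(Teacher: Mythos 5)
Your overall route is the same as the paper's: regroup the drift into the noise, change measure to reduce to the linear SPDE \eqref{eq:linear-spde-no-drift}, invoke Theorem \ref{thm:main-linear-extended} for pathwise uniqueness, and lean on Ferrario/Liptser--Shiryaev rather than Novikov. The algebra you do afterwards (rewriting $M_T^{-1}$ as the stochastic exponential of $h$ against $\tilde W$, and running the argument in reverse from $\rho$) is correct and is exactly how the density formulas arise.

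There is, however, a genuine gap in your first step. You assert that Ferrario's criterion, under \eqref{eq:weak-uniqueness-assumption1} alone, ``yields both $\E[M_T]=1$ and $\Q\sim\P$''. It does not, and it cannot: $\P$-a.s.\ finiteness of $\int_0^T\|h_s\|_\cH^2\,\d s$ never by itself upgrades the nonnegative local martingale $\cE_\cdot(-h)$ to a true martingale (this is the whole reason Novikov-type conditions exist), and if $\Q\sim\P$ on $\F_T$ were already available then $\mu^{NL}=\L_\P(\omega)$ and $\mu^L=\L_\Q(\omega)$ would automatically be equivalent, making hypothesis \eqref{eq:weak-uniqueness-assumption2} superfluous --- contradicting the very structure of the statement, which reserves the equivalence and formula \eqref{eq:formula-density-girsanov} for the case where \emph{both} conditions hold. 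What the Ferrario/Liptser--Shiryaev mechanism actually provides under \eqref{eq:weak-uniqueness-assumption1} alone is weaker and requires an extra idea you have not supplied: one introduces stopping times $\tau^n\uparrow T$ on which $\int_0^{\tau^n}\|h_s\|_\cH^2\,\d s$ is bounded, so that Novikov holds for the stopped processes; Girsanov applied to each $\omega_{\cdot\wedge\tau^n}$ identifies its law via the linear equation, and one then observes that $\L_\P(\omega)$ is determined by the family $\{\L_\P(\omega_{\cdot\wedge\tau^n})\}_n$. This yields uniqueness in law and one-sided absolute continuity $\mu^{NL}\ll\mu^L$ (with a density that need not take the form \eqref{eq:formula-density-girsanov}); only after imposing \eqref{eq:weak-uniqueness-assumption2} and running the symmetric localized argument from $\rho$ does one obtain the reverse absolute continuity, hence the equivalence, and the identification of the density with the conditional expectation of the stochastic exponential. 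Without the localization step your argument has no rigorous entry point, since the global change of measure you start from is exactly what is unavailable.
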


\begin{proof}
The statement is just a convenient rephrasing in our setting of the content of Propositions 9.1 (concerning uniqueness in law) and 9.2 (concerning formula \eqref{eq:formula-density-girsanov}) from \cite{Ferrario}, although applied to a different SPDE; the argument in \cite{Ferrario} is applied to many equations of interest, including SDEs with multiplicative noise, clearly showing the robustness of the method. As the proof here is almost identical to \cite{Ferrario}, rather than giving full details, let us explain the main ideas and difficulties.

Since $\omega$ is a weak solution to \eqref{eq:nonlinear-general-spde}, upon regrouping the terms in the SPDE it holds
\begin{equation}\label{eq:weak-uniqueness-proof1}
	\d \omega_t + \d \Big( W_t + \cI(\mathcal{R}\, \curl^{-1} \omega)_t\Big)\cdot \nabla\omega
	= [f_t - \nu\Lambda^\beta \omega_t + \kappa\Delta \omega_t]\, \d t;
\end{equation}
set $\tilde{W}_t := W_t + \cI(\mathcal{R}\, \curl^{-1} \omega)_t$.
If one were allowed to apply Girsanov's theorem to $\tilde W$, namely to verify that $\E_{\P} [\cE_T (-\cR\, \curl^{-1} \omega )]=1$, then we could construct a new probability measure $\Q\ll \P$ on $\Theta$ with Radon-Nikodym derivative $\d \Q/\d \P= \cE_T (-\cR\, \curl^{-1} \omega)$ such that $\L_{\Q}(\tilde W) = \L_{\P}(W)$ and relation \eqref{eq:weak-uniqueness-proof1} still holds $\Q$-a.s.

By \eqref{eq:weak-uniqueness-proof1}, $\omega$ actually solves the \emph{linear} SPDE \eqref{eq:linear-spde-no-drift} driven by $\tilde W$; by Theorem \ref{thm:main-linear-extended} its law is uniquely determined and coincides with $\mu^L$. Overall, assuming $\E_{\P} [\cE_T (-\cR\, \curl^{-1} \omega )]=1$, we could conclude that $\L_\Q(\tilde W, \omega)=\L_\P(W,\rho)$ with an explicit formula for $\d \Q/\d \P$; the expression for $\d \mu^L/\d \mu^{NL}$ as given in \eqref{eq:formula-density-girsanov} then follows from the general formula for the density of $\d (\mu\circ \psi^{-1})/\d (\nu\circ \psi^{-1})$ given $\d \mu/\d \nu$, valid for any measurable map $\psi$ and any $\mu\ll \nu$.
The equivalence of $\mu^{NL}$ and $\mu^L$ would then follow from similar considerations, upon inverting the roles of $\omega$ and $\rho$, namely starting from the strong solution $\rho$ on $(\Theta,\P)$ and then constructing a new $\tilde\Q\ll \P$ such that $\L_{\P}(\omega)=\L_{\tilde\Q}(\rho)$.

The problem is that in practice conditions like $\E_{\P} [\cE_T (-\cR\, \curl^{-1} \omega )]=1$ are very hard to verify, usually requiring criteria like Novikov's one to be verified and involving exponential moment bounds, which are not available in our setting.
If one is only interested in \emph{uniqueness in law}, rather than all the consequences guaranteed by Girsanov's theorem, one can rather introduce an increasing sequence of stopping times $\tau^n\uparrow T$ such that the stopped processes $\omega^n_\cdot=\omega_{\cdot\wedge \tau^n}$ satisfy the Novikov condition, apply the Girsanov transform therein, and finally observe that $\L_{\P}(\omega)$ is uniquely determined by the knowledge of $\{\L_{\P} (\omega^n)\}_n$.
This is loosely the idea of proof of \cite[Proposition 9.1]{Ferrario}; the use of suitable stopping times in the Girsanov theorem already appeared in \cite[Proposition V.3.10]{KarShr} and later in  \cite{Lehec}.
To be precise, the correct replacement for condition [A4] in \cite[Proposition 9.1]{Ferrario} is given respectively by the strong existence and pathwise uniqueness for the SPDE for $\rho$ (which plays the role of $Z$), guaranteed by Theorem \ref{thm:main-linear-extended} (which upon minor modifications holds true also on $[t_0,T]\subset [0,T]$);
instead [A5] is replaced by the lowersemicontinuity (thus measurability) of the map $\varphi\mapsto \| \cQ^{-1/2} \cR\, \curl^{-1} \varphi\|_{L^2}$ in $L^1_x\cap L^2_x\cap \dot H^{-1}_x$, once this is extended to take possibly value $+\infty$.

The proof of \cite[Proposition 9.2]{Ferrario} is more intricate and builds on the original one by Liptser and Shiryaev, cf. \cite[Chapter 7]{LipShi}, so we will not delve into too much details. Let us mention that the replacements for assumptions [A4] and [A5] are the same explained above, while conditions (9.9) and (9.10) from \cite{Ferrario} correspond to \eqref{eq:weak-uniqueness-assumption1}-\eqref{eq:weak-uniqueness-assumption2} here.
Essentially, already under solely \eqref{eq:weak-uniqueness-assumption1}, one can actually show that $\mu^{NL}\ll \mu^L$, with an explicit formula for the density $\d \mu^{NL}/\d \mu^L$; this in general needs not to be given by \eqref{eq:formula-density-girsanov}, rather $\int_0^T \< \cQ^{-1} \cR\, \curl^{-1} \omega, \d W_s \>$ should be replaced by a new term $\mathcal{J}_T(\rho)$ (cf. the formula for $\mathcal{J}_T(Z)$ in \cite[Section 9]{Ferrario}). Under the additional \eqref{eq:weak-uniqueness-assumption2}, the two formulas however coincide and all conclusions follow.
\end{proof}

\begin{remark}\label{rem:weak-uniqueness-local}
For simplicity, we stated Proposition \ref{prop:weak-uniqueness-nonlinear} in the case of solutions defined on $[0,T]$, but one can easily verify that the first part of the argument applies to local solutions; in particular, if $\omega$ is a weak solution such that $\int_0^\tau \| \cR\, \curl^{-1}\,\omega_s\|_{\cH}^2\, \d s <\infty$, for some stopping time $\tau$, then necessarily the law of the stopped process $\omega^\tau_\cdot=\omega_{\cdot\wedge \tau}$ is uniquely determined.
\end{remark}

In fact, upon further knowledge on the solutions $\omega$, $\rho$, one can further strengthen the previous result by providing \emph{entropy bounds}.
To this end, let us recall a few basic facts on entropy; see \cite{Lehec} and the references therein for more details.
Given a measurable space $(X,\cA)$ and two measures $\mu$, $\nu$ on it, the relative entropy of $\mu$ given $\nu$ is defined by
\begin{equation*}
	H(\mu |\nu) = \begin{cases}
	\int_X \log \big(\frac{\d \mu}{\d \nu}\big)\, \d \mu \quad & \text{if } \mu\ll \nu,\\
	+\infty & \text{otherwise}.
\end{cases}\end{equation*}
Entropy behaves well under transformations: for any measurable map $\psi :(X,\cA)\to(Y,\mathcal{B})$, it holds
\begin{equation}\label{eq:entropy-pushforward}
	H(\mu\circ \psi^{-1} | \nu\circ \psi^{-1}) \leq H(\mu| \nu).
\end{equation}

\begin{proposition}\label{prop:entropy-bounds}
Let $\omega$, $\rho$ be as in the first part of Proposition \ref{prop:weak-uniqueness-nonlinear} and suppose additionally that $\E\big[\int_0^T  \| \mathcal{R}\curl^{-1}\omega_s\|_{\cH}^2\, \d s \big] <\infty$; then
\begin{equation}\label{eq:entropy-bound-1}
	H\big(\mu^{NL}| \mu^L \big) \leq \frac{1}{2}\, \E\bigg[\int_0^T  \| \mathcal{R}\curl^{-1}\omega_s\|_{\cH}^2 \,\d s \bigg]< \infty.
\end{equation}
If additionally $\E\big[\int_0^T  \| \mathcal{R}\curl^{-1}\rho_s\|_{\cH}^2\, \d s \big] <\infty$, then
\begin{equation}\label{eq:entropy-bound-2}
	H\big(\mu^L| \mu^{NL} \big) \leq \frac{1}{2}\,\E \bigg[\int_0^T  \| \mathcal{R}\curl^{-1}\rho_s\|_{\cH}^2 \,\d s \bigg]< \infty.
\end{equation}
\end{proposition}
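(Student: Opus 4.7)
The plan is to combine the Girsanov change of measure set up in the proof of Proposition \ref{prop:weak-uniqueness-nonlinear} with the data-processing monotonicity of relative entropy \eqref{eq:entropy-pushforward}. Pretending first that $M_T := \cE_T(-\cR\,\curl^{-1}\omega)$ is a true $\P$-martingale (the technical point addressed below), the measure $\d\Q := M_T\,\d\P$ makes $\tilde W_t = W_t + \cI(\cR\,\curl^{-1}\omega)_t$ a $Q$-Brownian motion under which $\omega$ solves the linear SPDE \eqref{eq:linear-spde-no-drift}; pathwise uniqueness for the linear equation (Theorem \ref{thm:main-linear-extended}) then identifies $\L_\Q(\omega) = \mu^L$. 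Applying \eqref{eq:entropy-pushforward} to the measurable map $\omega \colon \Theta \to C^w_t L^2_x$,
\begin{equation*}
H(\mu^{NL}|\mu^L) \,=\, H\bigl(\L_\P(\omega)\,\big|\,\L_\Q(\omega)\bigr) \,\leq\, H(\P|\Q) \,=\, -\E_\P[\log M_T].
\end{equation*}
Under the standing hypothesis $\E_\P\bigl[\int_0^T \|\cR\,\curl^{-1}\omega_s\|_\cH^2\,\d s\bigr]<\infty$, Lemma \ref{lem:stoch-integr-basic} ensures that $\int_0^\cdot \<\cQ^{-1}\cR\,\curl^{-1}\omega_s,\d W_s\>$ is a genuine $L^2_\P$-martingale, hence of vanishing mean; expanding $\log M_T$ via \eqref{eq:exponential-martingale} and taking $\P$-expectation then leaves precisely the claimed bound $\tfrac12\E_\P\bigl[\int_0^T\|\cR\,\curl^{-1}\omega_s\|_\cH^2\,\d s\bigr]$.

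The main obstacle in the above is that $M_T$ is a priori only a nonnegative local martingale: the $L^2$-in-time assumption lies strictly below the Novikov threshold and does not by itself guarantee $\E_\P[M_T]=1$. This is the same difficulty already handled in the proof of Proposition \ref{prop:weak-uniqueness-nonlinear}, and will be bypassed by the very same localization scheme. Namely, one picks an increasing sequence of stopping times $\tau^n\uparrow T$ for which the stopped densities $M^n:=\cE_{\tau^n}(-\cR\,\curl^{-1}\omega)$ are true $\P$-martingales; carrying out the Girsanov argument on $\F_{\tau^n}$ yields
\begin{equation*}
H\bigl(\L_\P(\omega^{\tau^n})\,\big|\,\L_\P(\rho^{\tau^n})\bigr) \,\leq\, \tfrac12\,\E_\P\Bigl[\int_0^{\tau^n}\!\!\|\cR\,\curl^{-1}\omega_s\|_\cH^2\,\d s\Bigr],
\end{equation*}
where on the left we used that $\L_{\Q^n}(\omega^{\tau^n})=\L_\P(\rho^{\tau^n})$ by pathwise uniqueness of the linear SPDE, and on the right the identity for $H(\P|\Q^n|\F_{\tau^n})$ derived exactly as in the first paragraph. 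The right-hand side is monotone in $n$ and converges by monotone convergence to the target bound; on the left, the weak continuity of $\omega$, $\rho$ in $L^2_x$ gives $\omega^{\tau^n}\to\omega$ and $\rho^{\tau^n}\to\rho$ $\P$-a.s.\ in $C^w_t L^2_x$, so the corresponding laws converge weakly and the lower semicontinuity of relative entropy delivers \eqref{eq:entropy-bound-1}.

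The second bound \eqref{eq:entropy-bound-2} is produced by the symmetric construction: starting from the strong linear solution $\rho$ on $(\Theta,\P)$, I would perform the (localized) Girsanov transform with density $\cE_T(+\cR\,\curl^{-1}\rho)$ so that, under the new measure, $\rho$ solves the nonlinear SPDE \eqref{eq:nonlinear-general-spde} driven by a new Brownian motion. Uniqueness in law for the nonlinear equation (Proposition \ref{prop:weak-uniqueness-nonlinear}, whose hypothesis \eqref{eq:weak-uniqueness-assumption1} is automatic from the strengthened $L^2$-integrability assumption on $\cR\,\curl^{-1}\rho$) identifies the pushforward as $\mu^{NL}$, and the same entropy chain plus localization yields the claim.
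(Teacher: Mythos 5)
Your argument is correct and follows essentially the same route as the paper's proof: Girsanov change of measure, the data-processing inequality \eqref{eq:entropy-pushforward}, cancellation of the stochastic-integral term in $-\E_\P[\log \cE_T(-\cR\,\curl^{-1}\omega)]$ under the assumed $L^2$-integrability, and then stopping-time localization combined with lower semicontinuity of relative entropy to remove the extra boundedness needed for Novikov. The paper merely presents the core computation under a uniform deterministic bound on $\|\cR\,\curl^{-1}\omega_s\|_{\cH}$ and defers the localization and lower-semicontinuity step to \cite{Lehec}, which is exactly the scheme you spell out.
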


\begin{proof}
We present the proof of estimate \eqref{eq:entropy-bound-1}, the other being identical upon exchanging the roles of $\omega$ and $\rho$.

Assume for the moment that a stronger uniform bound $\sup_{s\in [0,T]} \| \mathcal{R}\curl^{-1}\omega_s\|_{\cH} \leq C$ is available, for some deterministic constant $C>0$.
Then applying Girsanov transform as in the proof of Proposition \ref{prop:weak-uniqueness-nonlinear}, we can construct a new probability $\Q\sim \P$ such that on $[0,T]$ it holds
\begin{equation*}
	\L_{\Q}\big( W+ \cI( \cR \curl^{-1}\omega) , \omega\big)
	= \L_{\P} ( W , \rho),
	\quad \frac{\d \Q}{\d \P} = \cE_T(-\cR \curl^{-1}\omega),
	\quad \frac{\d \P}{\d \Q} = \cE_T(-\cR \curl^{-1}\omega)^{-1}.
\end{equation*}
By virtue of \eqref{eq:entropy-pushforward}, it then holds
\begin{align*}
	H\big(\mu^{NL}| \mu^L \big)
	& \leq H(\P|\Q)
	= -\E_\P \big[ \log \cE_T(-\cR \curl^{-1}\omega) \big]\\
	& = \E_\P\bigg[ \int_0^T \langle \cQ^{-1} \cR \curl^{-1}\omega_s, \d W_s\rangle + \frac{1}{2} \int_0^T \| \cR \curl^{-1}\omega_s\|_{\cH}^2\, \d s \bigg]\\
	& = \frac{1}{2}\, \E_{\P} \bigg[\int_0^T  \| \mathcal{R}\curl^{-1}\omega_s\|_{\cH}^2\,\d s \bigg]
\end{align*}
yielding the conclusion for such $\omega$.
The general case then follows by using stopping time approximations and the lower-semicontinuity of the entropy, see \cite[Proposition 1]{Lehec} for more details.
\end{proof}

We are now ready to specialize the abstract Propositions \ref{prop:weak-uniqueness-nonlinear} and \ref{prop:entropy-bounds} to our cases of interest.
In the next statements, $\cP(C^w_t L^2_x )$ denotes the set of all probability measures on $C^w_t L^2_x = C^w([0,T];L^2_x)$, endowed with the topology of weak convergence of measures.

\begin{theorem}\label{thm:main-logeuler}
Let $\omega_0\in L^1_x\cap L^2_x\cap \dot H^{-1}_x$ and $f\in L^1_t (L^1_x\cap L^2_x\cap \dot H^{-1}_x)$ be deterministic, $\gamma>1/2$; let $W$ be the noise with associated covariance $Q$ determined by
\begin{equation}\label{eq:covariance-logeuler}
	g(\xi)= (1+|\xi|^2)^{-1} \log^{-2\gamma}(e+|\xi|).
\end{equation}
Then the corresponding $2$D logEuler SPDE given by System \ref{system:logEuler} admits a global weak solution $\omega$ in the sense of Definition \ref{defn:solution-nonlinear-SPDE}, which is unique in law.
Moreover the solution map $(\omega_0,f)\mapsto \L(\omega)$ is continuous as a function from $L^1_x\cap L^2_x\cap \dot H^{-1}_x \times L^1_t (L^1_x\cap L^2_x\cap \dot H^{-1}_x)$ to $\cP(C^w_t L^2_x)$.
\end{theorem}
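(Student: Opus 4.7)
The plan is threefold: establish existence directly via Proposition~\ref{prop:weak-existence-nonlinear} with $\cR=T_\gamma$ and $\nu=0$, deduce uniqueness in law through Proposition~\ref{prop:weak-uniqueness-nonlinear}, and finally obtain continuity of the solution map by a tightness-plus-uniqueness argument. Existence will be essentially immediate: the symbol $r(\xi)=\log^{-\gamma}(e+|\xi|)$ is bounded and $g$ as in \eqref{eq:covariance-logeuler} clearly satisfies Assumption~\ref{ass:covariance-basic}, so Proposition~\ref{prop:weak-existence-nonlinear} applies, and the a priori bounds \eqref{eq:apriori-vorticity-Lp}--\eqref{eq:apriori-vorticity-L2} together with \eqref{eq:apriori-energy} automatically place the constructed solution $\omega$ in the class of Definition~\ref{defn:solution-nonlinear-SPDE}.

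The heart of the matter is the verification of the Girsanov-type condition \eqref{eq:weak-uniqueness-assumption1}, namely that along any weak solution one has $\int_0^T\|T_\gamma\,\curl^{-1}\omega_s\|_{\cH}^2\,\d s<\infty$ $\P$-a.s. Since $T_\gamma\,\curl^{-1}\omega$ is divergence-free, Assumption~\ref{ass:covariance-basic} yields, in Fourier variables,
\[
  \|T_\gamma\,\curl^{-1}\omega\|_{\cH}^2 \sim \int_{\R^2} g(\xi)^{-1}\,\bigl|\widehat{T_\gamma\,\curl^{-1}\omega}(\xi)\bigr|^2\,\d\xi,
\]
where $g^{-1}(\xi)\sim(1+|\xi|^2)\log^{2\gamma}(e+|\xi|)$, while $T_\gamma$ contributes $\log^{-2\gamma}(e+|\xi|)$ and $\curl^{-1}$ contributes $|\xi|^{-2}$ to the squared modulus of the symbol. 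The logarithmic factors then cancel exactly---this is precisely why $g$ and $T_\gamma$ are paired as in \eqref{eq:covariance-logeuler}---leaving
\[
  \|T_\gamma\,\curl^{-1}\omega\|_{\cH}^2 \lesssim \int_{\R^2}\bigl(1+|\xi|^{-2}\bigr)|\hat\omega(\xi)|^2\,\d\xi = \|\omega\|_{L^2_x}^2+\|\omega\|_{\dot H^{-1}_x}^2.
\]
The solution class in Definition~\ref{defn:solution-nonlinear-SPDE} makes the right-hand side integrable in time $\P$-a.s., which is exactly \eqref{eq:weak-uniqueness-assumption1}; uniqueness in law then follows from Proposition~\ref{prop:weak-uniqueness-nonlinear}.

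For continuity, the plan is to take $(\omega_0^n,f^n)\to(\omega_0,f)$ in the target space, construct associated weak solutions $\omega^n$, and rerun the compactness scheme of Proposition~\ref{prop:weak-existence-nonlinear}. Since the data stay uniformly bounded along the sequence, Step~1 of that proposition yields tightness of $\{\L(\omega^n)\}_n$ in $C^0_t H^{-s}_\loc$, and the uniform pathwise control of $\|\omega^n\|_{L^\infty_t L^2_x}$ coming from \eqref{eq:apriori-vorticity-L2} upgrades this to tightness in $C^w_t L^2_x$ (bounded subsets of $L^2_x$ are metrized by $H^{-s}_\loc$ in the weak topology). Step~2 of Proposition~\ref{prop:weak-existence-nonlinear} identifies any weak limit point as the law of a weak solution of the nonlinear SPDE with the limit data $(\omega_0,f)$, and the uniqueness in law already established forces this limit to be $\L(\omega)$, so the whole sequence converges. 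The delicate point throughout is the exact tuning between $g$ and $T_\gamma$: any mismatch in the logarithmic exponents would destroy the cancellation in the Cameron--Martin norm, putting the Girsanov criterion out of reach under the available energy and enstrophy control.
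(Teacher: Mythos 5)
Your existence argument and your verification of the Girsanov condition \eqref{eq:weak-uniqueness-assumption1} are correct and coincide with the paper's: the Fourier-side cancellation you describe is exactly the paper's identity $\cQ^{-1/2}T_\gamma\curl^{-1}=(I-\Delta)^{1/2}\curl^{-1}$, giving $\|T_\gamma\curl^{-1}\varphi\|_{\cH}^2\sim\|\varphi\|_{L^2_x}^2+\|\varphi\|_{\dot H^{-1}_x}^2$, and membership in the class of Definition \ref{defn:solution-nonlinear-SPDE} then yields \eqref{eq:weak-uniqueness-assumption1} for \emph{every} weak solution, so Proposition \ref{prop:weak-uniqueness-nonlinear} applies. (The paper also checks \eqref{eq:weak-uniqueness-assumption2} for the linear reference solution, but that is only needed for the equivalence of laws and the explicit density, not for uniqueness in law.)

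The gap is in the continuity argument, and it is a genuine topology mismatch. The space $\cP(C^w_t L^2_x)$ in the statement refers to weak convergence of measures on $C^w_t L^2_x$ equipped with its \emph{supremum norm} $\sup_{t}\|\varphi_t\|_{L^2_x}$. Your tightness scheme produces precompactness of the laws only in $C^0_t H^{-s}_\loc$ intersected with $L^2_x$-bounded sets, i.e.\ in the topology of uniform-in-time \emph{weak} $L^2_x$ convergence. The parenthetical claim that bounded subsets of $L^2_x$ are metrized by $H^{-s}_\loc$ in the weak topology is true but does not upgrade this to tightness in the norm topology of $C^w_t L^2_x$: a norm-bounded set such as $\{t\mapsto e_n\}_n$ for an orthonormal sequence $(e_n)$ converges to $0$ in $C^0_t H^{-s}_\loc$ while staying at sup-norm distance $1$ from $0$, so $L^\infty_t L^2_x$-balls are not compact in $C^w_t L^2_x$ and Prohorov's theorem gives nothing there. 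Consequently your argument only yields $\E[F(\omega^n)]\to\E[F(\omega)]$ for $F$ continuous with respect to uniform weak convergence, which excludes functionals such as $F(\varphi)=1\wedge\sup_t\|\varphi_t\|_{L^2_x}$; this weaker conclusion is in particular insufficient for the Feller property on $L^1_x\cap L^2_x\cap\dot H^{-1}_x$ derived later from this theorem. The paper avoids compactness altogether: it writes $\langle F,\L(\omega^n)\rangle=\E\big[F(\rho^n)\,\cE(T_\gamma\curl^{-1}\rho^n)\big]$ using the Girsanov density formula \eqref{eq:formula-density-girsanov}, where $\rho^n$ solves the \emph{linear} SPDE with data $(\omega_0^n,f^n)$ on a fixed probability space; the pathwise stability estimate \eqref{eq:stability-linear-thm} of Theorem \ref{thm:main-linear-extended} gives $\sup_t\|\rho^n_t-\rho_t\|_{L^2_x}\to0$ a.s., hence $F(\rho^n)\to F(\rho)$ for $F$ sup-norm continuous, while Corollary \ref{cor:stability-linear-energy} plus a Scheff\'e-type argument (nonnegative random variables of unit mean converging in probability converge in $L^1$) gives $\cE(T_\gamma\curl^{-1}\rho^n)\to\cE(T_\gamma\curl^{-1}\rho)$ in $L^1_\Theta$. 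To repair your proof you would either need to adopt this density route or substantially strengthen the compactness (e.g.\ strong $C^0_tL^2_x$ precompactness), which the available a priori estimates do not provide.
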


\begin{proof}
Clearly $g\in L^1_\xi \cap L^\infty_\xi$, thus $W$ satisfies Assumption \ref{ass:covariance-basic} and global existence of weak solutions holds by virtue of Proposition \ref{prop:weak-existence-nonlinear}, for the choice $\cR=T_\gamma$; in order to establish uniqueness in law, we need to verify that the assumptions of Proposition \ref{prop:weak-uniqueness-nonlinear} are satisfied.
We claim that $\cQ^{-1/2}\, T_\gamma\, \curl^{-1}$ is a bounded operator from $L^2_x\cap \dot H^{-1}_x$ to $L^2_x$; once this is shown, the validity of \eqref{eq:weak-uniqueness-assumption1}-\eqref{eq:weak-uniqueness-assumption2} immediately follows from the a priori bounds on $\omega$, $\rho$ given by Proposition \ref{prop:weak-existence-nonlinear}.

In order to show the claim, let us first observe that by identity \eqref{eq:covariance-projector} and our choice \eqref{eq:covariance-logeuler} of $g$, it holds $\cQ = T_{2\gamma} (I-\Delta)^{-1} \Pi$, where $\Pi$ is the Leray-Helmholtz projector and all the operators commute; moreover $\Pi\, \curl^{-1}=\curl^{-1}$ by construction.
Therefore
\begin{align*}
	\cQ^{-1/2} T_\gamma \curl^{-1} = (I-\Delta)^{1/2} T_\gamma^{-1} \Pi T_\gamma \curl^{-1} = (I-\Delta)^{1/2} \curl^{-1};
\end{align*}
using its representation as a Fourier multiplier and Parseval's theorem, it then holds
\begin{align*}
	\| \cQ^{-1/2} T_\gamma \curl^{-1} \varphi\|_{L^2_x}^2
	\sim \int_{\R^d} \frac{1+|\xi|^2}{|\xi|^2} |\hat{\varphi}(\xi)|^2\, \d \xi = \| \varphi\|_{\dot H^{-1}_x}^2 + \| \varphi\|_{L^2_x}^2
\end{align*}
yielding the claim.

It remains to show continuity of the solution map $(\omega_0,f)\mapsto \L(\omega)$; namely, for any continuous bounded map $F:C^w_t L^2_x \to \R$ and any sequence $(\omega_0^n,f^n)\to (\omega_0,f)$, we need to show that the corresponding solutions $\omega^n$, $\omega$ satisfy $\langle F, \L(\omega^n)\rangle\to \langle F, \L(\omega)\rangle$ as $n\to \infty$.
Corresponding to the deterministic data $(\omega^n_0,f^n)$ (resp. $(\omega_0,f)$), let $\rho^n$ be the strong solutions, defined on the same probability space $(\Theta,\F,\P)$ and driven by the same $W$, to the linear SPDEs
\begin{equation*}
	\d \rho^n + \d W\cdot\nabla \rho^n = (f^n+\kappa \Delta\rho^n) \, \d t,
\end{equation*}
which amount to removing the logEuler nonlinearity.
By the first formula in \eqref{eq:formula-density-girsanov}, it holds
\begin{align*}
	\langle F, \L(\omega^n)\rangle = \E_\P \big[ F(\rho^n)\, \cE( T_\gamma \curl^{-1} \rho^n) \big],
\end{align*}
similarly for $\langle F, \L(\rho^n) \rangle$. By the boundedness of $F$ and dominated convergence, in order to conclude it then suffices to verify that
\begin{align*}
	F(\rho^n)\to F(\rho)\quad \P\text{-a.s.}, \quad \cE( T_\gamma \curl^{-1} \rho^n) \to \cE( T_\gamma \curl^{-1} \rho) \quad \text{in } L^1_\Theta.
\end{align*}
The first claim follows from the continuity of $F$ in $C^w_t L^2_x$ and the stability of the solution map $(\omega_0,f)\mapsto \rho$, given by Theorem \ref{thm:main-linear-extended}.
For the second one, observing that $\cE( T_\gamma \curl^{-1} \rho^n)$ is a sequence of nonnegative random variables with
\begin{align*}
	\E_{\P} \big[\cE( T_\gamma \curl^{-1} \rho^n) \big] = \E_{\P} \big[\cE( T_\gamma \curl^{-1} \rho) \big]=1,
\end{align*}
it suffices to show that $\cE( T_\gamma \curl^{-1} \rho^n)\to \cE( T_\gamma \curl^{-1} \rho)$ in probability.
By formula \eqref{eq:exponential-martingale} and continuity of the exponential, it then suffices to show that
\begin{align*}
	\P- \lim_{n\to\infty} \int_0^T \big\langle \cQ^{-1}T_\gamma \curl^{-1} (\rho^n_s-\rho_s), \d W_s \big\rangle = 0, \quad
	\P-\lim_{n\to\infty} \int_0^T \| T_\gamma \curl^{-1} (\rho^n_s-\rho_s)\|_{\cH}^2\, \d s = 0
\end{align*}
which by Lemma \ref{lem:stoch-integr-basic} will both follow if we show that
\begin{equation}\label{eq:nonlinear-stability-proof}
	\lim_{n\to\infty} \int_0^T \E\big[ \| T_\gamma \curl^{-1} (\rho^n_s-\rho_s)\|_{\cH}^2\big]\, \d s = 0.
\end{equation}
Recalling that by the previous computation $\| T_\gamma \curl^{-1}\varphi\|_{\cH} \lesssim \| \varphi\|_{L^2_x\cap \dot{H}^{-1}_x}$, the last claim \eqref{eq:nonlinear-stability-proof} now follows from Corollary \ref{cor:stability-linear-energy}, completing the proof.
\end{proof}

\begin{theorem}\label{thm:main-hypoNS}
Let $\omega_0$, $f$ be as in Theorem \ref{thm:main-logeuler}, $\nu>0$, $\beta\in (0,2)$ and let $W$ be the noise with associated covariance $Q$ determined by
\begin{equation}\label{eq:covariance-hypoNS}
	g(\xi)=(1+|\xi|^2)^{-1-\beta/2}.
\end{equation}
Then the corresponding $2$D hypodissipative Navier--Stokes SPDE given by System \ref{system:hypoNS} admits a global weak solution $\omega$ in the sense of Definition \ref{defn:solution-nonlinear-SPDE}, which is unique in law.
Moreover the solution map $(\omega_0,f)\mapsto \L(\omega)$ is continuous as a function from $L^1_x\cap L^2_x\cap \dot H^{-1}_x \times L^1_t (L^1_x\cap L^2_x\cap \dot H^{-1}_x)$ to $\cP(C^w_t L^2_x)$.
\end{theorem}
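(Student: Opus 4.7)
The plan is to follow the same blueprint as in Theorem \ref{thm:main-logeuler}: establish existence via Proposition \ref{prop:weak-existence-nonlinear}, verify the abstract hypotheses \eqref{eq:weak-uniqueness-assumption1}-\eqref{eq:weak-uniqueness-assumption2} of Proposition \ref{prop:weak-uniqueness-nonlinear}, and then deduce continuity in $(\omega_0,f)$ from the Girsanov density formula \eqref{eq:formula-density-girsanov} combined with Corollary \ref{cor:stability-linear-energy}. The choice $\cR=I$ forces all the work onto the noise, making the verification of the operator bound the real content.

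First I would check that $g(\xi)=(1+|\xi|^2)^{-1-\beta/2}$ satisfies Assumption \ref{ass:covariance-basic}: since $\beta\in(0,2)$ and $d=2$, we have $g\in L^1_\xi\cap L^\infty_\xi$. Proposition \ref{prop:weak-existence-nonlinear} with $\cR=I$ (i.e.\ $r\equiv 1\in L^\infty$) then yields a weak solution $\omega$ on $[0,T]$ satisfying the bounds \eqref{eq:apriori-vorticity-Lp}-\eqref{eq:apriori-vorticity-L2} and \eqref{eq:apriori-energy}; moreover the strong solution $\rho$ of the associated linear SPDE \eqref{eq:linear-spde-no-drift} supplied by Theorem \ref{thm:main-linear-extended} satisfies the analogous energy inequality from Remark \ref{rem:energy-estim-linear-fractional-laplacian} and the $L^\infty_t\dot H^{-1}_x$-bound obtained by running the argument of Proposition \ref{prop:apriori-estimates-nonlinear} without the nonlinear term (this was already invoked implicitly in Corollary \ref{cor:stability-linear-energy}).

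The central computation is the operator bound. By \eqref{eq:covariance-projector}, on divergence-free fields $\cQ^{-1/2}$ acts as the Fourier multiplier with symbol proportional to $(1+|\xi|^2)^{(1+\beta/2)/2}$; since $\curl^{-1}$ has symbol $-i\xi^\perp/|\xi|^2$ and produces a divergence-free field, Parseval yields
\begin{equation*}
\| \cQ^{-1/2}\curl^{-1}\varphi\|_{L^2_x}^2
\;\sim\; \int_{\R^2}\frac{(1+|\xi|^2)^{1+\beta/2}}{|\xi|^2}\,|\hat\varphi(\xi)|^2\,\d\xi
\;\lesssim\; \|\varphi\|_{\dot H^{-1}_x}^2+\|\varphi\|_{\dot H^{\beta/2}_x}^2,
\end{equation*}
after splitting the integral at $|\xi|=1$. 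Thus $\cQ^{-1/2}\curl^{-1}$ maps $\dot H^{-1}_x\cap\dot H^{\beta/2}_x$ boundedly into $L^2_x$. Combining this with the a priori estimates, both $\omega$ and $\rho$ belong $\P$-a.s.\ to $L^\infty_t\dot H^{-1}_x\cap L^2_t\dot H^{\beta/2}_x$, so conditions \eqref{eq:weak-uniqueness-assumption1}-\eqref{eq:weak-uniqueness-assumption2} of Proposition \ref{prop:weak-uniqueness-nonlinear} are satisfied, yielding uniqueness in law and the Radon--Nikodym formula for $\mu^{NL}$ with respect to $\mu^L$.

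For the continuity statement, I would repeat the argument at the end of the proof of Theorem \ref{thm:main-logeuler} verbatim. Given $(\omega_0^n,f^n)\to(\omega_0,f)$ in the stated space, construct the corresponding linear solutions $\rho^n,\rho$ on a common probability space; by \eqref{eq:formula-density-girsanov},
\begin{equation*}
\langle F,\L(\omega^n)\rangle=\E_\P\bigl[F(\rho^n)\,\cE_T(\curl^{-1}\rho^n)\bigr],
\end{equation*}
and the same identity with $n$ removed. Theorem \ref{thm:main-linear-extended} gives $\P$-a.s.\ convergence $\rho^n\to\rho$ in $C^w_tL^2_x$, while Corollary \ref{cor:stability-linear-energy} (crucially exploiting $\nu>0$ to recover $L^2_t\dot H^{\beta/2}_x$-convergence) together with the operator bound above yields $\int_0^T\|\cQ^{-1/2}\curl^{-1}(\rho^n_s-\rho_s)\|_{L^2_x}^2\,\d s\to 0$ in $L^1_\Theta$. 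As in Theorem \ref{thm:main-logeuler}, this upgrades to convergence in probability of $\cE_T(\curl^{-1}\rho^n)$ and hence in $L^1_\Theta$ by Scheff\'e's lemma, since these are nonnegative densities with expectation one.

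The main conceptual point, which also governs why $\nu>0$ is essential, is that the symbol $g$ in \eqref{eq:covariance-hypoNS} is tuned so that $\cQ^{-1/2}$ costs exactly $1+\beta/2$ derivatives; $\curl^{-1}$ recovers one, leaving a net loss of $\beta/2$ which is precisely compensated by the hypodissipation-induced $L^2_t\dot H^{\beta/2}_x$ control. If $\nu=0$ the argument breaks at the operator bound stage, and one cannot close the Novikov-type condition — which is consistent with the fact that the true inviscid Euler case is left as Conjecture \ref{conj:euler}. No additional obstacle is expected beyond performing the Fourier computation carefully.
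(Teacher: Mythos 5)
Your proposal is correct and follows essentially the same route as the paper: existence via Proposition \ref{prop:weak-existence-nonlinear} with $\cR=I$, the Parseval computation showing $\cQ^{-1/2}\curl^{-1}$ is bounded from $\dot H^{-1}_x\cap \dot H^{\beta/2}_x$ (the paper writes $H^{\beta/2}_x\cap\dot H^{-1}_x$, which is equivalent given the $L^2_x$ a priori bound) into $L^2_x$, verification of \eqref{eq:weak-uniqueness-assumption1}-\eqref{eq:weak-uniqueness-assumption2} from \eqref{eq:apriori-vorticity-L2}-\eqref{eq:apriori-energy}, and the continuity argument repeated from Theorem \ref{thm:main-logeuler} via Corollary \ref{cor:stability-linear-energy}. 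Your closing remark on the role of $\nu>0$ matches the paper's own discussion around Conjecture \ref{conj:euler}.
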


\begin{proof}
The proof is mostly identical to the one of Theorem \ref{thm:main-logeuler}, so let us only comment on the main differences.
In this case we have $\cR=I$, $\cQ=(1-\Delta)^{-1-\beta/2} \Pi$, so that $\cQ^{-1/2} \cR \curl^{-1}$ is the Fourier multiplier associated to $(1+|\xi|^2)^{(1/2+\beta/4)} |\xi|^{-2} \xi^\perp$.
Using Parseval, it is then easy to check that it is a bounded linear operator from $H^{\beta/2}_x\cap \dot H^{-1}_x$ to $L^2_x$, so that the requirements \eqref{eq:weak-uniqueness-assumption1}-\eqref{eq:weak-uniqueness-assumption2} are satisfied thanks to the a priori estimates \eqref{eq:apriori-vorticity-L2}-\eqref{eq:apriori-energy} (applied for both $\omega$ and $\rho$, respectively $\cR=I$ and $\cR=0$).
Uniqueness in law thus follows from Proposition \ref{prop:weak-uniqueness-nonlinear}; the part concerning stability is then identical to before, based on stability at the linear level for $\rho^n$ and the associated densities $\cE( \curl^{-1} \rho^n)$; the only difference is that, rather than claim \eqref{eq:nonlinear-stability-proof}, due to the different choice of $\cQ$ and $\cR$ one now needs to verify that
\begin{equation*}
	\lim_{n\to\infty} \int_0^T \E\big[ \| \cQ^{-1/2} \curl^{-1} (\rho^n_s-\rho_s)\|_{L^2_x}^2\big] \d s = 0
\end{equation*}
which again follows from the bounds on the operator and Corollary \ref{cor:stability-linear-energy}.
\end{proof}

We are finally ready to prove our main result.

\begin{proof}[Proof of Theorem \ref{thm:main-theorem}]

Part a) of the statement corresponds to Theorem \ref{thm:main-logeuler}, up to justifying the pathwise regularity of the noise $W$.
By choosing $g$ as in \eqref{eq:covariance-logeuler} for some $\gamma>1/2$, the $C^0_t L^2_\loc$ regularity now comes from Proposition \ref{prop:pathwise-regularity-noise}-a); in the case $\gamma>3/4$ we get $C^0_t C^0_\loc$ regularity by virtue of Proposition \ref{prop:pathwise-regularity-noise}-b).

Part b) similarly follows from  Theorem \ref{thm:main-hypoNS}, the choice of $g$ given by \eqref{eq:covariance-hypoNS} and Proposition \ref{prop:pathwise-regularity-noise}-c) applied with $\beta=\alpha/2$.
\end{proof}

We conclude this section by discussing an extension of the above result to the torus $\T^2$, and how this relates to answering positively an open problem formulated in \cite{Fla}.

\begin{remark}\label{rem:readaptation-torus}
As evident from the above, our proofs crucially rely on two ingredients: the existence of weak solutions satisfying $\P$-a.s. the condition $\cQ^{-1/2} \cR \curl^{-1} \omega \in L^2_t L^2_x$, so to reduce ourselves to the linear SPDE by applying Girsanov to \eqref{eq:weak-uniqueness-proof1}, and the pathwise uniqueness and available bounds for the latter, mainly deduced by studying its energy spectrum as in Proposition \ref{prop:pathwise-uniqueness-linear}.
All the results transfer to the torus $\T^2$ without problems, up to replacing Fourier transform with its discrete counterpart and all the integrals in Fourier space with their corresponding sums over $\Z^2$.
In fact, the case of $\T^2$ is simpler to treat, due to the nicer behaviour of the Biot--Savart kernel: one can reduce themselves to work only with functions of zero mean, namely $\int_{\T^2} \omega(x)\, \d x=0$, in which case $\curl^{-1}$ is a bounded operator from $L^2(\T^2)$ to $H^1(\T^2)$; therefore there is no need to find estimates for $\| \omega\|_{\dot H^{-1}_x}$ like in Proposition \ref{prop:apriori-estimates-nonlinear} anymore, one has directly pathwise bounds for $\|\curl^{-1}\omega\|_{H^1_x}$ as well as compact embeddings, and the tightness argument from Proposition \ref{prop:weak-existence-nonlinear} becomes simpler.
On the other hand, pathwise uniqueness for the linear SPDE can be established similarly, see the proof of \cite[Theorem 3]{Gal}.
\end{remark}

\begin{remark}\label{rem:flandoli}
On the torus $\T^2$, Flandoli considered in \cite{Fla} a regularized version of the $2$D Euler equations, corresponding in our setup to $\cR_\gamma:=(1-\Delta)^{-\gamma/2}$ (cf. \cite[eq. (2)]{Fla}) and posed the following question \cite[beginning p.17]{Fla}:

\emph{``Do there exist a noise $W$ and an exponent $\gamma\leq 1$ such that uniqueness holds for the SPDE with initial condition $\omega_0\in L^2_x$?''}

Our results answer affirmatively to the above, in terms of uniqueness in law, for any $\gamma\in (0,1]$. Indeed, take a noise $W$ with covariance $Q$ associated to\footnote{We use $k$ in place of $\xi$ to stress that $k\in\Z^2$;
recall that on $\T^2$, we can work with functions $\omega$ such that $\hat\omega(0)=0$, so we don't need to worry about the behaviour of any Fourier multiplier around $0$.} $g(k)= |k|^{-2} \log^{-2} (e+|k|)$;
existence of weak solutions $\omega$ with paths in $L^\infty_t L^2_x$ holds, thus in order for the above strategy to work, it suffices to check that $\cQ^{-1/2} \cR_\gamma\, \curl^{-1}$ is a bounded operator from $L^2(\T^2)$ to $L^2(\T^2;\R^2)$.
This is certainly the case, since the operator corresponds to a Fourier multiplier of the form
\begin{align*}
	\hat\omega(k)\mapsto r(k)\, \hat\omega (k), \quad r(k):=\frac{\log (e + |k|)}{(1+|k|^2)^{\gamma/2}}\, \frac{k^\perp}{|k|}
\end{align*}
and $r\in \ell^\infty(\Z^2 \setminus \{0\})$ for all $\gamma>0$.
\end{remark}

\section{Proof of Proposition \ref{prop:pde-nonuniquess}}\label{sec:nonuniqueness-deterministic}

In this section we focus on the deterministic hypodissipative 2D Navier--Stokes equation
\begin{equation}\label{eq:2D-hypodissip-NSE}
	\partial_t u + u\cdot\nabla u+ \nabla P = F- \Lambda^\beta u, \quad  \nabla\cdot u =0
\end{equation}
where $\beta\in (0,2)$ and $F\in L^1_tL^2_x$.
It is well known that for $\beta=2$ (standard 2D Navier-Stokes), existence and uniqueness of Leray--Hopf solutions hold for any $u_0\in L^2_x$.
However, for any $\beta\in (0,2)$, for some carefully chosen forcing $F\in L^1_tL^2_x$, Albritton and Colombo \cite{AlbCol} recently showed the existence of two distinct Leray--Hopf solutions $\bar{u}$, $u$ to \eqref{eq:2D-hypodissip-NSE}, both with $u_0\equiv 0$.

The goal of this section is to go through the proof from \cite{AlbCol} and verify that for $\beta\in (0,1)$, the solutions $\bar{u}$, $u$ constructed in \cite{AlbCol} satisfy the desired regularity claimed in Proposition \ref{prop:pde-nonuniquess}.
To this end, let us first briefly describe the main ideas from \cite{AlbCol}, which build on the works by Vishik \cite{Vishik1, Vishik2} and their revisitations in \cite{ABCDGJK, AlBrCo} and is based on \emph{linear instability} techniques, the construction of an \emph{unstable background} $\bar{u}$ and the analysis of the associated \emph{unstable manifold}.

The construction from \cite{AlbCol} is rather explicit. The authors start by transforming \eqref{eq:2D-hypodissip-NSE} in vorticity form (for $f:=\curl\, F$)
\begin{equation}\label{eq:2D-hypodissip-NSE-vort}
	\partial_t \omega + u\cdot\nabla\omega = f- \Lambda^\beta \omega,\quad  u = \mbox{curl}^{-1} \omega
\end{equation}
and rewrite the PDE in similarity variables $\xi:= t^{-1/\alpha}x$, $\tau:= \log t$, for a suitable parameter $\alpha>\beta$ to be chosen later.
Introducing the \emph{similarity profiles}
\begin{align*}
	\Omega(\tau, \xi) := e^\tau \omega(e^\tau, e^{\tau/\alpha}\xi), \quad
	\bar f(\tau, \xi) := e^{2\tau} f(e^\tau, e^{\tau/\alpha}\xi),
\end{align*}
the PDE \eqref{eq:2D-hypodissip-NSE-vort} becomes
\begin{equation}\label{eq:2D-hypodissip-NSE-simil-varia}
	\partial_\tau \Omega - \Big(1+ \frac{\xi}{\alpha} \cdot\nabla_\xi\Big) \Omega+ U\cdot\nabla \Omega + e^{\gamma \tau} \Lambda^\beta \Omega = \bar f, \quad U= \curl^{-1} \Omega
\end{equation}
where $\gamma := 1-\beta/\alpha \in (0,1)$.
The idea from \cite{AlbCol} is to treat the dissipation term $e^{\gamma \tau} \Lambda^\beta$ as a perturbation of the 2D Euler equation in similarity variables:
\begin{equation}\label{eq:2D-Euler-simil-varia}
	\partial_\tau \Omega - \Big(1+ \frac{\xi}{\alpha} \cdot\nabla_\xi\Big) \Omega+ U\cdot\nabla \Omega = \bar f, \quad U= \curl^{-1} \Omega.
\end{equation}

Any steady state $\bar\Omega$ for \eqref{eq:2D-Euler-simil-varia} corresponds to a self-similar solution $\bar{\omega}$ for the $2$D Euler equations; in particular, one would like to look at an \emph{unstable steady state} $\bar\Omega$, such that the linearization of \eqref{eq:2D-Euler-simil-varia} around $\bar\Omega$ admits an eigenfunction with eigenvalue $\lambda\in\mathbb{C}$ satisfying $ {\rm Re} \lambda >0$ (in the terminology of \cite[Proposition 2.1]{AlbCol}, $\bar{\Omega}$ is an \emph{unstable vortex}).
The forcing $\bar{f}$ is then chosen exactly so that $\bar{\Omega}$ solves \eqref{eq:2D-hypodissip-NSE-simil-varia}.
The second self-similar solution $\Omega$ to \eqref{eq:2D-hypodissip-NSE-simil-varia} instead is constructed by looking at the \emph{unstable manifold} associated to the background $\bar{\Omega}$, namely enforcing the solution ansatz
\begin{align*}
	\Omega= \bar\Omega+ \Omega^{\rm lin} + \Omega^{\rm per},
\end{align*}
where $\Omega^{\rm lin}$ is an exponentially growing solution of the linearization of \eqref{eq:2D-Euler-simil-varia} around $\bar{\Omega}$ and $\Omega^{\rm per} = o_{\tau\to -\infty} (|\Omega^{\rm lin}|)$.

Once this program is carried out, going back to the physical variables $(x,t)$, one overall finds a forcing $f$ and two distinct solutions $\bar\omega$, $\omega$ to \eqref{eq:2D-hypodissip-NSE-vort}, both starting at $0$, given by
\begin{equation}\label{eq:unstable-solut+forcing}
	\bar\omega_t(x)= t^{-1}\bar\Omega(t^{-\frac{1}{\alpha}} x), \quad
	\omega_t(x)= t^{-1}\Omega(\log t, t^{-\frac{1}{\alpha}} x), \quad
	f= \partial_t \bar\omega + \Lambda^\beta \bar\omega.
\end{equation}

We are now going to show that, for suitable values of $0<\beta<\alpha<2$, the solutions $\bar\omega$, $\omega$ above  belong to the class $L^2_t \big( L^1_x\cap \dot{H}^{-1}_x \cap H^{\beta/2}_x \big)$.
In particular, Proposition \ref{prop:pde-nonuniquess} is a consequence of the following result.

\begin{proposition}\label{prop:regularity-nonunique-solut}
If $0<\beta< \alpha< 1$, then the solutions $\bar\omega$ and $\omega$ constructed in \cite{AlbCol} satisfy
\begin{equation}\label{eq:regularity-nonunique-solut}
	\bar{\omega}, \,\omega \in L^2_t \big(L^1_x \cap \dot H^{-1}_x \cap H^{\beta/2}_x \big), \quad
	\bar u,\, u \in L^\infty_t L^2_x \cap L^2_t H^{\beta/2}_x,
\end{equation}
where $\bar u= \curl^{-1} \bar\omega,\, u= \curl^{-1}\omega$. Moreover, the forcing term verifies
\begin{equation}\label{eq:regularity-forcing}
	f\in L^1_t\big(L^1_x\cap L^2_x \cap \dot H^{-1}_x \big).
\end{equation}
\end{proposition}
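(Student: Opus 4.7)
My plan is to leverage the explicit self-similar structure of the solutions from \cite{AlbCol} as recalled above and translate the regularity of the similarity profiles $\bar\Omega$ and $\Omega(\tau,\cdot) = \bar\Omega + \Omega^{\rm lin}(\tau,\cdot) + \Omega^{\rm per}(\tau,\cdot)$ into space-time estimates in the physical variables, via direct scaling. The free parameter is $\alpha > \beta$, and the whole content of the proposition is that, when $\beta\in(0,1)$, one can choose $\alpha\in(\beta,1)$ so that every relevant scaling exponent is simultaneously integrable in time. The fact that the single constraint $\alpha<1$ would fail to produce an $L^1_tL^2_x$ forcing if $\beta\geq 1$ explains the (possibly suboptimal) restriction $\beta < 1$.

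First I would record the regularity of the profiles inherited from \cite{AlbCol}: the steady vortex $\bar\Omega$ is smooth and well-localized (in particular in $L^1_\xi\cap H^s_\xi$ for every $s$), while the growing mode $\Omega^{\rm lin}(\tau,\xi) = e^{\lambda\tau}\phi(\xi)$ (with $\mathrm{Re}\,\lambda>0$) and the smaller remainder $\Omega^{\rm per}(\tau,\cdot) = o(e^{\lambda\tau})$ belong to the same function spaces uniformly in $\tau\leq 0$. Consequently every spatial norm of $\Omega(\tau,\cdot)$ in $L^1\cap L^2\cap \dot H^{-1}\cap H^{\beta/2}$ is uniformly bounded for $\tau$ in any interval $(-\infty,T_0]$, i.e.\ for $t$ in any $(0,T]$. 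The question then reduces to book-keeping of scaling exponents.

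Next I would perform the scaling. From $\bar\omega_t(x) = t^{-1}\bar\Omega(t^{-1/\alpha}x)$, and analogously for $\omega$ (up to a multiplicative factor $t^{\mathrm{Re}\,\lambda}$ from $\Omega^{\rm lin}$, which only improves matters at $t\to 0^+$), a direct computation via Plancherel gives
\begin{equation*}
\|\bar\omega_t\|_{L^p_x} \sim t^{-1+\frac{2}{\alpha p}}, \qquad \|\bar\omega_t\|_{\dot H^s_x} \sim t^{-1+\frac{1-s}{\alpha}}.
\end{equation*}
Specialising to $p\in\{1,2\}$ and $s\in\{-1,\beta/2\}$, all four resulting exponents exceed $-1/2$ whenever $\alpha<2-\beta$, so the corresponding norms are locally $L^2$ in time on any finite interval; the inequality $\alpha<2-\beta$ is automatic for $\alpha<1<2-\beta$. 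This yields $\bar\omega, \omega\in L^2_t(L^1_x\cap\dot H^{-1}_x\cap H^{\beta/2}_x)$. The velocity regularity $\bar u, u\in L^\infty_tL^2_x\cap L^2_tH^{\beta/2}_x$ then follows from the identities $\|u\|_{L^2_x}=\|\omega\|_{\dot H^{-1}_x}$, $\|u\|_{\dot H^{\beta/2}_x}=\|\omega\|_{\dot H^{\beta/2-1}_x}$, or equivalently from the self-similar ansatz $\bar u_t(x) = t^{-1+1/\alpha}\bar U(t^{-1/\alpha}x)$ with $\bar U=\curl^{-1}\bar\Omega$.

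Finally I would analyze the forcing $f = \partial_t\bar\omega + \Lambda^\beta\bar\omega$. Both terms are again of the self-similar form $t^aH(t^{-1/\alpha}x)$: for $\partial_t\bar\omega$ one finds $a=-2$ with $H = -(\bar\Omega + \alpha^{-1}\xi\cdot\nabla\bar\Omega)$, while for $\Lambda^\beta\bar\omega$ one has $a=-1-\beta/\alpha$ with $H = \Lambda^\beta\bar\Omega$. Checking $L^1_t$ integrability of the resulting $L^1_x$, $L^2_x$ and $\dot H^{-1}_x$ norms amounts to comparing the exponents of $t$ with $-1$. All conditions turn out to be strictly milder than $\alpha<1$, except the one coming from $\|\partial_t\bar\omega_t\|_{L^2_x}$, whose exponent $-2+1/\alpha$ needs $\alpha<1$, and the one from $\|\Lambda^\beta\bar\omega_t\|_{L^2_x}$, whose exponent $-1+(1-\beta)/\alpha$ needs $\beta<1$. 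Hence any choice $\alpha\in(\beta,1)$, which is nonempty precisely because $\beta<1$, yields \eqref{eq:regularity-forcing}. There is no genuine analytic obstacle beyond this exponent-matching: the whole argument is scaling together with the uniform-in-$\tau$ boundedness of the similarity profiles, and the single tight constraint $\alpha<1$ is exactly what limits Proposition \ref{prop:pde-nonuniquess} to $\beta\in(0,1)$ while \cite{AlbCol} covers the full range $\beta\in(0,2)$.
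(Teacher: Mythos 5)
Your proposal is correct and follows essentially the same route as the paper: translate everything into the similarity profiles, compute the scaling exponents of the relevant norms, and check time-integrability, with the binding constraints $\alpha<1$ (from $\|\partial_t\bar\omega_t\|_{L^2_x}\sim t^{-2+1/\alpha}$) and $\beta<1$ (from $\|\Lambda^\beta\bar\omega_t\|_{L^2_x}\sim t^{-1+(1-\beta)/\alpha}$) identified exactly as in the paper; your treatment of $\Omega^{\rm lin}$ and $\Omega^{\rm per}$ by ``uniformly bounded profiles plus exponential decay that only helps'' is a legitimate shortcut through the paper's more explicit tracking of the rates $e^{a\tau}$ and $e^{(a+\delta_0)\tau}$. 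The one point to tighten: the paper does \emph{not} obtain the $\dot H^{-1}_x$ bounds on $\bar\omega,\omega,f$ and the velocity bounds $\bar u,u\in L^\infty_tL^2_x\cap L^2_tH^{\beta/2}_x$ by scaling the profiles, but imports them directly from Theorem 1.1 of \cite{AlbCol}; your self-contained scaling route for these requires the profile-level facts that $\bar\Omega$, $\xi\cdot\nabla\bar\Omega$ and $\Omega^{\rm per}(\tau,\cdot)$ lie in $\dot H^{-1}_\xi$ uniformly (which rests on mean-zero plus decay, and is not immediate from the $X$-norm estimates of \cite{AlbCol}), so you should either verify this or fall back on the cited theorem as the paper does.
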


\begin{proof}
For simplicity, in the following we restrict ourselves to the time interval $t\in [0,1]$.
Theorem 1.1 in \cite{AlbCol} implies $\bar{u},\, u\in L^\infty_t L^2_x\cap L^2_t H^{\beta/2}_x$ and $F\in L^1_t L^2_x$;
since $\bar\omega=\curl\, \bar u$, $\omega=\curl\, u$ and $f=\curl\, F$, we have $\bar\omega,\, \omega\in L^2_t \dot H^{-1}_x$ and $f\in L^1_t \dot H^{-1}_x$.
Thus we are left with verifying that
\begin{equation}\label{eq:pde-nonuniq-goal}
	\bar{\omega}, \,\omega \in L^2_t \big(L^1_x \cap H^{\beta/2}_x \big), \quad
	f\in L^1_t\big(L^1_x\cap L^2_x\big).
\end{equation}
As described above, the unstable background vorticity $\bar\omega$ is given by \eqref{eq:unstable-solut+forcing}, where by \cite[Proposition 2.1]{AlbCol} $\bar\Omega$ is a smooth, compactly supported function and $\alpha$ is a parameter of our choice satisfying $0<\beta<\alpha<2$ (cf. \cite[beginning of Section 2]{AlbCol}); moreover, the forcing term $f$ is defined as in \eqref{eq:unstable-solut+forcing} (see \cite[Lemma 2.3]{AlbCol} for $f=h$).

We divide the verification of \eqref{eq:pde-nonuniq-goal} in a few steps.

\emph{Step 1}.
We start by verifying the properties of $\bar\omega$.
In light of \eqref{eq:unstable-solut+forcing} and scaling relations for $L^p_x$-norms in $d=2$, for any $p\in [1,\infty]$ it holds
\begin{align*}
	\int_0^1 \| \bar\omega_t\|_{L^p_x}^2\, \d t
	= \| \bar\Omega\|_{L^p_x}^2 \int_0^1 t^{-2+\frac{4}{\alpha p}}\, \d t<\infty \quad \text{ for } \alpha<\frac{4}{p}
\end{align*}
so that $\bar\omega\in L^2_t (L^1_x\cap L^2_x)$ as soon as $\alpha<2$.
Next, recall the scaling relations for the fractional Laplacian: for any $\delta\in \R$ and any function $f$, it holds
\begin{equation}\label{fractional-Laplacian}
	\Lambda^\delta (f(\lambda\, \cdot))(x) = \lambda^\delta (\Lambda^\delta f)(\lambda x).
\end{equation}
Then we have
\begin{align*}
	\int_0^1 \| \bar\omega_t\|_{\dot H^{\beta/2}_x}^2\,\d t
	= \int_0^1 \| \Lambda^{\beta/2} \bar\omega_t\|_{L^2_x}^2\,\d t
	= \| \Lambda^{\beta/2} \bar\Omega \|_{L^2_x}^2 \int_0^1 t^{-2-\frac{\beta}{\alpha}+\frac{2}{\alpha}}\, \d t <\infty
\end{align*}
as soon as $2+\beta/\alpha - 2/\alpha <1$, that is, $\alpha+\beta<2$;
in the above we used the fact that $\bar\Omega$ is smooth and compactly supported, so that $\| \Lambda^{\beta/2} \bar\Omega \|_{L^2_x}<\infty$.
To sum up, we proved the regularity properties of $\bar\omega$. \smallskip

\emph{Step 2}. Next, we deal with the forcing term $f$.
Set $\bar\Omega_\nabla (\xi) := \nabla\bar\Omega(\xi) \cdot \xi$, which is also a smooth, compactly supported function; observe that by \eqref{eq:unstable-solut+forcing} and \eqref{fractional-Laplacian} it holds
\begin{align*}
	\partial_t \bar\omega_t
	= -t^{-2} \bar\Omega(t^{-\frac{1}{\alpha}} x) -\frac{1}{\alpha} t^{-2} \bar\Omega_\nabla (t^{-\frac{1}{\alpha}} x), \quad
	\Lambda^\beta \bar\omega_t
	= t^{-1-\frac{\beta}{\alpha}} (\Lambda^\beta \bar\Omega)(t^{-\frac{1}{\alpha}} x).
\end{align*}
Then again by scaling of $L^p_x$-norms we find
\begin{align*}
	\int_0^1 \| \partial_t \bar\omega_t\|_{L^p_x} \, \d t
	&\lesssim \int_0^1 t^{-2+\frac{2}{\alpha p}} \big(\| \bar\Omega\|_{L^p_x} + \| \bar\Omega_\nabla \|_{L^p_x} \big)\, \d t
	<\infty \quad \text{holds for } \alpha< \frac{2}{p},\\
 	\int_0^1 	\| \Lambda^\beta \bar\omega_t \|_{L^p_x}\, \d t
	&\lesssim \int_0^1 t^{-1-\frac{\beta}{\alpha}+\frac{2}{\alpha p}} \| \Lambda^\beta \bar\Omega\|_{L^p_x}\, \d t
	<\infty \quad \text{holds for } \beta<\frac{2}{p};
\end{align*}
in particular, $f\in L^1_t (L^1_x\cap L^2_x)$ as soon as $\alpha\vee\beta <1$.
%

\emph{Step 3}. It remains to verify that $\omega\in L^2_t \big(L^1_x\cap H^{\beta/2}_x \big)$.
Recall that $\omega$ is given in function of $\Omega$ by relation \eqref{eq:unstable-solut+forcing}, where $\Omega=\bar{\Omega}+\Omega^{\rm lin} + \Omega^{\rm per}$ (cf. \cite[Eq. (1.9)-(1.11)]{AlbCol}); to verify the requirements on $\omega$, we shall verify them on $\bar{\Omega}$, $\Omega^{\rm lin}$ and $\Omega^{\rm per}$ separately.
The term $\bar{\Omega}$ was already treated in Step 1.; let us first give a useful observation for treating the others.

Given a function $\gamma=\gamma(t,x)$, denote by $\Gamma=\Gamma(\tau,\xi)$ the corresponding function in self-similar variables, so that $\gamma(t,x)=e^{-\tau}\, \Gamma(\tau, \xi)$ with $\tau= \log t$, $\xi= t^{-1/\alpha} x$; then
\begin{align*}
	\int_0^1 \| \gamma(t,\cdot)\|_{L^p_x}^2\, \d t
	& = \int_0^1 t^{-2} \bigg(\int_{\R^2} |\Gamma(\log t, t^{-\frac{1}{\alpha}} x)|^p\,\d x \bigg)^{\frac{2}{p}}\, \d t \\
	& = \int_0^1 t^{-2+ \frac{4}{\alpha p}} \| \Gamma(\log t,\cdot)\|_{L^p_\xi}^2\, \d t
	= \int_{-\infty}^0 e^{\tau\big( \frac{4}{\alpha p} - 1\big)} \| \Gamma(\tau,\cdot)\|_{L^p_\xi}^2\, \d \tau;
\end{align*}
the convergence of the integral is dictated by the asymptotic decay of $\| \Gamma(\tau,\cdot)\|_{L^p_\xi}$ as $\tau\to -\infty$.

We now start by checking $\omega\in L^2_t (L^1_x\cap L^2_x)$.
By \cite[Eq. (2.12)]{AlbCol}, it holds $\Omega^{\rm lin}(\tau,\xi) = {\rm Re} (e^{\lambda \tau} \eta)$ with $(\lambda,\eta)$ given as in \cite[Proposition 2.1]{AlbCol};
specifically, for a fixed parameter $a\geq a_0$, with $a_0=8/\alpha>0$ as at the beginning of \cite[Section 3]{AlbCol}, we can find a corresponding $\lambda\in\mathbb{C}$ with ${\rm Re} \lambda=a$ and  $\eta\in L^2_m\subset L^2_\xi$, $\eta$ compactly supported (cf. items 1), 3) of \cite[Proposition 2.1]{AlbCol})\footnote{Here $L_m^2$ consists of $m$-fold rotationally symmetric $L^2_\xi$-functions.}.
Therefore, for any $p\in [1,2]$ it holds
\begin{align*}
	\int_{-\infty}^0 e^{\tau\big(\frac{4}{\alpha p}-1\big)} \|\Omega^{\rm lin}(\tau,\cdot)\|_{L^p_\xi}^2\, \d \tau
	\lesssim \| \eta\|_{L^1_\xi\cap L^2_\xi}^2 \int_{-\infty}^0 e^{\tau\big( \frac{4}{\alpha p} + 2a-1\big)}\, \d \tau
\end{align*}
which is finite for $\alpha<4/p$, since $a>0$; in particular the cases $p=1,2$ are covered for $\alpha<2$.

The term $\Omega^{\rm per}$ can be controlled with the estimates from the proof of \cite[Theorem 1.1]{AlbCol}, see pp. 14--15, especially estimate (4.2) therein (valid for $\Omega^{{\rm per},(k)}$ but then also for $\Omega^{\rm per}$ once we take $k\to\infty$).
In particular, the $X$-norm, as defined in \cite[Eq. (3.3)]{AlbCol} for the choice $Q=1$, controls the $L^1_\xi\cap L^2_\xi$-norm, thus it holds
\begin{equation}\label{eq:estim-Omega-per}
	\| \Omega^{\rm per}(\tau,\cdot)\|_{L^1_\xi\cap L^2_\xi}
	\leq e^{\tau (a+\delta_0)} \quad \forall\, \tau\leq \bar{\tau}.
\end{equation}
Here $\bar\tau$ is a suitable finite value, fixed at the end of proof of \cite[Theorem 1.1]{AlbCol}; the parameter $\delta_0$ is defined at the beginning of \cite[Section 3]{AlbCol} as $\delta_0:=\min\{\gamma/4,1/8,a_0/2\}$ where $\gamma:= 1-\beta/\alpha>0$ (cf. \cite[Eq. (1.13)]{AlbCol}).
Therefore for any $p\in [1,2]$ we obtain
\begin{align*}
	\int_{-\infty}^{\bar \tau } e^{\tau\big(\frac{4}{\alpha p}-1\big)} \| \Omega^{\rm per}(\tau,\cdot)\|_{L^p_\xi}^2\, \d \tau
	\leq \int_{-\infty}^{\bar\tau} e^{\tau\big(\frac{4}{\alpha p}+2a+2\delta_0-1\big)}\, \d \tau
\end{align*}
which is again finite for any $\alpha<4/p$.
Overall, this verifies that for $\alpha<1$ it holds
\begin{align*}
	\int_{(-\infty,0)} e^{\tau\big(\frac{4}{\alpha p}-1 \big)} \|\Omega(\tau,\cdot)\|_{L^1_\xi\cap L^2_\xi}^2\, \d \tau <\infty
\end{align*}
and so correspondingly that $\omega\in L^2_t (L^1_x\cap L^2_x)$.

Finally, we check that $\omega\in L^2_t \dot H^{\beta/2}_x$; we will show a bit more, namely that $\omega-\bar\omega\in L^2_t \dot H^1_x$, where $\omega-\bar\omega=:\tilde\omega$ corresponds in self-similar variables to $\Omega^{{\rm lin}}+\Omega^{{\rm per}}=: \tilde\Omega$.
By \eqref{eq:unstable-solut+forcing}, the previous observations and scaling relations we have
\begin{align*}
	\|\tilde\omega(t,\cdot) \|_{\dot H^1_x}
	= \|\nabla \tilde\omega(t,\cdot) \|_{L^2_x}
	= t^{-1-1/\alpha} \|\nabla \tilde\Omega(\log t,\cdot) (t^{-1/\alpha} \cdot) \|_{L^2_x}
	= t^{-1} \|\nabla \tilde\Omega(\log t,\cdot) \|_{L^2_\xi};
\end{align*}
hence
\begin{align*}
	\int_0^1 \|\tilde\omega(t,\cdot) \|_{\dot H^1_x}^2\,\d t
	= \int_0^1 t^{-2} \|\nabla \tilde\Omega(\log t,\cdot) \|_{L^2_\xi}^2\,\d t
	= \int_{-\infty}^0 e^{-\tau} \|\nabla\tilde\Omega(\tau,\cdot) \|_{L^2_\xi}^2\,\d \tau.
\end{align*}
We treat the terms $\Omega^{\rm lin}$ and $\Omega^{\rm per}$ separately.
As before, $\Omega^{\rm lin}(\tau,\xi) = {\rm Re} (e^{\lambda \tau} \eta)$, where by \cite[Proposition 2.1]{AlbCol}, ${\rm Re}\lambda =a\ge a_0 =8/\alpha$ and $\eta\in H^8_x$;
therefore $\|\nabla \Omega^{\rm lin} (\tau,\cdot) \|_{L^2_x} \le e^{a\tau} \|\eta \|_{H^1_x}$ and
\begin{align*}
	\int_{-\infty}^0 e^{-\tau} \|\nabla \Omega^{\rm lin}(\tau,\cdot) \|_{L^2_x}^2\,\d \tau
	\le \int_{-\infty}^0 e^{-\tau} e^{2a\tau} \|\eta \|_{H^1_x}^2 \,\d \tau <+\infty
\end{align*}
due to the choice of $a$, which for $\alpha\in (0,2)$ satisfies $a\geq 8/\alpha>4$.
Next, by the definition of norm $\|\cdot \|_X$ in \cite[Eq. (3.3)]{AlbCol} and estimate (4.2) in \cite{AlbCol}, we have $\|\Omega^{\rm per}(\tau,\cdot)\|_{H^1_x} \le e^{(a+\delta_0)\tau}$; consequently
\begin{align*}
	\int_{-\infty}^0 e^{-\tau} \|\nabla \Omega^{\rm per}(\tau,\cdot) \|_{L^2_x}^2\,\d \tau
	\le \int_{-\infty}^0 e^{-\tau} e^{2(a+\delta_0)\tau} \,\d \tau <+\infty.
\end{align*}
Collecting all the above estimates completes the proof.
\end{proof}


\section{Open problems and future directions}\label{sec:open}

We have shown that, in the presence of a sufficiently active, Kraichnan transport noise, the well-posedness theory of fluid dynamics equations can be improved considerably, yielding existence and uniqueness results for a class of initial vorticities which are merely $L^2_x$-integrable.
Our techniques work for logEuler equations with $\gamma>1/2$, where the corresponding deterministic best results available in the literature require at least $\omega_0\in H^1_x$ or $\omega_0\in L^\infty_x$ (cf. \cite{ChaeWu}), and in the case of hypodissipative Navier--Stokes equations, where by Proposition \ref{prop:pde-nonuniquess} we have explicit counterexample to uniqueness in $L^2_x$.

Nevertheless, the main question, whether there is hope to achieve similar results for $2$D Euler, remains open; we formulate it as a conjecture, for simplicity in the absence of forcing.

\begin{conjecture}\label{conj:euler}
There exists a noise $W$ satisfying Assumption \ref{ass:covariance-basic} such that the stochastic 2D Euler equations
\begin{equation}\label{eq:stoch-euler}
	\d \omega + \curl^{-1} \omega\cdot \nabla \omega\, \d t + \circ \d W\cdot \nabla \omega = 0
\end{equation}
are well-posed in law, for all initial data $\omega_0$ belonging to $L^1_x\cap L^p_x\cap \dot H^{-1}_x$, for some $p<\infty$.
\end{conjecture}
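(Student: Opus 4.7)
The plan would follow closely the Malliavin--Ferrario--Flandoli strategy already deployed in the paper for Systems \ref{system:logEuler}-\ref{system:hypoNS}: one couples \eqref{eq:stoch-euler} to its driftless linear Kraichnan counterpart by Girsanov, invokes Theorem \ref{thm:main-linear-extended} at the linear level, and transfers uniqueness back. Taking $\cR=I$, $\nu=0$ in \eqref{eq:intro-abstract-spde}, the entire scheme hinges on producing a Kraichnan noise whose radial profile $g$ is simultaneously rough enough to keep $\curl^{-1}\omega$ inside the Cameron--Martin space $\cH$, yet regular enough for both the a priori theory of Proposition \ref{prop:weak-existence-nonlinear} and the pathwise uniqueness of the linear equation to go through.

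The natural ansatz is $g(\xi)=(1+|\xi|^2)^{-1-\alpha}$ with $\alpha\in(0,1)$ small; then $\cQ^{-1/2}\curl^{-1}$ corresponds (up to lower order factors) to the multiplier $(1+|\xi|^2)^{(1+\alpha)/2}/|\xi|$, which is bounded from $\dot H^\alpha_x\cap\dot H^{-1}_x$ into $L^2_x$. Under this choice, the Novikov-type hypotheses \eqref{eq:weak-uniqueness-assumption1}-\eqref{eq:weak-uniqueness-assumption2} of Proposition \ref{prop:weak-uniqueness-nonlinear} reduce to the fractional regularity bound
\begin{equation*}
	\int_0^T \|\omega_s\|_{\dot H^\alpha_x}^2\, \d s < \infty \quad \P\text{-a.s.}
\end{equation*}
for the nonlinear solution and for its driftless Kraichnan analogue. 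Given such an estimate, the weak stability argument in the proof of Theorems \ref{thm:main-logeuler}-\ref{thm:main-hypoNS} can be adapted almost verbatim, once an $H^\alpha_x$-version of Corollary \ref{cor:stability-linear-energy} is in place; the latter should follow from reworking the $\dot H^{-1}_x$-It\^o formula of Proposition \ref{prop:apriori-estimates-nonlinear} at the fractional scale, together with the energy spectrum methods of Section \ref{subsec:uniqueness-linear}.

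The hard part, and the reason the statement is presently only a conjecture, is producing that fractional regularisation from merely $L^p_x$-valued initial vorticity without any deterministic dissipation or smoothing of the velocity. The only conceivable source is the noise itself, via the Kraichnan effective diffusion that already proved decisive at the linear level in Proposition \ref{prop:pathwise-uniqueness-linear}. A concrete attempt would be to repeat the spectral computation of Remark \ref{rem:michele} for the nonlinear SPDE: setting $a_t(\xi):=\E[|\hat\omega_t(\xi)|^2]$ one obtains, at least formally, an inequality of the form
\begin{equation*}
	\partial_t a_t(\xi) + 2\kappa|\xi|^2\, a_t(\xi) \le (2\pi)^{-d/2}\int_{\R^d} g(\eta)|P_\eta\xi|^2\, a_t(\xi+\eta)\, \d\eta + \mathcal{N}_t(\xi),
\end{equation*}
where $\mathcal{N}_t(\xi)$ encodes the nonlinear flux $\< (\curl^{-1}\omega_t)\cdot\nabla\omega_t, e^{-i\xi\cdot x}\>$. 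Closing this into a bound $\int_0^T\!\!\int_{\R^d}|\xi|^{2\alpha}a_t(\xi)\, \d\xi\d t < \infty$ would yield the missing estimate in expectation, which could then be upgraded pathwise by localisation in probability.

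The main obstacle is therefore controlling $\mathcal{N}_t$ against the intrinsic spectral diffusion furnished by the Kraichnan noise: in essence, turning the heuristic inverse/direct cascade picture of 2D turbulence (cf. \cite{Boffetta}) into a rigorous fractional regularisation statement, in a regime where all the Casimir invariants of $2$D Euler are preserved and no deterministic coercive term is available. We do not currently see a route to this that does not either invoke additional damping (as in \eqref{eq:intro-hypoNS}) or smoothing of the Biot--Savart law (as in \eqref{eq:intro-logeuler}). Any progress on such noise-induced fractional regularisation, even under extra structural assumptions on $\omega_0$ or on the spectrum of $g$, would immediately yield Conjecture \ref{conj:euler} through the arguments of Section \ref{subsec:uniqueness-nonlinear}.
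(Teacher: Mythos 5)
This statement is a conjecture, and the paper does not prove it either; what the paper offers is the partial result of Corollary \ref{cor:euler-conjecture} together with a discussion of what remains to be shown. Your proposal correctly reproduces the overall Girsanov strategy and correctly identifies the missing ingredient (an a priori regularity estimate on $\omega$ strong enough to validate \eqref{eq:weak-uniqueness-assumption1}--\eqref{eq:weak-uniqueness-assumption2}), so in that sense your assessment of the state of the problem matches the paper's. However, your specific noise ansatz $g(\xi)=(1+|\xi|^2)^{-1-\alpha}$ aims at a strictly harder target than necessary. With that choice the Novikov-type condition forces $\omega\in L^2_t\dot H^\alpha_x$ for a genuine fractional $\alpha>0$; this is precisely the regularity that, in System \ref{system:hypoNS}, is supplied for free by the dissipation $\Lambda^\beta$, and there is no reason to expect it to be propagated by the inviscid dynamics. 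Indeed, even for the \emph{linear} Kraichnan equation the solution is transported by a flow that is far from Lipschitz, so positive Sobolev regularity of $\omega_0$ is not expected to survive, and your proposed mechanism of ``noise-induced fractional regularisation'' would have to overcome this already at the linear level. The paper instead takes $g$ as in \eqref{eq:covariance-logeuler}, for which Corollary \ref{cor:euler-conjecture}-ii) shows that conditional uniqueness holds under the much weaker \emph{logarithmic} regularity requirement \eqref{eq:requirement-euler-uniqueness} (equivalently \eqref{eq:log-derivative-requirement}). This is the sharper reduction: propagation of a logarithmic derivative is exactly the kind of regularity that is known to persist along DiPerna--Lions flows (cf. \cite{CriDeL,Leger,BruNgu1,BruNgu2}), and the paper moreover observes that Vishik's non-unique forced solutions already satisfy this class, so the conditional uniqueness statement is nonvacuous. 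If you pursue this problem, you should replace your polynomial-decay ansatz by the logarithmic one and aim at propagating \eqref{eq:log-derivative-requirement}, rather than an $\dot H^\alpha_x$ bound; your spectral inequality for $a_t(\xi)=\E[|\hat\omega_t(\xi)|^2]$ with the nonlinear flux $\mathcal{N}_t(\xi)$ is a reasonable vehicle for that weaker target, and is also the direction the paper itself suggests.
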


Several variants of Conjecture \ref{conj:euler} can be formulated; for instance, rather than looking at $\omega_0\in L^1_x\cap L^p_x\cap \dot H^{-1}_x$, one could consider $\omega_0\in \dot H^s_x\cap \dot H^{-1}_x$ for some $s<1$.
In general, one would like to look at initial data in a \emph{supercritical class} $X$, with respect to the scaling invariance of the deterministic Euler equations $\omega(x)\mapsto \omega^\lambda (x)=\omega(\lambda x)$; such class should satisfy $\| \omega^\lambda\|_X\to \infty$ as $\lambda\to 0$ and thus cannot be included in the Yudovich regime.

Some partial results in this direction are already available.

\begin{corollary}\label{cor:euler-conjecture}
The following hold:
\begin{itemize}
\item[i)] For any $p\in [2,\infty]$, any noise $W$ satisfying Assumption \ref{ass:covariance-basic} and any $\omega_0\in L^1_x\cap L^p_x\cap \dot H^{-1}_x$, there exists a weak solution $\omega$ to \eqref{eq:stoch-euler}, with weakly continuous paths and almost surely in $L^\infty_t (L^1_x\cap L^p_x\cap \dot H^{-1}_x)$.
\item[ii)] Consider now the noise $W$ with $g$ given by \eqref{eq:covariance-logeuler} and the corresponding SPDE \eqref{eq:stoch-euler}. Then for any $\omega_0\in L^1_x\cap L^2_x\cap \dot H^{-1}_x$, there can be at most one solution on $[0,T]$ satisfying
\begin{equation}\label{eq:requirement-euler-uniqueness}
	\omega\in L^\infty_t (L^1_x\cap L^2_x\cap \dot H^{-1}_x), \quad
	\int_0^T \| T_\gamma^{-1} \omega_t\|_{L^2_x}^2\, \d t <\infty\quad \P\text{-a.s.}
\end{equation}
in the sense that, if two weak solutions both satisfy \eqref{eq:requirement-euler-uniqueness}, then their laws on $[0,T]$ necessarily coincide.
\end{itemize}
\end{corollary}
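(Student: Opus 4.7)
Part (i) is essentially an immediate application of Proposition \ref{prop:weak-existence-nonlinear} with the choice $\cR=I$ (so that the symbol $r\equiv 1$ belongs to $L^\infty_x$), dissipation parameter $\nu=0$ and trivial forcing $f\equiv 0$; the hypothesis $\omega_0\in L^1_x\cap L^p_x\cap \dot H^{-1}_x$ with $p\in [2,\infty]$ is precisely that proposition's input. The only point requiring attention is that the pathwise bounds \eqref{eq:apriori-vorticity-Lp}-\eqref{eq:apriori-vorticity-L2}, proved for the smooth approximants in Proposition \ref{prop:apriori-estimates-nonlinear}, are preserved by lower semicontinuity in the weak compactness argument, so that the constructed $\omega$ has paths in $L^\infty_t(L^1_x\cap L^p_x\cap \dot H^{-1}_x)$ $\P$-a.s., and not merely the weaker $L^2_t$ control appearing in Definition \ref{defn:solution-nonlinear-SPDE}. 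Weak continuity of trajectories is already part of the definition.

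For part (ii) the plan is to specialize Proposition \ref{prop:weak-uniqueness-nonlinear} to $\cR=I$, and to verify that its hypothesis \eqref{eq:weak-uniqueness-assumption1} is a consequence of the additional integrability \eqref{eq:requirement-euler-uniqueness}. Using \eqref{eq:covariance-projector} together with the choice \eqref{eq:covariance-logeuler}, and noting that $\curl^{-1}\omega$ is already divergence free so that $P_\xi$ acts as the identity on its Fourier transform, the Cameron-Martin norm takes the explicit form
\begin{equation*}
\|\curl^{-1}\omega\|_\cH^2 \;=\; \|\cQ^{-1/2}\curl^{-1}\omega\|_{L^2_x}^2 \;\sim\; \int_{\R^2}\frac{1+|\xi|^2}{|\xi|^2}\,\log^{2\gamma}(e+|\xi|)\,|\hat\omega(\xi)|^2\,\d\xi.
\end{equation*}
Splitting this integral into low and high frequencies: on $\{|\xi|\le 1\}$ the factor $\log^{2\gamma}(e+|\xi|)$ is bounded while $(1+|\xi|^2)/|\xi|^2 \sim |\xi|^{-2}$, giving a bound by $\|\omega\|_{\dot H^{-1}_x}^2$; on $\{|\xi|>1\}$ instead $(1+|\xi|^2)/|\xi|^2$ is bounded and the remaining factor is precisely the symbol of $T_\gamma^{-2}$, giving a bound by $\|T_\gamma^{-1}\omega\|_{L^2_x}^2$. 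Summing,
\begin{equation*}
\|\curl^{-1}\omega\|_\cH^2 \;\lesssim\; \|\omega\|_{\dot H^{-1}_x}^2 + \|T_\gamma^{-1}\omega\|_{L^2_x}^2.
\end{equation*}
Integrating in time, assumption \eqref{eq:requirement-euler-uniqueness} then yields $\int_0^T \|\curl^{-1}\omega_s\|_\cH^2\,\d s <\infty$ $\P$-a.s., which is exactly \eqref{eq:weak-uniqueness-assumption1} for $\cR=I$. Proposition \ref{prop:weak-uniqueness-nonlinear} then applies verbatim to any pair of weak solutions satisfying \eqref{eq:requirement-euler-uniqueness}, delivering coincidence of their laws on $[0,T]$.

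The conceptual obstruction preventing this argument from proving Conjecture \ref{conj:euler} outright is exactly the second condition in \eqref{eq:requirement-euler-uniqueness}, i.e. an a priori control on $\|T_\gamma^{-1}\omega\|_{L^2_tL^2_x}$: the only smoothing available in \eqref{eq:stoch-euler} comes from the Itô-to-Stratonovich corrector $\kappa\Delta$, whose regularizing effect is entirely consumed by the division by $|\xi|^2$ intrinsic to the $\dot H^{-1}_x$-energy estimate \eqref{eq:apriori-energy}, leaving no logarithmic margin to bound $\|T_\gamma^{-1}\omega\|_{L^2_x}$ unconditionally. Producing such a logarithmic gain -- presumably through a sharper noise covariance or a finer analysis of the fluctuation-dissipation balance in the nonlinear SPDE -- appears to be the key missing ingredient.
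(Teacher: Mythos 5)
Your proposal is correct and follows essentially the same route as the paper: part (i) is the direct specialization of Proposition \ref{prop:weak-existence-nonlinear} to $\cR=I$, $\nu=0=f$, and part (ii) invokes the first part of Proposition \ref{prop:weak-uniqueness-nonlinear} after checking \eqref{eq:weak-uniqueness-assumption1} by the Fourier multiplier computation, which the paper only sketches by reference to the proof of Theorem \ref{thm:main-logeuler} and which you carry out explicitly via the low/high frequency splitting yielding $\|\curl^{-1}\omega\|_\cH^2\lesssim \|\omega\|_{\dot H^{-1}_x}^2+\|T_\gamma^{-1}\omega\|_{L^2_x}^2$.
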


\begin{proof}
Part i) is a direct consequence of Proposition \ref{prop:weak-existence-nonlinear}, for the choice $\cR=I$, $\nu=0=f$.

Part ii) instead follows from the first part of Proposition \ref{prop:weak-uniqueness-nonlinear}, with $\cQ=(I-\Delta)^{-1} T_{2\gamma} \Pi$ and $\cR=I$; indeed, arguing by Fourier multipliers as in the proof of Theorem \ref{thm:main-logeuler}, one can see that condition \eqref{eq:requirement-euler-uniqueness} is sufficient to verify that
\begin{equation*}
 	\int_0^T \| \cQ^{-1/2} \curl^{-1}\omega_s\|_{L^2_x}^2\, \d s <\infty\quad \P\text{-a.s.}\quad \qedhere
\end{equation*}
\end{proof}

Corollary \ref{cor:euler-conjecture} tells us that, in order to answer positively Conjecture \ref{conj:euler} (or its variants), it would suffice to find a supercritical class $X$ of initial data which propagate the \emph{logarithmic regularity estimate} dictated by condition \eqref{eq:requirement-euler-uniqueness}.
In particular, the uniqueness question turns into the problem of constructing sufficiently nice solutions, or rather (by compactness arguments) finding a priori estimates for them.
As the noise $W$ given by \eqref{eq:covariance-logeuler} is very rough, propagation of regularity is highly non trivial, even when looking at the linear SPDE for $\rho$ without Euler nonlinearity, for which well-posedness is already known.
In that case, we believe that a careful study of the evolution for the energy spectrum $a_t(\xi)=\E[|\hat\rho_t(\xi)|^2]$, given by \eqref{eq:pde-energy-density}, could provide some useful insight on the problem.
Indeed a natural, partially stronger version of \eqref{eq:requirement-euler-uniqueness} can be formulated in Fourier as
\begin{equation}\label{eq:log-derivative-requirement}
	\int_0^T \int_{\R^2} \E\big[|\hat\omega_t(\xi)|^2 \big] \log^{2\gamma}(e+|\xi|)\, \d \xi\d t<\infty.
\end{equation}

Let us mention that the propagation of a ``logarithmic derivative'', as encoded by \eqref{eq:log-derivative-requirement}, is a phenomenon well-understood in the study of solutions to transport equations; however not in the stochastic setting, but rather for DiPerna--Lions flows.
Initial studies of the (very low) regularity propagated by such rough flows are due to Crippa and De Lellis \cite{CriDeL}; L\'eger \cite{Leger} then obtained bounds in Fourier space, closer to \eqref{eq:log-derivative-requirement}. Bru\'e and Nguyen recently showed that such regularity can be equivalently reformulated in real space by a suitable Gagliardo-Nirenberg seminorm in \cite{BruNgu1} and provided sharp regularity estimates for the associated solutions to the continuity equations in \cite{BruNgu2}.
We hope that in the future a combination of the ideas from this paper and the analysis techniques and regularity estimates from DiPerna-Lions theory will yield a positive resolution to Conjecture \ref{conj:euler}.

Either way, let us discuss Point ii) in Corollary \ref{cor:euler-conjecture} from a different perspective.
Recall Vishik's nonuniqueness result for $L^p_x$-valued solutions to the forced $2$D Euler equations, first presented in \cite{Vishik1,Vishik2} and then revisited in \cite{ABCDGJK}; going through similar computations as in Proposition \ref{prop:pde-nonuniquess}, one can verify that such non-unique solutions do satisfy the (deterministic analogue of the) regularity requirement \eqref{eq:requirement-euler-uniqueness}.
In other words, even though uniqueness of solutions to stochastic 2D Euler equations is open, we can still deduce that a sufficiently active noise $W$ prevents the arising of the same non-uniqueness scenario as for the forced deterministic equation.
This fact has a natural heuristical explanation: as the noise acts at the Lagrangian level as a very active environment, mixing particles around, it prevents any stationary or self-similar structure from forming or preserving itself for sufficiently long time.
This applies in particular to the existence of an \emph{unstable background} and its associated \emph{unstable manifold}, which are at the heart of Vishik's approach.

In a different direction, it would be interesting in the future to get a better understanding of the long-time behaviour of the (linear and nonlinear) SPDEs for which we have successfully obtained weak well-posedness in this paper;
we start with a simple observation.

\begin{corollary}\label{cor:Feller-property}
Consider any of the following systems:
\begin{enumerate}[i)]
\item the unforced stochastic $2$D logEuler equation, with $\gamma>1/2$ fixed and $W$ determined by \eqref{eq:covariance-logeuler};
\item the unforced stochastic $2$D hypodissipative Navier--Stokes, with $\nu>0$, $\beta\in (0,1)$ fixed and $W$ determined by \eqref{eq:covariance-hypoNS};
\end{enumerate}
Set $X:=L^1_x\cap L^2_x\cap \dot H^{-1}_x$; denote by $\mathcal{B}_b(X)$ the space of all bounded Borel measurable functions $F:X\to\R$ and by $C_b(X)$ the one of continuous bounded $F$.
For any $\omega_0$ and $t>0$, denote by $\L(\omega(t,\omega_0))$ the law at time $t$ of the weakly unique solution starting at $\omega_0$. Finally, define a family of linear operators on $\mathcal{B}_b(X)$ by
\begin{align*}
	(P_t F)(\omega_0):= \int_X F(v) \L(\omega(t,\omega_0))(\d v).
\end{align*}
Then $\{P_t\}_t$ is a Markov semigroup on $\mathcal{B}_b(X)$, in the sense that the relation $P_{t+s}=P_t\circ P_s$ holds, which is moreover Feller, in the sense that $P_t$ maps $C_b(X)$ into itself for all $t\geq 0$.
\end{corollary}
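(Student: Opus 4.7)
My plan is to derive the semigroup identity $P_{t+s} = P_t \circ P_s$ from the Markov property of the process (itself a consequence of uniqueness in law), and to prove the Feller property by coupling the nonlinear dynamics to the corresponding linear SPDE through the Girsanov identity \eqref{eq:formula-density-girsanov}.

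For the semigroup identity I would follow the classical Stroock--Varadhan / Ethier--Kurtz route: starting from a weak solution $\omega$ on some probability space $(\Theta,\F,\F_t,\P)$, take a regular conditional probability distribution of the shifted trajectory $(\omega_{s+u})_{u \geq 0}$ given $\F_s$ and verify that it is $\P$-almost surely the law of a weak solution to the same SPDE, driven by the shifted noise $W_{s+\cdot} - W_s$ (which remains a $Q$-Brownian motion independent of $\F_s$) and starting from $\omega_s$. Uniqueness in law from Theorems \ref{thm:main-logeuler} and \ref{thm:main-hypoNS} then forces this conditional law to coincide with that of the solution starting at $\omega_s$; composition of two such transitions immediately gives $P_{t+s} = P_t \circ P_s$.

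For the Feller property, fix $F \in C_b(X)$ and a sequence $\omega_0^n \to \omega_0$ in $X$. On a common probability space supporting $W$, let $\rho^n,\rho$ be the strong solutions of the linear SPDE \eqref{eq:linear-spde-no-drift} (with $f=0$) starting respectively from $\omega_0^n$ and $\omega_0$. The pathwise stability estimate \eqref{eq:stability-linear-thm} yields $\sup_{s \in [0,t]} \|\rho^n_s - \rho_s\|_{L^1_x \cap L^2_x} \leq \|\omega_0^n - \omega_0\|_{L^1_x \cap L^2_x} \to 0$, while Corollary \ref{cor:stability-linear-energy} provides $\E\big[\sup_{s \in [0,t]}\|\rho^n_s - \rho_s\|_{\dot H^{-1}_x}^2\big] \to 0$; together these give $\rho^n_t \to \rho_t$ in $X$ in $\P$-probability, so that, up to extraction of a subsequence, $F(\rho^n_t) \to F(\rho_t)$ $\P$-a.s.\ with uniform bound $\|F\|_\infty$. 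Applying \eqref{eq:formula-density-girsanov} on the interval $[0,t]$, and collapsing the conditional expectation by the tower property since $F(\rho_t)$ is $\F^\rho_t$-measurable, one obtains
\[
	(P_t F)(\omega_0^n) = \E_\P\big[F(\rho^n_t)\, \cE_t^n\big], \qquad \cE_t^n := \cE_t(\cR\, \curl^{-1} \rho^n),
\]
and the analogous identity for $\omega_0$. The proof of Theorem \ref{thm:main-logeuler} already shows $\cE_t^n \to \cE_t := \cE_t(\cR\,\curl^{-1}\rho)$ in $L^1(\P)$; splitting
\[
	F(\rho^n_t)\cE_t^n - F(\rho_t)\cE_t = \big(F(\rho^n_t) - F(\rho_t)\big)\cE_t + F(\rho^n_t)\big(\cE_t^n - \cE_t\big),
\]
the first summand has vanishing expectation by dominated convergence under the finite measure $\cE_t\,\d\P$, and the second by $\|F\|_\infty$-boundedness combined with the $L^1$-convergence of $\cE_t^n$. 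A standard subsequence-of-subsequences argument upgrades the conclusion to the full sequence, proving $(P_t F)(\omega_0^n) \to (P_t F)(\omega_0)$.

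The main difficulty I anticipate is the rigorous derivation of the Markov property on the rough path space $C^w([0,T];L^2_x)$: namely verifying that the regular conditional probability of the shifted trajectory does produce a weak solution in the sense of Definition \ref{defn:solution-nonlinear-SPDE}, and that the shifted noise remains a $Q$-Brownian motion conditionally independent of $\F_s$, despite $W$ being only $C^0_t L^2_\loc$-valued. Once this step is in place, the Feller conclusion is a relatively soft consequence of the Girsanov reduction to the linear SPDE and of its stability theory developed in Section \ref{sec:wellposedness-linear}.
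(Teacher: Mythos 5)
Your proposal is correct, and it diverges from the paper's (very terse) proof in one of its two halves. For the Markov property, the paper does \emph{not} use the regular-conditional-probability/uniqueness-in-law route you propose: it instead invokes the autonomy of the SPDE and the independent increments of $W$, justifying the semigroup identity by first establishing it for the regularized approximating systems of Proposition \ref{prop:weak-existence-nonlinear} and then passing to the limit. Your Stroock--Varadhan-style argument is arguably cleaner, since uniqueness in law (Theorems \ref{thm:main-logeuler}--\ref{thm:main-hypoNS}) is exactly the ingredient that pins down the conditional law of the shifted trajectory, and it avoids having to track Markovianity through a weak limit; the price is the technical verification that the weak formulation \eqref{eq:solution-nonlinear-SPDE} survives disintegration, which you correctly flag as the delicate step. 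For the Feller property, your argument is in substance the paper's — both reduce to the Girsanov identity \eqref{eq:formula-density-girsanov} and the stability theory of the linear SPDE — but you unfold a point the paper compresses: the cited continuity of $\omega_0\mapsto\L(\omega)$ is into $\cP(C^w_t L^2_x)$, whose topology does not see the $L^1_x$ and $\dot H^{-1}_x$ components of $X$, so to test against $F\in C_b(X)$ one genuinely needs the additional pathwise $L^1_x\cap L^2_x$ stability \eqref{eq:stability-linear-thm} and the $\dot H^{-1}_x$ convergence of Corollary \ref{cor:stability-linear-energy}, exactly as you do. One small omission: to make sense of $P_s(P_tF)$ for general $F\in\mathcal{B}_b(X)$ you should note that measurability of $\omega_0\mapsto (P_tF)(\omega_0)$ follows from the Feller property plus a monotone class argument; this is routine but should be said.
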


\begin{proof}
Although so far we always considered deterministic $\omega_0$, standard disintegration arguments allow to similarly establish weak existence and uniqueness in law for random $\F_0$-measurable data $\omega_0$ (note that the Banach space $X$ is separable).
The Markov property for $\{P_t\}_t$ is then a consequence of the SPDE being autonomous and the independent increments property of the driving noise $W$; it can be rigorously justified by first looking at more regular systems (like the ones used in the proof of Proposition \ref{prop:weak-existence-nonlinear}) for which it holds classically, and then passing to the limit in the approximations.
The Feller property instead is a consequence of the final parts of Theorems \ref{thm:main-logeuler}-\ref{thm:main-hypoNS}, which imply continuity of the map $\omega_0\mapsto \L(\omega(t,\omega_0))$ in the topology of weak convergence of measures.
\end{proof}

A similar result holds for the linear SPDE in any dimension $d\geq 2$, cf. Remark \ref{rem:invariant-measure-linear}.

In light of Corollary \ref{cor:Feller-property}, one could hope to apply ergodic theory techniques to understand the long-time behaviour of solutions (see \cite{DaPZab96} for a general overview); standard arguments would require to verify the \emph{Strong Feller} and \emph{irreducibility} properties of the semigroup, which we leave for future investigations.

The problem seems relatively trivial for $\nu>0$, as the presence of the fractional viscosity $\nu\Lambda^\beta$ should make any solution dissipate and thus vanish in the long-time limit in the strong $L^2_x$-norm.
The situation is much more interesting for $\nu=0$, namely for the inviscid $2$D logEuler equations and the incompressible Kraichnan noise; whether convergence to $0$ still holds, up to possibly replacing convergence in the strong topology with the weak one, here is less clear.
Let us mention some facts hinting at this possibility being realistic:
\begin{enumerate}[a)]
\item The only known stationary solutions for the SPDE are the constant ones, contrary to the deterministic case; for instance for $2$D Euler equations there are shear, radial and cellular flows among others. This again reflects the idea that a sufficiently active noise $W$ acts as an ever changing, erratic background which destroys any stationary configuration.
\item Although our techniques provide well-posedness of the SPDE, they do not imply preservation of the $L^2_x$ norm of solutions, which is formally expected; a priori estimates and approximation techniques only yield $\P$-a.s. inequalities of the form $\| \omega_t\|_{L^2_x}\leq \| \omega_0\|_{L^2_x}$, which could be strict, in which case \emph{anomalous dissipation} takes place.
In the presence of Stratonovich, transport-type noise, this phenomenon has been rigorously shown for dyadic models in \cite{BFM11}; it is however open for fluid dynamics SPDEs.
Anomalous dissipation taking place with positive probability would imply that the map $t\mapsto \E \big[\| \omega_t\|_{L^2_x}^2 \big]$  is decreasing, thus possibly converging to $0$ as $t\to\infty$.
\item For the incompressible Kraichnan model, by Remark \ref{rem:invariant-measure-linear} we know that the only possible invariant measure on $E= \big\{\varphi\in L^1_x: \int_{\R^d} \varphi(x) \d x = 0 \big\}$ is $\delta_0$, suggesting that $\{0\}$ should be the only global attractor for the linear system.
For the regularized Kraichnan model, under suitable assumptions on $W$, one can in fact infer the much stronger property that \emph{solutions get mixed exponentially fast}, which in particular implies their weak convergence to 0; see \cite[Section 5]{GesYar} and the references therein.
\end{enumerate}

In conclusion, our understanding of how transport type Kraichnan noise affects both the well-posedness and long-time behaviour of nonlinear PDEs is still very limited, especially in the context of inviscid equations.

\appendix

\section{Some useful lemmas}\label{app:useful}

The next lemma is classical, see for instance \cite[eq. (2.3)]{LFNLTZ} in the 2D setting.

\begin{lemma}\label{lem:identity-enstrophy}
Let $D$ be either $\T^d$ or $\R^d$, $d=2,3$, $\varphi\in H^1(D;\R^d)$ divergence free; then
\begin{equation*}
	\| \nabla \varphi\|_{L^2_x} = \| \curl\, \varphi\|_{L^2_x}.
\end{equation*}
\end{lemma}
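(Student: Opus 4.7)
The plan is to use Plancherel's theorem, which reduces the identity to a pointwise computation in frequency space, with the divergence-free condition used as the key algebraic input.

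First I would pass to the Fourier side. On $\R^d$, Plancherel gives
\begin{equation*}
\|\nabla\varphi\|_{L^2_x}^2 = \sum_{i,j}\int_{\R^d}|\xi_i|^2|\hat\varphi_j(\xi)|^2\,\d\xi = \int_{\R^d}|\xi|^2|\hat\varphi(\xi)|^2\,\d\xi,
\end{equation*}
so the problem reduces to showing $|\widehat{\curl\,\varphi}(\xi)|^2 = |\xi|^2|\hat\varphi(\xi)|^2$ for a.e. $\xi$. On $\T^d$ the same argument works with Fourier series and sums over $k\in\Z^d$ replacing integrals; I would therefore only write the $\R^d$ case explicitly and add a one-line remark for the torus.

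Next I would treat the two dimensions separately. For $d=3$, $\widehat{\curl\,\varphi}(\xi)=i\xi\times\hat\varphi(\xi)$, and the vector identity $|a\times b|^2 = |a|^2|b|^2 - |a\cdot b|^2$ yields
\begin{equation*}
|\widehat{\curl\,\varphi}(\xi)|^2 = |\xi|^2|\hat\varphi(\xi)|^2 - |\xi\cdot\hat\varphi(\xi)|^2.
\end{equation*}
For $d=2$, where $\curl\,\varphi=\partial_1\varphi_2-\partial_2\varphi_1$, a direct expansion of $|\xi_1\hat\varphi_2(\xi)-\xi_2\hat\varphi_1(\xi)|^2$ gives the same formula. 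In both cases the divergence-free assumption translates to $\xi\cdot\hat\varphi(\xi)=0$ for a.e. $\xi$, so the correction term vanishes and I get the desired pointwise identity. Integrating in $\xi$ and applying Plancherel again on the $\curl$ side closes the argument.

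There is essentially no obstacle here; the only point to be mildly careful about is the justification that $\hat\varphi\in L^2_x$ makes sense pointwise (it does, up to a null set, since $\varphi\in H^1_x$ implies $\hat\varphi$ is a locally integrable function), and that the divergence-free condition passes to the Fourier side in the required pointwise a.e. sense, which is standard. The torus case is identical after replacing $\xi\in\R^d$ by $k\in\Z^d$ and $\int\d\xi$ by $\sum_k$.
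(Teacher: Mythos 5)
Your proof is correct and follows essentially the same route as the paper: pass to the Fourier side via Plancherel, observe that incompressibility becomes $\xi\cdot\hat\varphi(\xi)=0$ a.e., and conclude from the pointwise algebraic identity (the paper reduces to $\varphi\in C^\infty_c$ by density and writes out only the $\R^3$ case, but the substance is identical). No gaps.
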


\begin{proof}
We only treat $D=\R^3$, the other cases being similar; we can assume $\varphi\in C^\infty_c$, the general case follows by density. Computing the Fourier transforms of $\nabla \varphi$ and $\curl\, \varphi$, by Parseval identity it holds
\begin{align*}
	\| \nabla \varphi\|_{L^2_x}^2
	= (2\pi)^{-d} \int_{\R^3} |\xi|^2 |\hat\varphi(\xi)|^2 \d \xi, \quad
	\| \curl\, \varphi\|_{L^2_x}^2
	= (2\pi)^{-d} \int_{\R^3} |\xi\times \hat\varphi(\xi)|^2 \d \xi.
\end{align*}
By the divergence free condition, $\hat\varphi(\xi)\cdot \xi=0$ for all $\xi$, which implies $|\xi\times \hat\varphi(\xi)|=|\xi| \, |\hat\varphi(\xi)|$ and thus the conclusion.
\end{proof}

\begin{lemma}\label{lem:Lp-norms-appendix}
For any $\varphi\in L^1_x\cap L^\infty_x$, it holds $ \| \varphi\|_{L^\infty_x} =\lim_{p\to\infty} \| \varphi\|_{L^p_x}$.
\end{lemma}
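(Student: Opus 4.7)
The plan is to prove the two inequalities $\limsup_{p\to\infty}\|\varphi\|_{L^p_x}\le \|\varphi\|_{L^\infty_x}$ and $\liminf_{p\to\infty}\|\varphi\|_{L^p_x}\ge \|\varphi\|_{L^\infty_x}$ separately, both by elementary means. Since $\varphi\in L^1_x\cap L^\infty_x$, the $L^p_x$-norm is finite for every $p\in[1,\infty]$, so all quantities under consideration are well defined.

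For the upper bound, I would use the standard interpolation inequality
\begin{equation*}
	\|\varphi\|_{L^p_x}^p = \int_{\R^d} |\varphi(x)|^{p-1} |\varphi(x)|\, \d x \le \|\varphi\|_{L^\infty_x}^{p-1} \|\varphi\|_{L^1_x},
\end{equation*}
which after taking the $p$-th root gives $\|\varphi\|_{L^p_x}\le \|\varphi\|_{L^\infty_x}^{1-1/p}\|\varphi\|_{L^1_x}^{1/p}$; letting $p\to\infty$ yields the desired $\limsup$ bound, provided $\|\varphi\|_{L^\infty_x}>0$ (the case $\varphi=0$ a.e. is trivial).

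For the lower bound, I would fix an arbitrary $M$ with $0<M<\|\varphi\|_{L^\infty_x}$ and consider the superlevel set $E_M:=\{x\in\R^d:|\varphi(x)|>M\}$. By the definition of the essential supremum, $|E_M|>0$; moreover Chebyshev's inequality gives $|E_M|\le M^{-1}\|\varphi\|_{L^1_x}<\infty$, so $|E_M|\in(0,\infty)$. The crucial observation is then
\begin{equation*}
	\|\varphi\|_{L^p_x}^p \ge \int_{E_M} |\varphi(x)|^p\, \d x \ge M^p\, |E_M|,
\end{equation*}
which, upon taking the $p$-th root, yields $\|\varphi\|_{L^p_x}\ge M\, |E_M|^{1/p}$. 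Since $|E_M|^{1/p}\to 1$ as $p\to\infty$, this gives $\liminf_{p\to\infty}\|\varphi\|_{L^p_x}\ge M$, and letting $M\uparrow \|\varphi\|_{L^\infty_x}$ concludes the proof.

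There is essentially no obstacle here; the only point requiring any care is ensuring $|E_M|<\infty$, which is where the hypothesis $\varphi\in L^1_x$ enters (on a bounded domain one does not need this assumption, but on $\R^d$ one does, to avoid the trivial factor $|E_M|^{1/p}=+\infty$ for all $p$). Combining both bounds yields the claimed equality.
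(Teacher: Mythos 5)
Your proof is correct and follows essentially the same route as the paper's: the same interpolation inequality $\|\varphi\|_{L^p_x}\le \|\varphi\|_{L^1_x}^{1/p}\|\varphi\|_{L^\infty_x}^{1-1/p}$ for the upper bound, and a positive-measure superlevel set for the lower bound. Your version is in fact slightly more careful than the paper's, which does not explicitly verify that the set has finite measure before taking its $p$-th root.
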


\begin{proof}
By interpolation it holds $\| \varphi\|_{L^p_x} \leq \| \varphi\|_{L^1_x}^{1/p} \| \varphi\|_{L^\infty_x}^{1-1/p}$, which implies
\begin{equation*}
	\limsup_{p\to\infty} \| \varphi\|_{L^p_x} \leq \| \varphi\|_{L^\infty_x}.
\end{equation*}
Conversely, by definition of $\| \varphi\|_{L^\infty_x}$, for any $\eps>0$ there exists a set $A_\eps$ with ${\rm Leb} (A_\eps)>0$ such that $|\varphi(x)|\geq \| \varphi\|_{L^\infty_x} -\eps$ on $A_\eps$; therefore
\begin{equation*}
\liminf_{p\to\infty} \| \varphi\|_{L^p_x}
\geq \liminf_{p\to\infty}\, {\rm Leb} (A_\eps)^{1/p} (\| \varphi\|_{L^\infty_x} -\eps) = \| \varphi\|_{L^\infty_x} -\eps.
\end{equation*}
By arbitrariness of $\eps>0$, the conclusion follows.
\end{proof}

\begin{lemma}\label{lem:decomposition-fractional-laplacian}
Let $\beta\in (0,2)$; then for any $R>0$, the fractional Laplacian admits a decomposition $\Lambda^\beta=\Lambda^\beta_{R,1}+\Lambda^\beta_{R,2}$ with the following properties:
\begin{itemize}
\item[i)] $\Lambda^\beta_{R,i}$ are symmetric linear operators;
\item[ii)] $\Lambda^\beta_{R,1}$ maps $C^\infty_c$ into bounded functions with compact support;
\item[iii)] for any $p\ge 1$, $\Lambda^\beta_{R,2}$ maps $L^p_x$ into itself and $\| \Lambda^\beta_{R,2} \|_{L^p_x\to L^p_x} \lesssim_{d,\beta,p} R^{-\beta}$.
\end{itemize}
\end{lemma}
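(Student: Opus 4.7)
The plan is to split the singular integral representation \eqref{eq:fractional-laplacian1} at spatial scale $R$. Concretely, I would set
\begin{equation*}
\Lambda^\beta_{R,1} f(x) := C_{d,\beta}\, {\rm P.v.} \int_{|x-y|\leq R} \frac{f(x)-f(y)}{|x-y|^{d+\beta}}\, \d y, \qquad \Lambda^\beta_{R,2} f(x) := C_{d,\beta} \int_{|x-y|> R} \frac{f(x)-f(y)}{|x-y|^{d+\beta}}\, \d y.
\end{equation*}
The integrand defining $\Lambda^\beta_{R,2}$ is no longer singular on its domain, so the principal value becomes superfluous. Symmetry of each $\Lambda^\beta_{R,i}$ is immediate from the bilinear representation
\begin{equation*}
\langle \Lambda^\beta_{R,i} f, g\rangle = \frac{C_{d,\beta}}{2} \int\!\!\int_{A_i} \frac{(f(x)-f(y))(g(x)-g(y))}{|x-y|^{d+\beta}}\, \d x\, \d y,
\end{equation*}
where $A_1=\{|x-y|\leq R\}$ and $A_2=\{|x-y|> R\}$; this is analogous to \eqref{eq:fractional-laplacian2} and establishes i).

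For ii), given $f\in C^\infty_c$ with $\mathrm{supp}(f)\subseteq B_{R_0}$, I would first check that $\Lambda^\beta_{R,1} f$ is supported in $B_{R_0+R}$: indeed if $|x|>R_0+R$ then every $y$ with $|x-y|\leq R$ satisfies $|y|>R_0$, so both $f(x)$ and $f(y)$ vanish. For boundedness, the standard Taylor trick applies: using the antisymmetry of $z\mapsto \nabla f(x)\cdot z$ on $B_R$,
\begin{equation*}
\Lambda^\beta_{R,1} f(x) = \frac{C_{d,\beta}}{2}\int_{|z|\leq R} \frac{2f(x)-f(x-z)-f(x+z)}{|z|^{d+\beta}}\, \d z,
\end{equation*}
and the numerator is pointwise bounded by $\|D^2 f\|_{L^\infty_x} |z|^2$, yielding $\|\Lambda^\beta_{R,1} f\|_{L^\infty_x}\lesssim \|D^2 f\|_{L^\infty_x} R^{2-\beta}$ since $\beta<2$.

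For iii), I would decompose $\Lambda^\beta_{R,2}$ as the sum of a multiplier and a convolution operator:
\begin{equation*}
\Lambda^\beta_{R,2} f(x) = C_{d,\beta}\, a_R\, f(x) - C_{d,\beta}\, (K_R\ast f)(x), \qquad a_R := \int_{|z|> R} \frac{\d z}{|z|^{d+\beta}} = \frac{c_d}{\beta}\, R^{-\beta},
\end{equation*}
where $K_R(z):= \mathbf{1}_{\{|z|>R\}} |z|^{-d-\beta}$ is non-negative with $\|K_R\|_{L^1_x}=a_R$. Young's convolution inequality then gives $\|\Lambda^\beta_{R,2} f\|_{L^p_x} \leq 2|C_{d,\beta}|\, a_R\, \|f\|_{L^p_x}\lesssim_{d,\beta} R^{-\beta}\|f\|_{L^p_x}$ for any $p\in[1,\infty]$, completing iii). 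There is no substantive obstacle: once the splitting is made at scale $R$, the only delicate point is discarding the principal value via the even-part symmetrization to exploit the full $|z|^2$ decay of the numerator near the diagonal.
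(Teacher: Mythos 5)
Your proposal is correct and uses essentially the same decomposition and arguments as the paper: the split of the singular integral at scale $R$, the support-plus-Taylor argument for ii) (your symmetrized second difference is equivalent to the paper's subtraction of the gradient term), and an elementary integrability estimate for iii). The only cosmetic difference is in iii), where you write $\Lambda^\beta_{R,2}=C_{d,\beta}(a_R\,\mathrm{Id}-K_R\ast\cdot)$ and apply Young's inequality, whereas the paper runs a direct H\"older computation and then extends to $L^p_x$ by density; your version defines the operator on all of $L^p_x$ at once, which is marginally cleaner.
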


\begin{proof}
Recall the integral representation \eqref{eq:fractional-laplacian1} of $\Lambda^\beta$; for fixed $R>0$ and $\varphi\in C^\infty_c$, set
\begin{equation*}
	\Lambda^\beta_{R,1} \varphi (x) := C_{d,\beta}\, {\rm P.v.}\! \int_{|x-y|\leq R} \! \frac{\varphi(x)-\varphi(y)}{|x-y|^{d+\beta}}\, \d y, \ \
	\Lambda^\beta_{R,2} \varphi (x) := C_{d,\beta} \! \int_{|x-y|> R} \! \frac{\varphi(x)-\varphi(y)}{|x-y|^{d+\beta}}\, \d y.
\end{equation*}
The symmetry of $\Lambda^\beta_{R,i}$ is clear from their definition. The fact that $\Lambda^\beta_{R,1}\varphi$ is a uniformly bounded function comes from the identity
\begin{equation*}
	\Lambda^\beta_{R,1} \varphi (x) := C_{d,\beta} \int_{|x-y|\leq R} \frac{\varphi(x)-\varphi(y)-\nabla\varphi(x)\cdot(x-y)}{|x-y|^{d+\beta}}\, \d y
\end{equation*}
and the estimate
\begin{equation*}
	|\Lambda^\beta_{R,1} \varphi (x)|
	\lesssim \| \varphi\|_{C^2_b} \int_{|x-y|\leq R} |x-y|^{2-d-\beta}\, \d y
	\lesssim \| \varphi\|_{C^2_b} R^{2-\beta}.
\end{equation*}
It is also clear from the above that ${\rm supp}\, \Lambda^\beta_{R,1} \varphi \subset {\rm supp}\, \varphi + B_R$.

It remains to prove iii), which can be accomplished by a direct computation; by H\"older's inequality,
  $$\aligned
  |\Lambda^\beta_{R, 2} \varphi(x)|^p &\lesssim \bigg|\int_{|x-y|> R} \! \frac{\varphi(x)-\varphi(y)}{|x-y|^{d+\beta}}\, \d y \bigg|^p \\
  &\le \bigg[\int_{|x-y|> R} \! \frac{1}{|x-y|^{d+\beta}}\, \d y\bigg]^{p-1} \int_{|x-y|> R} \! \frac{|\varphi(x)-\varphi(y)|^p}{|x-y|^{d+\beta}}\, \d y \\
  &\lesssim R^{-\beta(p-1)} \int_{|x-y|> R} \! \frac{|\varphi(x)|^p +|\varphi(y)|^p}{|x-y|^{d+\beta}}\, \d y.
  \endaligned $$
Therefore, by symmetry,
\begin{align*}
	\int_{\R^d} |\Lambda^\beta_{R, 2} \varphi(x)|^p\, \d x
	& \lesssim R^{-\beta(p-1)} \int_{\R^d} \int_{|x-y|> R}\! \frac{|\varphi(x)|^p +|\varphi(y)|^p}{|x-y|^{d+\beta}}\,\d y\d x \\
	& = 2 R^{-\beta(p-1)} \int_{\R^d} |\varphi(x)|^p\,\d x \int_{|x-y|> R}\! \frac{1}{|x-y|^{d+\beta}}\,\d y \\
    &\lesssim R^{-\beta p} \|\varphi \|_{L^p_x}^p.
\end{align*}
By standard density arguments, the operator $\Lambda^\beta_{R,2}$ extends to the whole $L^p_x$ with the desired operator norm estimate.
\end{proof}

Next, let us discuss a bit more in detail the Fr\'echet topology of suitable local function spaces;
we focus on $H^\alpha_\loc$, which will be relevant for the proof of Proposition \ref{prop:weak-existence-nonlinear}, but similar considerations also apply e.g. for $C^n_\loc$.

We can endow $H^\alpha_\loc$ with the following family of seminorms $\| \cdot\|_{s,n}$:
\begin{equation}\label{eq:seminorms-Hs-loc}
	\| f\|_{\alpha,n} := \sup\bigg\{\, \frac{|\langle f,\varphi \rangle|}{\| \varphi\|_{H^{-\alpha}_x} } \, : \, \varphi\in C^\infty_c, \ {\rm supp }\, \varphi\subset \overline{B(0,n)} \bigg\}
\end{equation}
which clearly induce its topology; such family makes $H^\alpha_\loc$ a separable metric vector space, with distance
\begin{equation}\label{eq:metric-Hs-loc}
	d_\alpha (f^1,f^2):=\sum_{n=1}^\infty 2^{-n}\, \big(1\wedge \| f^1 -f^2 \|_{\alpha,n} \big).
\end{equation}
The usefulness of $d_\alpha$ comes from the following fact.

\begin{lemma}\label{lem:Hs-loc}
Let $\alpha\in \R$, then $H^\alpha_x$ compactly embeds into $H^{\alpha-\eps}_\loc$ for any $\eps>0$; moreover, for any $f^1,\, f^2\in H^\alpha_x$ it holds $d_\alpha(f^1,f^2)\leq \| f^1-f^2\|_{H^\alpha_x}$.
\end{lemma}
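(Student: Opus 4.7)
The inequality is the easy half. By the standard duality $H^\alpha_x$-$H^{-\alpha}_x$, for any $\varphi\in C^\infty_c$ one has $|\langle f^1-f^2,\varphi\rangle|\leq \|f^1-f^2\|_{H^\alpha_x}\|\varphi\|_{H^{-\alpha}_x}$, so the defining supremum \eqref{eq:seminorms-Hs-loc} gives $\|f^1-f^2\|_{\alpha,n}\leq \|f^1-f^2\|_{H^\alpha_x}$ for every $n$. Inserting this into \eqref{eq:metric-Hs-loc} and using $1\wedge x\leq x$ together with $\sum_{n\geq 1}2^{-n}=1$ yields $d_\alpha(f^1,f^2)\leq \|f^1-f^2\|_{H^\alpha_x}$. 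In particular the canonical injection $H^\alpha_x\hookrightarrow H^\alpha_\loc$ is continuous, so the statement about compactness into $H^{\alpha-\eps}_\loc$ is meaningful.

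For the compact embedding, the strategy is a diagonal extraction combined with the classical Rellich-Kondrachov theorem for fractional spaces. First I would observe that, if $\chi_n\in C^\infty_c$ is a cutoff equal to $1$ on $\overline{B(0,n)}$, then for any $\varphi\in C^\infty_c$ supported in $\overline{B(0,n)}$ one has $\langle f,\varphi\rangle=\langle \chi_n f,\varphi\rangle$, hence
\begin{equation*}
\|f\|_{\alpha-\eps,n}\;\leq\;\|\chi_n f\|_{H^{\alpha-\eps}_x}.
\end{equation*}
Thus to control the seminorms of $H^{\alpha-\eps}_\loc$ it suffices to control the $H^{\alpha-\eps}_x$-norms of the localizations $\chi_n f$.

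Now let $\{f_k\}_k$ be a bounded sequence in $H^\alpha_x$. Multiplication by $\chi_n$ is a bounded operator $H^\alpha_x\to H^\alpha_x$, so $\{\chi_n f_k\}_k$ is bounded in $H^\alpha_x$ with support contained in a fixed compact set $K_n$. The classical Rellich theorem for Bessel-potential spaces (supports in $K_n$ embed compactly from $H^\alpha$ into $H^{\alpha-\eps}$, by e.g. multiplication with a further fixed bump and Fourier-side equicontinuity of $(1+|\xi|^2)^{-\eps/2}$) yields that $\{\chi_n f_k\}_k$ is precompact in $H^{\alpha-\eps}_x$. A standard diagonal extraction in $n$ produces a single subsequence $\{f_{k_j}\}_j$ such that $\chi_n f_{k_j}$ converges in $H^{\alpha-\eps}_x$ for every $n$. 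Combining this with the displayed inequality above shows that $\{f_{k_j}\}_j$ is Cauchy with respect to each seminorm $\|\cdot\|_{\alpha-\eps,n}$, and hence with respect to $d_{\alpha-\eps}$.

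To close the argument I still need to produce an honest limit in $H^{\alpha-\eps}_\loc$: by Banach-Alaoglu the same subsequence, up to a further extraction, converges weakly in $H^\alpha_x$ to some $f$, and one checks that for each $n$ the $H^{\alpha-\eps}_x$-limit of $\chi_n f_{k_j}$ coincides with $\chi_n f$ by testing against $\varphi\in C^\infty_c$. This identifies $f\in H^\alpha_x\subset H^{\alpha-\eps}_\loc$ as the limit of $\{f_{k_j}\}_j$ in $d_{\alpha-\eps}$, completing the proof. The only nontrivial ingredient is the compactness of $H^\alpha(K)\hookrightarrow H^{\alpha-\eps}(K)$ for compact $K$; everything else is bookkeeping with cutoffs and the definition of the metric.
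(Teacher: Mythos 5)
Your proposal is correct and follows essentially the same route as the paper: bound the seminorms $\|\cdot\|_{\alpha-\eps,n}$ by the $H^{\alpha-\eps}_x$-norms of cutoff localizations, use Rellich compactness for sequences with fixed compact support, diagonalize, and identify the limit through weak convergence in $H^\alpha_x$. The only (cosmetic) difference is the order of operations — the paper extracts the weak $H^\alpha_x$-limit first and then upgrades each localization to strong $H^{\alpha-\eps}_x$-convergence by uniqueness of limits, whereas you diagonalize over the Rellich extractions first and identify the limit afterwards.
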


\begin{proof}
The second claim follows immediately from the observation that, by the definition \eqref{eq:seminorms-Hs-loc}, $\| f^1-f^2\|_{\alpha,n}\leq \| f^1-f^2\|_{H^\alpha_x}$, and the definition of $d_s$ given in \eqref{eq:metric-Hs-loc}.

Concerning the first claim, we need to show that given any bounded sequence $\{f^n\}_n$ in $H^\alpha_x$, we can extract a subsequence such that $d_{\alpha-\eps}(f^n, f)\to 0$; this follows from a standard diagonalization argument.
Indeed, as the sequence is bounded and $H^\alpha_x$ is Hilbert, upon extraction of subsequence we can assume that it converges weakly in $H^\alpha_x$ to some $f$; then for any $\psi\in C^\infty_c$, the sequence $\{\psi f^n\}_n$ converges to $\psi f$ in $H^\alpha_x$, but being compactly supported it is also precompact in $H^{\alpha-\eps}_x$, thus converges strongly therein.
Upon choosing properly the functions $\psi$, this shows that $\| f^n-f\|_{\alpha,n}\to 0$ for any $n$ and thus the conclusion.
\end{proof}

We can readily deduce Ascoli--Arzel\`a compactness type results.

\begin{corollary}\label{cor:ascoli-Hs-loc}
Let $\alpha\in \R$, $A\subset C([0,T];H^\alpha_x)$ be a family of uniformly bounded, equicontinuous functions (in the time variable, seen as $H^\alpha_x$-valued paths).
Then $A$ is precompact in $C([0,T];H^{\alpha-\eps}_\loc)$, for any $\eps>0$.
\end{corollary}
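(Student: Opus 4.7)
The strategy is to verify the hypotheses of the classical Ascoli--Arzel\`a theorem applied in the complete metric space $\big(H^{\alpha-\eps}_\loc, d_{\alpha-\eps}\big)$: namely, pointwise precompactness and equicontinuity with respect to the target metric.

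First I would check pointwise precompactness. For each fixed $t\in [0,T]$, uniform boundedness of $A$ in $C([0,T];H^\alpha_x)$ yields that $A(t):=\{f(t) : f\in A\}$ is a bounded subset of $H^\alpha_x$. By the first part of Lemma \ref{lem:Hs-loc}, the embedding $H^\alpha_x\hookrightarrow H^{\alpha-\eps}_\loc$ is compact, so $A(t)$ is precompact in $H^{\alpha-\eps}_\loc$.

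Next I would establish equicontinuity with respect to $d_{\alpha-\eps}$. The key observation is that, applying the second part of Lemma \ref{lem:Hs-loc} with $\alpha$ replaced by $\alpha-\eps$, for any $g^1,g^2\in H^{\alpha-\eps}_x$ we have $d_{\alpha-\eps}(g^1,g^2)\leq \|g^1-g^2\|_{H^{\alpha-\eps}_x}$; combining this with the continuous Sobolev embedding $H^\alpha_x\hookrightarrow H^{\alpha-\eps}_x$, we obtain, for any $f\in A$ and any $s,t\in [0,T]$,
\begin{equation*}
	d_{\alpha-\eps}(f(t),f(s)) \leq \|f(t)-f(s)\|_{H^{\alpha-\eps}_x} \leq \|f(t)-f(s)\|_{H^\alpha_x}.
\end{equation*}
Since by hypothesis $A$ is equicontinuous as a family of $H^\alpha_x$-valued paths, the right-hand side tends to $0$ as $|t-s|\to 0$ uniformly in $f\in A$, yielding the required equicontinuity of $A$ in $C([0,T];H^{\alpha-\eps}_\loc)$.

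Having verified the two hypotheses, the conclusion follows from the Ascoli--Arzel\`a theorem for continuous functions valued in a complete separable metric space, applied here with target $(H^{\alpha-\eps}_\loc, d_{\alpha-\eps})$. I do not anticipate any serious obstacle in this argument: the only delicate point is the comparison of metrics/seminorms on $H^{\alpha-\eps}_\loc$ and $H^\alpha_x$, which is handled transparently by Lemma \ref{lem:Hs-loc}.
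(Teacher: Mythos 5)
Your proof is correct and follows essentially the same route as the paper's: both verify pointwise precompactness via the compact embedding $H^\alpha_x\hookrightarrow H^{\alpha-\eps}_\loc$ from Lemma \ref{lem:Hs-loc}, establish equicontinuity through the chain $d_{\alpha-\eps}(f(t),f(s))\leq \|f(t)-f(s)\|_{H^{\alpha-\eps}_x}\leq \|f(t)-f(s)\|_{H^\alpha_x}$, and conclude by Ascoli--Arzel\`a in abstract metric spaces. No gaps.
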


\begin{proof}
By the assumptions and Lemma \ref{lem:Hs-loc}, the set $\{f(t):f\in A\}$ is bounded in $H^\alpha_x$ and thus precompact in $H^{\alpha-\eps}_\loc$; on the other hand, since the family is equicontinuous in $H^\alpha_x$ and
\begin{align*}
	d_{\alpha-\eps}(f_t,f_s) \leq \| f_t-f_s\|_{H^{\alpha-\eps}_x} \leq \| f_t-f_s\|_{H^\alpha_x},
\end{align*}
it is also equicontinuous in $H^{\alpha-\eps}_\loc$.
The conclusion follows from Ascoli--Arzel\`a's theorem in abstract metric spaces.
\end{proof}

\begin{remark}\label{rem:Hs-loc-interpolation}
By standard interpolation arguments, one can easily verify that for any $\alpha_1< \alpha_2$ and any $\theta\in (0,1)$ it holds
\begin{align*}
	d_{\theta \alpha_1 + (1-\theta)\alpha_2}(f^1,f^2) \leq d_{\alpha_1} (f^1,f^2)^\theta\, \|f^1-f^2 \|_{H^{\alpha_2}_x}^{1-\theta}.
\end{align*}
In particular, if $f^n\to f$ in $H^{\alpha_1}_\loc$ and $\{f^n\}_n$ is a bounded sequence in $H^{\alpha_2}_x$, then $f^n$ converges strongly to $f$ in $H^\alpha_\loc$ for all $\alpha\in (\alpha_1, \alpha_2)$, as well as weakly in $H^{\alpha_2}_x$ (by reflexivity of this space).
Similarly, in the setting of Corollary \ref{cor:ascoli-Hs-loc}, if now $\{f^n\}_n$ is an equicontinuous sequence in $C([0,T];H^{\alpha_1}_x)$, which is also uniformly bounded in $L^\infty(0,T;H^{\alpha_2}_x)$\,\footnote{These two assumptions together readily imply that the maps $t\mapsto f^n_t$ are weakly continuous in $H^{\alpha_2}_x$, see e.g. \cite[Lemma 3.5]{FlGaLu21a} for a similar argument; in particular they are uniquely defined for all $t\in [0,T]$, not just up to Lebesgue negligible sets.},
then one can extract a (not relabelled) subsequence such that $f^n\to f$ in $C([0,T];H^\alpha_\loc)$ for all $\alpha\in (\alpha_1, \alpha_2)$;
moreover $f^n_t \rightharpoonup f_t$ weakly in $H^{\alpha_2}_x$, for all $t\in [0,T]$.
\end{remark}

We conclude this appendix by collecting some useful facts on $\curl^{-1}$.

\begin{lemma}\label{lem:homogeneous-H1}
Let $f\in C_c^\infty(\R^2)$, then $f\in \dot H^{-1}_x$ if and only if $\int_{\R^d} f(x)\,\d x =0$.
\end{lemma}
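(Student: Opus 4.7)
The plan is to work entirely in Fourier space, using that for $f\in C^\infty_c(\R^2)$ the Fourier transform $\hat f$ is a Schwartz function, so the only delicate issue in evaluating $\|f\|_{\dot H^{-1}}^2 = \int_{\R^2} |\xi|^{-2} |\hat f(\xi)|^2\, \d\xi$ is the behavior of the integrand at the origin. Recall our convention $\hat f(0) = (2\pi)^{-1} \int_{\R^2} f(x)\, \d x$, so the zero-mean hypothesis is exactly the statement $\hat f(0)=0$.

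First I would split the integral as $\int_{|\xi|\le 1} + \int_{|\xi|>1}$. On $\{|\xi|>1\}$, since $\hat f$ decays faster than any polynomial, $\int_{|\xi|>1} |\xi|^{-2} |\hat f(\xi)|^2\, \d\xi < \infty$ unconditionally, so this piece is irrelevant to the equivalence. The substance lies in the region near the origin, where the singular weight $|\xi|^{-2}$ is exactly critical in two dimensions (as $\int_{|\xi|\le 1} |\xi|^{-2}\, \d\xi = +\infty$).

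For the ``if'' direction, assume $\int f\, \d x =0$, i.e.\ $\hat f(0)=0$. Since $\hat f \in \cS$, a first-order Taylor expansion at $0$ yields $|\hat f(\xi)| \lesssim |\xi|$ uniformly on $\{|\xi|\le 1\}$, so $|\xi|^{-2}|\hat f(\xi)|^2 \lesssim 1$ there, which is integrable in $\R^2$ on $\{|\xi|\le 1\}$; combined with the large-$|\xi|$ bound we conclude $f\in \dot H^{-1}_x$.

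For the ``only if'' direction, assume $\hat f(0) \neq 0$. By continuity of $\hat f$, there exists $r\in (0,1)$ and $c>0$ such that $|\hat f(\xi)|\ge c$ for $|\xi|\le r$, hence $\int_{|\xi|\le r} |\xi|^{-2} |\hat f(\xi)|^2\, \d\xi \ge c^2 \int_{|\xi|\le r} |\xi|^{-2}\, \d\xi = +\infty$, so $f\notin \dot H^{-1}_x$. The only even mildly subtle point is justifying the Schwartz/smoothness of $\hat f$ (immediate from $f\in C^\infty_c$), and handling the log-divergence at the origin in $d=2$ cleanly; the estimate above via $|\hat f|\ge c$ on a small ball does this without any further delicacy.
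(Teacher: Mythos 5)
Your proof is correct and follows essentially the same route as the paper's: the paper's (much terser) argument also reduces everything to the behaviour of the Schwartz function $\hat f$ near $\xi=0$ under the weight $|\xi|^{-2}$, with rapid decay handling the region $|\xi|>1$. Your version simply fills in the details (the Taylor bound $|\hat f(\xi)|\lesssim|\xi|$ when $\hat f(0)=0$, and the lower bound $|\hat f|\ge c$ near the origin when $\hat f(0)\neq 0$) that the paper leaves implicit.
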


\begin{proof}
Since $f$ is a Schwartz test function, so is its Fourier transform $\hat{f}$, hence it is rapidly decaying at infinity and behaves like $\hat{f}(0)=\int_{\R^d} f(x)\,\d x$ near the origin. We conclude from these facts that
\begin{align*}
	\| f \|_{\dot H^{-1}_x}^2 = \int_{\R^d} \frac{|\hat{f}(\xi)|^2}{|\xi|^2}\, \d \xi<\infty
\end{align*}
is equivalent to $\hat{f}(0)=0$.
\end{proof}

\begin{lemma}\label{lem:convergence-nonlinearity}
Let $d\in\{2,3\}$, $\{f^n\}_n$ be a bounded sequence in $L^1_x\cap L^2_x\cap \dot H^{-1}_x$ such that $f_n\rightharpoonup f$ in $L^2_x$ and $\cR$ be the Fourier multiplier associated to $r\in L^\infty_x$.
Then for any $\varphi\in C^\infty_c$ it holds
\begin{align*}
	\langle \cR \, \curl^{-1} f^n\cdot \nabla \varphi, f^n\rangle \to \langle \cR\, \curl^{-1} f\cdot \nabla \varphi, f\rangle.
\end{align*}
Similarly, if we are given a sequence $r^n$ such that $|r^n|\leq |r|$, $r^n\uparrow r$ Lebesgue a.e., and we denote by $\cR^n$ the associated Fourier multipliers, then
\begin{align*}
	\langle \cR^n \, \curl^{-1} f^n\cdot \nabla \varphi, f^n\rangle \to \langle \cR\, \curl^{-1} f\cdot \nabla \varphi, f\rangle
\end{align*}
\end{lemma}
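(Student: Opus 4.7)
The plan is to exploit the smoothing effect of the combined Fourier multiplier $\cR\,\curl^{-1}$, converting the weak $L^2_x$-convergence of $f^n$ into strong local convergence of $\cR\,\curl^{-1} f^n$, and then to conclude via a weak-strong duality argument against the remaining factor $f^n$. First, since $\{f^n\}_n$ is bounded in $\dot H^{-1}_x$ and $f^n\rightharpoonup f$ in $L^2_x$ (hence in $\cS'$), uniqueness of weak limits promotes the convergence to $f^n\rightharpoonup f$ in $\dot H^{-1}_x$ as well. Setting $u^n:=\curl^{-1}f^n$ and $u:=\curl^{-1}f$, the operator $\cR\,\curl^{-1}$ has Fourier symbol proportional to $r(\xi)\,\xi^\perp/|\xi|^2$, so $\nabla \cR\,\curl^{-1}$ has matrix-valued symbol $r(\xi)\,\xi\otimes\xi^\perp/|\xi|^2$, whose modulus is controlled by $\|r\|_{L^\infty_x}$. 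By Plancherel,
\[
\|\cR u^n\|_{L^2_x}\lesssim \|r\|_{L^\infty_x}\,\|f^n\|_{\dot H^{-1}_x},\qquad \|\nabla \cR u^n\|_{L^2_x}\lesssim \|r\|_{L^\infty_x}\,\|f^n\|_{L^2_x},
\]
so $\{\cR u^n\}_n$ is uniformly bounded in $H^1_x$. A direct $L^2$-duality computation using the $L^2_x$-boundedness of $\cR$ gives $\cR u^n\rightharpoonup \cR u$ in $L^2_x$; combined with the $H^1_x$-bound and Rellich--Kondrachov, the full sequence converges strongly in $L^2_\loc$.

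The remainder of the first claim is a standard weak-strong pairing. For $\varphi\in C^\infty_c$, the vector field $(\cR u^n)\cdot\nabla\varphi$ is supported in $\mathrm{supp}\,\varphi$ and converges to $(\cR u)\cdot\nabla\varphi$ strongly in $L^2_x$; pairing with $f^n\rightharpoonup f$ in $L^2_x$ yields the convergence.

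For the second claim, I would decompose
\[
\langle \cR^n u^n\cdot\nabla\varphi,f^n\rangle-\langle \cR u\cdot\nabla\varphi,f\rangle=A_n+B_n+C_n,
\]
with $A_n=\langle \cR^n(u^n-u)\cdot\nabla\varphi,f^n\rangle$, $B_n=\langle (\cR^n-\cR)u\cdot\nabla\varphi,f^n\rangle$, $C_n=\langle \cR u\cdot\nabla\varphi,f^n-f\rangle$. The term $C_n$ tends to zero by weak convergence of $f^n$ in $L^2_x$. For $B_n$, dominated convergence in Fourier space gives $(\cR^n-\cR)u\to 0$ strongly in $L^2_x$ (using $r^n\to r$ Lebesgue-a.e.\ together with the envelope $|r^n|\le|r|$, so that the Fourier integrand is dominated by $4|r(\xi)|^2|\hat u(\xi)|^2\in L^1_\xi$), while $f^n\nabla\varphi$ is bounded in $L^2_x$. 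For $A_n$, the $H^1_x$-boundedness argument above applies verbatim with $\cR$ replaced by $\cR^n$ and yields constants uniform in $n$ thanks to $|r^n|\le|r|$; the argument of Step~1 then shows $\cR^n(u^n-u)\rightharpoonup 0$ in $L^2_x$ and strongly in $L^2_\loc$, so $A_n\to 0$ against the $L^2_x$-bounded $f^n\nabla\varphi$. The main (and rather mild) technical point is precisely the smoothing estimate for $\cR\,\curl^{-1}$ valid for $L^\infty_x$ symbols; once it is in place, the weak-strong structure closes the argument cleanly in both parts.
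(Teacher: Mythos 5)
Your proof is correct and follows essentially the same route as the paper: a uniform $H^1_x$ bound on $\cR\,\curl^{-1} f^n$ (via the identity $\|\nabla\curl^{-1}f\|_{L^2_x}=\|f\|_{L^2_x}$ and the $L^\infty$ bound on the symbol), weak $L^2_x$ convergence upgraded to strong $L^2_\loc$ convergence by compact embedding, and a weak--strong pairing against $f^n\nabla\varphi$, with the same dominated-convergence treatment of the varying multipliers $\cR^n$. The only real difference is in one step: you tame the low-frequency singularity of $\curl^{-1}$ on the Fourier side using the $\dot H^{-1}_x$ bound from the hypotheses, whereas the paper splits the Biot--Savart kernel as $K\mathbbm{1}_{|x|\le 1}+K\mathbbm{1}_{|x|>1}$ and uses Young's inequality together with the $L^1_x$ bound and weak convergence in $L^p_x$, $p\in(1,2]$ --- both are legitimate uses of the stated assumptions.
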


\begin{proof}
First observe that, thanks to our assumptions, $f^n\rightharpoonup f$ in $L^p_x$ for all $p\in (1,2]$.
Recall that $\curl^{-1} f^n = K\ast f^n$, where $K(x)=C_2\, x^\perp/|x|^2$ is the Biot--Savart kernel; set $K^1(x):= K(x)\mathbbm{1}_{|x|\leq 1}$, $K^2(x):=K(x)-K^1(x)=K(x)\mathbbm{1}_{|x|> 1}$, so that $K^1 \in L^r_x$ for all $r\in [1,2)$ and $K^2\in L^r_x$ for all $r\in (2,\infty]$.

Since $f^n\rightharpoonup f$ in $L^2_x$ and $K^1\in L^1_x$, $K^1\ast f^n\rightharpoonup K^1\ast f$ in $L^2_x$; by a similar argument, $K^2\ast f^n\rightharpoonup K^2\ast f$ in $L^r$ for any $r\in (2,\infty)$.
This implies $\langle \curl^{-1} f^n,\varphi\rangle \to \langle \curl^{-1} f,\varphi\rangle$ for all $\varphi\in C^\infty_c$;
by assumption $\{\curl^{-1} f^n\}_n$ is a bounded sequence in $L^2_x$, thus $\curl^{-1} f^n \rightharpoonup \curl^{-1} f$ in $L^2_x$.
In fact, thanks to Lemma \ref{lem:identity-enstrophy} everything is bounded in $H^1_x$ and weak convergence holds therein.
Since $\cR$ is a continuous operator from $H^1_x$ to itself, the same holds for $\cR\, \curl^{-1} f^n\rightharpoonup \cR\, \curl^{-1} f$;
then by Lemma \ref{lem:Hs-loc}, strong convergence in $L^2_\loc$ holds as well.

The conclusion in the first case then follows by a combination of $f^n\rightharpoonup f$ in $L^2_x$ and $\cR\, \curl^{-1} f^n\cdot\nabla \varphi\to \cR\, \curl^{-1} f\cdot\nabla \varphi$, where the latter holds since $\varphi\in C^\infty_c$.

The proof in the case of varying $\cR^n$ is similar; just observe that $\{\cR^n \curl^{-1} f^n\}_n$ is still bounded in $H^1_x$, and that under our assumptions $\cR^n \to \cR$ pointwise in $H^1_x$, as can be checked by computing $\| (\cR-\cR^n)\psi\|_{H^1_x}^2$ in Fourier space.
Combining these facts, one can still deduce that $\cR^n \curl^{-1} f^n\rightharpoonup \cR \curl^{-1} f$ in $H^1_x$, thus also strongly in $L^2_\loc$; the rest of the argument follows identically.
\end{proof}

\section{Equivalence of It\^o and Stratonovich formulations}\label{app:equiv-ito-strat}

We present here the

\begin{proof}[Proof of Proposition \ref{prop:equivalence-ito-stratonovich}]
Let us provide a detailed proof for $\rho$ satisfying condition \textit{i}), and later discuss the differences in the proof of case \textit{ii}).
Suppose that $\rho$ is a Stratonovich weak solution and fix $\varphi\in C^\infty_c$; define the sequence of semimartingales
\begin{align*}
	X^n_t := \sum_{k\leq n} \int_0^t \<\sigma_k\cdot\nabla\varphi, \rho_s\> \circ \d B^k_s, \quad t\in [0,\tau],
\end{align*}
which are known to converge in the sense of semimartingales, by \eqref{eq:solution-abstract-stratonovich}.
Without loss of generality, we can assume the associated family of increasing stopping times $\{\tau^m\}_m$ to be such that $\P$-a.s.,
\begin{equation}\label{eq:equivalence-ito-strat-proof1}
	\int_0^{\tau_m} \big( \|b_s\cdot\nabla\varphi\|_{L^{p'}_x} + \| f_s\, \varphi\|_{L^1_x} \big)\, \d s + \sup_{s\in [0,\tau^m]} \| \rho_s\|_{L^p_x} \leq m;
\end{equation}
otherwise, just replace $\tau^m$ by $\tau^m\wedge \tilde\tau^m$, for $\tilde{\tau}^m$ stopping times such that \eqref{eq:equivalence-ito-strat-proof1} holds.

It follows from the definition of $X^n$ and properties of Stratonovich integrals that $X^n=V^n+M^n$, where $M^n=\sum_{k\leq n} \int_0^\cdot \<\sigma_k\cdot\nabla\varphi, \rho_s\>\, \d B^k_s$;
we claim that $M^n$ converge as continuous local martingales on $[0,\tau)$ to $M:=\int_0^\cdot \< \rho_s\nabla\varphi,\d W \>$.
Indeed, it holds
\begin{align*}
	[M^n-M]_{\tau_m}
	& = \sum_{k=n+1}^{+\infty} \int_0^{\tau_m} |\<\sigma_k\cdot\nabla\varphi, \rho_s\>|^2\, \d s\\
	& = \sum_{k=n+1}^{+\infty} \int_0^{\tau_m} \int_{\R^{2d}} (\sigma_k\cdot \nabla\varphi(x))(\sigma_k\cdot \nabla\varphi(y)) \rho_s(x)\rho_s(y)\, \d x \d y \d s\\
	& = \int_0^{\tau_m} \int_{\R^{2d}} Q_n^\perp (x,y) : \nabla\varphi(x)\otimes \nabla\varphi(y)\, \rho_s(x)\rho_s(y)\, \d x \d y \d s
\end{align*}
for $Q^\perp_n(x,y):= \sum_{k>n} \sigma_k(x)\otimes \sigma_k(y)$.
By Lemma \ref{lem:covariance-series-representation}, $Q^\perp_n$ converges to $0$ uniformly on compact sets, while $\rho_s\nabla\varphi $ is compactly supported and in $L^1_x$, uniformly over $s\in [0,\tau_m]$ in view of \eqref{eq:equivalence-ito-strat-proof1}.
By dominated convergence we can thus deduce that
\begin{align*}
	\lim_{n\to\infty} \E\big( [M^n-M]_{\tau_m} \big) = 0\quad \forall\, m\in\N
\end{align*}
proving the claim.
Since $X^n$ converge to the l.h.s. of \eqref{eq:solution-abstract-stratonovich} in the sense of semimartingales, and the quadratic covariation is insensitive to the bounded variation component, it must hold, for all $j\in\N$,
\begin{align*}
	\big[\< \rho_\cdot, \varphi\>, B^j\big]_\cdot
	= \lim_{n\to\infty} \big[X^n,B^j\big]_\cdot
	= \lim_{n\to\infty} \big[M^n,B^j\big]_\cdot
	= \big[M,B^j\big]_\cdot
	= \int_0^\cdot \< \sigma_j\cdot\nabla \varphi, \rho_s \>\, \d s
\end{align*}
on $[0,\tau)$; in the last passage we used the fact that $\d [B^k,B^j]_t = \delta_{j,k}\, \d t $ since $\{B^k\}_k$ are independent standard Brownian motions.

Applying the same argument for $\varphi$ replaced by $\sigma_k\cdot\nabla\varphi\in C^\infty_c$, we deduce that
\begin{equation}\label{eq:equivalence-ito-strat-proof2}
	\frac{\d}{\d t} \big[ \< \rho_\cdot, \sigma_k\cdot\nabla\varphi\>, B^k\big]
	= \< \sigma_k\cdot\nabla(\sigma_k\cdot\nabla \varphi), \rho_t\>
	= - \< \sigma_k\cdot\nabla \varphi, \sigma_k\cdot\nabla\rho_t\>
\end{equation}
where the last passage is allowed by integration by parts, since $\sigma_k$ are divergence free and $\P$-a.s. $\rho\in L^1([0,\tau);W^{1,1}_\loc)$.
We can now use \eqref{eq:equivalence-ito-strat-proof2} to compute the bounded variation part $V^n$ of $X^n$:
\begin{equation}\label{eq:equivalence-ito-strat-proof3}
	V^n_\cdot
	= \frac{1}{2}\sum_{k\leq n} \big[ \<\sigma_k\cdot\nabla\varphi, \rho \>, B^k\big]_\cdot
	= -\frac{1}{2}\sum_{k\leq n} \int_0^\cdot  \<\sigma_k\cdot\nabla\varphi,\sigma_k\cdot\nabla \rho_s\>\, \d s.
\end{equation}
In order to conclude that $\rho$ is an It\^o solution, namely to identify the limit of $X^n$ with the respective It\^o integral and It\^o-Stratonovich corrector, it only remains to show that $V^n$ are converging on $[0,\tau)$ to
\begin{align*}
	\kappa \int_0^\cdot \< \Delta\varphi, \rho_s\>\,\d s
	= -\kappa \int_0^\cdot \<\nabla\varphi, \nabla\rho_s \>\,\d s
	= -\frac{1}{2} \int_0^\cdot \< \nabla\varphi, Q(0) \nabla \rho_s\>\,\d s;
\end{align*}
but this follows from similar arguments as the ones presented above, based on the uniform convergence on compact sets of $\sum_{k\leq n} \sigma_k(x)\otimes \sigma_k(x) \to Q(0)$, localization argument and dominated convergence.

Conversely, suppose now that $\rho$ is a solution in the sense of Definition \ref{defn:solution-abstract-ito}, again satisfying \textit{i}).
In this case, \eqref{eq:solution-abstract-ito} directly yields the semimartingale decomposition of $\langle \varphi,\rho\rangle$, from which relation \eqref{eq:equivalence-ito-strat-proof2} immediately follows.
For fixed $\varphi\in C^\infty_c$, we can then define the sequence $X^n$ as before, and compute its decomposition $X^n=V^n+M^n$, again given by $M^n=\sum_{k\leq n} \int_0^\cdot \<\sigma_k\cdot\nabla\varphi, \rho_s\>\, \d B^k_s$ and \eqref{eq:equivalence-ito-strat-proof3}.
The verification that $M^n+V^n\to M+V$ on $[0,\tau)$ in the sense of semimartingales, for $M=\int_0^\cdot \< \rho_s\nabla\varphi_s, \d W\>$ and $V=\kappa \int_0^\cdot \< \Delta\varphi, \rho_s\>\, \d s$ is then similar to the previous one, concluding the proof of this implication.

Let us finally comment on how the proof must be modified if condition \textit{ii}) holds instead.
The main argument is the same, based in both implications on showing that $M^n+V^n\to M+V$ as above; the part concerning $M^n\to M$ remains unchanged, which leads to the same formula for $V^n$.
The main difference now is that since $\rho$ is not known to be in $W^{1,1}_\loc$, we cannot enforce the final integration by parts in \eqref{eq:equivalence-ito-strat-proof2} and rather we need to show that $V^n\to V$ by only manipulating the formula
\begin{align*}
	V^n_\cdot = \frac{1}{2}\sum_{k\leq n} \int_0^\cdot \< \sigma_k\cdot\nabla(\sigma_k \cdot\nabla\varphi), \rho_s\>\, \d s;
\end{align*}
we can decompose it as $V^n=V^{n,1}+V^{n,2}$, for
\begin{align*}
	V^{n,1}_\cdot = \frac{1}{2}\sum_{k\leq n} \int_0^\cdot \< \sigma_k\otimes \sigma_k : D^2 \varphi, \rho_s\>\, \d s,
	\quad V^{n,2}_\cdot = \frac{1}{2}\sum_{k\leq n} \int_0^\cdot \< (\sigma_k\cdot\nabla \sigma_k)\cdot\nabla\varphi, \rho_s\>\, \d s.
\end{align*}
The convergence of $V^{n,1}_\cdot$ to $V$ is similar as before, based on Lemma \ref{lem:covariance-series-representation}; thus it remains to show that $V^{n,2}\to 0$. Formally this should hold since, for any $j=1,\ldots, d$,
\begin{align*}
	\sum_{k\in\N} (\sigma_k\cdot\nabla\sigma_k)_j(x)
	= \sum_{k\in\N} (\sigma_k(x)\cdot\nabla\sigma_k(y))_j\Big\vert_{x=y}
	= - \sum_{i=1}^d \partial_i Q^{ij}(x-y) \Big\vert_{x=y}
\end{align*}
and $\partial_i Q^{ij}(0)= 0$ since $Q$ is even.
In order to justify this rigorously, we need some uniform bounds on $V^{n,2}$ as $n\to\infty$; by condition \textit{ii}), Lemma \ref{lem:covariance-series-representation} and Proposition \ref{prop:regularity-Q-sigma}, it holds
\begin{align*}
	\sup_{x\in\R^d} \sum_{k>n} |\sigma_k(x)\cdot\nabla \sigma_k(x)|
	\leq \frac{1}{2} \sup_{x\in\R^d} \sum_{k>n} |\sigma_k(x)|^2 + \frac{1}{2} \sup_{x\in\R^d} \sum_{k>n} |\nabla \sigma_k(x)|^2 <\infty;
\end{align*}
moreover one can show that $\sum_{k>n} |\sigma_k(x)\cdot\nabla \sigma_k(x)|$ converges to $0$ as $n\to\infty$ uniformly on compact sets.
Dominated convergence and localization arguments then imply the conclusion.
\end{proof}

\section{Pathwise regularity of the noise}\label{app:pathwise-regularity}

The aim of this appendix is to give a fairly self-contained proof of Proposition \ref{prop:pathwise-regularity-noise}.
For simplicity, we will only prove that all the statements of Proposition \ref{prop:pathwise-regularity-noise} (the space where $W$ belongs and the $\P$-a.s. convergence of the series) hold at fixed time $t=1$; extending them to getting uniform convergence on some finite interval $[0,T]$ is then a routine argument.
Indeed, if $W$ is a $\cQ$-Brownian motion taking values in a Banach space $E$, then by Fernique's theorem it admits arbitrarily high finite moments; by Brownian scaling and Kolmogorov's continuity theorem, one can deduce that its paths are $\P$-a.s. in $C^{1/2-\eps}_t E$ and then reach the conclusion.
For simplicity, we will also manipulate $W_1(x)$ as if it were $\R$-valued, the general case of $\R^d$ following from arguing componentwise.

Therefore, our task reduces to statements of the form ``$\P$-a.s. $W_1$ belongs to $E$ and the associated series representation converges therein'', where $E$ is an appropriately chosen Banach space implying the convergence in the topologies given in Proposition \ref{prop:pathwise-regularity-noise} (resp. $L^2_\loc$, $C^0_\loc$ and $C^{\alpha-\eps}_\loc$).
To this end, let us recall a general result, providing easily verifiable conditions to ensure that the above holds.

\begin{theorem}[Theorem 2.12 from \cite{DaPZab}]\label{thm:white-noise-expansion}
Let $W_1$ be a centered Gaussian variable on a separable Banach space $E$ and let $\cH$ be its Cameron-Martin space; let $\{\sigma_k\}_k$ be a CONS of $\cH$ and consider the sequence $\{B^k_1\}_k$ as defined by \eqref{eq:chaos-expansion-finite-noises}.
Then the series $\sum_{k=1}^\infty \sigma_k B^k_1$ converges $\P$-a.s. to $W_1$ in $E$.
\end{theorem}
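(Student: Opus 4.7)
The plan is to deduce the claim from the classical It\^o--Nisio theorem, which for sums of independent symmetric random variables in a separable Banach space $E$ asserts that convergence in distribution is equivalent to $\P$-almost sure convergence in norm; more precisely, if there exists an $E$-valued random variable $S$ such that $\xi(S_n)\to\xi(S)$ in distribution for every $\xi\in E^*$, then $S_n\to S$ $\P$-a.s.\ in $E$. Since the partial sums $S_n:=\sum_{k\leq n}\sigma_k B^k_1$ are sums of independent, symmetric, $E$-valued Gaussians, the task reduces to identifying $W_1$ as the distributional limit of $\{S_n\}_n$ by testing against continuous linear functionals.

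First I would set up the standard white-noise isometry associated to the Gaussian law $\L(W_1)$ on $E$. By general theory of Gaussian measures on separable Banach spaces, the embedding $\cH\hookrightarrow E$ is continuous, the map $j:E^*\to\cH$ defined by the duality relation $\<h,j(\xi)\>_\cH=\xi(h)$ for all $h\in\cH$ has dense range, and $\mathrm{Var}(\xi(W_1))=\|j(\xi)\|_\cH^2$ for every $\xi\in E^*$. Consequently $\xi\mapsto \xi(W_1)$ extends by continuity to a linear isometry $J:\cH\to L^2(\Theta)$ onto the first Wiener chaos. Applying $J$ to the CONS $\{\sigma_k\}_k$ produces an orthonormal family of centered Gaussians $J(\sigma_k)$, which are therefore i.i.d.\ $N(0,1)$; these coincide with the $B^k_1$ defined in \eqref{eq:chaos-expansion-finite-noises} whenever that concrete formula is meaningful.

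The second step is the identification of the limit. For any fixed $\xi\in E^*$, expanding $j(\xi)\in\cH$ in the basis $\{\sigma_k\}_k$ yields $j(\xi)=\sum_k\xi(\sigma_k)\,\sigma_k$, convergent in $\cH$. Applying the isometry $J$ one obtains
\begin{equation*}
\xi(W_1)=J(j(\xi))=\sum_{k}\xi(\sigma_k)\,B^k_1\quad\text{in } L^2(\Theta),
\end{equation*}
while by linearity $\xi(S_n)$ is precisely the $n$-th partial sum of this series. Hence $\xi(S_n)\to\xi(W_1)$ in $L^2(\Theta)$, and a fortiori in distribution, for every $\xi\in E^*$.

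Combining the two steps, the It\^o--Nisio theorem applies and yields $S_n\to W_1$ $\P$-a.s.\ in the norm of $E$, as desired. The substantive technical input is really the It\^o--Nisio theorem itself, whose proof relies on L\'evy-type symmetrization inequalities and separability of $E$; once that tool is at hand, the rest is a clean manipulation of the Cameron--Martin isometry, and the main conceptual obstacle is simply packaging the RKHS machinery to upgrade the convergence of one-dimensional marginals to norm convergence in $E$.
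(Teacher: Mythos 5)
Your argument is correct, but be aware that the paper contains no proof of this statement to compare against: it is quoted verbatim as Theorem 2.12 of Da Prato--Zabczyk, so the ``paper's route'' is citation. The proof in that reference is genuinely different from yours: it identifies the partial sum $S_n=\sum_{k\leq n}\sigma_k B^k_1$ with the conditional expectation $\E\big[W_1\,\big|\,\sigma(B^1_1,\dots,B^n_1)\big]$ --- an identity resting on exactly the same first-chaos computation you carry out via the Cameron--Martin isometry --- and then invokes the convergence theorem for uniformly integrable $E$-valued martingales, together with the measurability of $W_1$ with respect to $\sigma(B^k_1,\,k\in\N)$. Your route through It\^o--Nisio trades the vector-valued martingale machinery for L\'evy-type symmetrization inequalities: it uses only that the summands $\sigma_k B^k_1$ are independent and symmetric (Gaussianity enters solely through the isometry $J$ identifying the scalar marginals and making the $J(\sigma_k)$ jointly Gaussian, hence independent once orthonormal), and it upgrades convergence of the one-dimensional marginals directly to $\P$-a.s.\ norm convergence; the martingale argument, in exchange, delivers $L^p(\Theta;E)$ convergence essentially for free from uniform integrability. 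Both proofs share the same substantive core, namely the expansion $\xi(W_1)=\sum_k \xi(\sigma_k)\,B^k_1$ in $L^2(\Theta)$ for every $\xi\in E^*$, which you derive cleanly from $j(\xi)=\sum_k \xi(\sigma_k)\,\sigma_k$ in $\cH$. Your hedge about when the concrete formula \eqref{eq:chaos-expansion-finite-noises} is meaningful is also wise: as written there, the normalization by $\|\sigma_k\|_{L^2_x}$ does not in general make the $B^k_1$ orthonormal (one would want the unit-norm vectors $\cQ^{-1/2}\sigma_k$ instead), and the intrinsic definition $B^k_1=J(\sigma_k)$ you use is the robust one.

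One caution on your statement of It\^o--Nisio. In the form you quote it --- mere convergence in distribution of $\xi(S_n)$ to $\xi(S)$ for each $\xi\in E^*$ --- the theorem yields $\P$-a.s.\ convergence of $S_n$ to \emph{some} random variable with the law of $S$, but does not by itself identify that limit with $S$ pathwise. Your actual application is immune to this, since you establish $\xi(S_n)\to\xi(W_1)$ in $L^2(\Theta)$, hence in probability, and the in-probability version of It\^o--Nisio does pin the almost sure limit down as $W_1$ itself; you should state the theorem in that stronger (and standard) form to make the identification step airtight.
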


Our task is then reduced to verifying that $W_1$ belongs to $E$ (possibly up to modification); to this end, let us recall the classical Garsia-Rodemich-Rumsay lemma.

\begin{lemma}\label{lem:GRR}
Let $L$ be a Banach space, $a\in\R^d$, $r>0$ and $h:B(a,r)\to L$ a continuous function.
Suppose that there exist strictly increasing, continuous functions $p,\Psi:\R_{\geq 0}\to \R_{\geq 0}$ such that $\Psi(0)=p(0)=0$ and
\begin{equation}\label{eq:GRR-hypothesis}
	\int_{B(a,r)\times B(a,r)} \Psi\bigg( \frac{\| h(x)-h(y)\|_L}{p(|x-y|)} \bigg)\, \d x \d y \leq K;
\end{equation}
then there exists a dimensional constant $C_d$ such that
\begin{equation}\label{eq:GRR-conclusion}
	\| h(x)-h(y)\|_L
	\leq 8 \int_0^{2|x-y|} \Psi\big(C_d\, K\, s^{-2d} \big)\, p'(s)\, \d s
	\quad \forall\, x,y\in B(a,r).
\end{equation}
\end{lemma}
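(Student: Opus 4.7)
The plan is to follow the classical proof of the Garsia--Rodemich--Rumsey lemma (see e.g. \cite[Theorem 2.1.3]{Stroock-Varadhan}), observing that its extension from the scalar to the Banach-space valued setting is entirely mechanical, since the argument only manipulates $\|h(x)-h(y)\|_L$ through the triangle inequality. In particular, I do not expect any genuinely new ideas to be needed; the whole work consists in tracking constants and choosing the intermediate points correctly.

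The proof goes by a chaining/dyadic selection argument. First, I would introduce the auxiliary function
\begin{equation*}
    U(x) := \int_{B(a,r)} \Psi\!\left( \frac{\| h(x)-h(y)\|_L}{p(|x-y|)} \right) \d y,
\end{equation*}
so that \eqref{eq:GRR-hypothesis} together with Fubini yields $\int_{B(a,r)} U(x)\, \d x \leq K$. The core technical step is the following selection lemma: for any $x\in B(a,r)$ and any ball $B'\subset B(a,r)$ of radius $\delta>0$, averaging gives the existence of a point $z\in B'$ with $U(z)\leq C_d\, K\, \delta^{-d}$ and simultaneously
\begin{equation*}
    \Psi\!\left( \frac{\| h(x)-h(z) \|_L}{p(|x-z|)} \right) \leq C_d\, U(x)\, \delta^{-d},
\end{equation*}
from which, after inverting $\Psi$ and bounding $|x-z|\leq \delta + |x-\text{center}(B')|$, one reads off a controlled increment $\|h(x)-h(z)\|_L \leq p(\cdot)\, \Psi^{-1}(\cdots)$.

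Second, given $x,y\in B(a,r)$ one iterates this construction dyadically, producing two sequences $x=x_0, x_1, x_2,\ldots$ and $y=y_0, y_1, \ldots$ approaching a common limit point, where at each stage the radii decrease geometrically, say $\delta_n \sim 2^{-n}|x-y|$. Telescoping and applying the triangle inequality in $L$ yields
\begin{equation*}
    \| h(x)-h(y) \|_L
    \leq \sum_{n\geq 0} \big( \| h(x_n)-h(x_{n+1})\|_L + \| h(y_n)-h(y_{n+1})\|_L \big).
\end{equation*}
Each summand is bounded via the selection step by a quantity of the form $p(\delta_n)\,\Psi^{-1}(C_d K \delta_n^{-2d})$; comparing this Riemann-type sum with the integral and using the monotonicity of $\Psi^{-1}$ and $p$ one recovers \eqref{eq:GRR-conclusion} (the factor $2|x-y|$ reflecting that the dyadic chain starts at scale $\sim |x-y|$).

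The only obstacle is purely bookkeeping: tracking the dimensional constant $C_d$ through the selection step (essentially the volume of $B(a,r)$ versus the volume of the shrinking dyadic subballs) and verifying that the resulting series is dominated by the stated integral. Since these manipulations are textbook, rather than reproduce them in full I would simply quote the scalar statement from \cite{Stroock-Varadhan} or \cite{Garsia} and point out that every step of their proof goes through verbatim with $|\cdot|$ replaced by $\|\cdot\|_L$, thus yielding the claimed inequality.
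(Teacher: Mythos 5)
Your proposal matches the paper exactly: the paper gives no proof of Lemma \ref{lem:GRR} either, but simply cites the classical scalar statement in \cite[Lemma 1.1]{GRR} (together with the variants in \cite{StrVar,DaPZab96}) and relies on the fact that the chaining argument transfers verbatim to the Banach-valued setting because it only ever touches $\|h(x)-h(y)\|_L$ through the triangle inequality --- which is precisely your concluding observation. One small point worth flagging: your sketch correctly produces summands of the form $p(\delta_n)\,\Psi^{-1}\big(C_d K\delta_n^{-2d}\big)$, i.e.\ the conclusion should involve the \emph{inverse} $\Psi^{-1}$, whereas \eqref{eq:GRR-conclusion} as printed has $\Psi$ in its place; this is a typo in the statement (as one can confirm from the way Corollary \ref{cor:GRR} applies it, with $\Psi(u)=e^{u^2}$ producing the $|\log s|^{1/2}$ weight in \eqref{eq:GRR-cor-condition}), and your version is the correct one.
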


The statement above was first presented in $d=1$ in \cite[Lemma 1.1]{GRR} and admits several variants, see \cite[Exercise 2.41]{StrVar} or \cite[Theorem B.1.1]{DaPZab96}.
For simplicity we restricted to continuous functions $h$, but one can allow for merely integrable ones, in which case the conclusion becomes the existence of a continuous representative $\tilde{h}$, such that $h=\tilde h$ Lebesgue a.e., satisfying \eqref{eq:GRR-conclusion}; the proof is based on first mollifying $h$, see \cite[Proposition 3.4 and Theorem 3.5]{DaPZab} for a detailed presentation in a particular case.
Similarly, when applying the result to possibly discontinuous stochastic processes, one can deduce the existence of a continuous modification.

In the special case of centered Gaussian variables, moment bounds on $\| W_1(x)-W_1(y)\|_L$ always imply exponential ones by Fernique's theorem; therefore one can take $\Psi$ growing exponentially fast at infinity in Lemma \ref{lem:GRR}, upon choosing $p$ satisfying the above conditions and such that $(W_1(x)-W_1(y))/p(x-y)^{1/2}$ has uniformly bounded moments.
Lemma \ref{lem:GRR} then admits the following specialization, cf. \cite[Theorem 2.1]{GRR}; the proof therein is only given in $d=1$, but the higher dimensional generalization is immediate in view of Lemma \ref{lem:GRR}.

\begin{corollary}\label{cor:GRR}
Let $\{W_1(x):x\in B(a,r)\}$ be a collection of $L$-valued, centered Gaussian variables, $r\leq 1$;
suppose there exists a strictly increasing $p:\R_{\geq 0}\to \R_{\geq 0}$ such that $p(0)=0$,
\begin{align*}
	\sup_{x,y\in B(a,r): |x-y|\leq s} \E\big[\| W_1(x)-W_1(y)\|_L^2\big] \leq p(s)
\end{align*}
and
\begin{equation}\label{eq:GRR-cor-condition}
	\int_0^r |\log s|^{1/2}\, p'(s)\, \d s<\infty.
\end{equation}
Then $W_1$ admits a modification belonging to $C(B(a,r);L)$.
\end{corollary}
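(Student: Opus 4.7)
The plan is to apply Lemma \ref{lem:GRR} pathwise to the random map $x \mapsto W_1(x)$, after converting the variance bound into a uniform Gaussian exponential moment estimate via Fernique's theorem.

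First, I would establish the uniform bound
\begin{equation*}
\sup_{x,y\in B(a,r)} \E\bigl[\exp(c\|W_1(x)-W_1(y)\|_L^2/p(|x-y|))\bigr] \le C
\end{equation*}
for universal constants $c,C>0$. The key Gaussian fact is that, for any centered Gaussian random variable $X$ in a separable Banach space $L$, the Kahane--Khintchine equivalence of $L^p$-norms yields $\E[\|X\|_L^{2n}] \le (Cn)^n (\E\|X\|_L^2)^n$ uniformly in $X$, so that $\E[\exp(c\|X\|_L^2/\E\|X\|_L^2)]$ is bounded by a universal constant for $c$ small enough. Applied to the Gaussian increment $W_1(x)-W_1(y)$ with variance bound $p(|x-y|)$, this gives the claim.

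Next, set $\Psi(u):=\exp(cu^2)-1$ and use $q(s):=p(s)^{1/2}$ as the denominator function in Lemma \ref{lem:GRR}. By Fubini and the previous estimate,
\begin{equation*}
\E\bigl[K\bigr] := \E\!\left[\int_{B(a,r)\times B(a,r)} \Psi\!\left(\frac{\|W_1(x)-W_1(y)\|_L}{q(|x-y|)}\right)\,\d x\,\d y\right] \le C\,|B(a,r)|^2<\infty,
\end{equation*}
so $K<\infty$ $\P$-a.s. Since $W_1$ need not be continuous a priori, I would invoke the variant of Lemma \ref{lem:GRR} for merely measurable inputs mentioned immediately after its statement (obtained by first mollifying $W_1$ and passing to the limit along a suitable subsequence): this produces a modification $\tilde W_1$ of $W_1$ on $B(a,r)$ for which, $\P$-a.s.\ and for all $x,y\in B(a,r)$,
\begin{equation*}
\|\tilde W_1(x)-\tilde W_1(y)\|_L \le 8\int_0^{2|x-y|} \Psi^{-1}\!\bigl(C_d K\,s^{-2d}\bigr)\,q'(s)\,\d s.
\end{equation*}

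Finally, because $\Psi^{-1}(v)\lesssim \sqrt{c^{-1}\log v}$ as $v\to\infty$, for small $s$ the integrand is controlled, up to a (random) constant depending on $K$ and $d$, by $|\log s|^{1/2}\,q'(s)$; the integrability hypothesis on $p$ then forces the right-hand side to vanish as $|x-y|\to 0$, yielding uniform continuity of $\tilde W_1$ on $B(a,r)$. The delicate point in this scheme is the universality of the Fernique constant $c$ in the first step, where one must ensure that the exponential moment bound depends only on $\E\|X\|_L^2$ (and not on finer covariance data of the Gaussian $X$); this is standard and can be deduced directly from the Gaussian moment comparison in Banach spaces, but it is the place where one uses in an essential way that $L$ is separable and that $W_1$ takes values in it.
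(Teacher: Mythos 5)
Your strategy is exactly the one the paper has in mind (and is essentially the proof of \cite[Theorem 2.1]{GRR}): normalize the Gaussian increments by the canonical metric $q=p^{1/2}$, upgrade the variance bound to a uniform exponential moment via Gaussian concentration in the separable Banach space $L$ so that $\Psi(u)=e^{cu^2}-1$ is admissible, check $\E[K]<\infty$ by Fubini, and invoke the measurable variant of Lemma \ref{lem:GRR} to obtain a continuous modification with modulus $8\int_0^{2|x-y|}\Psi^{-1}(C_dK s^{-2d})\,q'(s)\,\d s\lesssim_{K,d}\int_0^{2|x-y|}(1+|\log s|^{1/2})\,q'(s)\,\d s$. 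Up to that point the argument is correct and in fact more carefully justified than the paper's, which simply cites \cite{GRR}.

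The gap is in your last sentence. The modulus you derive involves $q'=(\sqrt p\,)'=p'/(2\sqrt p)$, and since $p(0)=0$ this is much larger than $p'$ near the origin; hence the stated hypothesis \eqref{eq:GRR-cor-condition}, namely $\int_0^r|\log s|^{1/2}\,p'(s)\,\d s<\infty$, does not imply $\int_0^r|\log s|^{1/2}\,q'(s)\,\d s<\infty$, and the claim that ``the integrability hypothesis on $p$ forces the right-hand side to vanish'' does not follow. Concretely, for $p(s)=|\log s|^{1-2\gamma}$ (the case arising in the application to Proposition \ref{prop:pathwise-regularity-noise}-b)) one has $\int_0|\log s|^{1/2}p'(s)\,\d s<\infty$ iff $\gamma>3/4$, whereas $\int_0|\log s|^{1/2}q'(s)\,\d s<\infty$ iff $\gamma>1$. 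What your argument actually proves is the corollary under the stronger hypothesis $\int_0^r|\log s|^{1/2}(\sqrt p\,)'(s)\,\d s<\infty$, which is the condition of \cite[Theorem 2.1]{GRR} (there the modulus bound is imposed on the $L^2$-norm of the increment, not on its square); moreover, Fernique's necessity theorem for homogeneous Gaussian fields indicates that no alternative argument can close this gap, since sample continuity can genuinely fail for covariances satisfying \eqref{eq:GRR-cor-condition} but not the square-root condition. The defect therefore lies as much in the statement as transcribed as in your write-up, but as written the final implication is not valid: you should either strengthen the hypothesis to involve $(\sqrt p\,)'$ or explicitly flag the discrepancy.
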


We are now ready to present the

\begin{proof}[Proof of Proposition \ref{prop:pathwise-regularity-noise}]
As mentioned before, in each case of interest, it suffices to show that $\P$-a.s. $W_1\in E$, for a suitably chosen Banach space $E$.

\textit{Proof of a).}
Define the measure $\mu(\d x)= (1+|x|)^{-d-2}\, \d x$ and consider be the associated weighted $L^2$-space, namely $E=L^2(\R^d,\mu)$;
it is clear that $E\hookrightarrow L^2_\loc$ with continuous embedding.
Recall that by homogeneity of the noise, it holds $\E[|W_1(x)|^2]=\Tr Q(0) =2d \kappa$ for all $x\in \R^d$; therefore
\begin{align*}
	\E\big[\| W_1\|_E^2\big] = \int_{\R^d} \E[|W_1(x)|^2]\, \mu(\d x) \sim \int_{\R^d} (1+|x|)^{-d-2}\, \d x <\infty.
\end{align*}
We conclude by Theorem \ref{thm:white-noise-expansion} that the series $\sum_{k=1}^\infty \sigma_k B^k_1$ converges $\P$-a.s. to $W_1$ in $E$, thus also in $L^2_{\loc}$.

\textit{Proof of b).}
This case is a bit more elaborate; we begin by showing that, under condition \eqref{eq:noise-logarithmic-regularity}, there exists $\delta\in (0,1)$ such that
\begin{equation}\label{eq:pathwise-regularity-proof1}
	\E\big[|W_1(x)-W_1(y)|^2\big] \lesssim \big|\log |x-y| \big|^{^{1-2\gamma}}
	\quad \forall\, x,y, |x-y|\leq \delta.
\end{equation}
By homogeneity, it holds $\E[|W_1(x)-W_1(y)|^2] = 2\Tr [Q(0)- Q(x-y)]$, so we only need to estimate the latter;
expressing $Q$ in terms of its antitransform, for any $R>0$ we have
\begin{align*}
	|Q(0)-Q(z)|
	& \sim \bigg| \int_{\R^d} (1-e^{i\xi\cdot z}) g(\xi) P_\xi\, \d \xi \bigg|
	\lesssim \int_{\R^d} |1-e^{i\xi\cdot z}| g(\xi)\, \d \xi\\
	& \lesssim \int_{|\xi|\leq R} |\xi|\,|z| g(\xi)\, \d \xi + \int_{|\xi|>R} g(\xi)\, \d \xi =: I_1^R + I_2^R.
\end{align*}
By assumption \eqref{eq:noise-logarithmic-regularity} and a few integral computations, for sufficiently large $R$ it holds
\begin{align*}
	I_1^R
	& \lesssim |z| \int_{|\xi|\leq R} \frac{|\xi|}{1+ |\xi|^d} \log^{-2\gamma} (e+|\xi|)\, \d \xi
	\sim |z| \int_0^R \log^{-2\gamma} (e+r)\, \d r
	\lesssim |z| R \log^{-2\gamma} R,\\
	I_2^R & \lesssim \int_{|\xi|> R}(1+ |\xi|^d)^{-1} \log^{-2\gamma} (e+|\xi|)\, \d \xi
	\sim \int_R^{+\infty} r^{-1} \log^{-2\gamma} (e+r)\, \d r
	\lesssim \log^{1-2\gamma} R.
\end{align*}
Taking $\delta=R^{-1}$, for all $|z|\leq \delta$ overall we find
\begin{align*}
	|Q(0)-Q(z)|
	\lesssim \log^{-2\gamma} R  + \log^{1-2\gamma} R
	\lesssim \big|\log |z|\big|^{1-2\gamma}
\end{align*}
and thus the claim \eqref{eq:pathwise-regularity-proof1}.
We are now in the position to apply Corollary \ref{cor:GRR} for $p(s)\sim |\log s|^{1-2\gamma}$, which under the condition $\gamma>3/4$ satisfies
\begin{align*}
	\int_0^\delta |\log s|^{1/2}\, p'(s)\, \d s
	\sim \int_0^\delta s^{-1} \, \big|\log |s|\big|^{1/2-2\gamma}\, \d s <\infty;
\end{align*}
exploiting again Fernique's theorem and the homogeneity of the noise, it follows that (up to modification)
\begin{align*}
	\sup_{x\in\R^d} \E \big[ \| W_1\|_{C(B(x,\delta))}^2 \big] < \infty.
\end{align*}
By taking a countable collection of points $\{x_n\}_n$ such that $B(x_n,\delta)$ cover $\R^d$, we conclude that $W_1$ is $\P$-a.s. continuous on $\R^d$; similarly, given any $R>0$, by taking $N\sim R^\delta$ balls of radius $\delta$ covering $B(0,R)$, we can deduce that
\begin{equation}\label{eq:pathwise-regularity-proof2}
	\E\Big[\| W_1\|_{C(B(0,R))}^2 \Big] \lesssim_\delta R^d .
\end{equation}
Let us define the Banach space $E$ of continuous function $f:\R^d\to\R$ growing at most like $(1+|x|)^{d+1}$ at infinity, with norm $\| f\|_E=\sup_x (1+|x|)^{-d-1} |f(x)|$;
we claim that $W_1$ belongs $\P$-a.s. to $E$.
Indeed by \eqref{eq:pathwise-regularity-proof2}, it holds
\begin{align*}
	\E\big[ \| W_1\|_E^2 \big]
	& \lesssim \sum_{n=0}^{+\infty} \E\bigg[ \sup_{n\leq |x|\leq n+1} (1+|x|)^{-2d-2} |W(1,x)|^2 \bigg]\\
	& \lesssim \sum_{n=0}^{+\infty} n^{-2d-2}\, \E\Big[ \|W_1\|_{B(0,n)}^2 \Big]
	\lesssim \sum_{n=0}^{+\infty} n^{-2} <\infty
\end{align*}
which proves the claim; the conclusion again follows from Theorem \ref{thm:white-noise-expansion} and the continuous embedding $E\hookrightarrow C^0_\loc$.

\textit{Proof of c).}
This case is similar to b) above and actually simpler, so we only shortly sketch it.
Using homogeneity and the antitransform of $Q$, in this case one can show that, for any $\eps>0$, it holds
\begin{align*}
	\E\big[|W_1(x)-W_1(y)|^2 \big] \lesssim_\eps |x-y|^{2\alpha-\eps}\quad \forall\, x, y\in\R^d;
\end{align*}
then using hypercontractivity of Gaussian moments and Lemma \ref{lem:GRR} (alternatively, \cite[Theorem 3.5]{DaPZab}), together with homogeneity of the noise, one finds
\begin{align*}
	\sup_x \E\Big[\, \llbracket W_1\rrbracket_{C^{\alpha-\eps}(B(x,1))}^2 \Big]\lesssim_\eps 1
\end{align*}
where $\llbracket \cdot \rrbracket_{C^{\alpha-\eps}(O)}$ denotes the $(\alpha-\eps)$-H\"older seminorm computed on an open set $O$.
Given the above, one can then come up with a suitable weighted $(\alpha-\eps)$-H\"older Banach space $E$, with at most polynomial growth at infinity, such that $\P$-a.s.  $W_1\in E$ and $E\hookrightarrow C^{\alpha-\eps}_\loc$ continuously.
Theorem \ref{thm:white-noise-expansion} and the arbitrariness of $\eps>0$ then yield the conclusion.
\end{proof}

\end{document}